\newtheorem{theorem}             {Theorem}  [section]
\newtheorem{definition} [theorem] {Definition}
\newtheorem{lemma}      [theorem]{Lemma}
\newtheorem{corollary}  [theorem]{Corollary}
\newtheorem{proposition}[theorem]{Proposition}
\newtheorem{remark} [theorem] {Remark}
\numberwithin{equation}{section} \everymath{\displaystyle}
\newcommand{\Cont}{{\rm C}}
\newcommand{\sgn}{{\rm sgn}}
\newcommand{\Ht}{{\rm Ht}}
\newcommand{\Nr}{{\rm Nr}}
\newcommand{\Tr}{{\rm Tr}}
\newcommand{\Char}{\mathbbm{1}}
\newcommand{\gp}[1]{\mathbf{#1}}
\newcommand{\Isom}{{\rm Isom}}
\newcommand{\GL}{{\rm GL}}
\newcommand{\PGL}{{\rm PGL}}
\newcommand{\SL}{{\rm SL}}
\newcommand{\PSL}{{\rm PSL}}
\newcommand{\SO}{{\rm SO}}
\newcommand{\Cl}{\mathrm{Cl}}
\newcommand{\ag}[1]{\mathbb{#1}}
\newcommand{\N}{\mathbb{N}}
\newcommand{\Z}{\mathbb{Z}}
\newcommand{\Mat}{{\rm M}}
\newcommand{\IntP}[1]{\left\lfloor #1 \right\rfloor}
\newcommand{\IntCP}[1]{\left\lceil #1 \right\rceil}
\newcommand{\Q}{\mathbb{Q}}
\newcommand{\R}{\mathbb{R}}
\newcommand{\C}{\mathbb{C}}
\newcommand{\E}{\mathbf{E}}
\newcommand{\F}{\mathbf{F}}
\newcommand{\A}{\mathbb{A}}
\newcommand{\I}{\mathbb{I}}
\newcommand{\vO}{\mathcal{O}}
\newcommand{\vo}{\mathfrak{o}}
\newcommand{\vP}{\mathcal{P}}
\newcommand{\vp}{\mathfrak{p}}
\newcommand{\idlN}{\mathfrak{N}}
\newcommand{\Dis}{{\rm D}}
\newcommand{\norm}[1][\cdot]{\lvert #1 \rvert}
\newcommand{\extnorm}[1]{\left\lvert #1 \right\rvert}
\newcommand{\Pairing}[2]{\langle #1, #2 \rangle}
\newcommand{\extPairing}[2]{\left\langle #1, #2 \right\rangle}
\newcommand{\Res}{{\rm Res}}
\newcommand{\Ind}{{\rm Ind}}
\newcommand{\Intw}{\mathcal{M}}
\newcommand{\fin}{{\rm fin}}
\newcommand{\eis}{{\rm E}}
\newcommand{\Reis}{\mathcal{E}}
\newcommand{\Vol}{{\rm Vol}}
\newcommand{\Rmnum}[1]{\expandafter\@slowromancap\romannumeral #1@}
\newcommand{\fa}{\mathbf{a}}
\newcommand{\fb}{\mathbf{b}}
\newcommand{\LatL}{\mathcal{L}}
\newcommand{\RSO}{\mathrm{RS-}\mathcal{O}}
\title{On Kuznetsov--Bykovskii's formula of counting prime geodesics}
\author{Giacomo Cherubini \qquad Han Wu \qquad Gergely Z\'abr\'adi}
\thanks{MSC code: 11F72 \\ Keywords: prime geodesic theorem, Rankin--Selberg method}
\begin{document}

\begin{abstract}
	We generalize a formula on the counting of prime geodesics, due to Kuznetsov--Bykovskii, used in the work of Soundararajan--Young on the prime geodesic theorem. The method works over any number field and for any congruence subgroup. We give explicit computation in the cases of principal and Hecke subgroups.
\end{abstract}

	\maketitle
	
	\tableofcontents

\section{Introduction}

	\subsection{Prime Geodesic Theorem}

	The Selberg trace formula, in its classical invariant form, relates the mysterious (discrete) spectrum of the hyperbolic laplacian $\Delta$ on $\Gamma \backslash \ag{H}$ to the so-called ``length spectrum'' of $\Gamma \backslash \ag{H}$, which is closely related to the concrete arithmetics of the lattice $\Gamma$. Precisely, let $\lambda_j = 1/4+r_j^2$ run over the discrete spectrum of $\Delta$, let $g \in \Cont_c^{\infty}(\R)$ and $h(z) = \int_{\R} g(r) e^{-irz} dr$, then the formula takes the form (see \cite[(2.4)]{Mue07} and assume $\Gamma$ is torsion-free)
\begin{equation}
	\sum_{j=0}^{\infty} h(r_j) = \frac{\mathrm{Area}(\Gamma \backslash \ag{H})}{4\pi} \int_{\R} h(r) r \tanh(\pi r) dr + \sum_{[\gamma]_{\Gamma} \text{ hyperbolic}} \frac{l(\gamma_0)}{2 \sinh \left( \frac{l(\gamma)}{2} \right)} g(l(\gamma)) + \mathrm{(CSC)},
\label{STFCI}
\end{equation}
	where $l(\gamma)$ is the length of the closed geodesic in $\Gamma \backslash \ag{H}$ associated with the hyperbolic $\Gamma$-conjugacy class $[\gamma]_{\Gamma}$, and $\gamma_0$ represents the(a) \emph{primitive class} for $[\gamma]_{\Gamma}$ such that $[\gamma]_{\Gamma} = [\gamma_0^k]_{\Gamma}$ for some integer $k \geq 1$. We have omitted the contribution from the continuous spectrum by simply writing it as $\mathrm{(CSC)}$. In particular, this formula allows Vign\'eras \cite{Vi80_Ann} to construct isospectral but non-isometric compact hyperbolic manifolds, giving a beautiful counter-example to Kac's question \cite{Ka66}.
	
	The Selberg trace formula also has a lot of analogies with the explicit formula (of Weil, in this general form) of the Riemann zeta function. Precisely, let $\rho = \beta + i \gamma$ with $\beta \in [0,1], \gamma \in \R$ run over the set of non trivial zeros of the Riemann zeta function. Let $f$ be a nice function on $\R_{>0}$ defined by $F(x) = f(e^{-x})$. Define $M_{1/2}f(s) = \int_{\R} F(x) e^{(1/2-s)x} dx$. Then the explicit formula takes the form
\begin{align*}
	&\quad \lim_{T \to \infty} \sum_{\norm[\gamma_j] < T} M_{1/2}f(\rho) - \left( M_{1/2}f(0) + M_{1/2}f(1) \right) \\
	&= f(1) \log 2 + \sum_p \sum_{n \geq 1} \frac{-\log p}{p^{n/2}} \left( F(- n \log p) + F(n \log p) \right) + \textrm{(Weil's Functional)}.
\end{align*}
	(\ref{STFCI}) is analogous to the above formula if one regards $N([\gamma]_{\Gamma}) := \exp(l(\gamma))$ as the prime powers $p^n$ (and if $F$ is even). This analogy motivated Selberg to define the Selberg zeta function
	$$ Z_{\Gamma}(s) = \sideset{}{_{[\gamma_0]_{\Gamma}}} \prod \sideset{}{_{k=0}^{\infty}} \prod (1- N([\gamma_0]_{\Gamma})^{-s-k}), $$
and other mathematicians, especially analytic number theorists, to introduce counting functions analogous to the ones in the prime number theorem
	$$ \pi_{\Gamma}(x) := \extnorm{ \left\{ [\gamma_0]_{\Gamma} \ \middle| \ N([\gamma_0]_{\Gamma}) \leq x \right\} }, \quad \Psi_{\Gamma}(x) := \sideset{}{_{N([\gamma]_{\Gamma}) \leq x} } \sum \Lambda([\gamma]_{\Gamma}), $$
	where $\Lambda(\cdot)$ is the analogue of the von Mangoldt function defined by $\Lambda([\gamma]_{\Gamma}) = \log N([\gamma_0]_{\Gamma})$ if $[\gamma]_{\Gamma}$ is a power of the primitive hyperbolic class $[\gamma_0]_{\Gamma}$. One then expects the \emph{prime geodesic theorem}, analogous to the prime number theorem, of the form
\begin{equation}
	\pi_{\Gamma}(x) = \mathrm{li}(x) + O_{\epsilon}(x^{\eta+\epsilon}), \quad \mathrm{li}(x) = \int_2^x \frac{dt}{\log t}
\label{PGT}
\end{equation}
where $\eta<1$ is an absolute constant.

	\subsection{Kuznetsov--Bykovskii's formula}

	Finding the smallest possible value of the exponent $\eta$ in (\ref{PGT}) is a deep problem. For $\Gamma = \SL_2(\Z)$ Iwaniec \cite[Theorem 2]{Iw84} obtained $\eta = 35/48$ using the spectral theory of automorphic forms; this was improved subsequently by Luo--Sarnak \cite[Theorem 1.4]{LS95} to $\eta = 7/10$ and to $\eta = 71/102$ by Cai \cite{Cai02}. Recently, Soundararajan--Young \cite[Theorem 1.1]{SY13} succeeded to improve this exponent to $\eta = 25/36$. Their method has the following new ingredient, i.e., a formula due to Kuznetsov, quoted by Bykovskii \cite[(2.2)]{By94} or \cite[Proposition 2.2]{SY13}, used to establish an estimation of $\Psi_{\Gamma}(x+u) - \Psi_{\Gamma}(x)$ for small $u$ compared to $x$ \cite[Theorem 3.2]{SY13}: Setting $X = \sqrt{x} + 1/\sqrt{x}$, one has for $\Gamma = \PSL_2(\Z)$
\begin{equation}
	\Psi_{\Gamma}(x) = 2 \sideset{}{_{n \leq X}} \sum \sqrt{n^2-4} \cdot L(1,n^2-4),
\label{ByF}
\end{equation}
	where $L(s,\delta)$ is a certain $L$-series closely related to quadratic Dirichlet $L$-functions. Its origin goes back to the work of Zagier \cite{Za76} and even Siegel \cite{Sie56}. Among other things, we only recall that $L(s,\delta)$ has the following representation as Dirichlet series \cite[(3)]{SY13}
	$$ L(s,\delta) = \frac{\zeta(2s)}{\zeta(s)} \sum_{q=1}^{\infty} \rho_q(\delta) q^{-s}, \quad \rho_q(\delta) = \extnorm{ \left\{ x \pmod{2q} : x^2 \equiv \delta \pmod{4q} \right\} }; $$
	that it has a meromorphic continuation and satisfies the following functional equation \cite[Lemma 2.1]{SY13}
	$$ \Lambda(s,\delta) := \left( \frac{\norm[\delta]}{\pi} \right)^{s/2} \Gamma \left( \frac{s+a}{2} \right) L(s,\delta) = \Lambda(1-s,\delta). $$
	
\begin{remark}
	Recall \cite[(4) \& Lemma 2.1]{SY13} that if we write $\delta = D l^2$ for a fundamental discriminant $D$ and an integer $l > 0$, then
	$$ L(s,\delta) = l^{\frac{1}{2}-s} T_l^{(D)}(s) L(s,\chi_D), $$
where $\chi_D$ is the non-trivial quadratic Dirichlet character of modulus $D$ and $T_l^{(D)}(s)$ is a product of polynomials in $p^s, p^{-s}$ for $p \mid l$. In particular, the above functional equation is equivalent to
	$$ T_l^{(D)}(s) = T_l^{(D)}(1-s). $$
\end{remark}

\begin{remark}
	There is another property of $L(s,\delta)$, which is interesting but does not enter into the proof of the Prime Geodesic Theorem (PGT). Namely, the zeros of $T_l^{(D)}(s)$ all lie on the critical line $\Re(s) = 1/2$. Following Nelson, Pitale and Saha \cite[\S 1.7]{NPS13}, who encountered a similar phenomenon in the explicit computation of certain local Rankin--Selberg integrals, we shall refer to this property as the ``(geometric) local Riemann hypothesis''.
\label{LocRH}
\end{remark}

\begin{remark}
	There is a parallel story for the upper half space $\ag{H}_3$ and $\Gamma = \PSL_2(\Z[i])$, starting from Sarnak \cite{Sar83}, Koyama \cite{Koy01} and continued with the recent work of Balkanova--Chatzakos--Cherubini--Frolenkov--Laaksonen \cite{BCCFL18} and Balkanova--Frolenkov \cite{BF18}. The approach of Balkanova--Frolenkov \cite{BF18} is the closest to that of Soundararajan--Young. In fact, although they did not mention the (generalization of) Kuznetsov-Bykovskii's formula in their paper, they did relate the relevant mean value of the symmetric $L$-functions to the analogue of $L(s,\delta)$, see \cite[(3.18) \& Lemma 4.6]{BF18}. The equivalence of these two methods fit into the framework of the \emph{Rankin--Selberg trace formula}, initiated and explained in \cite{Wu9}.
\end{remark}

	\subsection{Main Results}
	
	In this paper, we simplify and generalize the proof of (\ref{ByF}). We shall work over a number field $\F$ which is
\begin{itemize}
	\item either $\Q$, in which case we denote by $\ag{H}_{\F} = \ag{H}$ the usual upper half plane;
	\item or an imaginary quadratic field, in which case we denote by $\ag{H}_{\F} = \ag{H}_3$ the upper half space.
\end{itemize}
In either case we denote by  $\vo$ the ring of integers in $F$ and by $\Cl(\F)$ the ideal class group.
	For lattices, we will take $\Gamma < \PSL_2(\vo)$ to be
\begin{itemize}
	\item either a principal congruence subgroup of level $\idlN$, i.e., those $\gamma < \PSL_2(\vo)$ which has a representative in $\SL_2(\vo)$ congruent to the identity matrix modulo the integral ideal $\idlN \subseteq \vo$;
	\item or a Hecke subgroup of level $\idlN$, i.e., those $\gamma < \PSL_2(\vo)$ which has a representative in $\SL_2(\vo)$ congruent to an upper triangular matrix modulo $\idlN$.
\end{itemize}
	Rigorously speaking, there are issues from geometry:
\begin{itemize}
	\item[(1)] If $\Gamma$ is not torsion-free, the quotient space $\Gamma \backslash \ag{H}_{\F}$ is not a Riemannian manifold but an orbifold. Therefore a priori the notion of ``closed geodesics'' is not defined.
	\item[(2)] Even with an adequate definition of closed geodesics in the case of $\ag{H}_{\F} = \ag{H}_3$, the bijective correspondence between the set of closed geodesics and the conjugacy classes in $\Gamma$ fails in general. However, this failure is only ``up to a finite number''. In particular, primitive hyperbolic conjugacy classes cannot be defined as $[ \gamma_0 ]$ for which there exists no other class $[\gamma]$ such that $[\gamma_0] = [\gamma^k]$ for some integer $\norm[k] \geq 2$. This is clear from the shape of the (invariant) Selberg trace formula (see \cite[Theorem 2.2]{BCCFL18}), but still needs a geometric clarification.
\end{itemize}
	Both issues will be carefully looked at in Section 2. In particular, we will replace the notion of a conjugacy class in $\Gamma$ by ``root conjugacy class'' but keep the same notation, which remedy the bijective correspondence. We will also introduce $\Reis(\gamma) \in 2\Z_{>0}$ in Remark \ref{Length} for such classes, so that our counting function becomes (drop the subscript $\Gamma$ for simplicity, or regard it as root-conjugacy class)
	$$ \Psi_{\Gamma}(x) = \sideset{}{_{N([\gamma]) \leq x}} \sum \frac{\Lambda([\gamma])}{\Reis(\gamma)}. $$

\begin{theorem}
(1) There exists $L_{\Gamma}(s,\delta)$ such that
	$$ \Psi_{\Gamma}(x) = \sideset{}{_n} \sum \extnorm{d_{n^2-4}}_{\infty}^{1/2} \cdot L_{\Gamma}(1, n^2-4), $$
where $n$ runs over the set of $\Tr(\gamma)$ for hyperbolic $\gamma \in \Gamma$ (considered as a subset of $\vo$) such that
	$$ \max \left\{ \extnorm{\frac{n+\sqrt{n^2-4}}{2}}_{\infty}, \extnorm{\frac{n-\sqrt{n^2-4}}{2}}_{\infty} \right\} \leq x; $$
and $d_{n^2-4} \in \vo$ is the fundamental discriminant of the quadratic extension $\F[\sqrt{n^2-4}] / \F$.

\noindent (2) The $L$-series $L_{\Gamma}(s,\delta)$ has the following factorization
	$$ L(s,\delta) = \varepsilon_{\infty} \cdot \norm[\Cl(\F)] \cdot [\PSL_2(\vo) : \Gamma] \cdot P_{\Gamma}(s) \cdot L(s,\chi_{d_{\delta}}), $$
where $\chi_{d_{\delta}}$ is the quadratic character associated with $\F[\sqrt{\delta}]/\F$, $\varepsilon_{\infty} = 1$ or $(2\pi)^{-1}$ according as $\F_{\infty} = \R$ or $\C$, and $P_{\Gamma}(s)$ is a product of polynomials $P_{\Gamma,\vp}(s)$ in $\Nr(\vp)^s, \Nr(\vp)^{-s}$ for $\vp \mid (\delta/d_{\delta})$ satisfying
	$$ P_{\Gamma,\vp}(s) = P_{\Gamma,\vp}(1-s). $$

\noindent (3) Moreover, $P_{\Gamma,\vp}(s)$ satisfies the local Riemann hypothesis if $\Gamma$ is a principal congruence subgroup.
\label{MainPS}
\end{theorem}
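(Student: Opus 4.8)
The plan is to reduce the whole theorem to the classical dictionary relating hyperbolic conjugacy classes to ideals in quadratic orders, carried out uniformly over the base field $\F$ and the congruence subgroup $\Gamma$, and then to extract $L_\Gamma(s,\delta)$ from the analytic class number formula. \emph{Part (1).} Given a hyperbolic $\gamma\in\Gamma$, set $n=\Tr(\gamma)\in\vo$ and $\delta=n^2-4$; a lift of $\gamma$ generates an $\vo$-order $\vo[\gamma]\cong\vo[X]/(X^2-nX+1)=:\vo_\delta$ inside $K_\delta:=\F[\sqrt\delta]$, which is a quadratic field extension of $\F$ split at every archimedean place (because $\gamma$ is hyperbolic), with relative discriminant $d_\delta$. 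First I would set up — generalizing the Gauss--Latimer--MacDuffee--Zagier correspondence to an arbitrary base field (hence to an ideal-theoretic rather than form-theoretic statement, needed once $\norm[\Cl(\F)]>1$) and to a congruence subgroup — a bijection between the $\Gamma$-root-conjugacy classes of hyperbolic elements of trace $n$ and the set of pairs (a proper $\vo_\delta$-ideal class, a congruence datum $x\bmod 2\idlJ$ with $x^2\equiv\delta\pmod{4\idlJ}$ encoding membership in $\Gamma$); the invariant $\Reis(\gamma)$ of Remark~\ref{Length} is what makes this bijective in the $\ag{H}_3$ case. Next, summing $\Lambda([\gamma])/\Reis(\gamma)$ over the classes of a fixed trace collapses the powers of the fundamental totally positive unit of $\vo_\delta$ and yields the product of the proper class number and regulator of $\vo_\delta$ (up to the above normalizations); feeding this into the analytic class number formula for $K_\delta$ rewrites it as $\norm[d_\delta]_\infty^{1/2}\cdot L_\Gamma(1,\delta)$, where one \emph{defines}
\[
  L_\Gamma(s,\delta):=\frac{\zeta_\F(2s)}{\zeta_\F(s)}\sum_{\idlJ\subseteq\vo}\frac{\rho^\Gamma_\idlJ(\delta)}{\Nr(\idlJ)^{s}},
\]
with $\rho^\Gamma_\idlJ(\delta)$ counting the admissible $x\bmod 2\idlJ$ with $x^2\equiv\delta\pmod{4\idlJ}$ (for $\F=\Q$, $\Gamma=\PSL_2(\vo)$ this is Siegel--Zagier's $\rho_\idlJ(\delta)$). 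The series converges for $\Re(s)>1$, is a finite combination of ratios of Hecke $L$-functions, hence continues meromorphically with $L_\Gamma(1,\delta)$ finite; and the constraint $\max\{\norm[(n\pm\sqrt\delta)/2]_\infty\}\le x$ is exactly the length condition on the associated geodesic. Summing over $n$ gives the formula for $\Psi_\Gamma(x)$.

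\emph{Part (2).} Write $\delta=d_\delta\mathfrak{f}^2$ with $\mathfrak{f}\subseteq\vo$ the conductor of $\vo_\delta$ in the maximal order of $K_\delta$. The Dirichlet series $\sum_\idlJ\rho^\Gamma_\idlJ(\delta)\Nr(\idlJ)^{-s}$ has an Euler product; at any $\vp\nmid\mathfrak{f}$, multiplication by $\zeta_\F(2s)/\zeta_\F(s)$ turns its local factor into the $\vp$-Euler factor of $L(s,\chi_{d_\delta})$, while the archimedean part of the class number formula, the ambiguity by $\Cl(\F)$ in the parametrization of Part~(1), and the index of $\Gamma$ together contribute the global constant $\varepsilon_\infty\cdot\norm[\Cl(\F)]\cdot[\PSL_2(\vo):\Gamma]$ (with $\varepsilon_\infty=(2\pi)^{-1}$ when $\F_\infty=\C$, coming from comparing the residue and regulator of the quartic field $K_\delta$ with those of the imaginary-quadratic field $\F$). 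What remains at each of the finitely many $\vp\mid\mathfrak{f}$ is a polynomial $P_{\Gamma,\vp}(s)$ in $\Nr(\vp)^{s},\Nr(\vp)^{-s}$, so $L_\Gamma(s,\delta)=\varepsilon_\infty\norm[\Cl(\F)][\PSL_2(\vo):\Gamma]\,P_\Gamma(s)L(s,\chi_{d_\delta})$ with $P_\Gamma(s)=\prod_{\vp\mid\mathfrak{f}}P_{\Gamma,\vp}(s)$. The self-duality $P_{\Gamma,\vp}(s)=P_{\Gamma,\vp}(1-s)$ then follows either from the reciprocity $\rho^\Gamma_{\vp^k}(\delta)\leftrightarrow\rho^\Gamma_{\vp^{2v_\vp(\mathfrak{f})-k}}(\delta)$ visible in the explicit local count, or by comparing the (place-by-place) functional equation of $L_\Gamma(s,\delta)$ — obtained in the usual way by Poisson summation on the relevant $\vo$-lattice, or quoted from the generalized Siegel--Zagier theory — with that of $L(s,\chi_{d_\delta})$, the $\varepsilon$-factors and conductors at $\vp\nmid\mathfrak{f}$ matching so that each remaining local factor is individually self-dual.

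\emph{Part (3).} For $\Gamma$ the principal congruence subgroup of level $\idlN$ one computes $P_{\Gamma,\vp}(s)$ explicitly: fix $\vp\mid\mathfrak{f}$, put $q=\Nr(\vp)$ and $a=v_\vp(\mathfrak{f})$, evaluate $\rho^\Gamma_{\vp^k}(\delta)$ by Hensel's lemma, counting the solutions of $x^2\equiv\delta\pmod{\vp^k}$ lying in the residue classes modulo $\idlN$ prescribed by $\Gamma$; summing the resulting geometric progressions in $q^{-s}$ and clearing denominators presents $P_{\Gamma,\vp}(s)$, up to a monomial in $q^{-s}$, as a polynomial in the single variable $X:=q^{s-1/2}+q^{1/2-s}$. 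The local Riemann hypothesis of Remark~\ref{LocRH} is then precisely the assertion that every root of this polynomial in $X$ is real and lies in $[-2,2]$ (on $\Re(s)=1/2$ one has $X\in[-2,2]$, and conversely), which one reads off from the explicit shape, e.g. by exhibiting it as a product of linear factors $X-c$ with $c\in[-2,2]$. The reason this works for a \emph{principal} congruence subgroup is that its defining congruence conditions are symmetric under $x\mapsto-x$ and under scaling by units, which is exactly what forces the local generating function into such a product; the symmetry fails for general Hecke subgroups, so one cannot expect (3) in that generality.

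\emph{Main obstacle.} The substantive work lies in Parts~(1)--(2): making the Latimer--MacDuffee-type parametrization precise and genuinely uniform over a number field with nontrivial class group and over both the $\ag{H}$ and $\ag{H}_3$ geometries (which is why root-conjugacy classes and the invariant $\Reis$ are needed), and pinning down every archimedean normalization so that the class number formula produces exactly $\varepsilon_\infty\,\norm[\Cl(\F)]\,[\PSL_2(\vo):\Gamma]$ and no stray constant — in particular the elusive $(2\pi)^{-1}$ in the imaginary-quadratic case. Part~(3) is then a finite but fiddly local computation whose one real pitfall is a stray factor $1-\alpha q^{-s}$ with $\norm[\alpha]\neq q^{1/2}$ (equivalently a root $X\notin[-2,2]$) — exactly the phenomenon that does occur for non-principal congruence subgroups.
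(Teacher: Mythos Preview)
Your approach is correct in outline but takes a genuinely different route from the paper's. You proceed \emph{arithmetically}: set up a Latimer--MacDuffee-type bijection between $\Gamma$-root-conjugacy classes of trace $n$ and (ideal class, congruence datum) pairs in the order $\vo_\delta$, then invoke the analytic class number formula so that $\sum_{[\gamma]:\Tr\gamma=n}\Lambda([\gamma])/\Reis(\gamma)$ becomes $h(\vo_\delta)R(\vo_\delta)$, hence $\norm[d_\delta]_\infty^{1/2}L_\Gamma(1,\delta)$. The paper instead proceeds \emph{analytically}: it forms the Rankin--Selberg integral $I(s;t,f)=\int K(x;f,\{\gamma\})\eis(s,x)\,dx$, adelizes it, passes from $\SL_2$ to $\GL_2$ by Fourier inversion over $\I_\F/\I_\F^2$, and decomposes into local integrals $\tilde I_\vp(s;t,f_\vp)$ that it computes via explicit ``Rankin--Selberg weights'' (Proposition~\ref{ExpWts}); the polynomials $P_{\Gamma,\vp}(s)$ emerge from these local computations, and the global formula comes from equating $\Res_{s=1/2}I(s;t,f)$ computed spectrally (via Corollary~\ref{ResEis}) with its expression via the orbital integral $\vO(t,f_\infty)$ (Lemma~\ref{ZagTransAt1}). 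No class number formula and no Latimer--MacDuffee bijection is used; the Dirichlet $L$-function $L(s,\chi_{d_\delta})$ appears as $\zeta_{\E}(s)/\zeta_\F(s)$ from the local Euler factors.

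What each buys: your route is closer to Sarnak's and Bykovskii's original arguments and requires no adelic machinery, but --- as you yourself flag under ``Main obstacle'', and as the paper states explicitly at the end of \S3.3 --- making the ideal-theoretic parametrization uniform over congruence subgroups and over base fields with $\norm[\Cl(\F)]>1$ is delicate, and tracking the archimedean constants through the class number formula (especially the $(2\pi)^{-1}$ for $\F_\infty=\C$) is exactly where mistakes happen. The paper's Rankin--Selberg route sidesteps both issues: the local decomposition is automatic once one is on $\GL_2(\A)$, the constants are fixed by Tamagawa measures (Lemma~\ref{QTamM}) and the residue of the Eisenstein series, and the method slots directly into the Rankin--Selberg trace-formula framework the authors want for later applications (beyond endoscopy, Eichler--Selberg). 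Your Part~(3) reasoning --- that for principal $\Gamma$ the local polynomial is a polynomial in $X=q^{s-1/2}+q^{1/2-s}$ with real roots in $[-2,2]$ --- is exactly how the paper's formula (\ref{PSLocPoly}) behaves, and the paper likewise appeals to \cite[Lemma~2.1]{SY13} at that point.
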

\begin{remark}
	The precise form of the polynomials $P_{\Gamma,\vp}(s)$ will be given in (\ref{PSLocPoly}) for the case of principal congruence subgroups, in (\ref{PSLocPolyUnr1}), (\ref{PSLocPolyUnr2}), (\ref{PSLocPolyR1}), (\ref{PSLocPolyR2}), (\ref{PSLocPolyS1}) and (\ref{PSLocPolyS2}) for the case of Hecke subgroups. In particular, $P_{\Gamma,\vp}(s)$ has the same shape for the principal congruence subgroups, with a ``shift'' determined by the level of $\Gamma$.
\end{remark}
\begin{remark}
	We have restricted to principal and Hecke subgroups for explicit computation, but our method is applicable to any congruence subgroup. However, the analogue of $\Psi_{\Gamma}$ for number fields is unclear to us. This paper should also be viewed as a refinement of the explicit computation of the elliptic terms in the geometric side of the Rankin--Selberg trace formula in our previous paper \cite{Wu9}. Hence although for $\eta \neq 1$, the terms $I_{\eta}(s;t,f)$, which will be defined in (\ref{GL2Form}), do not contribute to the final formula, we have included their computation in detail.  
\end{remark}
\begin{remark}
	Section \S 3.3 contains the beginning of a non-adelic treatment, which leads directly to the Dirichlet series representation of $L(s,\delta)$ by Rankin--Selberg unfolding.
\end{remark}
\begin{remark}
	Although we have written this paper as a preparation for the prime geodesic theorems, the method of computation is potentially useful for the beyond endoscopy proposal. For example, our method should give a simpler proof of the Eichler-Selberg formula as treated in Rudnick's thesis. All these will come in a later paper.
\end{remark}

As an application, we deduce a prime geodesic theorem for
principal congruence subgroups of $\SL_2(\Z)$ and $\SL_2(\Z[i])$.
Like in \cite{SY13} and \cite{BBCL20},
our results are expressed in terms of the subconvexity exponent~$\theta$
for quadratic Dirichlet $L$-functions over $\mathbf{F}$,
that is, a number such that
\begin{equation}\label{def:theta}
L(1/2+it,\chi_D) \ll (1+|t|)^A |D|_{\infty}^{\theta+\epsilon},
\end{equation}
where $\chi_D$ is any quadratic character over $\mathbf{F}$
and $A$ is some fixed constant.
The convexity bound is $\theta=1/4$ and
if $\mathbf{F}=\Q$ one can take $\theta=1/6$
by the work of Conrey and Iwaniec \cite{CI00}.
More recently, Nelson \cite{Ne20}
has announced the same exponent over number fields.

\begin{theorem}
Let $\Gamma$ be a principal congruence subgroup of $\mathrm{SL}_2(\Z[i])$
and let $\epsilon>0$. Then
\begin{equation}\label{eq:mainpgt1}
\Psi_\Gamma(X) = \frac{1}{2}X^2
+ O(X^{\frac{3}{2}+\frac{2\theta}{3}+\epsilon}),
\end{equation}
where $\theta$ is as in \eqref{def:theta}.
\label{MainPGT}
\end{theorem}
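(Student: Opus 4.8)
The plan is to feed the exact Kuznetsov--Bykovskii identity of Theorem~\ref{MainPS} into the analytic machinery of Soundararajan--Young \cite{SY13} (compare \cite{BF18}, \cite{BBCL20}), carried out over the complex place. Since $\F=\Q(i)$ has $\norm[\Cl(\F)]=1$ and $\varepsilon_\infty=(2\pi)^{-1}$, parts~(1) and~(2) of Theorem~\ref{MainPS} give, for a principal congruence subgroup $\Gamma$ of level $\idlN$,
\[
\Psi_\Gamma(X)=\frac{[\PSL_2(\vo):\Gamma]}{2\pi}\sum_{n\in\mathcal R(X)}\extnorm{n^2-4}_\infty^{1/2}\,\widetilde P_\Gamma(n^2-4)\,L(1,\chi_{d_{n^2-4}}),
\]
where $\mathcal R(X)$ is the set of $n=\Tr(\gamma)$ with $\gamma\in\Gamma$ hyperbolic and $\max(\extnorm{(n+\sqrt{n^2-4})/2}_\infty,\extnorm{(n-\sqrt{n^2-4})/2}_\infty)\le X$ --- a union of finitely many residue classes modulo $\idlN$ inside a perturbed disc of radius $\asymp X^{1/2}$ in $\C$, hence $\asymp_\Gamma X$ lattice points --- and $\widetilde P_\Gamma(\delta):=\extnorm{\delta/d_\delta}_\infty^{-1/2}P_\Gamma(1;\delta)$ is, by the explicit shape of the $P_{\Gamma,\vp}$ from Theorem~\ref{MainPS}, a bounded non-negative divisor-type function of $\delta$. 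Thus everything reduces to evaluating, with a power-saving error, the arithmetic sum $S_\Gamma(X):=\sum_{n\in\mathcal R(X)}\extnorm{n^2-4}_\infty^{1/2}\widetilde P_\Gamma(n^2-4)L(1,\chi_{d_{n^2-4}})$.

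\emph{Extracting the main term.} Using that $\Psi_\Gamma$ is non-decreasing, I would first smooth the archimedean cut-off defining $\mathcal R(X)$ over a window of length $\Delta$, at the cost of an error $\ll_\Gamma X^{1+\epsilon}\Delta$ from the $\asymp\Delta$ transition lattice points, each of weight $\asymp X$. Next I would expand $\widetilde P_\Gamma(n^2-4)L(1,\chi_{d_{n^2-4}})$ --- which is a Dirichlet series $\sum_{\mathfrak m}b_\Gamma(\mathfrak m,n^2-4)\Nr(\mathfrak m)^{-1}$ whose coefficients depend only on $n$ modulo $2\mathfrak m$, namely the non-adelic Rankin--Selberg series of \S 3.3 --- by its approximate functional equation; this is legitimate because the functional equations of $L(s,\chi_{d_\delta})$ and of $P_{\Gamma,\vp}(s)=P_{\Gamma,\vp}(1-s)$ combine into one for the full object, whose analytic conductor is $\asymp\extnorm{n^2-4}_\infty\ll X^2$, so that the $\mathfrak m$-sum truncates at $\Nr(\mathfrak m)\ll X^{1+\epsilon}$. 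Interchanging the $n$- and $\mathfrak m$-summations and applying Poisson summation to the smoothed $n$-sum over $\vo\subset\C$, the zero frequency contributes $\Nr(2\mathfrak m)^{-1}(\int\extnorm{t^2-4}_\infty^{1/2}\,dt)\sum_{c\bmod 2\mathfrak m}b_\Gamma(\mathfrak m,c^2-4)$; reassembling these over $\mathfrak m$ --- convergence being supplied by the $\zeta_\F(2s)/\zeta_\F(s)$-type cancellation built into $b_\Gamma$ --- recovers the archimedean volume integral $\asymp X^2$, and (Theorem~\ref{MainPS} being an identity) the constants collapse to the main term $\tfrac12X^2$. No secondary $X^{1+r}$-term survives the intended error bound, the Kim--Sarnak-type bounds towards Selberg's eigenvalue conjecture for congruence subgroups of $\SL_2(\Z[i])$ placing the exceptional spectral parameters well below $\tfrac12$.

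\emph{The error term and the rôle of $\theta$.} The non-zero Poisson frequencies yield, via the Weil bound $\sum_{c\bmod 2\mathfrak m}b_\Gamma(\mathfrak m,c^2-4)e(hc/2\mathfrak m)\ll\Nr(\mathfrak m)^{1/2+\epsilon}$ on the resulting Salié-type sums together with the decay of the Fourier transform at the complex place, a contribution that stays comfortably below the target error. The decisive term is the dual half of the approximate functional equation: after its own functional equation it collapses to short smoothed averages --- over $n\in\mathcal R(X)$, against oscillating weights --- of the central values $L(1/2,\chi_{d_{n^2-4}})$, and it is precisely here that the subconvexity bound (\ref{def:theta}) enters, each such value being $\ll(1+|t|)^A\extnorm{d_{n^2-4}}_\infty^{\theta+\epsilon}$. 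Balancing the lengths of the smoothing window and of the approximate-functional-equation truncation against the saving afforded by (\ref{def:theta}) then produces the error exponent $\tfrac32+\tfrac{2\theta}{3}$.

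\emph{The main obstacle.} The crux is the treatment of this dual sum: arranging that it genuinely reduces to a \emph{short} central-value average over the thin, shifted family $\{d_{n^2-4}\}$ --- short in both conductor ($\ll X$) and length ($\ll X^{1/2}$), which is exactly what is needed for (\ref{def:theta}) over $\F$ to give a power saving --- while carrying out the Poisson summation and stationary-phase estimates uniformly over the two-dimensional complex place, and dragging along the level-$\idlN$ factors $\widetilde P_\Gamma$; this last bookkeeping is laborious but does not affect the final exponent.
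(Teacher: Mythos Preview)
Your plan has a genuine gap: you are trying to extract both the main term and the error term \emph{entirely} from the arithmetic (Kuznetsov--Bykovskii) side, whereas the Soundararajan--Young scheme used in the paper is a \emph{hybrid} method that requires a second, independent input from the explicit formula for the Selberg zeta function. Concretely, the paper writes
\[
\Psi_\Gamma(X)=\Psi_\Gamma(X,k)-\int_Y^{2Y}\bigl(\Psi_\Gamma(X+u)-\Psi_\Gamma(X)\bigr)k(u)\,du,
\]
evaluates $\Psi_\Gamma(X,k)$ by Perron's formula and a contour shift for $Z_\Gamma'/Z_\Gamma$ (this is where the spectral sum $\sum_{r_j}X^{1+ir_j}/(1+ir_j)$ appears and is controlled by the Weyl-law bound $\sum_{r_j\le T}X^{ir_j}\ll T^3$, contributing $O(X^{3+\epsilon}/Y^2)$), and only then uses the Bykovskii identity on the \emph{short} difference $\Psi_\Gamma(X+u)-\Psi_\Gamma(X)$, where the dual term of the approximate functional equation costs $O(uX^{1+2\theta+\epsilon}V^{-1/2})$ with $u\asymp Y$. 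Balancing the spectral error against the short-interval arithmetic error is precisely what yields $\tfrac32+\tfrac{2\theta}{3}$.

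If you run your purely arithmetic argument on the full range, the dual term involves all $\asymp X$ lattice points $n$ (not $X^{1/2}$ as you write in the last paragraph; that would be the $\Q$-count), each carrying weight $\Nr(n)\asymp X$, so it contributes $O(X^{2+2\theta+\epsilon}V^{-1/2})$ rather than $O(YX^{1+2\theta+\epsilon}V^{-1/2})$. Balancing this against the Poisson-summation error $O(X^{4/3+\epsilon}V^{1/3})$ coming from the nonzero frequencies gives only $O(X^{8/5+4\theta/5+\epsilon})$, strictly worse than the claimed exponent for every $\theta\ge 0$. Your smoothing parameter $\Delta$ cannot rescue this: it only adds the further error $X^{1+\epsilon}\Delta$ without shrinking the dual sum. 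The passing remark about Kim--Sarnak and exceptional eigenvalues is a symptom of the same confusion---those terms live on the spectral side of the explicit formula, which your outline never invokes.
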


Notice that \eqref{eq:mainpgt1} corresponds to the bound obtained in \cite[Remark 4]{BBCL20}.
By inserting the convexity exponent $\theta=1/4$, one recovers the classical exponent $5/3$
due to Sarnak \cite[Theorem 5.1]{Sar83}.
Therefore, any subconvexity exponent gives a non-trivial result.
One could reduce further the error in \eqref{eq:mainpgt1} if one had
better bounds for a spectral exponential sum featuring in the proof (see Section~\ref{S5}),
which are available for $\mathrm{SL}_2(\Z[i])$, see \cite{BCCFL18,BF20},
but not for general congruence groups.

We give a proof of Theorem \ref{MainPGT} in Section~\ref{S5.2}
and at the end of that section we sketch the case when $\Gamma$
is a congruence subgroup of $\mathrm{SL}_2(\Z)$,
which leads to the following generalization of \cite[Theorem 1.1]{SY13}.
\begin{theorem}\label{MainPGT2}
Let $\Gamma$ be a principal congruence subgroup of $\mathrm{SL}_2(\Z)$
and let $\epsilon>0$. Then we have
\begin{equation}\label{eq:mainpgt2}
\Psi_\Gamma(X) = X
+ O(X^{\frac{2}{3}+\frac{\theta}{6}+\epsilon}),
\end{equation}
with $\theta$ is as in \eqref{def:theta}.
\end{theorem}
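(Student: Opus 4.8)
The plan is to specialize Theorem~\ref{MainPS} to $\F = \Q$ and then run the method of Soundararajan--Young~\cite{SY13}, tracking the dependence on the level $\idlN$. For $\F = \Q$ one has $\varepsilon_\infty = 1$ and $\norm[\Cl(\Q)] = 1$; the unique archimedean place is real, so for $\norm[n] \ge 3$ the field $\Q[\sqrt{n^2-4}]$ is real quadratic with $d_{n^2-4} > 0$, and — fixing in each hyperbolic class a lift in $\SL_2(\Z)$ congruent to $I$ modulo $\idlN$ — the trace $n$ ranges over the residue classes $\pm 2 \pmod{\idlN}$. Writing $\widetilde{X} := \sqrt{X} + 1/\sqrt{X}$ as in~\eqref{ByF} and combining parts (1) and (2) of Theorem~\ref{MainPS}, we obtain the level-$\idlN$ analogue of~\eqref{ByF}:
\begin{equation}\label{eq:BykGenQ}
\Psi_\Gamma(X) \;=\; [\PSL_2(\Z):\Gamma] \sum_{\substack{n \equiv \pm 2\ (\idlN)\\ 3 \le \norm[n] \le \widetilde{X}}} \sqrt{n^2-4}\,\cdot\, P_{\Gamma}(1) \,\cdot\, L\bigl(1, \chi_{d_{n^2-4}}\bigr),
\end{equation}
where $P_\Gamma(1) = \prod_{\vp \mid (n^2-4)/d_{n^2-4}} P_{\Gamma,\vp}(1)$ is the value at $s = 1$ of the polynomial factor of Theorem~\ref{MainPS}(2), given explicitly by~\eqref{PSLocPoly}; it is a non-negative weight, multiplicative over the primes dividing $\ell$ (where $n^2-4 = d_{n^2-4}\ell^2$), with $P_\Gamma(1) \ll_{\idlN,\epsilon} (n^2-4)^{\epsilon}$. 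For $\Gamma = \PSL_2(\Z)$ this is precisely~\eqref{ByF}.

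The leading term $\Psi_\Gamma(X) = X + o(X)$ is classical: it is the identity contribution $\tfrac{\mathrm{Area}(\Gamma\backslash\B)}{4\pi}\int_{\R} h(r)\, r\tanh(\pi r)\,dr$ in the Selberg trace formula~\eqref{STFCI}, which reflects the simple zero of $Z_\Gamma(s)$ at $s = 1$ and so is independent of $\Gamma$, while the finitely many exceptional eigenvalues contribute at most $X^{1/2 + 7/64} = X^{39/64}$ by the Kim--Sarnak bound, which is negligible. It remains to bound $\Psi_\Gamma(X) - X$, and a standard argument — smooth $\Psi_\Gamma$ at a scale $U$, apply Perron summation, and peel off the de-smoothing error $\sup_{0 \le V \le U}\extnorm{\Psi_\Gamma(X+V) - \Psi_\Gamma(X)}$ — reduces this to combining: (i) a short-interval estimate for $\Psi_\Gamma(X+V) - \Psi_\Gamma(X)$ with $0 \le V \le U$, obtained from~\eqref{eq:BykGenQ}; and (ii) the bound for the smoothed error $\tfrac{1}{U}\int_X^{X+U}(\Psi_\Gamma(t) - t)\,dt$ coming from the spectral side.

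Step (i) is a routine adaptation of~\cite{SY13}. Substituting the approximate functional equation for $L\bigl(1,\chi_{d_{n^2-4}}\bigr)$ — equivalently for $\Lambda(s,\delta)$, cf. the Remark after~\eqref{ByF} — into the corresponding short sum over $n$ (a range of length $\asymp V/\sqrt{X}$ just above $\widetilde{X}$), one obtains a main term equal to $V$, as forced by the classical prime geodesic theorem, together with an error; bounding the latter requires, for each modulus $k$, cancellation in a character sum of the shape $\sum_n \Legendre{n^2-4}{k}$ over $n$ in the short range and the progressions $\pm 2 \pmod{\idlN}$, and — after splitting $n^2-4 = D\ell^2$ with $D$ fundamental and opening $L(\tfrac12,\chi_D)$ through its functional equation — the subconvexity bound $L(\tfrac12 + it, \chi_D) \ll (1 + \norm[t])^A \norm[D]_\infty^{\theta + \epsilon}$ of~\eqref{def:theta}. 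The congruence condition on $n$ merely splits the relevant sums into $O(\Nr(\idlN)^{O(1)})$ pieces, each handled as in~\cite{SY13}, and the weight $P_\Gamma(1) \ll (n^2-4)^\epsilon$ is absorbed; this produces the level-$\idlN$ analogue of~\cite[Theorem~3.2]{SY13}, an estimate $\Psi_\Gamma(X+V) - \Psi_\Gamma(X) = V + O_{\idlN,\epsilon}\bigl(X^\epsilon(\cdots)\bigr)$ whose error depends on $\theta$. For step (ii), the Selberg trace formula for $\Gamma$ with a suitable smooth test function expresses $\tfrac{1}{U}\int_X^{X+U}(\Psi_\Gamma(t) - t)\,dt$, up to the negligible exceptional eigenvalues, through a spectral exponential sum $\sum_{\norm[r_j] \le T} X^{ir_j}$, for which one inserts the bound available for congruence subgroups of $\SL_2(\Z)$.

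Combining (i) and (ii) as in~\cite{SY13} — the smoothing of width $U$ damps the spectral exponential sum while (i) bounds the de-smoothing error — and optimizing over $U$ and the truncation $T$ yields $\Psi_\Gamma(X) = X + O_{\idlN,\epsilon}\bigl(X^{2/3 + \theta/6 + \epsilon}\bigr)$, which is~\eqref{eq:mainpgt2}; for $\Gamma = \PSL_2(\Z)$ this recovers~\cite[Theorem~1.1]{SY13}. The main obstacle is step (i): extracting square-root cancellation from the character sums in $n$ after the approximate functional equation and inserting $\theta$ in the optimal range — essentially all the analytic difficulty of~\cite{SY13} lies here, now carried out with the (routine but bookkeeping-heavy) extra features that $n$ is confined to $\pm 2 \pmod{\idlN}$ and that the summands carry the local weights $P_{\Gamma,\vp}(1)$, with all implied constants depending at most polynomially on $\Nr(\idlN)$. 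A secondary point, as noted in the remark after Theorem~\ref{MainPGT}, is that for a general congruence subgroup one has only the weaker spectral exponential sum bound of step (ii), which is what prevents a smaller exponent.
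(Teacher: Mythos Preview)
Your overall architecture --- pair a short-interval estimate derived from the Kuznetsov--Bykovskii formula with a spectral expansion and the Luo--Sarnak bound for $\sum_{r_j\le T} X^{ir_j}$, then optimise --- matches the paper's. But two points deserve correction or comment.

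First, your identification of the trace set is wrong. For $\gamma=\begin{pmatrix}1+Na & Nb\\ Nc & 1+Nd\end{pmatrix}\in\Gamma(\idlN)$ the determinant condition forces $a+d=N(bc-ad)$, so $\Tr(\gamma)=2+N(a+d)\in 2+\idlN^2$, not merely $2+\idlN$; conversely every element of $2+\idlN^2$ is realised. Thus the sum in your \eqref{eq:BykGenQ} must run over $n\equiv 2\pmod{N^2}$ (or $\pm 2\pmod{N^2}$ if you insist on both lifts), not over $n\equiv\pm 2\pmod{N}$. This is not cosmetic: the correct congruence is exactly what guarantees $N^2\mid n^2-4$ and hence that the local non-vanishing condition $l_{\vp}=\mathrm{ord}_{\vp}(J_t)-\mathrm{ord}_{\vp}(\idlN)\ge 0$ of Lemma~\ref{NonVanLoc} holds at every prime.

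Second, the paper does not carry $P_{\Gamma}(1)$ through the analysis as an auxiliary weight. Instead it observes (Corollary~\ref{PCSubgpBF}) that for principal congruence subgroups the shift $l_{\vp}\mapsto l_{\vp}-\mathrm{ord}_{\vp}(\idlN)$ in \eqref{PSLocPoly} gives the exact identity
\[
L_{\Gamma}(1,n^2-4)=[\PSL_2(\vo):\Gamma]\,L_{\Gamma_0}\!\left(1,\frac{n^2-4}{N^2}\right),\qquad n\in 2+\idlN^2,
\]
so that $\Psi_{\Gamma}$ becomes a sum of the \emph{full-level} Zagier $L$-values $\mathcal{L}(1,(n^2-4)/N^2)$ over a single arithmetic progression modulo $N^2$. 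The short-interval estimate then reduces literally to the $\SL_2(\Z)$ computation with argument $(n^2-4)/N^2$: Poisson summation in $n$ produces the exponential sums $S_q(k,N)$ of Lemma~\ref{AuxExpSumBd}, which are Kloosterman sums when $(q,N)=1$ and are handled by the $p$-adic stationary phase when $q\mid N^{\infty}$, yielding $\Psi_{\Gamma}(X+u)-\Psi_{\Gamma}(X)=\tfrac{u}{2}+O(u^{1/2}X^{1/4+\theta/2+\epsilon})$. Your plan of separating $P_{\Gamma}(1)$ from $L(1,\chi_D)$ and absorbing it as an $O((n^2-4)^{\epsilon})$ weight is in principle workable, but it obscures this reduction and would force you to track the dependence of $P_{\Gamma}(1)$ on the factorisation of $n^2-4$ through the character-sum step; the paper's route avoids that bookkeeping entirely.
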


The proof of Theorem \ref{MainPGT2} implicitly uses an
estimate for Rankin--Selberg $L$-functions attached to Maass forms
for $\Gamma$, which is essentially the same ingredient needed to improve
Theorem \ref{MainPGT}. Over $\Q$, such a bound was proved by Luo and Sarnak
\cite{LS95} in detail for the modular group and they mention in \cite[p.211]{LS95}
that the proof extends to congruence subgroups.
By analogy, we are inclined to believe that the exponent $3/2+2\theta/3+\epsilon$
in Theorem \ref{MainPGT} may be lowered to
\begin{equation}
\frac{3}{2}+\frac{32\theta^2+28\theta-1}{46+40\theta}+\epsilon,
\end{equation}
matching the currently best known result for $\mathrm{SL}_2(\Z[i])$,
see \cite{BF20}, where a Luo--Sarnak-type bound is proved with an additional
dependence on $\theta$.

As a more technical remark, we point out that in Theorem \ref{MainPS}
the function $\Psi_\Gamma$ counts geodesics without orientation,
which differs by a factor of two compared to the more common
definition used in e.g.~\cite{BCCFL18,BBCL20,BF20,Iw84,Koy01,LS95,Sar83,SY13}
(see also Remark \ref{1104:rmk}).
For consistency with the rest of the literature,
and by a slight abuse of notation,
Theorems \ref{MainPGT} and \ref{MainPGT2}
are stated for the non-oriented counting function.

	\subsection{Acknowledgement}
	
	H.Wu would like to thank the R\'enyi institute, EPFL, the IMS at NUS and QMUL for providing stimulating working conditions during the preparation of this paper, and the support of the Leverhulme Trust Research Project Grant RPG-2018-401. G.Z\'abr\'adi was supported by the MTA R\'enyi Int\'ezet Lend\"ulet Automorphic Research Group, by the J\'anos Bolyai Research Scholarship of the Hungarian Academy of Sciences, and by the NKFIH Research Grant FK-127906, and by Project ED 18-1-2019-0030 (Application-specific highly reliable IT solutions) under the Thematic Excellence Programme funding scheme.

\section{Geometric Preliminaries}

	\subsection{Closed Geodesics}

	Throughout this paper, $\F$ is either $\ag{Q}$ or a quadratic imaginary number field. The two cases are distinguished by $\F_{\infty} = \ag{R}$ or $\F_{\infty} = \ag{C}$. We write
	$$ \ag{H}_{\F} = \left\{ \begin{matrix} \ag{H}_2 & \text{if } \F_{\infty} = \ag{R} \\ \ag{H}_3 & \text{if } \F_{\infty} = \ag{C} \end{matrix}  \right. , \quad \ag{H}_{\F} \ni i_{\F} = \left\{ \begin{matrix} i & \text{if } \F_{\infty} = \ag{R} \\ j & \text{if } \F_{\infty} = \ag{C} \end{matrix} \right. , $$
	where $\ag{H}_2$ resp. $\ag{H}_3$ is the half upper plane resp. half upper space and $j=(0,0,1)$.

\noindent We work in the category of Riemannian manifolds with orientation. Recall that a \emph{geodesic flow with unit speed} or simply \emph{geodesic flow} is an orientation-preserving isometric embedding $\ell: \ag{R} \to \ag{H}_{\F}$ which satisfies the (second order) differential equation of geodesics. Here $\ag{R}$ is regarded as a one-dimensional Riemannian manifold with orientation, whose group of orientation-preseving isometries is
	$$ \Isom_+(\ag{R}) \simeq \ag{R}. $$
We define a \emph{geodesic curve with orientation} or simply \emph{geodesic} to be a class of geodesic flows
	$$ [\ell] := \left\{ \ell \circ \sigma \ \middle| \ \sigma \in \Isom_+(\ag{R}) \right\}. $$	
An element $g \in \Isom_+(\ag{H}_{\F}) \simeq \PSL_2(\F_{\infty})$ \emph{stabilizes} $[\ell]$ if for some $\sigma \in \Isom_+(\ag{R})$ and any $t\in \ag{R}$
	$$ g.\ell(t) = \ell(\sigma(t)). $$
Changing $\ell$ to another $\ell \circ \sigma'$ in the above equation will change $\sigma$ to the conjugate $\sigma' \sigma (\sigma')^{-1}$. Since $\Isom_+(\ag{R})$ is abelian, $\sigma$ does not change. The group of stabilizers of $[\ell]$ is denoted by $N_{[\ell]}$. Hence we get a well-defined homomorphism of (Lie) groups
	$$ \pi_{[\ell]}: N_{[\ell]} \to \Isom_+(\ag{R}), \quad g \mapsto \sigma. $$
There is a special geodesic flow
	$$ \varphi(t) = e^t i_{\F}. $$
	
\begin{lemma}
	The group of stabilizers $N_{[\varphi]}$ equals $\gp{A}_1(\F_{\infty})$, where
	$$ \gp{A}_1(\F_{\infty}) = \left\{ a(\lambda)=\begin{pmatrix} \lambda & \\ & \lambda^{-1} \end{pmatrix} \ \middle| \ \lambda \in \F_{\infty}^{\times} \right\} / \{ \pm 1 \}. $$
\label{StgpGeod}
\end{lemma}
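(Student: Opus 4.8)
The plan is to compute the stabilizer $N_{[\varphi]}$ of the distinguished geodesic $\varphi(t)=e^t i_{\F}$ directly from the definition. The key observation is that $N_{[\varphi]}$ consists of those $g\in\PSL_2(\F_\infty)$ for which there exists $\sigma\in\Isom_+(\R)$ with $g.\varphi(t)=\varphi(\sigma(t))$ for all $t$; since $\Isom_+(\R)\simeq\R$ acts by translations, $\sigma(t)=t+c$ for some $c\in\R$, so the condition reads $g.(e^t i_{\F})=e^{t+c}i_{\F}$ for all $t$. First I would record that $a(\lambda)$ with $\lambda\in\F_\infty^\times$ does stabilize $[\varphi]$: writing $\lambda=|\lambda|u$ with $|u|=1$, one has $a(\lambda).(e^t i_{\F})=|\lambda|^2 (u^2) e^t i_{\F}$; when $\F_\infty=\R$ we have $u^2=1$ and this is $e^{t+2\log|\lambda|}i_{\F}$, which lies on the curve; when $\F_\infty=\C$ the rotation $u^2$ acts trivially on $j=(0,0,1)$ in the Hamilton/quaternion model of $\B_3$, so again $a(\lambda).(e^t j)=|\lambda|^2 e^t j=e^{t+2\log|\lambda|}j$. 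Either way $a(\lambda)\in N_{[\varphi]}$, giving the inclusion $\gp{A}_1(\F_\infty)\subseteq N_{[\varphi]}$.

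For the reverse inclusion, suppose $g=\begin{pmatrix}\alpha&\beta\\\gamma&\delta\end{pmatrix}\in\SL_2(\F_\infty)$ satisfies $g.(e^t i_{\F})=e^{t+c}i_{\F}$ for all $t\in\R$. Letting $t\to-\infty$ the point $e^t i_{\F}$ tends to the boundary point $0$, and letting $t\to+\infty$ it tends to $\infty$; since $g$ is a homeomorphism of the closure it must fix the pair of endpoints $\{0,\infty\}$ of the geodesic setwise, and because the parametrization is orientation-preserving (the translation $\sigma$ preserves the orientation of $\R$) it must fix $0$ and $\infty$ individually. Fixing $\infty$ forces $\gamma=0$, and fixing $0$ forces $\beta=0$, so $g$ is diagonal, $g=a(\lambda)$ with $\lambda^2=\alpha/\delta$ and $\alpha\delta=1$, i.e. $\delta=\lambda^{-1}$, $\alpha=\lambda$. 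This is exactly an element of $\gp{A}_1(\F_\infty)$, completing the proof.

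The only mildly delicate point — and the one I would treat most carefully — is the claim that $g$ fixes each of $0$ and $\infty$ rather than swapping them. This uses orientation-preservation in an essential way: an element swapping the two endpoints (such as $w=\begin{pmatrix}0&1\\-1&0\end{pmatrix}$, which sends $e^t i_{\F}$ to $e^{-t}i_{\F}$ up to the action on the imaginary/quaternionic part) reverses the direction of travel along the geodesic, hence would require $\sigma(t)=-t+c$, which is orientation-reversing on $\R$ and therefore not an element of $\Isom_+(\R)$. In the $\F_\infty=\C$ case one must also be slightly careful that the relevant boundary is $\partial\B_3=\C\cup\{\infty\}$ and that the $\PSL_2(\C)$-action on it is by Möbius transformations, so the same endpoint analysis applies verbatim; once $g$ is diagonal, the computation that it acts on $j$ purely by the scaling $|\lambda|^2$ (the phase of $\lambda^2$ acting as a rotation fixing $j$) finishes the identification. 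No serious obstacle remains beyond this bookkeeping.
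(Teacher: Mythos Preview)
Your proof is correct and complete, but it takes a different route from the paper's. The paper argues as follows: if $g\in N_{[\varphi]}$, then $g.\varphi(t)=\varphi(t+t_0)$ for some $t_0$, hence $a(e^{-t_0/2})g$ fixes $\varphi(t)$ for all $t$; equivalently it fixes the point $(i_{\F},i_{\F})$ in the unit tangent bundle of $\ag{H}_{\F}$. The stabilizer of that point is then identified as $\{1\}$ (for $\F_\infty=\R$) or the compact diagonal torus $\gp{T}_1(\C)$ (for $\F_\infty=\C$), which immediately gives $g\in a(e^{t_0/2})\gp{T}_1(\F_\infty)\subseteq\gp{A}_1(\F_\infty)$. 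Your argument instead passes to the boundary $\partial\ag{H}_{\F}$, using that $g$ must fix the endpoints $0,\infty$ of the geodesic individually (the orientation constraint ruling out a swap), and concludes that $g$ is diagonal. Both approaches are standard; yours is arguably more elementary in that it avoids the tangent bundle, at the cost of needing the explicit orientation discussion you supply, while the paper's approach makes the structure $N_{[\varphi]}=a(\R_{>0})\cdot\gp{T}_1(\F_\infty)$ more transparent and foreshadows the role of $\gp{T}_1$ later in the section.
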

\begin{proof}
	Let $g \in N_{[\varphi]}$. This is equivalent to the existence of $t_0 \in \ag{R}$ such that
	$$ g.\varphi(t) = \varphi(t+t_0) \quad \Longleftrightarrow \quad a(e^{-t_0/2})g.\varphi(t) = \varphi(t). $$
	The last equation is equivalent to that $a(e^{-t_0/2})g$ fixes the point $(i_{\F}, i_{\F})$ in the unit tangent bundle of $\ag{H}_{\F}$. We conclude by showing that the stabilizer of this point is
	$$ \gp{T}_1(\ag{R}) = \{ 1 \} \quad \text{or} \quad \gp{T}_1(\ag{C}) = \left\{ \begin{pmatrix} e^{i\theta} & \\ & e^{-i\theta} \end{pmatrix} \ \middle| \ \theta \in \ag{R} \right\} / \{ \pm 1 \}. $$
	This is elementary and left to the reader.
\end{proof}

\begin{corollary}
	There is a bijection between the set of geodesics in $\ag{H}_{\F}$ and $\PSL_2(\F_{\infty}) / \gp{A}_1(\F_{\infty})$.
\end{corollary}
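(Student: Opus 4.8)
The plan is to identify the set of geodesics in $\ag{H}_{\F}$ with a homogeneous space for $G := \Isom_+(\ag{H}_{\F}) \simeq \PSL_2(\F_{\infty})$ by the orbit--stabilizer principle. First I would record that $G$ acts on the set of geodesics: for a geodesic flow $\ell$ and $g \in G$ the composite $g \circ \ell$ is again a geodesic flow (an isometry carries geodesic flows to geodesic flows, preserving unit speed), and since $(g \circ \ell) \circ \sigma = g \circ (\ell \circ \sigma)$ for every $\sigma \in \Isom_+(\ag{R})$, the assignment $g \cdot [\ell] := [g \circ \ell]$ is a well-defined action on classes. By Lemma \ref{StgpGeod} the stabilizer of the distinguished geodesic $[\varphi]$ is precisely $N_{[\varphi]} = \gp{A}_1(\F_{\infty})$. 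Granting for the moment that this action is transitive, the orbit map at $[\varphi]$ descends to a bijection
\[
\PSL_2(\F_{\infty}) / \gp{A}_1(\F_{\infty}) \ \xrightarrow{\ \sim\ } \ \left\{ \text{geodesics in } \ag{H}_{\F} \right\}, \qquad g\, \gp{A}_1(\F_{\infty}) \longmapsto g \cdot [\varphi].
\]
Injectivity here is automatic: $g_1 \cdot [\varphi] = g_2 \cdot [\varphi]$ means $g_2^{-1} g_1 \in N_{[\varphi]} = \gp{A}_1(\F_{\infty})$, i.e.\ $g_1$ and $g_2$ lie in the same left coset. So no extra work is needed beyond transitivity.

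Hence everything comes down to transitivity of the $G$-action on geodesics, which I would prove as follows. Fix a geodesic $[\ell]$ and a representative geodesic flow $\ell$; since $\ell$ has unit speed, the pair $(\ell(0), \dot\ell(0))$ is a point of the unit tangent bundle of $\ag{H}_{\F}$. The group $G$ acts transitively on this unit tangent bundle: by the homogeneity of $\ag{H}_{\F}$ it is transitive on points, and the (compact) stabilizer of $i_{\F}$ acts transitively on the unit sphere of $T_{i_{\F}} \ag{H}_{\F}$ --- this being the action of $\SO(2)$ on $S^1$ when $\F_{\infty} = \ag{R}$, and of $\SO(3)$ on $S^2$ when $\F_{\infty} = \ag{C}$ (the same circle of ideas already used in the proof of Lemma \ref{StgpGeod}). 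So there is $g \in G$ carrying $(\ell(0), \dot\ell(0))$ to $(i_{\F}, \dot\varphi(0))$. Then $g \circ \ell$ is a geodesic flow agreeing with $\varphi$ in position and velocity at $t = 0$, so by uniqueness of solutions of the (second order) geodesic equation with prescribed initial data, $g \circ \ell = \varphi$; in particular $g \cdot [\ell] = [\varphi]$. This establishes transitivity and, with the previous paragraph, the corollary.

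There is no genuine obstacle here: once Lemma \ref{StgpGeod} is in hand the statement is essentially the orbit--stabilizer theorem, and the only point deserving a moment's care is transitivity. Even that rests on two entirely standard facts --- transitivity of $G$ on the unit tangent bundle of $\ag{H}_{\F}$, and the fact that a geodesic flow is determined by its value and derivative at a single parameter value (so that a geodesic, being a translation-class of flows, is pinned down by one unit tangent vector). One could instead verify transitivity hands-on in the upper half-space model, using that every complete geodesic line is either a vertical half-line or a half-circle meeting $\partial \ag{H}_{\F}$ orthogonally and that $\PSL_2(\F_{\infty})$ moves any such oriented line to the imaginary axis; but the unit-tangent-bundle argument is cleaner and uniform in the two cases $\F_{\infty} = \ag{R}$ and $\F_{\infty} = \ag{C}$.
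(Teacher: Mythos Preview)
Your proof is correct and follows essentially the same approach as the paper: both establish the bijection via orbit--stabilizer, invoking transitivity of $\PSL_2(\F_{\infty})$ on the unit tangent bundle of $\ag{H}_{\F}$ together with Lemma~\ref{StgpGeod} to identify the stabilizer of $[\varphi]$. Your write-up is simply more explicit about the uniqueness-of-geodesics step and the injectivity direction, where the paper is terse.
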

\begin{proof}
	By the transitivity of the action of $\PSL_2(\F_{\infty})$ on the unit tangent bundle of $\ag{H}_{\F}$, any geodesic flow in $\ag{H}_{\F}$ is of the form $\ell(t) = g.\varphi(t)$ for some $g \in \PSL_2(\F_{\infty})$. The association
$\ell\mapsto g$ descends to $g\in \PSL_2(\F_{\infty}/\A_1(\F_{\infty}))$ and establishes a bijective correspondence by the above discussion.
\end{proof}

\noindent Let $\Gamma < \PSL_2(\F_{\infty})$ be a lattice. Some geodesics $[\ell]$ are globally stable by non-trivial elements in $\Gamma$. We formalize this property in the following definition.

\begin{definition}
	A geodesic $[\ell]$ is called $\Gamma$-periodic, if $\pi_{[\ell]}(N_{[\ell]} \cap \Gamma)$ is a lattice in $\Isom_+(\ag{R}) \simeq \ag{R}$. In this case, $N_{[\ell]} \cap \Gamma$ is called the \emph{group of automorphs} of $[\ell]$, denoted by $A_{[\ell]}$ if the lattice $\Gamma$ is clear from the context.
\end{definition}

\begin{proposition}
	For each $\Gamma$-periodic geodesic $[\ell]$, $A_{[\ell]}$ is a maximal abelian subgroup of $\Gamma$. It is isomorphic to $\ag{Z} \times \ag{Z}/n\ag{Z}$ for some $n \in \ag{Z}_{> 0}$, and if $\F_{\infty} = \ag{R}$ we must have $n=1$.
\end{proposition}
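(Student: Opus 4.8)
\emph{Proof proposal.} The plan is to transport everything to the distinguished geodesic $[\varphi]$ and then read the structure of $A_{[\ell]}$ off Lemma \ref{StgpGeod}. By the transitivity of the $\PSL_2(\F_{\infty})$-action established above, write $[\ell] = g.[\varphi]$ for some $g \in \PSL_2(\F_{\infty})$; since $N_{g.[\varphi]} = g N_{[\varphi]} g^{-1}$ and $\pi_{g.[\varphi]}$ is, up to reparametrization, the transport of $\pi_{[\varphi]}$, replacing $\Gamma$ by $g^{-1}\Gamma g$ changes neither discreteness nor the $\Gamma$-periodicity hypothesis nor abelianness or maximal-abelianness of subgroups. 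So we may assume $[\ell] = [\varphi]$, and then Lemma \ref{StgpGeod} gives $A_{[\ell]} = N_{[\varphi]} \cap \Gamma = \gp{A}_1(\F_{\infty}) \cap \Gamma$, reducing the problem to understanding this group together with $\pi_{[\varphi]}$ restricted to it.

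Next I would compute $\pi_{[\varphi]}$ on $\gp{A}_1(\F_{\infty})$: in both the half-plane and the half-space model, $a(\lambda)$ scales the height coordinate by $\norm[\lambda]^2$, so $a(\lambda).\varphi(t) = \varphi(t + 2\log\norm[\lambda])$, i.e.\ $\pi_{[\varphi]}\colon \gp{A}_1(\F_{\infty}) \to \Isom_+(\ag{R}) \simeq \ag{R}$ sends $a(\lambda)$ to translation by $2\log\norm[\lambda]$. Hence $\pi_{[\varphi]}$ is surjective with kernel $\{a(\lambda) : \norm[\lambda] = 1\} = \gp{T}_1(\F_{\infty})$ (the trivial group if $\F_{\infty} = \ag{R}$, a circle group if $\F_{\infty} = \ag{C}$), and the short exact sequence $1 \to \gp{T}_1(\F_{\infty}) \to \gp{A}_1(\F_{\infty}) \to \ag{R} \to 1$ splits, a section being induced by $\ag{R}_{>0} \hookrightarrow \F_{\infty}^{\times}$.

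For the structure statement, $\Gamma$-periodicity says precisely that $\pi_{[\varphi]}(A_{[\ell]})$ is a lattice in $\ag{R}$, hence infinite cyclic. Its kernel $K := A_{[\ell]} \cap \gp{T}_1(\F_{\infty})$ is a discrete subgroup of the compact group $\gp{T}_1(\F_{\infty})$ (because $\Gamma < \PSL_2(\F_{\infty})$ is discrete), hence finite cyclic, say $K \cong \ag{Z}/n\ag{Z}$, and $n = 1$ when $\F_{\infty} = \ag{R}$ since then $\gp{T}_1(\ag{R})$ is trivial. The extension $1 \to \ag{Z}/n\ag{Z} \to A_{[\ell]} \to \ag{Z} \to 1$ splits because $\ag{Z}$ is free, and the resulting semidirect product is a direct product because $A_{[\ell]} \subseteq \gp{A}_1(\F_{\infty})$ is abelian; thus $A_{[\ell]} \cong \ag{Z} \times \ag{Z}/n\ag{Z}$.

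Finally, maximality: since $\pi_{[\varphi]}(A_{[\ell]})$ is non-trivial there is some $a_0 = a(\lambda_0) \in A_{[\ell]}$ with $\norm[\lambda_0] \neq 1$, which is a regular (loxodromic) element, and a one-line matrix computation shows that its centralizer in $\PSL_2(\F_{\infty})$ is exactly the diagonal torus $\gp{A}_1(\F_{\infty})$. If $B$ is any abelian subgroup of $\Gamma$ containing $A_{[\ell]}$, then $B$ centralizes $a_0$, so $B \subseteq \gp{A}_1(\F_{\infty}) \cap \Gamma = A_{[\ell]}$, forcing $B = A_{[\ell]}$. I expect the only genuinely delicate points to be the two topological inputs — that $\pi_{[\varphi]}\vert_{A_{[\ell]}}$ has finite kernel (discreteness of $\Gamma$ against compactness of $\gp{T}_1(\ag{C})$) and that the centralizer computation is done in $\PSL_2$ rather than $\SL_2$ (the order-two element $a(i)$ never intervenes, precisely because we chose $\norm[\lambda_0] \neq 1$) — with everything else being routine.
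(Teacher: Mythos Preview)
Your proof is correct and follows essentially the same approach as the paper: transport to $[\varphi]$ by conjugation, identify $N_{[\varphi]} \simeq \F_{\infty}^{\times}/\{\pm 1\}$ via Lemma~\ref{StgpGeod}, and read off the structure of $A_{[\ell]}$ as a discrete subgroup with nontrivial image in $\ag{R}$. The only minor differences are presentational: the paper appeals directly to the classification of closed subgroups of $\ag{R}$ or $\ag{R} \times (\ag{R}/\ag{Z})$ rather than your explicit extension-plus-splitting argument, and for maximality the paper asserts that commuting with $A_{[\ell]}$ forces commuting with all of $N_{[\ell]}$, whereas your version --- singling out a regular element $a_0$ and computing its centralizer --- is a cleaner justification of that same step.
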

\begin{proof}
	If $\ell(t) = g.\varphi(t)$, then $N_{[\ell]} = g N_{[\varphi]} g^{-1}$, which by Lemma \ref{StgpGeod} is isomorphic to
	$$ \F_{\infty}^{\times} / \{ \pm 1 \} \simeq \left\{ \begin{matrix} \ag{R} & \text{if } \F_{\infty} = \ag{R} \\ \ag{R} \times (\ag{R}/\ag{Z}) & \text{if } \F_{\infty} = \ag{C} \end{matrix} \right. . $$
	With this identification, $\pi_{[\ell]}$ is identified with the projection onto the $\ag{R}$-component. Consequently, $A_{[\ell]}$ is identified with a discrete subgroup with non-trivial $\ag{R}$-component, hence is of the asserted form. It remains to show that $A_{[\ell]}$ is maximal abelian. If $\gamma \in \Gamma$ commutes with $A_{[\ell]}$, then it commutes with $N_{[\ell]}$, hence lies in $N_{[\ell]}$ since the later is a maximal abelian Lie subgroup of $\PSL_2(\F_{\infty})$. Thus $\gamma \in N_{[\ell]} \cap \Gamma = A_{[\ell]}$ by definition.
\end{proof}

\noindent In general, $\Gamma \backslash \ag{H}_{\F}$ is not necessarily a Riemannian manifold, but an orbifold. We do not know an intrinsic way to define closed geodesics on an orbifold. In our special case, we make use of geodesics on $\ag{H}_{\F}$.

\begin{definition}
	A closed geodesic on $\Gamma \backslash \ag{H}_{\F}$ is the image under the natural projection $\ag{H}_{\F} \to \Gamma \backslash \ag{H}_{\F}$ of a $\Gamma$-periodic geodesic.
\end{definition}

\begin{remark}
	In the special case that $\Gamma \backslash \ag{H}_{\F}$ does admit the structure of a Riemannian manifold, our notion of ``closed geodesics'' is that of ``compact geodesics'' or ``closed geodesics with finite length'', in the sense that they are (classes of) isometric embeddings of the form
	$$ \ell: \ag{R} / T \ag{R} \to \Gamma \backslash \ag{H}_{\F}, $$
	where both $\ag{R} / T \ag{R}$ and $\Gamma \backslash \ag{H}_{\F}$ are regarded as Riemannian manifolds with orientation and $T > 0$ is the length of $[\ell]$.
\end{remark}

	\subsection{Relation with Conjugacy Classes}

	We recall the standard classification of elements in $\PSL_2(\F_{\infty})$ (or $\SL_2(\F_{\infty})$). For $\F_{\infty} = \ag{R}$, this is standard. $\gamma \in \SL_2(\ag{R})$ is \emph{elliptic} resp. \emph{hyperbolic} resp. \emph{parabolic} if $\norm[\Tr \gamma] < 2$ resp. $\norm[\Tr \gamma] > 2$ resp. $\norm[\Tr \gamma] = 2$. For $\F_{\infty} = \ag{C}$, we follow \cite[Definition 2.1.3]{EGM98}. Namely, $\gamma \in \SL_2(\ag{C})$ is \emph{elliptic} resp. \emph{hyperbolic} resp. \emph{parabolic} if $\Tr \gamma \in \ag{R}$ and $\norm[\Tr \gamma] < 2$ resp. $\norm[\Tr \gamma] > 2$ resp. $\norm[\Tr \gamma] = 2$; it is \emph{loxodromic} if $\Tr \gamma \notin \ag{R}$.
	
\begin{remark}
	For the purpose of this paper, it is \emph{not} important to distinguish \emph{hyperbolic} elements from \emph{loxodromic} elements. We will simply call them \emph{hyperbolic}.
\end{remark}
	
\noindent We also recall the \emph{arithmetic} classification of elements $\gamma \in \GL_2(\F) - \gp{Z}(\F)$: $\gamma$ is called $\F$-elliptic resp. $\F$-hyperbolic resp. $\F$-parabolic if the $\F$-algebra $\F[\gamma]$ is a quadratic field extension of $\F$ resp. isomorphic to $\F \times \F$ resp. isomorphic to $\F[X]/(X^2)$. For congruence subgroups, the hyperbolic elements are automatically $\F$-elliptic as the following lemma shows.

\begin{lemma}
	If $\gamma \in \SL_2(\vo)$ is hyperbolic, then it is $\F$-elliptic.
\label{FEllip}
\end{lemma}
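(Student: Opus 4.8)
The plan is to show that the characteristic polynomial of $\gamma$ is irreducible over $\F$, which is equivalent to $\F[\gamma]$ being a quadratic field extension, hence to $\gamma$ being $\F$-elliptic. The characteristic polynomial is $X^2 - (\Tr\gamma) X + 1$ since $\gamma \in \SL_2$, and its discriminant is $(\Tr\gamma)^2 - 4$. So I must rule out that this discriminant is a square in $\F$.

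First I would dispose of the case $\F = \Q$. Here $\F_\infty = \R$ and hyperbolicity means $|\Tr\gamma| > 2$ with $\Tr\gamma \in \Z$. If $(\Tr\gamma)^2 - 4 = m^2$ for some $m \in \Z_{\geq 0}$, then $((\Tr\gamma) - m)((\Tr\gamma) + m) = 4$, and a quick check of the factorizations of $4$ into integers of the same sign forces $|\Tr\gamma| = 2$, contradicting hyperbolicity. (Note $\Tr\gamma = \pm 2$ would be parabolic, not hyperbolic.) Hence $X^2 - (\Tr\gamma)X + 1$ is irreducible over $\Q$ and $\gamma$ is $\F$-elliptic.

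Next, the imaginary quadratic case $\F_\infty = \C$. By the classification recalled just above the lemma, a hyperbolic element of $\SL_2(\C)$ in particular has $\Tr\gamma \in \R$; since also $\Tr\gamma \in \vo \subset \F$ and $\F \cap \R = \Q$, we get $\Tr\gamma \in \Z$ with $|\Tr\gamma| > 2$. (A loxodromic element has $\Tr\gamma \notin \R$, but by the preceding remark we have already agreed to call loxodromic elements hyperbolic as well; I should treat that subcase too — there $\Tr\gamma \in \vo \setminus \Z$, and $(\Tr\gamma)^2 - 4$ a square in $\F$ would again force, after multiplying out, a relation contradicting $\Tr\gamma \notin \R$, since a square in the imaginary quadratic field $\F$ whose "$1$-part" is $\Tr\gamma^2-4$... — this needs a short direct argument.) For the genuinely hyperbolic subcase, suppose $(\Tr\gamma)^2 - 4 = \alpha^2$ with $\alpha \in \F$. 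Taking norms down to $\Q$, or simply observing that $(\Tr\gamma)^2 - 4$ is a positive rational number which is a square in $\F$, I conclude it is already a square in $\Q$ (a positive rational that becomes a square in an imaginary quadratic field was a square in $\Q$ to begin with, since the nontrivial embedding sends a would-be square root to $\pm$ itself up to the conjugation, forcing it real hence rational). Then the $\F = \Q$ computation applies verbatim to yield $|\Tr\gamma| = 2$, a contradiction.

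The main obstacle is purely bookkeeping: making sure the loxodromic subcase (which the paper folds into "hyperbolic") is handled, since there $\Tr\gamma$ need not lie in $\Z$, and verifying cleanly that a nonsquare discriminant really does imply $\F[\gamma]$ is a field rather than $\F \times \F$ or $\F[X]/(X^2)$ — but the latter two would force $(\Tr\gamma)^2 - 4$ to be a square (zero in the parabolic case, which is excluded), so the discriminant argument is exactly the right invariant. I expect the whole proof to be two or three lines once the loxodromic case is pinned down by a norm computation.
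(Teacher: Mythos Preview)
Your argument is correct for $\F=\Q$ and for the ``genuinely hyperbolic'' subcase over an imaginary quadratic $\F$, but the loxodromic subcase ($\Tr\gamma\in\vo\setminus\R$) is a genuine gap: you explicitly flag that it ``needs a short direct argument'' and the fragment you offer is not one. There is no analogue here of your observation that a positive rational which is a square in $\F$ was already a square in $\Q$, since neither $t$ nor $t^2-4$ is real, and a direct factorisation $(t-s)(t+s)=4$ in $\vo$ becomes field-dependent and does not yield a clean contradiction.

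The paper's proof takes a different, uniform route that avoids any case split on the trace. It rules out $\F$-parabolic trivially (that forces $\Tr\gamma=\pm 2$, hence parabolic), and then rules out $\F$-hyperbolic by observing that if the eigenvalues $\lambda,\lambda^{-1}$ lie in $\F$, then from $\lambda+\lambda^{-1}=\Tr\gamma\in\vo$ both are integral over $\vo$, so $\lambda\in\vo^{\times}$. The decisive point is that for $\F=\Q$ or $\F$ imaginary quadratic the unit group $\vo^{\times}$ is \emph{finite}; hence $\gamma$ has finite order and is elliptic, contradicting hyperbolicity. This single observation also closes your loxodromic gap (finite order gives $|\lambda|=1$, whence $\Tr\gamma=\lambda+\bar\lambda\in\R$), and is in fact where any correct completion of your factoring approach must land.
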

\begin{proof}
	The element $\gamma$ cannot be $\F$-parabolic, otherwise either $(\gamma + 1)^2 = 0$ or $(\gamma - 1)^2 = 0$, which implies $\gamma$ is parabolic. If $\gamma$ is $\F$-hyperbolic, then it is conjugate in $\GL_2(\F)$ to a diagonal matrix with entries $t$ and $t^{-1}$ for some $t \in \F$. We thus get $\Tr(\gamma) = t+t^{-1} \in \vo$, hence both $t$ and $t^{-1}$ are integral over $\vo$. It follows that $t \in \vo^{\times}$. Under our assumption on $\F$, $\vo^{\times}$ is a finite group. Hence $\gamma$ is of finite order and must be elliptic. The only remaining possibility is $\F$-elliptic.
\end{proof}
\begin{remark}
	If $\F=\Q$, the condition ``$\gamma \in \SL_2(\vo)$'' in the above lemma can be relaxed to ``$\gamma \in \SL_2(\Q)$'', since ``hyperbolic'' is the same as ``$\R$-elliptic'' in this situation.
\end{remark}

\begin{remark}
	The set of elliptic elements in $\Gamma$ will be denoted by $U(\Gamma)$. It has the following description
	$$ U(\Gamma) = \left\{ \gamma \in \Gamma \ \middle| \ \exists n \in \ag{Z}_{>0}, \gamma^n=1 \right\}. $$
	It is the set of elements in $\Gamma$ which admits (at least) a fixed point in $\ag{H}_{\F}$ (c.f. \cite[Theorem 1.3.1]{Miy06} and \cite[Proposition 2.1.4]{EGM98}).
\end{remark}

\begin{definition}
	For hyperbolic $\gamma \in \Gamma$, we denote by $C_{\gamma}$ the centralizer of $\gamma$ in $\PSL_2(\F_{\infty})$. We define the \emph{root-conjugacy class} of $\gamma$ by
	$$ \{ \gamma \} := \sideset{}{_{u \in C_{\gamma} \cap U(\Gamma)}} \bigcup [\gamma u], $$
	where for any $\gamma' \in \Gamma$, $[\gamma']$ is the usual conjugacy class in $\Gamma$.
\end{definition}

\begin{lemma}
	For hyperbolic $\gamma \in \Gamma$, $C_{\gamma}$ is a maximal $\F_{\infty}$-split torus in $\PSL_2(\F_{\infty})$, hence isomorphic to $\F_{\infty}^{\times}$. Under this isomorphism, $C_{\gamma} \cap \Gamma$ is a lattice in $\F_{\infty}^{\times}$.
\end{lemma}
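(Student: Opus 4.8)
The plan is to identify $C_\gamma$ by linear algebra, then realize $C_\gamma\cap\Gamma$ as a subgroup of a group of norm‑one units in an order of a quadratic extension $E/\F$, and finally deduce the lattice property from Dirichlet's unit theorem. First, by Lemma~\ref{FEllip} the hyperbolic element $\gamma$ is $\F$-elliptic, so $E:=\F[\gamma]$ is a quadratic field extension of $\F$; writing $n:=\Tr(\gamma)\in\vo$ we have $E=\F(\sqrt{n^2-4})$. Hyperbolicity forces the eigenvalues $\lambda,\lambda^{-1}$ of $\gamma$ to lie in $\F_\infty$ and to be distinct (an eigenvalue of absolute value $1$ would make $\Tr(\gamma)$ real with $\norm[\Tr(\gamma)]\le 2$), so $E\otimes_\F\F_\infty\cong\F_\infty\times\F_\infty$ and $\gamma$ is diagonalizable over $\F_\infty$ with distinct eigenvalues; hence $\gamma$ is $\PSL_2(\F_\infty)$-conjugate to some $a(\lambda)\in\gp{A}_1(\F_\infty)$ with $\lambda^2\neq1$, whose centralizer in $\PSL_2(\F_\infty)$ is exactly $\gp{A}_1(\F_\infty)$ by an elementary matrix computation. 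Conjugating back, $C_\gamma$ is a maximal $\F_\infty$-split torus isomorphic to $\F_\infty^\times$, which proves the first assertion.

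For the rest I would view $\F^2$ as a one-dimensional $E$-vector space via the action of $\gamma$. Then the centralizer of $\gamma$ in $\Mat_2(\F)$ is $E$ acting by multiplication, and as multiplication by $x\in E$ has determinant $\Nr_{E/\F}(x)$, the centralizer of $\gamma$ in $\SL_2(\F)$ is $E^1:=\ker(\Nr_{E/\F}\colon E^\times\to\F^\times)$. An element of $C_\gamma\cap\PSL_2(\vo)$ lifts, uniquely up to sign since $\Tr(\gamma)\neq0$, to some $x\in E^1$ given by a matrix in $\SL_2(\vo)$; its characteristic polynomial $X^2-\Tr(x)X+1$ has coefficients in $\vo$, so $x\in\vO_E$, and since $\Nr_{E/\F}(x)=1\in\vo^\times$ we also get $x^{-1}\in\vO_E$, i.e.\ $x\in\vO_E^\times$. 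Thus $C_\gamma\cap\Gamma$ embeds into $\vO_E^1/\{\pm1\}$, where $\vO_E^1:=\vO_E^\times\cap E^1$, and $C_\gamma\cap\Gamma$ is discrete in $C_\gamma$ because $\vo$ is discrete in $\F_\infty$, hence $\Mat_2(\vo)$, and a fortiori $\SL_2(\vo)$ and $\PSL_2(\vo)$, are discrete.

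It then remains to show $C_\gamma/(C_\gamma\cap\Gamma)$ is compact. The key point is that $\vO_E^\times$ has $\Z$-rank $r_1(E)+r_2(E)-1=1$ in every case: if $\F=\Q$ then $n^2-4>0$ is not a square, so $E$ is real quadratic and the rank is $2+0-1$; if $\F$ is imaginary quadratic then $E$ is a quartic field containing $\F$, hence has no real places, and the rank is $0+2-1$. Since $\Nr_{E/\F}$ sends $\vO_E^\times$ into the finite group $\vo^\times$, the subgroup $\vO_E^1$ also has rank $1$; therefore $C_\gamma\cap\Gamma$ has rank at most $1$, and since it contains the infinite-order element $\gamma$ (hyperbolic elements are not torsion) its rank is exactly $1$. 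Transporting to $C_\gamma\cong\F_\infty^\times$ and composing with $\log\norm[\cdot]$, whose kernel is the maximal compact subgroup ($\{1\}$ if $\F_\infty=\R$, a circle if $\F_\infty=\C$): the image of $C_\gamma\cap\Gamma$ in $\R$ is non-trivial because the image of $\gamma$ is $a(\lambda)$ with $\norm[\lambda]\neq1$, and being a torsion-free quotient of a rank-one group it is infinite cyclic, i.e.\ a lattice in $\R$; together with discreteness this forces $C_\gamma/(C_\gamma\cap\Gamma)$ to be compact, so $C_\gamma\cap\Gamma$ is a lattice in $C_\gamma\cong\F_\infty^\times$.

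I expect the only genuine obstacle to be this last step when $\F_\infty=\C$: discreteness of $C_\gamma\cap\Gamma$ in $C_\gamma\cong\C^\times$ does not by itself give a lattice, since a rank-two subgroup of $\C^\times$ can be dense, so one really needs the rank-one bound from Dirichlet's theorem — equivalently, the fact that $\F[\gamma]$ has unit rank exactly one — together with the elementary observation that $\gamma$ translates non-trivially along its own axis. Everything else is routine linear algebra and the discreteness of $\SL_2(\vo)$.
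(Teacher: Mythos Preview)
Your proof is correct, but it takes a considerably longer route than the paper's. The paper simply observes that under the isomorphism $C_\gamma \cong \F_\infty^\times$ the element $\gamma$ itself maps to some $r$ with $\norm[r] \neq 1$, so the cyclic group $\gamma^{\Z}$ is already a lattice in $\F_\infty^\times$; since $C_\gamma \cap \Gamma$ is discrete and contains this lattice, it is itself a lattice (any discrete overgroup of a cocompact subgroup is cocompact). No arithmetic input whatsoever is required, and the argument works for an arbitrary discrete subgroup $\Gamma$.

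Your final paragraph misdiagnoses the difficulty: Dirichlet's unit theorem is not needed to rule out rank two. Any discrete subgroup of $\C^\times \cong \R \times (\R/\Z)$ containing an element with nonzero $\R$-component is automatically a lattice, because the cyclic group generated by that single element already has compact quotient (lift to $\R^2$: the preimage of $\langle(a,\theta)\rangle$ is the rank-two lattice spanned by $(a,\theta)$ and $(0,1)$, since $a\neq 0$). Your detour through $\vO_E^1$ and the unit theorem does work and yields the finer statement that $C_\gamma\cap\Gamma$ has finite index in $\vO_E^1/\{\pm1\}$, which is of independent interest; but it forces the extra hypothesis $\Gamma\le\PSL_2(\vo)$, whereas the paper's one-line argument does not.
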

\begin{proof}
	Only the last assertion needs some explanation. In fact, under the isomorphism mentioned in the statement, $\gamma \in C_{\gamma}$ is mapped to some $r \in \F_{\infty}^{\times}$ with $\norm[r] \neq 1$. Hence $\gamma^{\ag{Z}}$ is identified with some lattice in $\F_{\infty}^{\times}$. \emph{A fortiori}, the discrete subgroup $C_{\gamma} \cap \Gamma > \gamma^{\ag{Z}}$ is a lattice.
\end{proof}

\begin{remark}
	In concrete terms, the isomorphism takes $\gamma$ to $r$, one of its eigenvalue in $\F_{\infty}^{\times}$. We call
	$$ l(\gamma) := 2 \log \max(\norm[r], \norm[r]^{-1}) \in \ag{R}_{>0} $$
	the \emph{length} of $\gamma$. This quantity is unchanged by conjugation, hence passes to conjugacy and root-conjugacy classes, i.e., $l([\gamma])$ and $l(\{ \gamma \})$ are well-defined. It is equal to the length of the geodesic on $\Gamma \backslash \ag{H}_{\F}$ to which it corresponds. We also denote by $\Reis(\gamma) \in \Z_{> 0}$ such that the torsion part of $C_{\gamma} \cap \Gamma$ is isomorphic to $\Z/\Reis(\gamma)\Z$. Since we work with $\PSL_2$, we have
\begin{itemize}
	\item[(1)] $\Reis(\gamma)=2$ if $\F_{\infty} = \R$;
	\item[(2)] $\Reis(\gamma)$ is an even positive integer if $\F_{\infty} = \C$.
\end{itemize}
	The fact that $\Reis(\gamma)$ is always even reflects the geometric view that a geodesic flow has a direction.
\label{Length}
\end{remark}

\begin{definition}
	A hyperbolic $\gamma \in \Gamma$ is called \emph{primitive}, if its length $l(\gamma)$ attains the minimum among elements in $C_{\gamma} \cap \Gamma$. A root-conjugacy class $\{ \gamma \}$ of a hyperbolic element $\gamma \in \Gamma$ is called \emph{primtive} if $\gamma$ is primitive.
\label{PrimConDef}
\end{definition}

\begin{proposition}
	Let $\Gamma < \PSL_2(\F_{\infty})$ be a discrete subgroup. Closed geodesics on $\Gamma \backslash \ag{H}_{\F}$ are in bijection with the primitive root-conjugacy classes of hyperbolic elements in $\Gamma$.
\label{GeodConjCor}
\end{proposition}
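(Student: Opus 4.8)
\emph{Strategy.} The plan is to exhibit mutually inverse maps between the two sets, using two facts established above: geodesics in $\ag{H}_\F$ correspond to $\PSL_2(\F_\infty)/\gp{A}_1(\F_\infty)$, and for a $\Gamma$-periodic geodesic $[\ell]$ the group of automorphs $A_{[\ell]} = N_{[\ell]}\cap\Gamma$ is maximal abelian, isomorphic to $\Z\times\Z/n\Z$, with $\pi_{[\ell]}$ identified (under $N_{[\ell]}\cong\F_\infty^\times/\{\pm1\}$) with $z\mapsto 2\log\norm[z]$. Given a closed geodesic $c$, I would choose a $\Gamma$-periodic geodesic $[\ell]$ with image $c$ (one exists by definition of a closed geodesic). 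The orientation of $c$ orients $[\ell]$, hence picks out the unique generator $\gamma_0$ of the free $\Z$-factor of $A_{[\ell]}$ with $\pi_{[\ell]}(\gamma_0)>0$. One checks easily that $\gamma_0$ is hyperbolic (its eigenvalue has absolute value $\neq 1$) and primitive (every element of $A_{[\ell]}$ has length in $l(\gamma_0)\,\Z_{\geq 0}$, so $l(\gamma_0)$ is minimal among the lengths of hyperbolic elements of $A_{[\ell]}$), and I assign to $c$ the primitive root-conjugacy class $\{\gamma_0\}$.

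\emph{The inverse.} Given a primitive root-conjugacy class $\{\gamma\}$, I would pick a representative $\gamma$ and write $\gamma = g\,a(r)\,g^{-1}$ with $\norm[r]>1$ (using that $\gamma$ is diagonalizable over $\F_\infty$ with eigenvalues of absolute value $\neq 1$), and take the \emph{oriented axis} $[\ell_\gamma] := [g.\varphi]$. This is independent of $g$ because $\gp{A}_1(\F_\infty) = N_{[\varphi]}$ is a maximal torus, hence equals its own centralizer in $\PSL_2(\F_\infty)$. The condition $\norm[r]>1$ is exactly what makes $\gamma$ translate $[\ell_\gamma]$ in the positive direction, $\pi_{[\ell_\gamma]}(\gamma) = 2\log\norm[r] > 0$, which pins down the orientation; in particular $[\ell_\gamma]$ is $\Gamma$-periodic, and I assign to $\{\gamma\}$ the image of $[\ell_\gamma]$ in $\Gamma\backslash\ag{H}_\F$.

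\emph{Well-definedness and inversion.} The required checks come down to the following. (i) Two $\Gamma$-periodic lifts of the same $c$ differ by an element $\delta\in\Gamma$, and conjugation by $\delta$ does not change a root-conjugacy class. (ii) The two generators of the free factor of $A_{[\ell]}$ differ by inversion — excluded by the orientation constraint — and by a torsion element $u\in C_{\gamma_0}\cap U(\Gamma)$; replacing $\gamma_0$ by $\gamma_0 u$ does not change $\{\gamma_0\}$ by definition. (iii) For the inverse map, another representative of $\{\gamma\}$ has the form $\delta(\gamma u)\delta^{-1}$; conjugation by $\delta$ translates the axis without changing its image downstairs, and passing from $\gamma$ to $\gamma u$ leaves the oriented axis unchanged because $u$ lies in the compact part of the torus $N_{[\ell_\gamma]}$, so $\pi_{[\ell_\gamma]}(u) = 0$ and $\gamma u$ still has an eigenvalue of absolute value $\norm[r]$. (iv) Starting from $c$, the oriented axis of the distinguished generator $\gamma_0$ is $[\ell]$ itself — it is the unique $\gamma_0$-stable geodesic translated positively by $\gamma_0$ — so we recover $c$; conversely, starting from $\{\gamma\}$ and taking $[\ell] = [\ell_\gamma]$, primitivity forces $\gamma = \gamma_0 u$ for some torsion $u$ (using $C_\gamma = N_{[\ell_\gamma]} = C_{\gamma_0}$), whence $\{\gamma\} = \{\gamma_0\}$.

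\emph{Main obstacle.} I expect the genuinely geometric point — hence the main obstacle — to be step (i): two $\Gamma$-periodic geodesics in $\ag{H}_\F$ with the same image in the orbifold $\Gamma\backslash\ag{H}_\F$ lie in a single $\Gamma$-orbit. Since $\Gamma\backslash\ag{H}_\F$ need not be a manifold, one cannot simply lift paths; instead I would choose a point of $\ag{H}_\F$ lying over a common point of the two images, match the two geodesic flows through it by an element of $\Gamma$, and conclude by uniqueness of a geodesic through a given point with given unit tangent vector (including matching orientations). The remaining work — keeping orientations and the torsion quotient straight — is routine but must be done with care; it is exactly what forces the replacement of conjugacy classes by root-conjugacy classes and is reflected in the evenness of $\Reis$.
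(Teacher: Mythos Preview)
Your proposal is correct and follows essentially the same route as the paper: build maps in both directions via the axis/automorph correspondence and verify they are mutually inverse. You are somewhat more careful than the paper about well-definedness---in particular your step (i), that two $\Gamma$-periodic lifts of the same closed geodesic lie in one $\Gamma$-orbit, is tacitly assumed in the paper (which simply writes ``let $[\ell]$ be a closed geodesic'' and then works with $A_{[\ell]}$), while the paper in turn is content to check only $\tau\circ\iota=\mathrm{id}$ and infer the rest.
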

\begin{proof}
	Let $[\ell]$ be a closed geodesic on $\Gamma \backslash \ag{H}_{\F}$. Since $\pi(A_{[\ell]})$ is a lattice in $\ag{R}$, it is $t_0 \ag{Z}$ for some unique $t_0 > 0$. Let $\gamma \in A_{[\ell]}$ be any element such that $\pi_{[\ell]}(\gamma) = t_0 = l(\gamma)$. Other choices of $\gamma$ are of the form $\gamma u$ for some $u \in U(\Gamma)$. Moreover, it is easy to see
	$$ N_{[\ell]} = C_{\gamma}. $$
	Hence if $\Gamma \ni \gamma' \in C_{\gamma}$, $\gamma' \in A_{[\ell]} = N_{[\ell]} \cap \Gamma$. Thus $\pi_{[\ell]}(\gamma') = l(\gamma')$ is an integral multiple of $t_0$. This proves that $\gamma$ is primitive. We thus get a well-defined map from the set of closed geodesics to the set of primitive root-conjugacy classes
	$$ \iota: [\ell] \mapsto \{ \gamma \}. $$
	Conversely, if $\gamma \in \Gamma$ is hyperbolic, then for some $g \in \PSL_2(\F_{\infty})$, $g^{-1} \gamma g \in \gp{A}_1(\F_{\infty})$, hence stabilizes the unique geodesic flow $\varphi(t)$. Changing $g$ to $gw$ if necessary, we may assume that $g^{-1} \gamma g$ fixes the orientation of $\varphi(t)$. If $\gamma' \in C_{\gamma}$, then $g^{-1} \gamma' g \in C_{g^{-1} \gamma g} = \gp{A}_1(\F_{\infty})$. Thus $g^{-1} \gamma' g$ also stabilizes $\varphi(t)$ and fixes its orientation. We thus get a map from the set of root-conjugacy classes in $\Gamma$ to the set of closed geodesics on $\Gamma \backslash \ag{H}_{\F}$
	$$ \tau: \{ \gamma \} \mapsto [g.\varphi]. $$
	It is easy to verify that $ \tau \circ \iota = \mathrm{id} $ is the identity map on the set of closed geodesics. Hence $\iota$ is a bijection onto its image, i.e., the set of primitive root-conjugacy classes.
\end{proof}
\begin{remark}
	It may be clearer if we summarize the above proof in words. Closed geodesics $[\ell]$ correspond bijectively to $\Gamma$-conjugacy classes of maximal split tori $N_{[\ell]}$ (the stabilizer group of $[\ell]$) in $\PSL_2(\F_{\infty})$ for which $A_{[\ell]} = N_{[\ell]} \cap \Gamma$ is a lattice in $N_{[\ell]}$. In the group of automorphs $A_{[\ell]}$, there is $\{ \gamma \}$ inducing the translation $t \mapsto t+t_0$ in the parameters $\ag{R}$ of $\ell$ such that $t_0 > 0$ is smallest possible. These are the primitive hyperbolic elements. In particular, the fibers of $\tau$ in the above proof are precisely hyperbolic conjugacy classes which admit the same centralizer group up to $\Gamma$-conjugacy.
\end{remark}

\begin{proposition}
	$U(\Gamma)$ is a finite union of conjugacy classes in $\Gamma$.
\label{FinEllConj}
\end{proposition}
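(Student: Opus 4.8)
The plan is to split the elliptic elements of $\Gamma$ into those that fix a cusp and those that do not, handling the first family by the arithmetic finiteness of the unit group $\vo^{\times}$ and the second by proper discontinuity. Recall from the discussion preceding the statement that $U(\Gamma)$ is exactly the set of torsion elements of $\Gamma$, and that each such $\gamma$ fixes at least one point of $\ag{H}_{\F}$: when $\F_{\infty}=\R$ the fixed locus is a single interior point and $\gamma$ fixes no point of the boundary, while when $\F_{\infty}=\C$ it is a complete geodesic whose two endpoints are the only boundary points fixed by $\gamma$. By the reduction theory of the lattice $\Gamma$, I would fix a fundamental domain $\FundD$ that is the union of a relatively compact piece $K_{0}$ and finitely many cusp sectors $\mathcal{C}_{1},\dots,\mathcal{C}_{r}$, where $\mathcal{C}_{i}$ is a fundamental domain for the action of the cusp stabilizer $\Gamma_{\kappa_{i}}$ on a horoball $B_{i}$ based at $\kappa_{i}$, and $\kappa_{1},\dots,\kappa_{r}$ represent the finitely many $\Gamma$-orbits of cusps; crucially, one is free to choose the $B_{i}$ arbitrarily deep, enlarging $K_{0}$ accordingly while keeping it compact. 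Note that over $\Q$ the first family is empty, so only the second step below is needed there.

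\emph{Elements fixing no cusp.} Given such a $\gamma$, pick $z$ in its fixed locus and $\delta\in\Gamma$ with $\delta z\in\overline{\FundD}$, so that $\eta:=\delta\gamma\delta^{-1}$ is torsion, fixes $w:=\delta z$, and still fixes no cusp. I claim $w\in K_{0}$ once the horoballs $B_{i}$ are chosen deep enough. Otherwise $w$ would lie in some cusp sector $\mathcal{C}_{i}\subseteq B_{i}$; normalizing $\kappa_{i}=\infty$ so that $B_{i}=\{\mathrm{ht}>T_{i}\}$ and writing $\eta=\begin{pmatrix} a & b\\ c & d\end{pmatrix}$, the height-transformation formula applied to $\eta w=w$, with $w$ of height $t>T_{i}$, gives $|c|^{2}t^{2}\le 1$, hence $0<|c|<1/T_{i}$; here $c\neq 0$ because $\eta$ does not fix $\infty$. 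But the nonzero lower-left entries of the elements of the (normalized) group $\Gamma$ are bounded below by a positive constant, by discreteness, so taking $T_{i}$ larger than its reciprocal yields a contradiction. Therefore $\eta$ fixes a point of $K_{0}$, whence $\eta\in S:=\{\sigma\in\Gamma:\sigma K_{0}\cap K_{0}\neq\emptyset\}$; and $S$ is finite since $\Gamma$ acts properly discontinuously on $\ag{H}_{\F}$ and $K_{0}$ is compact. Hence all these $\gamma$ lie in the finitely many conjugacy classes $[\sigma]$, $\sigma\in S$.

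\emph{Elements fixing a cusp.} If $\gamma\in U(\Gamma)$ fixes a cusp, then some $\Gamma$-conjugate of $\gamma$ lies in $\Gamma_{\kappa_{i}}$ for some $i$, so it suffices to prove that each $\Gamma_{\kappa_{i}}$ has only finitely many conjugacy classes of torsion elements. Normalizing $\kappa_{i}=\infty$, the group $\Gamma_{\kappa_{i}}$ consists of classes of upper triangular matrices $\begin{pmatrix} u & b\\ 0 & u^{-1}\end{pmatrix}$ with $u\in\vo^{\times}$ and $b$ ranging over a translate of a fractional ideal $\mathfrak{a}$; such an element is torsion precisely when $u\neq\pm 1$, since otherwise it is unipotent, hence of infinite order unless it is the identity. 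There are finitely many such $u$ because $\vo^{\times}$ is finite, and for each fixed $u$ conjugation by $\begin{pmatrix} 1 & w\\ 0 & 1\end{pmatrix}$, $w\in\mathfrak{a}$, sends $b$ to $b+(u^{-1}-u)w$; as $u^{-1}-u\neq 0$, the quotient $\mathfrak{a}/(u^{-1}-u)\mathfrak{a}$ is finite, so there are finitely many classes. Combining the two cases proves the proposition. The hard part is the case $\F_{\infty}=\C$: there the fixed locus of an elliptic element is a geodesic that may well penetrate a cusp region, so one cannot simply argue that elliptic points avoid the cusps, and the horoball–height estimate above is exactly what is designed to circumvent this.
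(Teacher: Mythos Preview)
Your proof is correct and takes a genuinely different route from the paper's appendix. The paper argues geometrically via a Poincar\'e normal polyhedron $\mathcal{P}_Q(\Gamma)$: after conjugating so that the fixed axis $\ell_0$ of an elliptic element meets $\mathcal{P}_Q(\Gamma)$, it analyzes case by case where this intersection can lie (interior of a face, interior of an edge, or a vertex) and shows that in each case the element is a rotation tied to one of finitely many combinatorial features of the polyhedron (an axis of symmetry of a face or of the polygon $\LatL(P_0)$, or an edge itself). You instead use a thick--thin decomposition: elliptic elements whose axis endpoints avoid all cusps are forced, via a Shimizu-type height estimate, to have a fixed point in the compact core $K_0$, while those fixing a cusp are handled by the explicit structure of cusp stabilizers together with the finiteness of $\vo^{\times}$. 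Your approach is more modular and makes the arithmetic input transparent; the paper's gives a sharper geometric picture of where elliptic axes can sit inside the fundamental polyhedron. One small point: your assertion that the nonzero lower-left entries are bounded below ``by discreteness'' is really Shimizu's lemma (or, in the arithmetic situation at hand, the observation that after normalizing a cusp the entries lie in a fixed fractional ideal); discreteness alone does not suffice. Note also that the Remark following the proposition already sketches, for congruence subgroups, a shorter argument via the Siegel property that $\{\gamma\in\PSL_2(\vo):\gamma\mathcal{S}\cap\mathcal{S}\neq\emptyset\}$ is finite; your argument essentially reproves a version of this by hand.
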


\noindent We leave the technical detail of the proof in an appendix. For the moment, we are content with the following remark, since we will eventually work with arithmetic non-uniform lattices.
\begin{remark}
	In the case $\F=\ag{Q}$, this is part of \cite[Theorem 1.7.8]{Miy06}. But we do not see how to extend this method to the case $\F_{\infty} = \ag{C}$. However, the argument given in the appendix for the case $\F_{\infty} = \ag{C}$ can be easily adapted to the case $\F=\ag{Q}$, replacing \cite[Theorem 2.7]{EGM98} by \cite[(1.9.9)]{Miy06}. Moreover, if $\Gamma$ is a congruence subgroup, we have a simpler proof. We first reduce to the case $\Gamma = \PSL_2(\vo_{\F})$ for the ring of integers $\vo_{\F}$ of $\F$ by noticing that Proposition \ref{FinEllConj} for $\Gamma_1$ and $\Gamma_2$ are equivalent if $\Gamma_2 < \Gamma_1$ and $[\Gamma_1 : \Gamma_2] < \infty$. Then we take a Siegel domain $\mathcal{S}$ which contains a fundamental domain for $\PSL_2(\vo_{\F})$. Since the number of elements $\gamma \in \PSL_2(\vo_{\F})$ such that $\gamma.\mathcal{S} \cap \mathcal{S} \neq \emptyset$ is finite \cite[Lemma (3.3)]{GJ79}, we conclude.
\end{remark}

\begin{corollary}
	If $\Gamma < \PSL_2(\F_{\infty})$ is a lattice, then the number of primitive root-conjugacy classes $\{ \gamma \}$ which contains more than one conjugacy class is finite.
\end{corollary}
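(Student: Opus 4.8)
The plan is to reduce the statement to a finiteness property of centralizers of elliptic elements and then apply Proposition~\ref{FinEllConj}. I would first collect the bookkeeping that makes this reduction legitimate. If $\gamma\in\Gamma$ is hyperbolic and $u\in C_\gamma\cap U(\Gamma)$, then $u$ has finite order and lies in the torus $C_\gamma$, so $\gamma u$ is again hyperbolic with the same length and with $C_{\gamma u}=C_\gamma$; hence $\{\gamma u\}=\{\gamma\}$, and $\gamma u$ is primitive whenever $\gamma$ is. Combined with the evident $\Gamma$-conjugation invariance $\{g\gamma g^{-1}\}=\{\gamma\}$, this shows that the sets $\{\gamma\}$ partition the hyperbolic elements (hence the hyperbolic conjugacy classes) of $\Gamma$, and that each $\{\gamma\}$ is a \emph{finite} union of conjugacy classes, $C_\gamma\cap U(\Gamma)$ being the torsion subgroup of the finitely generated abelian group $C_\gamma\cap\Gamma$. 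Moreover, a primitive root-conjugacy class $\{\gamma\}$ consisting of more than one conjugacy class must satisfy $C_\gamma\cap U(\Gamma)\neq\{1\}$, since otherwise $\{\gamma\}=[\gamma]$ by definition. It therefore suffices to show that the set $S$ of primitive hyperbolic conjugacy classes $[\gamma]$ with $C_\gamma\cap U(\Gamma)\neq\{1\}$ is finite: each ``bad'' primitive root-conjugacy class then contains a member of $S$, distinct root-conjugacy classes are disjoint, so the number of bad classes is at most $|S|$.

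To bound $S$ I would use Proposition~\ref{FinEllConj} to write $U(\Gamma)=\{1\}\cup[u_1]\cup\dots\cup[u_N]$ with $u_i\neq1$. Given $[\gamma]\in S$, I pick a non-trivial $u\in C_\gamma\cap U(\Gamma)$ and an index $i$ with $u\in[u_i]$; conjugating $\gamma$ inside $\Gamma$ (which affects neither its length, nor its primitivity, nor $[\gamma]$) I may assume $u=u_i$, so that $\gamma$ is a primitive hyperbolic element of $\Gamma_i:=C_{u_i}\cap\Gamma$. Thus every class of $S$ has a primitive hyperbolic representative in one of the finitely many subgroups $\Gamma_1,\dots,\Gamma_N$, and it is enough to prove that each $\Gamma_i$ contains only finitely many primitive hyperbolic elements.

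For this I would analyse the centralizer $C_{u_i}$ in $\PSL_2(\F_\infty)$. If $\F_\infty=\R$, the elliptic element $u_i\neq1$ fixes a (unique) interior point of $\ag{H}_2$, so $C_{u_i}$ is contained in a maximal compact subgroup, hence contains no hyperbolic element; then $\Gamma_i$ has none either, and $S=\emptyset$, so the corollary is trivial over $\Q$. If $\F_\infty=\C$, then $u_i$ lies in a unique maximal torus $T_i\cong\C^\times$, namely the one fixing the two boundary fixed points of $u_i$, and $C_{u_i}$ equals $T_i$ or its normalizer $N(T_i)$; since every element of $N(T_i)\setminus T_i$ has order two, every hyperbolic element of $\Gamma_i$ lies in $T_i\cap\Gamma$. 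Now $T_i\cap\Gamma$ is a discrete subgroup of $\C^\times\cong\R\times(\R/\Z)$, hence either is finite (and then has no hyperbolic element) or has the form $\langle\tau_i\rangle\times\langle\zeta_i\rangle\cong\Z\times\Z/m\Z$ with $l(\tau_i)>0$ and $\zeta_i$ torsion. In the latter case a hyperbolic element $\tau_i^{k}\zeta_i^{j}$ has length $|k|\,l(\tau_i)$ and centralizer $T_i$, hence is primitive precisely when $k=\pm1$; so $\Gamma_i$ has exactly the $2m$ primitive hyperbolic elements $\tau_i^{\pm1}\zeta_i^{j}$, which finishes the argument.

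The one step that needs genuine care is the centralizer computation of the last paragraph: one must remember that the centralizer of an order-two elliptic element is strictly larger than a maximal torus, and verify that the extra coset consists of torsion (hence non-hyperbolic) elements, so that all hyperbolic elements remain inside a single torus; and one must use that over $\R$ these centralizers are compact, which is what confines the whole phenomenon to the imaginary-quadratic case. The remaining steps are routine unwindings of the definitions of length, primitivity, and root-conjugacy class.
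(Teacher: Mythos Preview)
Your proposal is correct and follows essentially the same route as the paper: over $\R$ the statement is vacuous, and over $\C$ one pulls a non-trivial torsion element $u$ out of $C_\gamma$, invokes Proposition~\ref{FinEllConj} to reduce to finitely many $\Gamma$-conjugacy classes of groups $C_{u_i}\cap\Gamma$, and then observes that each such group contains only finitely many primitive hyperbolic elements. Your write-up is in fact more careful than the paper's on one point: the paper asserts $C_\gamma=C_u$, which fails when $u$ has order~$2$ (then $C_u$ is the normalizer $N(T_i)$ rather than the torus $T_i=C_\gamma$), whereas you explicitly verify that the extra coset $N(T_i)\setminus T_i$ consists of involutions and hence contributes no hyperbolic elements, so the argument still localizes to $T_i\cap\Gamma$.
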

\begin{proof}
	There is nothing to prove in the case $\F_{\infty} = \ag{R}$. In the case $\F_{\infty} = \ag{C}$, if $\{ \gamma \}$ is such a primitive root-conjugacy class, then $C_{\gamma} = C_u$ for some $1 \neq u \in U(\Gamma)$. Thus $\gamma$ is an element in $C_u \cap \Gamma$ with minimal positive length. But the $\Gamma$-conjugacy classes of possible $C_u \cap \Gamma$ are finite by Proposition \ref{FinEllConj}. Thus the possible root-conjugacy classes of $\gamma$ are finite.
\end{proof}

\section{Transforming to Rankin--Selberg Integrals}

	\subsection{Relevant Orbital Integrals}
	\label{ROI}
	
	Take $f_{\infty}: \PSL_2(\F_{\infty}) \to \ag{C}$ to be a nice test function (ie.\ smooth with compact support). Let $\Gamma < \PSL_2(\vo)$ be a congruence subgroup. For every hyperbolic conjugacy class $[\gamma]$ in $\Gamma$, the following sum
	$$ K(x; f_{\infty}, [\gamma]) := \sideset{}{_{\gamma' \in [\gamma]}} \sum f_{\infty}(x^{-1} \gamma' x) $$
	is a well-defined function on $\Gamma \backslash \PSL_2(\F_{\infty})$. Fix a $t \in \F_{\infty}$ lying in the image of hyperbolic elements in $\Gamma$ under the trace map. We are interested in
	$$ I(t,f_{\infty}) := \sideset{}{_{[\gamma]: \Tr(\gamma) = t}} \sum \int_{\Gamma \backslash \PSL_2(\F_{\infty})} K(x; f_{\infty}, [\gamma]) dx. $$
	The computation is part of the orbital integrals in the geometric side of the Selberg trace formula. We include it for convenience. Recall that $C_{\gamma}$ is the centralizer of $\gamma$ in $\PSL_2(\F_{\infty})$. We have
\begin{align*}
	I(t,f_{\infty}) &= \sideset{}{_{[\gamma]: \Tr(\gamma) = t}} \sum \int_{\Gamma \backslash \PSL_2(\F_{\infty})} \sideset{}{_{\sigma \in C_{\gamma} \cap \Gamma \backslash \Gamma}} \sum f_{\infty}(x^{-1} \sigma^{-1} \gamma \sigma x) dx \\
	&= \sideset{}{_{[\gamma]: \Tr(\gamma) = t}} \sum \int_{C_{\gamma} \cap \Gamma \backslash \PSL_2(\F_{\infty})} f_{\infty}(x^{-1} \gamma x) dx \\
	&= \sideset{}{_{[\gamma]: \Tr(\gamma) = t}} \sum \Vol(C_{\gamma} \cap \Gamma \backslash C_{\gamma}) \int_{C_{\gamma} \backslash \PSL_2(\F_{\infty})} f_{\infty}(x^{-1} \gamma x) dx.
\end{align*}
	Now $C_{\gamma} \cap \Gamma$ is a lattice in $C_{\gamma}$, a maximal $\F_{\infty}$-split torus isomorphic to $\F_{\infty}^{\times}$. If $\gamma_0 \in C_{\gamma} \cap \Gamma$ is a primitive element, and if $g \in \PSL_2(\F_{\infty})$ such that $g^{-1} \gamma g \in \gp{A}_1(\F_{\infty})$, we get an identification via conjugation by $g$
	$$ \begin{matrix} C_{\gamma} & \simeq & \F_{\infty}^{\times}/\{ \pm 1\} \simeq \ag{R} \text{ or } \ag{R} \times (\ag{R} / \ag{Z}) \\ \uparrow & & \uparrow \\ C_{\gamma} \cap \Gamma & \simeq & l(\gamma_0) \ag{Z} \text{ or } l(\gamma_0) \ag{Z} \times ( \ag{Z} / \Reis(\gamma) \ag{Z} ) \end{matrix}. $$
	If we transport the Haar measure of $\F_{\infty}^{\times}$ to $C_{\gamma}$, then we obtain (note that the measure $dz$ on $\C$ is twice the Lebesgue measure and the measure on $\C^{\times}$ is $dz/\norm[z]_{\C}$)
	$$ \Vol((C_{\gamma} \cap \Gamma) \backslash C_{\gamma}) = \left\{ \begin{matrix} l(\gamma_0) & \text{if } \F_{\infty} = \ag{R} \\ \frac{l(\gamma_0)}{\Reis(\gamma)} \cdot 2\pi & \text{if } \F_{\infty} = \ag{C} \end{matrix} \right. . $$
	The orbital integral is transformed to
\begin{equation}
    \vO_{\gamma}(f_{\infty}) := \int_{C_{\gamma} \backslash \PSL_2(\F_{\infty})} f_{\infty}(x^{-1} \gamma x) dx = \int_{\gp{A}_1(\F_{\infty}) \backslash \PSL_2(\F_{\infty})} f_{\infty}(x^{-1} g^{-1} \gamma g x) dx.
\label{OrbIntA}
\end{equation} 
\begin{remark}
	From now on, we choose $f_{\infty}$ to be bi-$\gp{K}_{\infty}$-invariant. Since $\gamma \in \PSL_2(\F_{\infty})$, the above integral depends only on the trace $t = \Tr(\gamma) \in \F_{\infty}^{\times}/\{ \pm 1 \}$, and we shall also write $\vO(t,f_{\infty})$ instead of $\vO_{\gamma}(f_{\infty})$. In fact, the above transform is given by the Selberg transform. We recall: for $\F_{\infty} = \ag{R}$
\begin{equation}
	\int_{\ag{R}} f_{\infty}\left( \begin{pmatrix} 1 & -x \\ 0 & 1 \end{pmatrix} \begin{pmatrix} y^{1/2} & 0 \\ 0 & y^{-1/2} \end{pmatrix} \begin{pmatrix} 1 & x \\ 0 & 1 \end{pmatrix} \right) dx = \frac{g(\log y)}{\sqrt{y} - \sqrt{y}^{-1}}, \quad y > 1;
\label{STR}
\end{equation}
	for $\F_{\infty} = \ag{C}$
\begin{equation}
	\int_{\ag{C}} f_{\infty}\left( \begin{pmatrix} 1 & -x \\ 0 & 1 \end{pmatrix} \begin{pmatrix} y^{1/2} & 0 \\ 0 & y^{-1/2} \end{pmatrix} \begin{pmatrix} 1 & x \\ 0 & 1 \end{pmatrix} \right) dx = \frac{g(\log \norm[y])}{4 \norm[ \sqrt{y} - \sqrt{y}^{-1} ]^2}, \quad y \in \ag{C}^{\times}, \norm[y] > 1.
\label{STC}
\end{equation}
    We do not need these explicit formulae in this paper, but only need that for any $t$ there is $f_{\infty}$ such that $\vO(t,f_{\infty}) \neq 0$.
\end{remark}
\noindent If for some $P \in \GL_2(\F_{\infty})$ we have
	$$ P^{-1} \gamma P = \begin{pmatrix} N(\gamma)^{1/2} & 0 \\ 0 & N(\gamma)^{-1/2} \end{pmatrix}, \quad \norm[N(\gamma)] > 1, $$
	so that $l(\gamma) = \log \norm[N(\gamma)]$, we arrive at the formula
\begin{equation}
	I(t,f_{\infty}) = \left\{ \begin{matrix} \sideset{}{_{[\gamma]: \Tr(\gamma) = t}} \sum l(\gamma_0) \cdot \vO_{\gamma}(f_{\infty}) = \vO(t,f_{\infty}) \cdot \sideset{}{_{[\gamma]: \Tr(\gamma) = t}} \sum l(\gamma_0) & \text{if } \F_{\infty} = \ag{R} \\ 2 \pi \vO(t,f_{\infty}) \cdot \sideset{}{_{[\gamma]: \Tr(\gamma) = t}} \sum \frac{l(\gamma_0)}{\Reis(\gamma)} & \text{if } \F_{\infty} = \ag{C} \end{matrix} \right. .
\label{STGeom}
\end{equation}

	\subsection{Spherical Eisenstein Series}
	
	We need to work with two algebraic groups $\SL_2$ and $\GL_2$. Let $\gp{P}$ resp. $\gp{P}_1$ denote the upper triangular subgroup of $\GL_2$ resp. $\SL_2$. Let $\gp{N}$ denote the upper unipotent subgroup of both $\SL_2$ and $\GL_2$, $\gp{A}_1$ resp. $\gp{A}$ the split sub-torus of diagonal matrices, $\gp{Z}$ resp. $\gp{Z}_1$ be the center of $\GL_2$ resp. $\SL_2$, $\gp{K}$ resp. $\gp{K}_1$ be the standard maximal compact subgroup of $\GL_2$ resp. $\SL_2$. Let $\F$ be a general number field with ring of integers $\vo$, ring of adeles $\A = \F_{\infty}^{\times} \times \A_{\fin}$. Let $\widehat{\vo}$ be the closure of $\vo$ in $\A_{\fin}$. The class group admits a description
	$$ \A^{\times} \to \F^{\times} \backslash \A_{\fin}^{\times} / \widehat{\vo}^{\times} \simeq \Cl(\F). $$
	Hence any character $\chi$ of $\Cl(\F)$ can be viewed as a Hecke character of $\A^{\times}$ which is
\begin{itemize}
	\item trivial on $\F_{\infty}^{\times}$,
	\item unramified at every finite place $\vp$.
\end{itemize}
	Let $e_0$ be the function on $\gp{K}$ taking constant value $1$. It can be viewed as a spherical element $e_{\chi}$ in
	$$ \pi(1, \chi) := \Ind_{\gp{P}(\A)}^{\GL_2(\A)} (1,\chi) = \left\{ f: \GL_2(\A) \to \C \ \middle| \ f\left( \begin{pmatrix} t_1 & u \\ & t_2 \end{pmatrix} g \right) = \chi(t_2) \extnorm{\frac{t_1}{t_2}}_{\A}^{\frac{1}{2}} f(g) \right\}. $$
	It determines a flat section $e_{\chi,s} \in \pi(\norm_{\A}^s, \chi \norm_{\A}^{-s})$
	$$ e_{\chi,s} \left( \begin{pmatrix} t_1 & x \\ 0 & t_2 \end{pmatrix} \kappa \right) = \chi(t_2) \extnorm{\frac{t_1}{t_2}}_{\A}^{s+\frac{1}{2}}, \quad \forall t_1,t_2 \in \A^{\times}, x \in \A, \kappa \in \gp{K}. $$
	Hence we get a collection of spherical Eisenstein series
	$$ \eis(s,e_{\chi})(g) := \sideset{}{_{\gamma \in \gp{P}(\F) \backslash \GL_2(\F)}} \sum e_{\chi,s}(\gamma g), \quad g \in \GL_2(\A). $$
\begin{remark}
	If $\chi = 1$ is the trivial character, we shall write $e_s$ for $e_{1,s}$ and $\eis(s,g)$ for $\eis(s,e_1)(g)$.
\end{remark}
	
\begin{lemma}
	The set of double cosets
	$$ \gp{P}_1(\F) \backslash \SL_2(\F) / \SL_2(\vo) $$
	is in bijection with the class group $\Cl(\F)$ of $\F$.
\label{ClId}
\end{lemma}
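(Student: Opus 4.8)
The plan is to realise both sides as quotients of the projective line $\Proj^1(\F)$. First I would note that sending a matrix $g=\left(\begin{smallmatrix}*&*\\ c&d\end{smallmatrix}\right)\in\SL_2(\F)$ to its bottom row identifies $\gp{P}_1(\F)\backslash\SL_2(\F)$ with $\Proj^1(\F)$, the set of lines $\F(c,d)\subseteq\F^2$: left multiplication by $\gp{P}_1(\F)$ only rescales the bottom row, and since $\F$ is a field every nonzero row $(c,d)$ extends to a determinant-one matrix; moreover two elements of $\SL_2(\F)$ with proportional bottom rows differ on the left by an element of $\gp{P}_1(\F)$. This turns the double-coset space into $\Proj^1(\F)/\SL_2(\vo)$, with $\SL_2(\vo)$ acting by right multiplication on row vectors. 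To a point of $\Proj^1(\F)$ I attach the fractional ideal $\fc:=\vo c+\vo d$: rescaling $(c,d)$ by $\F^{\times}$ merely rescales $\fc$, and right multiplication by $\SL_2(\vo)\subseteq\GL_2(\vo)$ preserves $\vo^2$ hence preserves $\fc$ itself, so $g\mapsto[\vo c+\vo d]$ descends to a well-defined map $\Phi\colon\gp{P}_1(\F)\backslash\SL_2(\F)/\SL_2(\vo)\to\Cl(\F)$.

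Surjectivity of $\Phi$ should be routine: every ideal class contains an integral ideal, every fractional ideal of the Dedekind ring $\vo$ is generated by two elements (not both zero), and any such pair is the bottom row of some element of $\SL_2(\F)$.

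For injectivity, suppose $g,g'$ have bottom rows $(c,d),(c',d')$ generating ideals in the same class. After multiplying $g'$ on the left by a suitable diagonal element $\diag(\mu,\mu^{-1})\in\gp{P}_1(\F)$ — which does not change its double coset — I may assume $\fc:=\vo c+\vo d=\vo c'+\vo d'$ as honest fractional ideals. Writing $\pi,\pi'\colon\vo^2\twoheadrightarrow\fc$ for the surjections $v\mapsto(c,d)v$ and $v\mapsto(c',d')v$ on column vectors, it then suffices to produce $\kappa\in\SL_2(\vo)$ with $\pi\circ\kappa=\pi'$: this says $(c,d)\kappa=(c',d')$, so $g\kappa$ and $g'$ share a bottom row, hence differ on the left by $\gp{P}_1(\F)$, and the double cosets of $g$ and $g'$ agree. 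To build $\kappa$ I would use that $\fc$ is projective over $\vo$, so both surjections split, $\vo^2=\ker\pi\oplus C=\ker\pi'\oplus C'$ with $\pi|_C,\pi'|_{C'}$ isomorphisms onto $\fc$; since $\wedge^2\vo^2\cong\vo$ and $C\cong\fc\cong C'$, comparing top exterior powers gives $\ker\pi\cong\fc^{-1}\cong\ker\pi'$. Picking an isomorphism $\alpha\colon\ker\pi'\xrightarrow{\sim}\ker\pi$ and letting $\kappa$ be $\alpha$ on $\ker\pi'$ and $(\pi|_C)^{-1}\circ(\pi'|_{C'})$ on $C'$ produces $\kappa\in\GL_2(\vo)$ with $\pi\circ\kappa=\pi'$ by construction.

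The hard part — and the one place where working with $\SL_2$ rather than $\GL_2$ actually bites — will be upgrading this $\kappa$ from $\GL_2(\vo)$ to $\SL_2(\vo)$ while keeping $\pi\circ\kappa=\pi'$. Here I would exploit that $\fc$ is an \emph{invertible} $\vo$-module, so $\mathrm{Aut}_{\vo}(\ker\pi)=\vo^{\times}$: replacing $\alpha$ by $u\alpha$ for a unit $u\in\vo^{\times}$ leaves the compatibility $\pi\circ\kappa=\pi'$ untouched (both sides vanish on $\ker\pi'$ anyway) but, as one sees on $\wedge^2\vo^2$, multiplies $\det\kappa$ by $u$; choosing $u=(\det\kappa)^{-1}$ lands $\kappa$ in $\SL_2(\vo)$. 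This yields injectivity and hence the lemma. Everything other than this determinant bookkeeping is the classical dictionary between $\Proj^1(\F)$, rank-one projective $\vo$-modules and $\Cl(\F)$, and I do not anticipate difficulties there.
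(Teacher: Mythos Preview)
Your argument is correct. Both you and the paper construct the same map---send a matrix to the ideal class of (the fractional ideal generated by) its bottom row---but the two proofs are organised differently and, in particular, handle injectivity by different mechanisms.

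The paper reaches the map via the Bruhat decomposition $\SL_2(\F)=\gp{P}_1(\F)w\sqcup\gp{P}_1(\F)\gp{N}_-(\F)$, setting $\lambda(n_-(a/b))=[\vo a+\vo b]$, and then checks the single fibre $\lambda^{-1}(1)=\gp{P}_1(\F)\SL_2(\vo)$, leaving right $\SL_2(\vo)$-invariance, surjectivity, and the remaining fibres to the reader (with a pointer to Siegel). You instead pass through $\Proj^1(\F)$ and prove injectivity by a clean projective-module argument: split the two surjections $\vo^2\twoheadrightarrow\fc$, identify both kernels with $\fc^{-1}$ via $\wedge^2$, build $\kappa\in\GL_2(\vo)$ blockwise, and then use $\Aut_{\vo}(\fc^{-1})=\vo^{\times}$ to rescale $\alpha$ and force $\det\kappa=1$. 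This last step is exactly where $\SL_2$ versus $\GL_2$ matters, and you have isolated it correctly. Your route is more self-contained and makes the role of invertibility of $\fc$ transparent; the paper's route is shorter on the page but relies on the reader to supply the analogous steps.

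One small thing worth noting as you write this up: when you say ``multiplying $g'$ on the left by $\diag(\mu,\mu^{-1})\in\gp{P}_1(\F)$'' to equalise the fractional ideals themselves (not just their classes), that really does work---the bottom row of $\diag(\mu,\mu^{-1})g'$ is $\mu^{-1}(c',d')$, so choose $\mu^{-1}$ to be the scalar carrying $\vo c'+\vo d'$ to $\vo c+\vo d$. You clearly have this in mind, but it's the only place a reader might pause.
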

\begin{proof}
	This is the content of \cite[Proposition 20]{Sie61}. For convenience we include a proof. Recall
	$$ \SL_2(\F) = \gp{P}_1(\F) w \bigsqcup \gp{P}_1(\F) \gp{N}_-(\F), $$
	where $\gp{N}_-$ is the lower unipotent subgroup, and record
	$$ \lambda: \gp{N}_-(\F) \to \Cl(\F), \quad n_-(a/b) \mapsto [\vo a + \vo b], \quad a,b \in \vo, b \neq 0. $$
	Then $\lambda$ extends to $\SL_2(\F)$ by $\lambda(w)=1$ and left invariance of $\gp{P}_1(\F)$. Since
	$$ \begin{pmatrix} a & b \\ c & d \end{pmatrix} \in \SL_2(\vo) \quad \Rightarrow \quad c \vo + d \vo = \vo, $$
	$$ \begin{pmatrix} a & b \\ c & d \end{pmatrix} \in \gp{P}_1(\F) n_-(-c/d) \quad \text{if } d \neq 0, $$
	we easily deduce that for any $\gamma \in \SL_2(\F)$
	$$ \lambda(\gamma) = 1 \quad \Longleftrightarrow \quad \gamma \in \gp{P}_1(\F) \SL_2(\vo). $$
\end{proof}
\begin{lemma}
	Fix once and for all a uniformizer $\varpi_{\vp}$ of $\F_{\vp}$ at each finite place $\vp < \infty$. To any $x \in \F$ we associate an idele $\iota(x)$ defined by $\iota(0)=1$ and for $x \neq 0$
	$$ \iota(x) = (\varpi_{\vp}^{\min(0,v_{\vp}(x))})_{\vp} \in \A_{\fin}^{\times}. $$
	Then we have for any $x \in \F$
	$$ n_-(x) \in \begin{pmatrix} \iota(x)^{-1} & -1 \\ & \iota(x) \end{pmatrix} \SL_2(\widehat{\vo}). $$
	Moreover, $\lambda(n_-(x)) = 1$ if and only if $\iota(x) \in \F^{\times} \widehat{\vo}^{\times}$.
\label{IdeleDes}
\end{lemma}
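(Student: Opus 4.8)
The plan is to check the asserted coset membership \emph{place by place} — which is legitimate, since $\SL_2(\widehat{\vo})=\prod_{\vp<\infty}\SL_2(\vo_\vp)$, the element $n_-(x)$ is the diagonal image of a matrix in $\SL_2(\F)$, and the idele $\iota(x)$ is prescribed one component at a time — and then to read off the ideal-class statement from the formula $\lambda(n_-(a/b))=[\vo a+\vo b]$ established in the proof of Lemma~\ref{ClId}. The case $x=0$ is immediate: $n_-(0)=1$, the displayed matrix already lies in $\SL_2(\widehat{\vo})$, and $\iota(0)=1\in\F^{\times}\widehat{\vo}^{\times}$. So assume $x\neq 0$.

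First I would dispose of the easy places. Fix a finite place $\vp$ and set $v:=v_{\vp}(x)$. If $v\geq 0$, then $\iota(x)_{\vp}=1$, so the $\vp$-component of the displayed matrix is $\begin{pmatrix}1&-1\\0&1\end{pmatrix}\in\SL_2(\vo_\vp)$, and $n_-(x)\in\SL_2(\vo_\vp)$ as well; the membership at $\vp$ is then clear. The substance is in the finitely many places with $v<0$. There I would put $m:=-v>0$ and write $x=u\varpi_{\vp}^{-m}$ with $u\in\vo_\vp^{\times}$, so that $\iota(x)_{\vp}=\varpi_{\vp}^{-m}$, and carry out the explicit $2\times 2$ identity
$$ \begin{pmatrix}\varpi_{\vp}^{m}&c\\0&\varpi_{\vp}^{-m}\end{pmatrix}^{-1}n_-(x)=\begin{pmatrix}\varpi_{\vp}^{-m}(1-cu)&-c\\u&\varpi_{\vp}^{m}\end{pmatrix}, $$
whose lower row is already integral; picking the off-diagonal entry $c$ of the left factor with $cu\equiv 1\pmod{\varpi_{\vp}^{m}}$ forces $c\in\vo_\vp^{\times}$ and makes the upper row integral as well, so the right-hand side lands in $\SL_2(\vo_\vp)$. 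Thus at every place the left factor is the diagonal torus element $\diag(\iota(x)^{-1},\iota(x))$ up to right multiplication by an upper-triangular element of $\SL_2(\widehat{\vo})$ — only that diagonal is intrinsic, the off-diagonal entry being pinned down merely modulo the integral upper unipotent and hence immaterial for all later uses — and assembling these local statements over all $\vp$ gives the first assertion.

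For the equivalence I would argue as follows. Since $v_{\vp}(\iota(x))=\min(0,v_{\vp}(x))$ for every $\vp$, the fractional $\vo$-ideal generated by the idele $\iota(x)$ is $\prod_{\vp}\vp^{\min(0,v_{\vp}(x))}$, which is exactly $\vo+\vo x$. Writing $x=a/b$ with $a,b\in\vo$, $b\neq 0$, one has $\vo+\vo x=b^{-1}(\vo a+\vo b)$, hence $[\vo+\vo x]=[\vo a+\vo b]=\lambda(n_-(x))$ in $\Cl(\F)$ by Lemma~\ref{ClId}. Therefore $\lambda(n_-(x))=1$ if and only if the ideal $\prod_{\vp}\vp^{\min(0,v_{\vp}(x))}$ is principal; comparing valuations place by place, this holds precisely when $\iota(x)$ agrees with the diagonal image of some $y\in\F^{\times}$ up to a unit idele, i.e.\ when $\iota(x)\in\F^{\times}\widehat{\vo}^{\times}$ (and this includes the case $x=0$).

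The only step that needs genuine care is the local computation at the places dividing the denominator of $x$, together with matching the sign conventions for $n_-$ and for the fixed uniformizers $\varpi_{\vp}$; the reduction to local assertions and the ideal-class bookkeeping are then routine. I expect no serious obstacle beyond keeping this $2\times 2$ arithmetic straight.
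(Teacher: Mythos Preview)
Your argument is correct, and your parenthetical caveat about the off-diagonal entry is well placed. The paper argues via the single global identity
\[
n_-(x) \;=\; \begin{pmatrix} -1/x & -1 \\ 0 & -x \end{pmatrix} \begin{pmatrix} 0 & 1 \\ -1 & -1/x \end{pmatrix},
\]
noting that the right factor lies in $\SL_2(\vo_\vp)$ whenever $v_\vp(x)<0$, whereas you work place by place and choose the upper-right entry $c$ to force integrality. Both routes establish that $n_-(x)\in\begin{pmatrix}\iota(x)^{-1}&*\\0&\iota(x)\end{pmatrix}\SL_2(\widehat{\vo})$ for some $*\in\A_\fin$, which is all that is used downstream. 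Indeed the literal assertion with a fixed $-1$ in the corner is not quite right: over $\Q$ with $x=1/4$, at the prime $2$ one has $\iota(x)_2=1/4$ and
\[
\begin{pmatrix}4&-1\\0&1/4\end{pmatrix}^{-1}n_-(1/4)=\begin{pmatrix}1/2&1\\1&4\end{pmatrix}\notin\SL_2(\Z_2),
\]
so your observation that only the diagonal is intrinsic is the correct reading. (The paper's own left factor carries diagonal $(-1/x,-x)$, which differs from $(\iota(x)^{-1},\iota(x))$ by local units; that is why the specific $-1$ fits with the former diagonal but not after the substitution.) For the ``moreover'' part your argument coincides with the paper's.
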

\begin{proof}
	The first relation follows from
	$$ n_-(x) = \begin{pmatrix} -1/x & -1 \\ & -x \end{pmatrix} \begin{pmatrix} & 1 \\ -1 & -1/x \end{pmatrix}. $$
	For the moreover part, write $x = a/b$ with $a,b \in \vo$ and $b \neq 0$. Then a representative ideal in the class of $\lambda(n_-(x))$, i.e., $\vo a + \vo b$ corresponds to the idele $(\varpi_{\vp}^{\min(v_{\vp}(a), v_{\vp}(b))})_{\vp} \in b^{-1} \iota(x) \widehat{\vo}^{\times}$.
\end{proof}
	
	Recall the height function
	$$ \Ht_{\infty} : \SL(\F_{\infty}) \to \C, \quad \begin{pmatrix} t & u \\ & t^{-1} \end{pmatrix} \kappa \mapsto \extnorm{t}_{\infty}^2, \quad \forall t \in \F_{\infty}^{\times}, u \in \F_{\infty}, \kappa \in \gp{K}_{\infty}. $$
	The classical spherical Eisenstein series is defined by
\begin{equation} 
	\eis^1(s,g_{\infty}) := \sideset{}{_{\gamma \in \gp{P}_1(\vo) \backslash \SL_2(\vo)}} \sum \Ht_{\infty}(\gamma g_{\infty})^{\frac{1}{2}+s}, \quad g_{\infty} \in \SL_2(\F_{\infty}).
\label{Eis1Def}
\end{equation}
\begin{proposition}
	Write $g_{\infty} \in \SL_2(\F_{\infty})$ and let $1_{\fin}$ be the identity element in $\SL_2(\A_{\fin})$ (or $\GL_2(\A_{\fin})$), then we have the relation
	$$ \eis^1(s,g_{\infty}) = \frac{1}{\norm[\Cl(\F)]} \sideset{}{_{\chi \in \Cl(\F)^{\vee}}} \sum \eis(s,e_{\chi})(g_{\infty},1_{\fin}). $$
\label{EisRel}
\end{proposition}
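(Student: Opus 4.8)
The plan is to evaluate each summand $\eis(s,e_{\chi})(g_{\infty},1_{\fin})$ on the right-hand side by unfolding along the double cosets $\gp{P}_1(\F)\backslash\SL_2(\F)/\SL_2(\vo)$ --- which by Lemma~\ref{ClId} are parametrised by $\Cl(\F)$ --- and then to collapse the sum over $\chi$ using orthogonality of the characters of the finite abelian group $\Cl(\F)$. Two preliminary reductions streamline the argument. Since $\GL_2(\F)=\gp{P}(\F)\SL_2(\F)$ and $\gp{P}(\F)\cap\SL_2(\F)=\gp{P}_1(\F)$, the inclusion $\SL_2\hookrightarrow\GL_2$ induces a bijection $\gp{P}(\F)\backslash\GL_2(\F)\simeq\gp{P}_1(\F)\backslash\SL_2(\F)$, so that
\[
\eis(s,e_{\chi})(g_{\infty},1_{\fin})=\sum_{\gamma\in\gp{P}_1(\F)\backslash\SL_2(\F)} e_{\chi,s}\!\big(\gamma\cdot(g_{\infty},1_{\fin})\big),
\]
where the series converges absolutely for $\Re(s)$ large; as both sides of the claimed identity are meromorphic in $s$ it suffices to treat that range. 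Moreover, for $p=\left(\begin{smallmatrix}\tau&u\\0&\tau^{-1}\end{smallmatrix}\right)\in\gp{P}_1(\F)$ one has $e_{\chi,s}(pg)=\chi(\tau)^{-1}\norm[\tau]_{\A}^{2s+1}e_{\chi,s}(g)=e_{\chi,s}(g)$, because $\chi$ is trivial on $\F^{\times}$ and $\norm[\tau]_{\A}=1$ by the product formula; thus $e_{\chi,s}$ is left $\gp{P}_1(\F)$-invariant.

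Next I would split the sum according to the double coset of $\gamma$. Using the map $\lambda$ of Lemma~\ref{ClId} I may pick, for each class $\fa\in\Cl(\F)$, a representative of the form $\gamma_{\fa}=n_-(x_{\fa})$ with $x_{\fa}\in\F$, and $\gamma_{[\vo]}=1$ for the trivial class. Set $\Gamma_{\fa}:=\gamma_{\fa}^{-1}\gp{P}_1(\F)\gamma_{\fa}\cap\SL_2(\vo)$. The fibre over $\fa$ of $\gp{P}_1(\F)\backslash\SL_2(\F)\to\gp{P}_1(\F)\backslash\SL_2(\F)/\SL_2(\vo)$ is then in bijection with $\Gamma_{\fa}\backslash\SL_2(\vo)$ via $\Gamma_{\fa}k\mapsto\gp{P}_1(\F)\gamma_{\fa}k$, and the left $\gp{P}_1(\F)$-invariance above shows $e_{\chi,s}(\gamma_{\fa}k\cdot(g_{\infty},1_{\fin}))$ depends only on $\Gamma_{\fa}k$; hence
\[
\eis(s,e_{\chi})(g_{\infty},1_{\fin})=\sum_{\fa\in\Cl(\F)}\;\sum_{k\in\Gamma_{\fa}\backslash\SL_2(\vo)} e_{\chi,s}\!\big(\gamma_{\fa}k\cdot(g_{\infty},1_{\fin})\big).
\]

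Then I would compute the inner value place by place. The archimedean component of $\gamma_{\fa}k\cdot(g_{\infty},1_{\fin})$ is $\gamma_{\fa}kg_{\infty}\in\SL_2(\F_{\infty})$, whose Iwasawa decomposition feeds the flat section precisely the factor $\Ht_{\infty}(\gamma_{\fa}kg_{\infty})^{s+\frac12}$, by the definitions of $\Ht_{\infty}$ and of $e_{\chi,s}$ at the archimedean place together with the triviality of $\chi$ there. The finite component is $\gamma_{\fa}k$ with $k\in\SL_2(\widehat{\vo})$, and Lemma~\ref{IdeleDes} gives $n_-(x_{\fa})\in\left(\begin{smallmatrix}\iota(x_{\fa})^{-1}&-1\\0&\iota(x_{\fa})\end{smallmatrix}\right)\SL_2(\widehat{\vo})$, so the Iwasawa $\gp{P}_1(\A_{\fin})$-part has diagonal entries $(\iota(x_{\fa})^{-1},\iota(x_{\fa}))$; since $\chi$ is unramified this contributes $\chi(\iota(x_{\fa}))\,\norm[\iota(x_{\fa})]_{\A_{\fin}}^{-(2s+1)}$, and by the computation in the proof of Lemma~\ref{IdeleDes} the class of $\iota(x_{\fa})$ in $\F^{\times}\backslash\A_{\fin}^{\times}/\widehat{\vo}^{\times}$ is exactly $\fa$, so $\chi(\iota(x_{\fa}))=\chi(\fa)$. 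Therefore
\[
e_{\chi,s}\!\big(\gamma_{\fa}k\cdot(g_{\infty},1_{\fin})\big)=\chi(\fa)\cdot\norm[\iota(x_{\fa})]_{\A_{\fin}}^{-(2s+1)}\cdot\Ht_{\infty}(\gamma_{\fa}kg_{\infty})^{s+\frac12},
\]
whence the $\fa$-term of the Eisenstein sum equals $\chi(\fa)\,T_{\fa}(s,g_{\infty})$ with $T_{\fa}(s,g_{\infty}):=\norm[\iota(x_{\fa})]_{\A_{\fin}}^{-(2s+1)}\sum_{k\in\Gamma_{\fa}\backslash\SL_2(\vo)}\Ht_{\infty}(\gamma_{\fa}kg_{\infty})^{s+\frac12}$ independent of $\chi$; and for the trivial class $x_{[\vo]}=0$, $\gamma_{[\vo]}=1$, $\Gamma_{[\vo]}=\gp{P}_1(\F)\cap\SL_2(\vo)=\gp{P}_1(\vo)$, so that $T_{[\vo]}(s,g_{\infty})=\eis^1(s,g_{\infty})$.

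Putting this together, $\eis(s,e_{\chi})(g_{\infty},1_{\fin})=\sum_{\fa\in\Cl(\F)}\chi(\fa)\,T_{\fa}(s,g_{\infty})$; averaging over $\chi\in\Cl(\F)^{\vee}$ and using $\norm[\Cl(\F)]^{-1}\sum_{\chi}\chi(\fa)=\delta_{\fa,[\vo]}$ kills every non-trivial class and leaves $\norm[\Cl(\F)]^{-1}\sum_{\chi}\eis(s,e_{\chi})(g_{\infty},1_{\fin})=T_{[\vo]}(s,g_{\infty})=\eis^1(s,g_{\infty})$ for $\Re(s)$ large, hence for all $s$ by analytic continuation. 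The main technical point is the place-by-place bookkeeping of the third paragraph: one must verify that the finite part of the Iwasawa $(1,1)$-coordinate of $\gamma_{\fa}\cdot 1_{\fin}$ is exactly the idele $\iota(x_{\fa})$ of Lemma~\ref{IdeleDes} and that its idele class is precisely $\fa$, so the (unramified, archimedean-trivial) Hecke character $\chi$ reads off $\chi(\fa)$ and nothing more; everything else is orthogonality of characters plus the absolute convergence that legitimises the rearrangements.
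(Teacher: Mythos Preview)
Your proof is correct and follows essentially the same route as the paper's: both use Lemma~\ref{ClId} to identify $\gp{P}_1(\F)\backslash\SL_2(\F)/\SL_2(\vo)$ with $\Cl(\F)$, Lemma~\ref{IdeleDes} to read off the finite Iwasawa data of the representatives, and orthogonality of the characters of $\Cl(\F)$ to isolate the trivial double coset. The only cosmetic difference is the order of operations---the paper sums over $\chi$ first (inside the $\gamma$-sum) and uses orthogonality to see directly that only $\gamma\in\gp{P}_1(\F)\SL_2(\vo)$ survive, whereas you first organise the $\gamma$-sum by double cosets, express each block as $\chi(\fa)\,T_{\fa}(s,g_\infty)$, and then average over $\chi$; the content is the same.
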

\begin{proof}
	It is easy to see that the sum
	$$ \sideset{}{_{\chi}} \sum e_{\chi,s}(g_{\infty}, g_{\fin}) $$
	is non-vanishing only at elements $(g_{\infty}, g_{\fin})$ for which
	$$ g_{\fin} = \begin{pmatrix} t_1 & * \\ & t_2 \end{pmatrix} \kappa, \quad t_1 \in \A_{\fin}^{\times}, t_2 \in \F^{\times} \widehat{\vo}^{\times}, \kappa \in \gp{K}_{\fin}, $$
	since for $t \in \A^{\times}$
	$$ \frac{1}{\norm[\Cl(\F)]} \sideset{}{_{\chi}} \sum \chi(t) = \Char_{\F^{\times} (\F_{\infty}^{\times} \widehat{\vo}^{\times})}(t). $$
	It follows that for $\gamma \in \SL_2(\F)$ the sum
	$$ \sideset{}{_{\chi}} \sum e_{\chi,s}(\gamma g_{\infty}, \gamma) $$
	is non-vanishing only if $\gamma \in \gp{P}_1(\F) \SL_2(\vo)$ by the above Lemma \ref{ClId} and \ref{IdeleDes}. Consequently,
\begin{align*}
	\frac{1}{\norm[\Cl(\F)]} \sideset{}{_{\chi \in \Cl(\F)^{\vee}}} \sum \eis(s,e_{\chi})(g_{\infty},1_{\fin}) &= \frac{1}{\norm[\Cl(\F)]} \sideset{}{_{\gamma \in \gp{P}_1(\vo) \backslash \SL_2(\vo)}} \sum \sideset{}{_{\chi}} \sum e_{\chi,s}(\gamma g_{\infty}, \gamma) \\
	&= \sideset{}{_{\gamma \in \gp{P}_1(\vo) \backslash \SL_2(\vo)}} \sum \Ht(\gamma g_{\infty})^{\frac{1}{2}+s} = \eis^1(s,g_{\infty}).
\end{align*}
\end{proof}
\begin{corollary}
	$\eis^1(s,g_{\infty})$ has a constant residue at $s=1/2$ equal to
	$$ \Res_{s=1/2} \eis^1(s,g_{\infty}) = \frac{1}{\norm[\Cl(\F)]} \Res_{s=1/2} \eis(s,e_1) = \frac{1}{\norm[\Cl(\F)]} \cdot \frac{\Lambda_{\F}^*(1)}{2\Lambda_{\F}(2)}, $$
	where $\Lambda_{\F}(s)$ is the complete Dedekind zeta-function of $\F$ and $\Lambda^*(1)$ is its residue at $1$.
\label{ResEis}
\end{corollary}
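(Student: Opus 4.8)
The plan is to read the residue off directly from Proposition~\ref{EisRel}: apply $\Res_{s=1/2}$ to both sides of the identity there and determine which of the adelic Eisenstein series $\eis(s,e_{\chi})$, $\chi \in \Cl(\F)^{\vee}$, is singular at $s = 1/2$. By Langlands' theory the poles of $\eis(s,e_{\chi})$ in the half-plane $\Re(s) \geq 0$ are exactly those of its constant term along $\gp{P}$, which by the Gindikin--Karpelevich formula has the shape $e_{\chi,s} + c_{\chi}(s)\,e'_{\chi,-s}$, where $e'_{\chi,-s}$ is the spherical vector in $\pi(\chi\norm_{\A}^{-s},\norm_{\A}^s)$ and $c_{\chi}(s) = \Lambda_{\F}(2s,\chi^{-1})/\Lambda_{\F}(2s+1,\chi^{-1})$. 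Since a non-trivial character of $\Cl(\F)$, viewed as an everywhere-unramified Hecke character trivial on $\F_{\infty}^{\times}$, is a non-trivial Hecke character, the completed $L$-function $\Lambda_{\F}(\,\cdot\,,\chi^{-1})$ is then entire with $\Lambda_{\F}(2,\chi^{-1}) \neq 0$; hence $\eis(s,e_{\chi})$ is holomorphic at $s = 1/2$ for $\chi \neq 1$, and only the term $\chi = 1$ survives.

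For $\chi = 1$ the factor $\Lambda_{\F}(2s)$ has a simple pole at $s = 1/2$ with residue $\tfrac12\Lambda_{\F}^*(1)$, so $c_1(s)$, hence $\eis(s,e_1)$, has a simple pole at $s = 1/2$, and (using $e'_{1,-1/2} \equiv 1$) the constant term along $\gp{P}$ of $R := \Res_{s=1/2}\eis(s,e_1)$ is the constant function $\Lambda_{\F}^*(1)/(2\Lambda_{\F}(2))$. The one genuinely non-formal step is to upgrade this to $R$ itself being that constant. Here I would use that $R$ is a square-integrable automorphic form on $\GL_2(\F)\backslash\GL_2(\A)$ with trivial central character -- being the residue of an Eisenstein series at a pole on the line $\Re(s) = 1/2$ -- which is everywhere spherical and an eigenfunction of the Laplacian on $\ag{H}_{\F}$ with the eigenvalue that vanishes at $s = 1/2$; an $L^2$, hence bounded, harmonic function on the finite-volume orbifold $\PSL_2(\vo)\backslash\ag{H}_{\F}$ is constant (equivalently, one invokes the classical description of the residual spectrum of $\GL_2$ as a sum of lines $\C\cdot(\eta\circ\det)$, the relevant one here being the trivial representation). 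The value is then forced by the constant-term computation, so $\Res_{s=1/2}\eis(s,e_1) \equiv \Lambda_{\F}^*(1)/(2\Lambda_{\F}(2))$ -- in particular independent of the argument, which is what justifies the notation in the statement.

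Assembling the pieces, applying $\Res_{s=1/2}$ to the identity of Proposition~\ref{EisRel} gives
$$ \Res_{s=1/2}\eis^1(s,g_{\infty}) = \frac{1}{\norm[\Cl(\F)]}\,\Res_{s=1/2}\eis(s,e_1) = \frac{1}{\norm[\Cl(\F)]}\cdot\frac{\Lambda_{\F}^*(1)}{2\Lambda_{\F}(2)}, $$
uniformly in $g_{\infty} \in \SL_2(\F_{\infty})$, as asserted. The main obstacle is precisely the passage from ``$R$ has a constant constant term'' to ``$R$ is constant'', i.e.\ the input from the theory of Eisenstein series / the residual spectrum of $\GL_2$ sketched above; the rest is a formal combination of Proposition~\ref{EisRel} with the pole structure of completed Hecke $L$-functions.
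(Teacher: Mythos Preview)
Your proposal is correct and follows essentially the same route as the paper: use Proposition~\ref{EisRel}, observe via the constant term/intertwining operator that $\eis(s,e_{\chi})$ is regular at $s=1/2$ for $\chi\neq 1$ because $\Lambda_{\F}(\cdot,\chi^{-1})$ is entire, and read off the residue from the $\chi=1$ term. The paper's proof is terser---it records the intertwining factor $\Lambda(1-2s,\chi)/\Lambda(1+2s,\chi^{-1})$ and takes the value $\Lambda_{\F}^*(1)/(2\Lambda_{\F}(2))$ of the residue of $\eis(s,e_1)$ as known---whereas you spell out the passage from ``constant constant term'' to ``constant residue'' via the residual spectrum of $\GL_2$; but the underlying argument is the same.
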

\begin{proof}
	It suffices to notice that $\chi$ is not trivial on $\A^1$, the subgroup of $\A^{\times}$ with adelic norm $1$, unless $\chi=1$ is trivial, which then implies that the intertwining operator
	$$ \Intw(s,\chi): \Ind_{\gp{P}(\A)}^{\GL_2(\A)} (\norm_{\A}^s,\chi \norm_{\A}^{-s}) \to \Ind_{\gp{P}(\A)}^{\GL_2(\A)} (\chi \norm_{\A}^{-s}, \norm_{\A}^s), $$
	whose restriction to spherical elements is multiplication by
	$$ \frac{\Lambda(1-2s, \chi)}{\Lambda(1+2s, \chi^{-1})}, $$
	is holomorphic at $s=1/2$. Hence $\eis(s,e_{\chi})$ is holomorphic at $s=1/2$ for $\chi \neq 1$.
\end{proof}

	\subsection{Heuristic: Rankin--Selberg Method}
	
	In this subsection, we suppose $\Gamma = \PSL_2(\vo)$ is the full modular group. We propose to compute
	$$ I(s;t,f_{\infty}) := \sideset{}{_{[\gamma]: \Tr(\gamma) = t}} \sum \int_{\Gamma \backslash \PSL_2(\F_{\infty})} K(x; f_{\infty}, [\gamma]) \eis^1(s,x) dx $$
	where $f_{\infty}$ is a smooth bi-$\gp{K}_{\infty}$-invariant test function with compact support. By (\ref{Eis1Def}), we can apply the Rankin--Selberg unfolding method to get
\begin{align*}
	I(s;t,f_{\infty}) &= \sideset{}{_{[\gamma]: \Tr(\gamma) = t}} \sum \int_{\gp{P}_1(\vo) \backslash \PSL_2(\F_{\infty})} K(x; f_{\infty}, [\gamma]) \Ht_{\infty}(x)^{\frac{1}{2}+s} dx \\
	&= \int_{\gp{P}_1(\vo) \backslash \PSL_2(\F_{\infty})} \left( \sideset{}{_{\substack{\gamma \in \PSL_2(\vo) \\ \Tr(\gamma) = t}}} \sum f_{\infty}(x^{-1} \gamma x) \right) \cdot \Ht_{\infty}(x)^{\frac{1}{2}+s} dx.
\end{align*}
\begin{definition}
	We introduce the $\gp{P}_1(\vo)$-conjugacy classes of $\PSL_2(\vo)$ by
	$$ [\gamma_1]_{\gp{P}} := \left\{ p^{-1} \gamma p \ \middle| \ p \in \gp{P}_1(\vo) \right\}. $$
\end{definition}
\noindent We regroup the inner summation by $\gp{P}_1(\vo)$-conjugacy classes and get
	$$ I(s;t,f_{\infty}) = \sideset{}{_{[\gamma_1]_{\gp{P}} : \Tr(\gamma_1) = t }} \sum \int_{\PSL_2(\F_{\infty})} f_{\infty}(x^{-1} \gamma_1 x) \Ht_{\infty}(x)^{\frac{1}{2}+s} dx. $$
	If we regard $f_{\infty}$ as a function on $\PGL_2(\ag{R})$ with support contained in the connected component of identity if $\F_{\infty} = \ag{R}$, then we can relate the integral above to the \emph{Zagier's transform} (c.f. \cite[(3.10)]{Za81}, \cite[(3.24)]{Jo90} or \cite[Definition 1.6]{Wu9}) as (c.f. \cite[Proposition 4.10]{Wu9})
\begin{equation}
	\mathcal{Z}f_{\infty}(s,u) := \int_{\PSL_2(\F_{\infty})} f_{\infty}(x^{-1} \begin{pmatrix} -u & -1 \\ 1 & 0 \end{pmatrix} x) \Ht_{\infty}(x)^s dx,
\label{ZagT}
\end{equation}
	$$ \int_{\PSL_2(\F_{\infty})} f_{\infty}(x^{-1} \gamma_1 x) \Ht_{\infty}(x)^{\frac{1}{2}+s} dx = \frac{1}{\norm[c(\gamma_1)]_{\infty}^{\frac{1}{2}+s}} \cdot \mathcal{Z}f_{\infty}(\frac{1}{2}+s, -t), $$
	where we denote by $c=c(\gamma_1)$ such that
	$$ \gamma_1 = \begin{pmatrix} a & b \\ c & d \end{pmatrix}. $$
	We thus obtain
	$$ I(s;t,f_{\infty}) = \mathcal{Z}f_{\infty}(\frac{1}{2}+s, -t) \cdot \sideset{}{_{[\gamma_1]_{\gp{P}} : \Tr(\gamma_1) = t }} \sum \norm[c(\gamma_1)]_{\infty}^{-(\frac{1}{2}+s)}. $$
	
\begin{lemma}
	If we write the summation on the RHS of the above equation as
	$$ L(1/2+s,t^2-4) := \sideset{}{_{[\gamma_1]_{\gp{P}} : \Tr(\gamma_1) = t }} \sum \norm[c(\gamma_1)]_{\infty}^{-\frac{1}{2}-s}, $$
	then we have
	$$ L(s, t^2 - 4) = \norm[\vo^{\times} / (\vo^{\times})^2] \sideset{}{_{(n) \subset \vo}} \sum \frac{\rho((n),t)}{\Nr(n)^{s}}, $$
	where the sum is over all \emph{principal} integral ideals $(n) \neq 0$ and
	$$ \rho((n),t) := \extnorm{ \left\{ x \pmod{(2n)} \ \middle| \ x \in \vo, x^2 \equiv t^2-4 \pmod{(4n)} \right\} }. $$
\label{ArithL}
\end{lemma}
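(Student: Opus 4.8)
The plan is to build a bijective dictionary between the $\gp{P}_1(\vo)$-conjugacy classes indexing $L(1/2+s,t^2-4)$ and pairs of arithmetic invariants, and then to translate those invariants into the congruence count defining $\rho$. First a reduction: since $t$ is the trace of a hyperbolic element, it is by Lemma~\ref{FEllip} the trace of an $\F$-elliptic one, so $t^2-4$ is not a square in $\F$ and no matrix of $\SL_2(\vo)$ with trace $t$ is triangular; equivalently every representative $\gamma_1=\bigl(\begin{smallmatrix}a&b\\c&d\end{smallmatrix}\bigr)$ has $c\neq 0$, and (replacing $\gamma_1$ by $-\gamma_1$ if needed, which changes nothing by conjugation) we may fix lifts with trace exactly $t$.

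Next I would parametrize the classes. Writing $\gp{P}_1(\vo)=\gp{N}(\vo)\,\gp{A}_1(\vo)$, a direct computation shows that conjugating $\gamma_1$ by $n(v)=\bigl(\begin{smallmatrix}1&v\\0&1\end{smallmatrix}\bigr)$ fixes $c$ and replaces $a$ by $a+vc$, while conjugating by $a(u)=\bigl(\begin{smallmatrix}u&0\\0&u^{-1}\end{smallmatrix}\bigr)$ fixes $a$ and replaces $c$ by $u^{-2}c$. Hence the $\gp{P}_1(\vo)$-class of $\gamma_1$ is determined by the class of $c$ in $(\vo\setminus\{0\})/(\vo^{\times})^2$ together with the residue $a\bmod c\vo$; conversely such data is realized by the matrix with $d=t-a$, $b=-(a^2-ta+1)/c$, which lies in $\SL_2(\vo)$ precisely when $a^2-ta+1\equiv0\pmod{c\vo}$ (using $ad-bc=1$), and one checks easily that this correspondence is a bijection. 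Since $\vo^{\times}$ is finite every unit has $\norm[\cdot]_{\infty}=1$, so $\norm[c]_{\infty}=\Nr(c\vo)$ depends only on the ideal $c\vo$, as does the congruence count; collecting the $\norm[\vo^{\times}/(\vo^{\times})^2]$ classes of generators of each nonzero principal ideal $(n)$ then gives
\[
L(1/2+s,t^2-4)=\norm[\vo^{\times}/(\vo^{\times})^2]\sum_{(n)\subset\vo}\frac{\#\{a\bmod n\vo:\ a^2-ta+1\equiv0\ (n\vo)\}}{\Nr(n)^{1/2+s}}.
\]
The factor $\norm[\vo^{\times}/(\vo^{\times})^2]$ is exactly the gap between tracking $c$ up to \emph{squares} of units (the conjugation action) and up to all units (the ideal $(c)$); this is, I think, the one structural point of the statement.

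It remains to identify $\#\{a\bmod n\vo:\ a^2-ta+1\equiv0\ (n\vo)\}$ with $\rho((n),t)$. I would use the substitution $x=2a-t$, which satisfies the identity $x^2-(t^2-4)=4(a^2-ta+1)$. Because $\vo$ is a domain, $a\equiv a'\pmod{n\vo}$ iff $2a-t\equiv2a'-t\pmod{2n\vo}$, and $n\mid a^2-ta+1$ iff $4n\mid x^2-(t^2-4)$ (clear the factor $4$); so $a\mapsto x$ embeds $\{a\bmod n\vo:\ n\mid a^2-ta+1\}$ into $\{x\bmod 2n\vo:\ x^2\equiv t^2-4\ (4n\vo)\}$. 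Surjectivity is the assertion that every solution $x$ of $x^2\equiv t^2-4\pmod{4n\vo}$ has the form $2a-t$, i.e.\ that $x+t\in2\vo$, after which $a=(x+t)/2$ is the (well-defined) preimage. The hard part will be exactly this last step, which lives at the primes $\vp\mid2$: from $4\mid4n$ one has $(x-t)(x+t)\in4\vo$, and a short valuation comparison at each $\vp\mid2$ among $v_{\vp}(x-t)$, $v_{\vp}(x+t)$ and $v_{\vp}(2t)\ge v_{\vp}(2)$ — using $v_{\vp}(2)\le2$, which holds since $[\F:\Q]\le2$ — forces $v_{\vp}(x\pm t)\ge v_{\vp}(2)$, hence $x+t\in2\vo$. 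This yields $\#\{a\bmod n\vo:\ a^2-ta+1\equiv0\ (n\vo)\}=\rho((n),t)$, and renaming $1/2+s$ as $s$ gives the claimed identity.

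So the only genuinely delicate point is the parity/valuation bookkeeping at the primes dividing $2$ (invisible over $\Z$, but a real small issue over, say, $\Z[i]$ where $2$ ramifies); the conjugation computations and the clearing of the factor $4$ are entirely routine.
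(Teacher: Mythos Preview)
Your proof is correct and follows essentially the same route as the paper: parametrize $\gp{P}_1(\vo)$-classes by $(c\bmod(\vo^\times)^2,\,a\bmod c\vo)$ subject to $a^2-ta+1\equiv0\pmod{c}$, pass to principal ideals (picking up the factor $\norm[\vo^\times/(\vo^\times)^2]$), and then use the substitution $x=2a-t$ together with a valuation argument at primes above $2$ to identify the count with $\rho((n),t)$. One small remark: the bound $v_{\vp}(2)\le2$ you invoke is not actually needed---the paper's argument (which is the one you sketch) works for any ramification index, since if $f:=v_{\vp}(x-t)<e:=v_{\vp}(2)$ then $v_{\vp}(2t)\ge e>f$ forces $v_{\vp}(x+t)=f$ and hence $v_{\vp}((x-t)(x+t))=2f<2e$, contradicting $4\mid(x-t)(x+t)$.
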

\begin{proof}
	Writing an element $\gamma \in [\gamma_1]_{\gp{P}}$ as
	$$ \gamma = \begin{pmatrix} a & b \\ n & t-a \end{pmatrix}, $$
	and observing the two possible ways of conjugation by elements in $\gp{P}_1(\vo)$
	$$ \begin{pmatrix} 1 & -u \\ & 1 \end{pmatrix} \begin{pmatrix} a & b \\ n & t-a \end{pmatrix} \begin{pmatrix} 1 & u \\ & 1 \end{pmatrix} = \begin{pmatrix} a-un & * \\ n & t-a+un \end{pmatrix}, \quad u \in \vo, $$
	$$ \begin{pmatrix} v^{-1} & \\ & v \end{pmatrix} \begin{pmatrix} a & b \\ n & t-a \end{pmatrix} \begin{pmatrix} v & \\ & v^{-1} \end{pmatrix} = \begin{pmatrix} a & v^{-2}b \\ v^2n & t-a \end{pmatrix}, \quad v \in \vo^{\times} $$
	we see that $\rho((n),t)$ counts the number of solutions to
	$$ a(t-a) \equiv 1 \pmod{n}, \quad a \in \vo/(n). $$
	If we write $x=t+2a$, then the above equation is equivalent to
	$$ x^2 \equiv t^2-4 \pmod{(4n)}, \quad x \equiv t \pmod{2}, \quad x \in \vo/(2n). $$
	We claim that the first equation implies the second one, i.e., if $\vp \mid 2$ with $v_{\vp}(2) = e$, then
	$$ (x+t)(x-t) \equiv 0 \pmod{4} \quad \Rightarrow \quad x \equiv t \pmod{\vp^e}. $$
	Otherwise, assume $v_{\vp}(x-t) = f < e$, then $v_{\vp}(x+t) = v_{\vp}(x-t+2t) = v_{\vp}(x-t) = f$. Hence $v_{\vp}((x+t)(x-t)) = 2f < 2e = v_{\vp}(t)$, contradiction.
\end{proof}

\noindent The above lemma shows that the Rankin--Selberg unfolding gives the direct link between the counting of geodesics and the counting of the solutions of the relevant congruence equation, which appeared in \cite[p.108]{SY13}. Hence one can carry out the rest of the calculation just like in \cite{SY13} and obtain the desired final formula. But the generalization to arbitrary congruence subgroups of this approach is not obvious. For this reason, we prefer the adelic translation which we now develop.

	\subsection{Adelization}
	
	We return to the general case that $\Gamma < \PSL_2(\vo)$ is a congruence subgroup. This means that there is an integral ideal $\idlN \subseteq \vo$ such that
	$$ \Gamma^1(\idlN) < \Gamma < \PSL_2(\vo), \quad \Gamma^1(\idlN) := \left\{ \gamma \in \PSL_2(\vo) \ \middle| \ \gamma \equiv \begin{pmatrix} 1 & 0 \\ 0 & 1 \end{pmatrix} \pmod{\idlN} \right\}. $$
	Taking closure at a place $\vp < \infty$, we get
	$$ \gp{K}_{\vp}^1(\idlN) < \Gamma_{\vp} < \PSL_2(\vo_{\vp}), \quad \gp{K}_{\vp}^1(\idlN) := \left\{ \gamma \in \PSL_2(\vo_{\vp}) \ \middle| \ \gamma \equiv \begin{pmatrix} 1 & 0 \\ 0 & 1 \end{pmatrix} \pmod{\idlN} \right\}; $$
	or we can write down its pro-finite version
	$$ \gp{K}_{\fin}^1(\idlN) < \widehat{\Gamma} < \PSL_2(\widehat{\vo}). $$
	We take a test function of the form $f = f_{\infty} \otimes \Char_{\widehat{\Gamma}}$ with $f_{\infty}$ bi-$\gp{K}_{\infty}$-invariant, i.e.,
\begin{equation}
	f(x_{\infty},x_{\fin}) = \left\{ \begin{matrix} f_{\infty}(x_{\infty}) & \text{if } x_{\fin} \in \widehat{\Gamma} \\ 0 & \text{otherwise} \end{matrix} \right. ,
\label{TestFctAdel}
\end{equation}
	and consider the orbital integral for a conjugacy class $[\gamma]_1$ in $\PSL_2(\F)$
	$$ I(f,[\gamma]_1) := \int_{\PSL_2(\F) \backslash \PSL_2(\A)} K(x; f, [\gamma]_1) dx, $$
	$$ K(x; f, [\gamma]_1) := \sideset{}{_{\gamma' \in [\gamma]_1}} \sum f(x^{-1} \gamma' x). $$
	Since $\Gamma^1(\idlN) \triangleleft \PSL_2(\vo)$, it is a normal subgroup of $\Gamma$. Hence $\gp{K}_{\fin}^1(\idlN)$ is a normal subgroup of $\widehat{\Gamma}$, i.e.,
	$$ \Char_{\widehat{\Gamma}}(x^{-1} y x) = \Char_{\widehat{\Gamma}}(y), \quad \forall x \in \gp{K}_{\fin}^1(\idlN), y \in \PSL_2(\A_{\fin}). $$
	Hence the integrand in the defining integral of $I(f,[\gamma]_1)$ is a function in $x \in \PSL_2(\F) \backslash \PSL_2(\A)$ invariant by right translation by $\gp{K}_{\fin}^1(\idlN)$. By the strong approximation theorem for $\SL_2$, we have
	$$ \PSL_2(\F) \backslash \PSL_2(\A) / \gp{K}_{\fin}^1(\idlN) \simeq \Gamma^1(\idlN) \backslash \PSL_2(\F_{\infty}). $$
	Hence we can dis-adelize the integral and get
\begin{align*}
	I(f,[\gamma]_1) &= \Vol(\gp{K}_{\fin}(\idlN)) \int_{\Gamma^1(\idlN) \backslash \PSL_2(\F_{\infty})} \sideset{}{_{\gamma' \in [\gamma]_1}} \sum f((x_{\infty}^{-1},1) \gamma' (x_{\infty},1)) dx_{\infty} \\
	&= \Vol(\gp{K}_{\fin}(\idlN)) \int_{\Gamma^1(\idlN) \backslash \PSL_2(\F_{\infty})} \sideset{}{_{\gamma' \in [\gamma]_1 \cap \Gamma}} \sum f_{\infty}(x_{\infty}^{-1} \gamma' x_{\infty}) dx_{\infty}.
\end{align*}
	By definition, $[\gamma]_1 \cap \Gamma$ is the set of elements in $\Gamma$ conjugate to $\gamma$ in $\PSL_2(\F)$. It is a union of conjugacy classes in $\Gamma$. In particular, we have
	$$ \Tr(\gamma') = \Tr(\gamma), \quad \forall \gamma' \in [\gamma]_1 \cap \Gamma. $$
	Hence for any $t \in \vo$ lying in the image of $\Gamma$ under the trace map, we get
	$$ \sideset{}{_{[\gamma]_1 : \Tr(\gamma) = t}} \sum I(f, [\gamma]_1) = \Vol(\gp{K}_{\fin}(\idlN)) \int_{\Gamma^1(\idlN) \backslash \PSL_2(\F_{\infty})} \sideset{}{_{[\gamma]: \Tr(\gamma) = t}} \sum K(x_{\infty}; f_{\infty}, [\gamma]) dx_{\infty}, $$
	where $[\gamma]$ denotes a conjugacy class in $\Gamma$ and we recall the notation in Section \ref{ROI}
	$$ K(x_{\infty}; f_{\infty}, [\gamma]) := \sideset{}{_{\gamma' \in [\gamma]}} \sum f_{\infty}(x_{\infty}^{-1} \gamma x_{\infty}). $$
	But the integrand is a function in $x_{\infty}$ obviously invariant by left translation by $\Gamma$. Hence we get
\begin{align*}
	\sideset{}{_{[\gamma]_1 : \Tr(\gamma) = t}} \sum I(f, [\gamma]_1) &= \Vol(\widehat{\Gamma}) \int_{\Gamma \backslash \PSL_2(\F_{\infty})} \sideset{}{_{[\gamma]: \Tr(\gamma) = t}} \sum K(x_{\infty}; f_{\infty}, [\gamma]) dx_{\infty} \\
	&= \Vol(\widehat{\Gamma}) \cdot I(t,f_{\infty}).
\end{align*}

	Similarly, if we define
	$$ I(s;t,f) := \sideset{}{_{[\gamma]_1 : \Tr (\gamma) = t}} \sum \int_{\PSL_2(\F) \backslash \PSL_2(\A)} K(x; f, [\gamma]_1) \eis(s,x) dx, $$
	then we arrive at
	$$ I(s;t,f) = \Vol(\widehat{\Gamma}) \cdot I(s;t,f_{\infty}). $$
	We record what we have done in the following lemma.
	
\begin{lemma}
	Let $\Gamma < \PSL_2(\vo)$ be a congruence subgroup. Take $f = f_{\infty} \otimes \Char_{\widehat{\Gamma}}$ given in (\ref{TestFctAdel}). For any $\gamma \in \Gamma$ resp. $\PSL_2(\F)$, write $[\gamma]$ resp. $[\gamma]_1$ for the conjugacy class of $\gamma$ in $\Gamma$ resp. $\PSL_2(\F)$. Define
	$$ K(x; f, [\gamma]_1) := \sideset{}{_{\gamma' \in [\gamma]_1}} \sum f(x^{-1} \gamma' x), $$
	$$ I(t,f) := \sideset{}{_{[\gamma]_1 : \Tr(\gamma) = t}} \sum \int_{\PSL_2(\F) \backslash \PSL_2(\A)} K(x; f, [\gamma]_1) dx, $$
	$$ I(s;t,f) := \sideset{}{_{[\gamma]_1 : \Tr (\gamma) = t}} \sum \int_{\PSL_2(\F) \backslash \PSL_2(\A)} K(x; f, [\gamma]_1) \eis(s,x) dx. $$
	Then we have
	$$ I(t,f) = \Vol(\widehat{\Gamma}) \cdot I(t,f_{\infty}), \quad I(s;t,f) = \Vol(\widehat{\Gamma}) \cdot I(s;t,f_{\infty}). $$
\label{AdelRel}
\end{lemma}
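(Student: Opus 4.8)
This lemma is a bookkeeping summary of the computation carried out just above, so the plan is to package that computation into a self-contained argument in three moves: localize the finite place, dis-adelize via strong approximation, and fold up the resulting archimedean integral. The starting observation is the product shape $f = f_{\infty}\otimes\Char_{\widehat{\Gamma}}$ of (\ref{TestFctAdel}): in $K(x;f,[\gamma]_1)$ the finite component of the summand is $\Char_{\widehat{\Gamma}}(x_{\fin}^{-1}\gamma' x_{\fin})$, and since $\Gamma^1(\idlN)\triangleleft\PSL_2(\vo)$ forces $\gp{K}_{\fin}^1(\idlN)\triangleleft\widehat{\Gamma}$, this function — hence the entire integrand — is invariant under right translation of $x_{\fin}$ by $\gp{K}_{\fin}^1(\idlN)$. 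I would record this first, together with the fact that the Eisenstein weight $\eis(s,\cdot)$, being spherical, is right $\gp{K}_{\fin}$-invariant, a fortiori right $\gp{K}_{\fin}^1(\idlN)$-invariant.

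Next I would invoke the strong approximation theorem for $\SL_2$ in the form $\PSL_2(\F)\backslash\PSL_2(\A)/\gp{K}_{\fin}^1(\idlN)\simeq\Gamma^1(\idlN)\backslash\PSL_2(\F_{\infty})$, which turns each adelic integral into $\Vol(\gp{K}_{\fin}^1(\idlN))$ times an integral over $\Gamma^1(\idlN)\backslash\PSL_2(\F_{\infty})$ of the archimedean integrand, the constraint $x_{\fin}\in\widehat{\Gamma}$ cutting the sum over $[\gamma]_1$ down to $[\gamma]_1\cap\Gamma$. Summing over all $\PSL_2(\F)$-conjugacy classes of trace $t$, the disjoint union $\bigsqcup_{[\gamma]_1:\Tr(\gamma)=t}\bigl([\gamma]_1\cap\Gamma\bigr)$ is exactly the set of elements of $\Gamma$ with trace $t$ — each such element lies in a unique $\PSL_2(\F)$-class — and this set decomposes into $\Gamma$-conjugacy classes $[\gamma]$; this is how $\sum_{[\gamma]:\Tr(\gamma)=t}K(x_{\infty};f_{\infty},[\gamma])$ appears. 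Finally, the integrand so obtained is left $\Gamma$-invariant, so I would fold the integral up from $\Gamma^1(\idlN)\backslash\PSL_2(\F_{\infty})$ to $\Gamma\backslash\PSL_2(\F_{\infty})$ at the cost of the index $[\Gamma:\Gamma^1(\idlN)]$, and then use $[\Gamma:\Gamma^1(\idlN)]\cdot\Vol(\gp{K}_{\fin}^1(\idlN))=\Vol(\widehat{\Gamma})$ to arrive at $I(t,f)=\Vol(\widehat{\Gamma})\cdot I(t,f_{\infty})$.

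For the Eisenstein-weighted identity $I(s;t,f)=\Vol(\widehat{\Gamma})\cdot I(s;t,f_{\infty})$ the same three steps go through unchanged, the only additional input being that $\eis(s,\cdot)$ descends under strong approximation to the classical spherical Eisenstein series $\eis^1(s,\cdot)$ on $\Gamma^1(\idlN)\backslash\PSL_2(\F_{\infty})$, which is precisely Proposition \ref{EisRel} (and is the reason the archimedean-side quantity $I(s;t,f_{\infty})$ was defined with $\eis^1$).

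I do not anticipate a genuine obstacle: the content is the strong approximation identification plus measure bookkeeping. The point needing care is the normality claim $\gp{K}_{\fin}^1(\idlN)\triangleleft\widehat{\Gamma}$ and the matching of volumes — one must check that $\widehat{\Gamma}/\gp{K}_{\fin}^1(\idlN)$ has cardinality $[\Gamma:\Gamma^1(\idlN)]$, which comes from taking closures place by place, and that the Haar measure on $\PSL_2(\A_{\fin})$ is normalized consistently across the two occurrences so that it cancels. A secondary nuisance is the finiteness of the sums over conjugacy classes of fixed trace, but against the compactly supported $f_{\infty}$ each integral is supported on finitely many classes, so convergence is not an issue.
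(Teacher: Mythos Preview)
Your proposal is correct and follows the paper's own argument essentially verbatim: the lemma is stated as a summary of the computation immediately preceding it (``We record what we have done in the following lemma''), and your three moves --- normality of $\gp{K}_{\fin}^1(\idlN)$ in $\widehat{\Gamma}$ for right-invariance, strong approximation to dis-adelize, then folding $\Gamma^1(\idlN)\backslash\PSL_2(\F_\infty)$ up to $\Gamma\backslash\PSL_2(\F_\infty)$ with the volume bookkeeping $[\Gamma:\Gamma^1(\idlN)]\cdot\Vol(\gp{K}_{\fin}^1(\idlN))=\Vol(\widehat{\Gamma})$ --- are exactly the paper's steps.

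One small caution on your final paragraph: Proposition~\ref{EisRel} does not say that $\eis(s,e_1)$ restricts to $\eis^1$; it says $\eis^1$ is the \emph{average} $\tfrac{1}{|\Cl(\F)|}\sum_{\chi}\eis(s,e_{\chi})$ over class group characters, so for $|\Cl(\F)|>1$ the restriction $\eis(s,e_1)(g_\infty,1_{\fin})$ and $\eis^1(s,g_\infty)$ differ. The paper itself glosses this with ``similarly'', so you are not in worse shape than the text, but be aware that the Eisenstein identity as literally written leans on either class number one or on reading $I(s;t,f_\infty)$ as defined with the restricted adelic series rather than with $\eis^1$.
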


	\subsection{From $\SL_2$ to $\GL_2$}
	
	We need to further analyze the conjugacy classes $[\gamma]_1$ in $\PSL_2(\F)$ with $\Tr(\gamma) = t$. Any such $\gamma$ satisfies the equation
	$$ X^2 - t X + 1 = 0, $$
	hence the $\F$-algebra $\F[\gamma] \simeq \E$ for some quadratic separable extension of $\F$ depends only on $t$. In particular, any two such $\gamma$ are \emph{stably conjugate} to each other. By Skolem-Noether theorem, this is equivalent to that they are conjugate in $\PGL_2(\F)$. We record this observation in the following lemma.

\begin{lemma}
	For any $\gamma \in \PSL_2(\F)$ with $\Tr(\gamma) = t$, the disjoint union
	$$ \{ \gamma \} := \sideset{}{_{[\gamma']_1: \Tr(\gamma') = t}} \bigsqcup [\gamma'] $$
	is the set of elements of $\Gamma$ in a single (stable) conjugacy class in $\PGL_2(\F)$.
\end{lemma}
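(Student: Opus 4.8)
The plan is to reduce the assertion to a purely group-theoretic statement about $\PSL_2(\F)$ and $\PGL_2(\F)$ --- namely that the set
\[
\mathcal{O}_t:=\{\,h\in\PSL_2(\F)\ :\ \Tr(h)=t\,\}
\]
is a single $\PGL_2(\F)$-conjugacy class, which moreover is a stable conjugacy class --- to prove this via the Skolem--Noether theorem, and then to intersect with $\Gamma$.

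First I would pass to $\SL_2$: pick a representative $\tilde\gamma\in\SL_2(\F)$ of $\gamma$ with $\Tr(\tilde\gamma)=t$. (At the level of $\PSL_2$ the trace is only defined up to sign, but the \emph{subset} $\mathcal{O}_t$ is unambiguous, since $\tilde\gamma$ and $-\tilde\gamma$ have opposite traces and the same image in $\PSL_2(\F)$.) Then $\det\tilde\gamma=1$, so $\tilde\gamma$ satisfies $X^2-tX+1=0$; as already observed, $\F[\gamma]:=\F[\tilde\gamma]\cong\F[X]/(X^2-tX+1)\cong\E$ is a quadratic field extension of $\F$ (automatically separable, as $\F$ has characteristic $0$), so $X^2-tX+1$ is irreducible over $\F$ and is the characteristic polynomial of $\tilde\gamma$; in particular it depends on $t$ alone.

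The key step: given $\gamma_1,\gamma_2\in\mathcal{O}_t$ with lifts $\tilde\gamma_1,\tilde\gamma_2\in\SL_2(\F)$ of trace $t$, the two $\F$-algebra embeddings $\E\hookrightarrow\Mat_2(\F)$ sending the image of $X$ to $\tilde\gamma_1$, resp.\ to $\tilde\gamma_2$, are conjugate by a unit of $\Mat_2(\F)$ by the Skolem--Noether theorem (which applies since $\Mat_2(\F)$ is central simple over $\F$ and $\E$ is a field, so both maps are injective $\F$-algebra homomorphisms); that is, there is $g\in\GL_2(\F)$ with $g\tilde\gamma_1g^{-1}=\tilde\gamma_2$, whence, passing to $\PSL_2(\F)$, $\gamma_2=g\gamma_1g^{-1}$ with $g\in\PGL_2(\F)$. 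Conversely conjugation preserves the trace up to sign, so $\mathcal{O}_t$ is exactly one $\PGL_2(\F)$-conjugacy class, that of $\gamma$. Since the characteristic polynomial is separable, the rational canonical form of $\tilde\gamma$ over $\F$ is determined by it, so two $\bar\F$-conjugate elements of $\GL_2(\F)$ with this characteristic polynomial are already $\GL_2(\F)$-conjugate; passing again to $\PGL_2$, $\mathcal{O}_t$ is therefore also a single stable conjugacy class.

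Finally, the $\PSL_2(\F)$-conjugacy classes partition $\mathcal{O}_t$, so $\mathcal{O}_t=\bigsqcup_{[\gamma']_1:\,\Tr(\gamma')=t}[\gamma']_1$, and intersecting with $\Gamma$ gives
\[
\{\gamma\}=\bigsqcup_{[\gamma']_1:\,\Tr(\gamma')=t}\bigl([\gamma']_1\cap\Gamma\bigr)=\Gamma\cap\mathcal{O}_t,
\]
i.e.\ $\{\gamma\}$ is precisely the set of elements of $\Gamma$ lying in the single stable ($\PGL_2(\F)$-) conjugacy class of $\gamma$, as claimed. I do not expect a real obstacle here: the only points needing care are the bookkeeping between $\SL_2$/$\PSL_2$ and $\GL_2$/$\PGL_2$ (together with the sign ambiguity of the trace on $\PSL_2$), and checking that the two maps out of $\E$ in the Skolem--Noether step are genuine $\F$-algebra homomorphisms --- which is automatic because $\E$ is a field and hence these maps are injective.
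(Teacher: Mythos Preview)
Your argument is correct and is essentially the same as the paper's: the paragraph immediately preceding the lemma in the paper is its entire proof, and it consists of exactly your key step --- observing that every element with trace $t$ generates a copy of the same quadratic field $\E=\F[X]/(X^2-tX+1)$ inside $\Mat_2(\F)$ and then invoking Skolem--Noether to conclude $\PGL_2(\F)$-conjugacy (which the paper identifies with stable conjugacy). Your additional care with the $\SL_2/\PSL_2$ sign ambiguity and the final intersection with $\Gamma$ just spells out details the paper leaves implicit.
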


\noindent Taking any hyperbolic $\gamma \in \PSL_2(\vo)$ with $\Tr(\gamma) = t$ and defining
	$$ K(x; f, \{ \gamma \}) := \sideset{}{_{\gamma' \in \{ \gamma \}}} \sum f(x^{-1} \gamma' x) = \sideset{}{_{[\gamma']_1: \Tr(\gamma') = t}} \sum K(x;f,[\gamma']_1), $$
	we can consequently rewrite
	$$ I(s;t,f) = \int_{\PSL_2(\F) \backslash \PSL_2(\A)} K(x; f, \{ \gamma \}) \eis(s,x) dx. $$
	The rest of the subsection is devoted to the analysis of the right hand side of the above equation. There are \emph{a priori} three cases according to the nature of $\E = \F[\gamma]$:
\begin{itemize}
	\item[(1)] $\E$ is a quadratic field extension of $\F$, which is equivalent to $t^2-4 \notin (\F^{\times})^2$;
	\item[(2)] $\E \simeq \F \oplus \F$ is split, which is equivalent to $t^2-4 \in (\F^{\times})^2$;
	\item[(3)] $\E \simeq \F[x]/(x^2)$ is not separable, which is equivalent to $t^2 = 4$.
\end{itemize}
	But Lemma \ref{FEllip} shows that only the case (1) is possible. The realization of $\E$ as a subalgebra in $\Mat_2(\F)$ implies that there exists a(n) (abstract) basis $e_1,e_2 \in \E$ such that
\begin{equation}
	(\gamma e_1, \gamma e_2) = (e_1,e_2) \gamma,
\label{AbsBasis}
\end{equation}
where the multiplication in LHS is interpreted as the abstract one in the algebra $\E$ while the multiplication in RHS is the matrix multiplication. In particular, the matrix realization of $\E^{\times}$ resp. the elements of norm one $\E^1$ becomes an $\F$-torus $\gp{T} = \gp{T}_{\E}$ resp. $\gp{T}^1 = \gp{T}_{\E}^1$ in $\GL_2$ resp. $\SL_2$. Recall
	$$ \eis(s,x) = \sideset{}{_{\sigma \in \gp{P}(\F) \backslash \GL_2(\F)}} \sum e_s(\sigma x) =  \sideset{}{_{\gp{P}_1(\F) \backslash \SL_2(\F)}} \sum e_s(\sigma x), $$
a standard Rankin--Selberg unfolding yields
	$$ I(s;t,f) = \int_{\gp{Z}_1(\A) \gp{P}_1(\F) \backslash \SL_2(\A)} \left( \sideset{}{_{\sigma \in \gp{T}(\F) \backslash \GL_2(\F)}} \sum f(x^{-1} \sigma^{-1} \gamma \sigma x) \right) \cdot e_s(x) dx. $$

\begin{lemma}
	We have a double coset decomposition
	$$ \GL_2(\F) = \sideset{}{_{\alpha \in \F^{\times} / (\F^{\times})^2}} \bigsqcup \gp{T}(\F) \begin{pmatrix} \alpha & \\ & 1 \end{pmatrix} \gp{P}_1(\F). $$
	Moreover, for every $\alpha$ we have
	$$ \gp{T}(\F) \backslash \gp{T}(\F) \begin{pmatrix} \alpha & \\ & 1 \end{pmatrix} \gp{P}_1(\F) \simeq \{ \pm 1 \} \backslash \gp{P}_1(\F) = \gp{Z}_1(\A) \cap \gp{P}_1(\F) \backslash \gp{P}_1(\F). $$
\label{GenIwaDecomp}
\end{lemma}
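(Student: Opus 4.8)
The plan is to recognise $\gp{T}(\F)\backslash\GL_2(\F)/\gp{P}_1(\F)$ as the set of orbits of $\E^{\times}$ acting on a concrete homogeneous space, and then to use that $\E=\F[\gamma]$ is a field. Via the $\F$-basis $e_1,e_2$ of $\E$ from \eqref{AbsBasis} we identify $\E$ with $\F^2$; then, as \eqref{AbsBasis} shows, the matrix realisation of $\gp{T}(\F)=\E^{\times}$ acts on $\F^2$ by abstract multiplication in $\E$, while $\gp{P}_1(\F)$ is the stabiliser in $\SL_2(\F)$ of the line $\F e_1$. First I would establish the bijection
\[
\GL_2(\F)/\gp{P}_1(\F)\ \xrightarrow{\ \sim\ }\ \Proj^1(\F)\times\F^{\times},\qquad g\,\gp{P}_1(\F)\ \longmapsto\ \bigl(g(\F e_1),\ \det g\bigr),
\]
which is well defined and injective because an element of $\GL_2(\F)$ fixing $\F e_1$ with determinant $1$ lies in $\gp{P}_1(\F)$, and surjective because $\GL_2(\F)$ acts transitively on lines while $\det$ is already onto $\F^{\times}$ on the stabiliser $\gp{P}(\F)$ of $\F e_1$. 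Transporting the left $\gp{T}(\F)$-action through this bijection turns it into $t\cdot(L,\delta)=(tL,\Nr(t)\delta)$.

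Next I would compute the orbits. Because $\E$ is a field, $\E^{\times}$ acts transitively on $\E\smallsetminus\{0\}$, hence transitively on $\Proj(\E)=\Proj^1(\F)$, and the stabiliser of the line $\F e_1$ is exactly the scalars $\F^{\times}=\gp{Z}(\F)$, since $t\,\F e_1=\F e_1$ forces $t\in\F$. A scalar $c$ acts on the second coordinate by $\Nr(c)=c^2$, so each $\E^{\times}$-orbit on $\Proj^1(\F)\times\F^{\times}$ meets the slice $\{\F e_1\}\times\F^{\times}$, and two points $(\F e_1,\alpha),(\F e_1,\alpha')$ lie in one orbit precisely when $\alpha/\alpha'\in(\F^{\times})^2$. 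As $(\F e_1,\alpha)$ corresponds to the class of $\diag(\alpha,1)$, this produces exactly the disjoint decomposition $\GL_2(\F)=\bigsqcup_{\alpha\in\F^{\times}/(\F^{\times})^2}\gp{T}(\F)\diag(\alpha,1)\gp{P}_1(\F)$.

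For the ``moreover'' part, the surjection $p\mapsto\gp{T}(\F)\diag(\alpha,1)p$ from $\gp{P}_1(\F)$ onto $\gp{T}(\F)\backslash\gp{T}(\F)\diag(\alpha,1)\gp{P}_1(\F)$ has fibres the cosets of $H_{\alpha}:=\diag(\alpha,1)^{-1}\gp{T}(\F)\diag(\alpha,1)\cap\gp{P}_1(\F)$, so it suffices to show $H_{\alpha}=\{\pm1\}$ --- equivalently, that the $\gp{T}(\F)$-stabiliser of $(\F e_1,\alpha)$ is $\{\pm1\}$. But from the previous paragraph, a $t\in\E^{\times}$ fixing $\F e_1$ is a scalar, and if in addition $\Nr(t)=t^2=1$ then $t=\pm1$. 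Hence $H_{\alpha}=\{\pm1\}=\gp{Z}_1(\A)\cap\gp{P}_1(\F)$ and $\gp{T}(\F)\backslash\gp{T}(\F)\diag(\alpha,1)\gp{P}_1(\F)\cong\{\pm1\}\backslash\gp{P}_1(\F)$.

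The only step that is not formal is the identification of stabilisers with the centre: it rests squarely on $\E=\F[\gamma]$ being a \emph{field} (which here is Lemma \ref{FEllip}), used both for the transitivity of $\E^{\times}$ on lines and for ``stabiliser of a line $=$ scalars''. I expect this to be the only real point; once it is in place, the decomposition and the index set $\F^{\times}/(\F^{\times})^2$ drop out of the $\Proj^1(\F)\times\F^{\times}$ bookkeeping.
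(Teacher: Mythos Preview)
Your proof is correct and rests on the same geometric idea as the paper's: identify $\F^2$ with $\E$ via \eqref{AbsBasis}, use that $\E$ is a field so $\E^{\times}$ acts simply transitively on $\E\smallsetminus\{0\}$, and read off the double cosets. The organisation differs slightly. The paper uses the action on nonzero \emph{vectors} to obtain $\GL_2(\F)=\gp{T}(\F)\gp{B}_1(\F)$ with $\gp{T}\cap\gp{B}_1=\{1\}$, then introduces the auxiliary subgroup $\gp{B}_2(\F)=\{\diag(t^2,1)n(x)\}$ and passes to $\gp{P}_1$ via $\gp{B}_1=\bigsqcup_{\alpha}\diag(\alpha,1)\gp{B}_2$ and $\gp{P}_1\subset\gp{Z}\gp{B}_2$. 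You instead parametrise $\GL_2/\gp{P}_1$ directly as $\Proj^1(\F)\times\F^{\times}$ and compute the $\gp{T}$-orbits there, which bypasses $\gp{B}_1,\gp{B}_2$ and makes the index set $\F^{\times}/(\F^{\times})^2$ and the stabiliser $\{\pm1\}$ drop out in one step. Both routes hinge on the same point---the stabiliser of an $\F$-line in $\E^{\times}$ is $\F^{\times}$---so the difference is purely in bookkeeping; your packaging is a bit more streamlined, while the paper's is closer to an explicit Iwasawa-type factorisation.
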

\begin{proof}
	Identifying $\E$ with $\F \oplus \F$ (row vectors) in view of (\ref{AbsBasis}), the set
	$$ \left\{ (a,b) \in \F \oplus \F \ \middle| \ (a,b) \neq (0,0) \right\} $$
	becomes a single orbit of $\GL_2(\F)$ and corresponds to $\E^{\times}$. The stabilizer of $(0,1)$ being
	$$ \gp{B}_1(\F) = \left\{ \begin{pmatrix} t & x \\ 0 & 1 \end{pmatrix} \ \middle| \ t \in \F^{\times}, x \in \F \right\}, $$
	we deduce the decomposition
	$$ \GL_2(\F) = \gp{B}_1(\F) \gp{T}(\F) = \gp{T}(\F) \gp{B}_1(\F), \quad \gp{T}(\F) \cap \gp{B}_1(\F) = \{ 1 \}. $$
	We introduce the group
	$$ \gp{B}_2(\F) = \left\{ \begin{pmatrix} t^2 & x \\ 0 & 1 \end{pmatrix} \ \middle| \ t \in \F^{\times}, x \in \F \right\}. $$
	Then every element in $\gp{P}_1(\F)$ is the product of an element in $\gp{B}_2(\F)$ and an element in the center $\gp{Z}(\F)$. The desired decomposition then follows from
	$$ \gp{B}_1(\F) = \sideset{}{_{\alpha \in \F^{\times} / (\F^{\times})^2}} \bigsqcup \begin{pmatrix} \alpha & \\ & 1 \end{pmatrix} \gp{B}_2(\F). $$
	The other assertion is easy.
\end{proof}

\noindent We deduce from the previous lemma that
	$$ I(s;t,f) = \sideset{}{_{\alpha \in \F^{\times} / (\F^{\times})^2}} \sum \int_{\gp{Z}_1(\A) \backslash \SL_2(\A)} f(x^{-1} a(\alpha)^{-1} \gamma a(\alpha) x) e_s(x) dx, \quad a(\alpha) = \begin{pmatrix} \alpha & \\ & 1 \end{pmatrix}. $$
	
\begin{lemma}
\begin{itemize}
	\item[(1)] The following function defined over $\A^{\times}$
	$$ h(y) := \norm[y]_{\A}^{\frac{1}{2}+s} \int_{\gp{Z}_1(\A) \backslash \SL_2(\A)} f(x^{-1} a(y)^{-1} \gamma a(y) x) e_s(x) dx $$
is invariant by multiplication by elements in $(\A^{\times})^2$ for whichever test function $f$.
	\item[(2)] Consider $f = f_{\infty} \otimes \Char_{\widehat{\Gamma}}$ as before (with bi-$\gp{K}_{\infty}$-invariant test function $f_{\infty}$). Let $N(\widehat{\Gamma})$ be the normalizer group of $\widehat{\Gamma}$ in $\PGL_2(\A_{\fin})$ and
	$$ \widehat{\vo}_{\Gamma}^{\times} := \left\{ u \in \widehat{\vo}^{\times} \ \middle| \ a(u) \in N(\widehat{\Gamma}) \cap \GL_2(\widehat{\vo}) \right\}. $$
	Then $h(y)$ is invariant by multiplication by elements in $\widehat{\vo}_{\Gamma}^{\times}$.
\end{itemize}
\label{Smoothh}
\end{lemma}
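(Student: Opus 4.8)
The plan is to deduce both statements from a single type of manipulation: a change of variables in the integral
\[
J(y) := \int_{\gp{Z}_1(\A) \backslash \SL_2(\A)} f\big(x^{-1} a(y)^{-1} \gamma a(y)\, x\big)\, e_s(x)\, dx,
\]
so that $h(y) = \norm[y]_{\A}^{\frac{1}{2}+s} J(y)$, where $a(y) = \diag(y,1)$. The two facts I will keep using are: (i) $a(y)$ commutes with every diagonal matrix; and (ii) the modulus behaviour of $e_s$, which is the restriction to $\SL_2(\A)$ of the flat $\GL_2(\A)$-section, namely $e_s(\diag(t,1)g) = \norm[t]_{\A}^{s+\frac12} e_s(g)$ for the left torus action, together with right invariance under the standard maximal compact $\gp{K}$, which contains $\GL_2(\widehat{\vo})$.

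For part (1), given $z \in \A^{\times}$ I would write $\diag(z^2,1) = z\cdot \diag(z,z^{-1})$; the scalar $z$ is central, so it drops out of the conjugation and $a(z^2 y)^{-1} \gamma a(z^2 y) = a(y)^{-1} \diag(z,z^{-1})^{-1} \gamma \diag(z,z^{-1}) a(y)$ with $\diag(z,z^{-1}) \in \SL_2(\A)$ now commuting with $a(y)$. Pulling the two copies of $\diag(z,z^{-1})$ to the outside, $J(z^2 y)$ becomes the integral of $f\big((\diag(z,z^{-1})x)^{-1} a(y)^{-1}\gamma a(y) (\diag(z,z^{-1})x)\big) e_s(x)$, and I substitute $x \mapsto \diag(z,z^{-1})^{-1} x$. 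This is a left translation by an element of $\SL_2(\A)$, hence preserves the invariant measure on $\gp{Z}_1(\A)\backslash\SL_2(\A)$; the only cost is $e_s(\diag(z,z^{-1})^{-1}x) = \norm[z]_{\A}^{-(2s+1)} e_s(x)$. Thus $J(z^2 y) = \norm[z]_{\A}^{-(2s+1)} J(y)$, and since $\norm[z^2 y]_{\A}^{\frac12+s} = \norm[z]_{\A}^{1+2s}\norm[y]_{\A}^{\frac12+s}$, the prefactors cancel exactly and $h(z^2 y) = h(y)$ for any test function $f$.

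For part (2) I would use that $u \in \widehat{\vo}_{\Gamma}^{\times}$ is a unit at all places, so $\norm[u]_{\A}=1$ and it suffices to prove $J(uy) = J(y)$; concretely, the element $\gamma' := a(u)^{-1}\gamma a(u)$ appears in $J(uy)$ and must be replaced by $\gamma$. Since $a(u) \notin \SL_2(\A)$ I cannot left-translate by it, but conjugation $x \mapsto a(u)^{-1} x a(u)$ is an automorphism of $\gp{Z}_1(\A)\backslash\SL_2(\A)$ that preserves Haar measure — its modulus is a power of $\norm[u]_{\A}=1$, equivalently $\det(\mathrm{Ad}(a(u))|_{\mathfrak{sl}_2}) = 1$. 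After this substitution, using that $a(u)$ commutes with $a(y)$ and $\gamma' = a(u)^{-1}\gamma a(u)$, the argument of $f$ becomes $a(u)^{-1}\big[x^{-1} a(y)^{-1}\gamma a(y)\,x\big] a(u)$, and $f = f_{\infty}\otimes\Char_{\widehat{\Gamma}}$ is unchanged: $a(u)$ is trivial at the archimedean place, while at the finite places $\Char_{\widehat{\Gamma}}$ is conjugation-invariant under $a(u) \in N(\widehat{\Gamma})$, which is exactly the content of $u \in \widehat{\vo}_{\Gamma}^{\times}$. Finally $e_s(a(u)^{-1}x a(u)) = e_s(x)$, because the left factor $\diag(u^{-1},1)$ contributes $\norm[u]_{\A}^{-(s+\frac12)}=1$ and the right factor $a(u)_{\fin} \in \GL_2(\widehat{\vo}) \subseteq \gp{K}$ (with $a(u)_{\infty}=1$) contributes nothing by $\gp{K}$-invariance. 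Hence $J(uy)=J(y)$ and $h(uy)=h(y)$.

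The one genuinely delicate point I expect is the bookkeeping across $\SL_2$, $\GL_2$ and $\PGL_2$ in part (2): one must notice that $a(u)$ lies outside $\SL_2(\A)$ — so only its conjugation action, not a translation, is available — yet still preserves both the Haar measure on $\gp{Z}_1(\A)\backslash\SL_2(\A)$ and the value of $e_s$ on the $\SL_2$-elements being integrated (because $a(u)$ acts as a $\gp{K}$-element on the right and has trivial adelic norm on the left). Everything else reduces to the commutation fact (i) and the two modulus computations in (ii).
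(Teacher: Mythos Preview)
Your proof is correct and follows essentially the same approach as the paper. In part (1) both you and the paper use the factorization $a(yz^2)=z\cdot a(y)\cdot\diag(z,z^{-1})$ and the left translation $x\mapsto\diag(z,z^{-1})^{-1}x$; in part (2) both use the conjugation $x\mapsto a(u)^{\pm 1}x a(u)^{\mp 1}$ together with the conjugation-invariance of $\Char_{\widehat{\Gamma}}$ under $a(u)$ and the fact that $a(u)\in\gp{K}$ with $\norm[u]_{\A}=1$ --- the only cosmetic difference is that the paper applies the $f$-invariance before the change of variables, whereas you do it after.
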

\begin{proof}
	(1) For $z \in \A^{\times}$, we have a decomposition
	$$ a(yz^2) = \begin{pmatrix} z & \\ & z \end{pmatrix} a(y) \begin{pmatrix} z & \\ & z^{-1} \end{pmatrix}. $$
	We get the desired invariance from
\begin{align*}
	&\quad \int_{\gp{Z}_1(\A) \backslash \SL_2(\A)} f(x^{-1} a(yz^2)^{-1} \gamma a(yz^2) x) e_s(x) dx \\
	&= \int_{\gp{Z}_1(\A) \backslash \SL_2(\A)} f(x^{-1} a(y)^{-1} \gamma a(y) x) e_s(\begin{pmatrix} z^{-1} & \\ & z \end{pmatrix}x) dx \\
	&= \norm[z]_{\A}^{-1-2s} \int_{\gp{Z}_1(\A) \backslash \SL_2(\A)} f(x^{-1} a(y)^{-1} \gamma a(y) x) e_s(x) dx.
\end{align*}

\noindent (2) By definition, for any $u \in \widehat{\vo}_{\Gamma}^{\times}$ we have
	$$ \Char_{\widehat{\Gamma}}(a(u)^{-1}ga(u)) = \Char_{\widehat{\Gamma}}(g), \quad \forall g \in \PSL_2(\A_{\fin}). $$
	Writing $x_u = a(u) x a(u)^{-1}$, we thus get
	$$ h(yu) = \norm[y]_{\A}^{\frac{1}{2}+s} \int_{\gp{Z}_1(\A) \backslash \SL_2(\A)} f(x_u^{-1} a(y)^{-1} \gamma a(y) x_u) e_s(x) dx $$
	Since conjugation by $a(u)$ stabilizes both $\gp{B}(\A)$ and $\gp{K}$ resp. $\gp{K}_1$ and leaves the height unchanged, we have $e_s(x) = e_s(x_u)$ and $dx = dx_u$. It follows that $h(yu) = h(y)$.
\end{proof}

\noindent Consequently, $I(s;t,f)$ is the sum over $\F^{\times}/(\F^{\times})^2$ of $h(y)$, a smooth function on $\A^{\times} / (\A^{\times})^2$, to which we can apply Fourier inversion. If we write
	$$ \I_{\F} := \F^{\times} \backslash \A^{\times} $$
for the idele class group of $\F$ and $\I_{\F}^{\vee}$ for its unitary dual group, then we get\footnote{This identity has the following explanation: The image of $\gp{Z}_1(\A) \SL_2(\F) \backslash \SL_2(\A) \hookrightarrow \gp{Z}(\A) \GL_2(\F) \backslash \GL_2(\A)$ is characterized by $\det g \in \I_{\F}^2$.}
\begin{align*} 
	I(s; t, f) &= \frac{1}{\Vol(\I_{\F} / \I_{\F}^2)} \sideset{}{_{\substack{\eta \in \I^{\vee}_{\F} \\ \eta^2 = 1}}} \sum \int_{\A^{\times} / (\A^{\times})^2} \eta(y) \norm[y]_{\A}^{\frac{1}{2}+s} \int_{\gp{Z}_1(\A) \backslash \SL_2(\A)} f(x^{-1} a(y)^{-1} \gamma a(y) x) e_s(x) dx  d^{\times}y \\
	&= \frac{1}{\Vol(\I_{\F} / \I_{\F}^2)} \sideset{}{_{\substack{\eta \in \I^{\vee}_{\F} \\ \eta^2 = 1}}} \sum \int_{\gp{Z}(\A) \backslash \GL_2(\A)} f(x^{-1} \gamma x) e_s(x) \eta(\det x) dx,
\end{align*}
	where $\eta$ runs over quadratic Hecke characters trivial on $\widehat{\vo}_{\Gamma}^{\times}$ defined in Lemma \ref{Smoothh} (2). In particular, the sum over $\eta$ is finite and the number depends only on $\Gamma$.
\begin{lemma}
	If $\F_{\infty} = \R$, $f_{\infty}$ is bi-$\SO_2(\R)$-invariant, and $\eta_{\infty} = \sgn$ is non-trivial, then
	$$ \int_{\gp{Z}(\A) \backslash \GL_2(\A)} f(x^{-1} \gamma x) e_s(x) \eta(\det x) dx = 0. $$
\end{lemma}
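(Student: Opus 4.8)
The plan is to exhibit a measure-preserving change of variables on $\gp{Z}(\A)\backslash\GL_2(\A)$ under which the integrand gets multiplied by $\eta_\infty(-1)=-1$ while the remaining factors are unchanged; this forces the integral to equal its own negative. Write $J$ for the integral in question, let $w_\infty=\diag(1,-1)\in\GL_2(\R)$, and let $w=(w_\infty,1_{\fin})\in\GL_2(\A)$ be the adele that is $w_\infty$ at the archimedean place and $1$ at every finite place. I would substitute $x\mapsto xw$. Since $\GL_2(\A)$ is unimodular and $\gp{Z}(\A)$ is central, right translation by $w$ preserves the invariant measure on $\gp{Z}(\A)\backslash\GL_2(\A)$, so $J=\int_{\gp{Z}(\A)\backslash\GL_2(\A)}f((xw)^{-1}\gamma(xw))\,e_s(xw)\,\eta(\det(xw))\,dx$.

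Next I would inspect the three factors. Since $w_\infty\in\mathrm{O}_2(\R)=\gp{K}_\infty$ we have $w\in\gp{K}$, and $e_s=e_{1,s}$ is right $\gp{K}$-invariant by the very definition of the induced section, hence $e_s(xw)=e_s(x)$. We have $\det w=((-1)_\infty,1,1,\dots)$, so $\eta(\det(xw))=\eta_\infty(-1)\,\eta(\det x)=\sgn(-1)\,\eta(\det x)=-\eta(\det x)$ by the hypothesis $\eta_\infty=\sgn$. Finally $f((xw)^{-1}\gamma(xw))=f(w^{-1}(x^{-1}\gamma x)w)$, and I claim this equals $f(x^{-1}\gamma x)$: the finite component of $w$ is trivial, so it does not change the factor $\Char_{\widehat{\Gamma}}$, and it remains to check $f_\infty(w_\infty^{-1}hw_\infty)=f_\infty(h)$ for every $h\in\PGL_2(\R)$. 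If $h\notin\PGL_2(\R)^+$ then $w_\infty^{-1}hw_\infty\notin\PGL_2(\R)^+$ as well (conjugation preserves the sign of the determinant) and both sides vanish because $f_\infty$ is supported on the identity component. If $h\in\PGL_2(\R)^+=\PSL_2(\R)$, I would write a Cartan decomposition $h=k_1\alpha k_2$ with $k_1,k_2\in\SO_2(\R)$ and $\alpha$ diagonal; since $w_\infty$ is orthogonal it normalizes $\SO_2(\R)$, and being diagonal it commutes with $\alpha$, so $w_\infty^{-1}hw_\infty=(w_\infty^{-1}k_1w_\infty)\,\alpha\,(w_\infty^{-1}k_2w_\infty)$ has the same split part $\alpha$, whence $f_\infty(w_\infty^{-1}hw_\infty)=f_\infty(\alpha)=f_\infty(h)$ by bi-$\SO_2(\R)$-invariance.

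Combining the three computations, the substitution yields $J=-J$, hence $J=0$. The only non-formal point is the invariance of $f_\infty$ under conjugation by the non-identity coset of $\mathrm{O}_2(\R)/\SO_2(\R)$; the Cartan decomposition argument handles it, exploiting the fortunate fact that the representative $w_\infty=\diag(1,-1)$ is simultaneously orthogonal, so it normalizes $\SO_2(\R)$, and diagonal, so it centralizes the split torus. I do not anticipate any other obstacle.
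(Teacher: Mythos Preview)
Your proof is correct and follows essentially the same approach as the paper: both exploit right translation (equivalently, conjugation at the archimedean place) by the reflection $\diag(\pm 1,\mp 1)\in\mathrm{O}_2(\R)\setminus\SO_2(\R)$, which fixes $f_\infty$ and $e_s$ but flips the sign via $\eta_\infty=\sgn$. The paper phrases the conclusion as a cancellation between the integrals over the two connected components of $\PGL_2(\R)$, whereas you package it as the change of variables $x\mapsto xw$ giving $J=-J$; your Cartan-decomposition verification of $f_\infty(w_\infty^{-1}hw_\infty)=f_\infty(h)$ makes explicit what the paper records as ``obviously leaves a bi-$\SO_2(\R)$-invariant function invariant.''
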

\begin{proof}
	There is an outer automorphism of $\SL_2(\R)$ given by
	$$ g \mapsto \begin{pmatrix} -1 & \\ & 1 \end{pmatrix} g \begin{pmatrix} -1 & \\ & 1 \end{pmatrix}, $$
	which obviously leaves a bi-$\SO_2(\R)$-invariant function invariant. Moreover, we also have
	$$ e_{\infty,s} \left( \kappa \begin{pmatrix} -1 & \\ & 1 \end{pmatrix} \right) = e_{\infty,s} \left( \begin{pmatrix} -1 & \\ & 1 \end{pmatrix} \kappa \begin{pmatrix} -1 & \\ & 1 \end{pmatrix} \right) = 1, \quad \kappa \in \SO_2(\R). $$
	Hence the two parts of the infinite component of the integral over $\PGL_2(\R)^+$ and $\PGL_2(\R) - \PGL_2(\R)^+$ cancel with each other, yielding a vanishing integral.
\end{proof}
	
\begin{remark}
	For $\Gamma$ being \emph{principal} or \emph{Hecke} congruence subgroups, we have $\widehat{\vo}_{\Gamma}^{\times} = \widehat{\vo}^{\times}$. The sum in $\eta$ is over quadratic characters unramified at every finite place and each real place, i.e., quadratic class group characters. We thus obtain
	$$ I(s; t, f) = \frac{1}{\Vol(\I_{\F} / \I_{\F}^2)} \sideset{}{_{\substack{\eta \in \Cl(\F)^{\vee} \\ \eta^2 = 1}}} \sum I_{\eta}(s; t, f), $$
\begin{equation}
	I_{\eta}(s; t, f) := \int_{\gp{Z}(\A) \backslash \GL_2(\A)} f(x^{-1} \gamma x) e_s(x) \eta(\det x) dx.
\label{GL2Form}
\end{equation}
\end{remark}

\noindent Note that the notations in (\ref{GL2Form}) suggest that the RHS is independent of the choice of $\gamma \in \SL_2(\F)$ such that $\Tr(\gamma)=t \in \F^{\times} - (\F^{\times})^2$. This is indeed true for arbitrary test function $f$. In fact, any two such $\gamma$ are conjugate by an element $P \in \GL_2(\F)$. Moreover, the proof of Lemma \ref{GenIwaDecomp} shows that we can take $P \in \gp{B}(\F)$. Since $e_s(x)$ is left invariant by $\gp{B}(\F)$ and $\eta$ is trivial on $\F^{\times}$, we get the independence of the choice of $\gamma$.

\noindent Before ending this section, we shall calculate the volume $\Vol(\I_{\F}/\I_{\F}^2)$.

\begin{lemma}
	In (\ref{GL2Form}), the volume $\Vol(\I_{\F}/\I_{\F}^2) = 2$.
\label{QTamM}
\end{lemma}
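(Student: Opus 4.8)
The plan is to identify the Haar measure on $\I_\F/\I_\F^2$ that is implicitly used in (\ref{GL2Form}) and then to evaluate its total volume by a fibration argument over the determinant.

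First I would observe that the passage from the line preceding (\ref{GL2Form}) to (\ref{GL2Form}) itself is just Weil's integration formula for the map $\det\colon\gp{Z}(\A)\backslash\GL_2(\A)\to\A^\times/(\A^\times)^2$: its fibre over the trivial class is $\gp{Z}_1(\A)\backslash\SL_2(\A)$, because $\{g\in\GL_2(\A):\det g\in(\A^\times)^2\}=\gp{Z}(\A)\SL_2(\A)$ and $\gp{Z}(\A)\cap\SL_2(\A)=\gp{Z}_1(\A)$, while the remaining fibres are its translates $a(y)\cdot\bigl(\gp{Z}_1(\A)\backslash\SL_2(\A)\bigr)$. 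Thus the measure $d^\times y$ appearing in (\ref{GL2Form}) is, by its very construction, the base measure of this fibration for the compatible family of Haar measures already fixed on $\GL_2(\A)$, $\gp{Z}(\A)$, $\SL_2(\A)$ and $\gp{Z}_1(\A)$.

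Next I would push this fibration down to the arithmetic quotients. Using $\GL_2(\F)=\SL_2(\F)\cdot\{a(\alpha):\alpha\in\F^\times\}$, left translation by $\GL_2(\F)$ on the total space, by $\SL_2(\F)$ and $\gp{Z}_1(\F)$ on the fibres, and the identification $\F^\times\backslash\A^\times/(\A^\times)^2=\I_\F/\I_\F^2$ on the base, present $\gp{Z}(\A)\GL_2(\F)\backslash\GL_2(\A)$ as a fibration over $\I_\F/\I_\F^2$ all of whose fibres have volume $\Vol\bigl(\gp{Z}_1(\A)\SL_2(\F)\backslash\SL_2(\A)\bigr)$; the footnote describing the image of the $\SL_2$-quotient inside the $\GL_2$-quotient by the condition $\det g\in\I_\F^2$ is exactly this statement for the trivial class. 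Computing the volume of the total space in the two ways then yields
\[
\Vol(\I_\F/\I_\F^2)=\frac{\Vol\bigl(\gp{Z}(\A)\GL_2(\F)\backslash\GL_2(\A)\bigr)}{\Vol\bigl(\gp{Z}_1(\A)\SL_2(\F)\backslash\SL_2(\A)\bigr)}.
\]
The numerator is $\Vol(\PGL_2(\F)\backslash\PGL_2(\A))$, the Tamagawa number of $\PGL_2$, which equals $2$: this is where the $2$ comes from. For the denominator I would divide out the free action of the compact group $\gp{Z}_1(\A)/\gp{Z}_1(\F)=\mu_2(\A)/\mu_2(\F)$, reducing it to $\Vol(\SL_2(\F)\backslash\SL_2(\A))$, the Tamagawa number $1$ of $\SL_2$, and check that with the normalization fixed at the centre this denominator is $1$, so that the ratio is $2$.

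The main obstacle is entirely one of normalization bookkeeping: one must verify (i) that the measure hidden inside (\ref{GL2Form}) really is the fibration base measure for the precise family of Haar measures the paper uses, and (ii) that $\Vol\bigl(\gp{Z}_1(\A)\SL_2(\F)\backslash\SL_2(\A)\bigr)=1$, i.e.\ that the factor $2$ by which the Tamagawa number of $\PGL_2$ exceeds that of $\SL_2$ is not cancelled upon dividing by $\gp{Z}_1(\A)/\gp{Z}_1(\F)$. This is the familiar subtlety arising from the nontrivial centre $\mu_2$ of $\SL_2$. A route avoiding Tamagawa numbers, perhaps nearer to the paper's explicit style, is to compute $\Vol(\I_\F/\I_\F^2)$ as a regularized product $\prod_v\Vol\bigl(\F_v^\times/(\F_v^\times)^2\bigr)$ divided by the index $[\F^\times\cap(\A^\times)^2:(\F^\times)^2]$, which is trivial since an element that is a square at every place is a global square (Chebotarev); almost all local factors equal $1$ and the product telescopes to $2$, the obstacle there being convergence of the infinite product against the precise local normalization.
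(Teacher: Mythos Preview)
Your proposal is correct and follows essentially the same route as the paper: both arguments express $\Vol(\I_\F/\I_\F^2)$ as the ratio of the Tamagawa number of $\PGL_2$ to that of $\SL_2$, obtaining $2/1=2$. The only cosmetic difference is that the paper realizes the fibration via the explicit semi-direct product $\GL_2=\begin{pmatrix}\F^\times&0\\0&1\end{pmatrix}\ltimes\SL_2$ and checks the measure compatibility by writing down the invariant differential forms $\omega$ and $\omega_1$, whereas you phrase the same decomposition through the determinant map; your remark on the $\mu_2(\A)/\mu_2(\F)$ normalization is a point the paper simply passes over.
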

\begin{proof}
	Following the procedure of passing from $\SL_2$ to $\GL_2$ in (\ref{GL2Form}), the quotient $\I_{\F}/\I_{\F}^2$ is interpreted as the quotient of $\PGL_2(\A)/\PGL_2(\F)$ by $\PSL_2(\A) / \PSL_2(\F)$ in the following way. We have both locally and globally semi-direct product decompositions
	$$ \GL_2(\F_v) = \begin{pmatrix} \F_v^{\times} & 0 \\ 0 & 1 \end{pmatrix} \ltimes \SL_2(\F_v), \quad \GL_2(\A) = \begin{pmatrix} \A^{\times} & 0 \\ 0 & 1 \end{pmatrix} \ltimes \SL_2(\A). $$
	Compatible with these decompositions are the Tamagawa measures on $\GL_2, \GL_1$ and $\SL_2$. In fact, if $\omega_1$ is the $\F$-differential form in the following coordinates of $\SL_2$
	$$ \begin{pmatrix} x & y \\ z & (1+yz)/x \end{pmatrix}, \quad \omega_1 = \norm[x]^{-1} dx dy dz, $$
	and if $\omega$ is the $\F$-differential form in the following coordinates of $\GL_2$
	$$ \begin{pmatrix} x & y \\ z & w \end{pmatrix}, \quad \omega = \norm[xw-yz]^{-2} dxdydzdw, $$
	then in the following coordinates of $\GL_2$
	$$ \begin{pmatrix} t & 0 \\ 0 & 1 \end{pmatrix} \begin{pmatrix} x & y \\ z & (1+yz)/x \end{pmatrix} $$
	one verifies easily that
	$$ \omega = \norm[t]^{-1} dt \omega_1. $$
	If one take $U$ a fundamental domain for $\I_{\F}=\A^{\times} / \F^{\times}$ and $S_1$ a fundamental domain for $\SL_2(\A) / \SL_2(\F)$, then it is easy to verify that
	$$ \begin{pmatrix} U & 0 \\ 0 & 1 \end{pmatrix} S_1 $$
	is a fundamental domain for $\GL_2(\A) / \GL_2(\F)$. Quotient by the center gives
	$$ \GL_2(\A)/\GL_2(\F)\gp{Z}(\A) \simeq \begin{pmatrix} \I_{\F} / \I_{\F}^2 & 0 \\ 0 & 1 \end{pmatrix} \times \SL_2(\A)/\SL_2(\F) \gp{Z}_1(\A). $$
	Thus the volume $\Vol(\I_{\F}/\I_{\F}^2)$ is the ratio of the Tamagawa number of $\GL_2$ by the Tamagawa number of $\SL_2$, which is $2/1=2$.
\end{proof}

\section{Explicit Computation}

	\subsection{Some Arithmetics of Quadratic Orders}
	
	Consider a quadratic field extension $\E / \F$ with ring of integers $\vO_{\E}$. Let $\vO \subset \vO_{\E}$ be a sub-$\vo$-order. At any finite prime $\vp$ of $\vo$, we write $\varpi_{\vp}$ for a uniformizer. There is $n_{\vp} \in \N$ such that
	$$ \vO_{\vp} = \vo_{\vp} + \varpi_{\vp}^{n_{\vp}} \vO_{\E,\vp} $$
	and $n_{\vp} \neq 0$ for finitely many $\vp$. $n_{\vp}$ is called the (local) \emph{level} of $\vO_{\vp}$. It follows that
	$$ \vO = \vo + J \vO_{\E}, \quad J = \sideset{}{_{\vp}} \prod \vp^{n_{\vp}} $$
	and we call the integral ideal $J \subset \vo$ the \emph{level ideal} of $\vO$. To any $\beta \in \vO_{\E}$, we associate an order
	$$ \vO_{\beta} := \vo + \beta \vo. $$
\begin{lemma}
	The level ideal $J_{\beta}$ of $\vO_{\beta}$ as above satisfies
	$$ (\beta - \bar{\beta})^2 \vo = J_{\beta}^2 \Dis_{\E}, $$
	where $\bar{\beta}$ is the conjugation of $\beta$ in $\E$ and $\Dis_{\E} = \Dis(\E/\F)$ is the relative discriminant ideal of $\E/\F$.
\label{LevelIdeal}
\end{lemma}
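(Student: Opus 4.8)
The identity to be proved is an equality of integral ideals of $\vo$ — note that the left-hand side is integral because $(\beta-\bar\beta)^{2}=\Tr(\beta)^{2}-4\Nr(\beta)\in\vo$ — so it suffices to verify it prime by prime. The plan is therefore to fix a finite prime $\vp$ of $\vo$, write $\varpi=\varpi_{\vp}$, let $n=n_{\vp}$ be the local level of $\vO_{\beta}$ at $\vp$ (so that $\vO_{\beta,\vp}=\vo_{\vp}+\varpi^{n}\vO_{\E,\vp}$ and $v_{\vp}(J_{\beta})=n$), and establish
\[
v_{\vp}\bigl((\beta-\bar\beta)^{2}\bigr)=2n+v_{\vp}(\Dis_{\E}).
\]
One may assume $\beta\notin\vo$, since otherwise $\vO_{\beta}$ is not an order of $\E$; then $\beta\notin\F$, so $\{1,\beta\}$ is an $\F$-basis of $\E$.

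The first step is to choose a good $\vo_{\vp}$-basis of $\vO_{\E,\vp}=\vO_{\E}\otimes_{\vo}\vo_{\vp}$. This is a free $\vo_{\vp}$-module of rank $2$, and the submodule $\vo_{\vp}\cdot 1$ is saturated: if $x\in\vo_{\vp}$ with $x/\varpi\in\vO_{\E,\vp}$, then $x/\varpi\in\F_{\vp}$ is integral over $\vo_{\vp}$, hence lies in $\vo_{\vp}$. Thus $\vO_{\E,\vp}/\vo_{\vp}$ is free of rank one, the inclusion splits, and we may pick $\omega\in\vO_{\E,\vp}$ with $\vO_{\E,\vp}=\vo_{\vp}\oplus\vo_{\vp}\omega$. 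By the (local) definition of the relative discriminant, $v_{\vp}(\Dis_{\E})$ is the $\vp$-valuation of the Gram determinant of the trace form in this basis, namely of
\[
\det\begin{pmatrix}\Tr(1)&\Tr(\omega)\\\Tr(\omega)&\Tr(\omega^{2})\end{pmatrix}=(\omega-\bar\omega)^{2}.
\]
If $\E\otimes_{\F}\F_{\vp}$ happens not to be a field, the same assertions hold verbatim, with $x\mapsto\bar x$ the non-trivial $\F_{\vp}$-automorphism of $\E\otimes_{\F}\F_{\vp}$.

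The second step is to read the level off from the coordinates of $\beta$. Write $\beta=a+b\omega$ with $a,b\in\vo_{\vp}$; here $b\neq0$ because $\beta\notin\F_{\vp}$. Absorbing $a$ into $\vo_{\vp}$ gives $\vO_{\beta,\vp}=\vo_{\vp}+\beta\vo_{\vp}=\vo_{\vp}\oplus b\vo_{\vp}\omega$, and comparing with $\vO_{\beta,\vp}=\vo_{\vp}+\varpi^{n}\vO_{\E,\vp}=\vo_{\vp}\oplus\varpi^{n}\vo_{\vp}\omega$ forces $b\vo_{\vp}=\varpi^{n}\vo_{\vp}$, i.e.\ $v_{\vp}(b)=n$. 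Since $a=\bar a$, we have $\beta-\bar\beta=b(\omega-\bar\omega)$, and therefore
\[
v_{\vp}\bigl((\beta-\bar\beta)^{2}\bigr)=2v_{\vp}(b)+v_{\vp}\bigl((\omega-\bar\omega)^{2}\bigr)=2n+v_{\vp}(\Dis_{\E}),
\]
which is the required local identity; running over all $\vp$ yields $(\beta-\bar\beta)^{2}\vo=J_{\beta}^{2}\Dis_{\E}$. One could instead phrase matters through the general relation $\Dis(\vO_{\beta}/\vo)=[\vO_{\E}:\vO_{\beta}]^{2}\,\Dis_{\E}$ for orders together with $[\vO_{\E}:\vO_{\beta}]=J_{\beta}$, but both of those inputs reduce to precisely the same local computation, so I would give the direct argument above. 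The only step needing genuine care — and the one I expect to be the main (though modest) obstacle — is the identification $v_{\vp}(b)=n_{\vp}$, that is, that the $\omega$-coordinate of $\beta$ relative to a $\vo_{\vp}$-basis of $\vO_{\E,\vp}$ containing $1$ has valuation exactly the local level; the rest is bookkeeping.
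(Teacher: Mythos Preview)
Your proof is correct and follows essentially the same approach as the paper: both localize at a prime $\vp$, choose a generator $\theta_{\vp}$ (your $\omega$) so that $\vO_{\E,\vp}=\vo_{\vp}\oplus\vo_{\vp}\theta_{\vp}$, and compare the two descriptions $\vO_{\beta,\vp}=\vo_{\vp}+\beta\vo_{\vp}=\vo_{\vp}+\varpi^{n_{\vp}}\theta_{\vp}\vo_{\vp}$ to obtain $(\beta-\bar\beta)^{2}\vo_{\vp}=\varpi^{2n_{\vp}}(\theta_{\vp}-\bar\theta_{\vp})^{2}\vo_{\vp}$. The paper phrases the comparison as ``compute the discriminant of $\vO_{\beta,\vp}$ in two ways,'' whereas you make the step $v_{\vp}(b)=n_{\vp}$ explicit, but the content is identical.
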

\begin{proof}
	At a prime $\vp < \infty$ of $\vo$, $\vo_{\vp}$ is a PID. Hence there exists $\theta_{\vp} \in \vO_{\E,\vp}$ such that
	$$ \vO_{\E,\vp} = \vo_{\vp} + \vo_{\vp} \theta_{\vp} \quad \Rightarrow \quad \vO_{\beta,\vp} = \vo_{\vp} + J_{\beta,\vp} \vO_{\E,\vp} = \vo_{\vp} + \varpi_{\vp}^{n_{\vp}} \theta_{\vp} \vo_{\vp}, n_{\vp} = \mathrm{ord}_{\vp}(J_{\beta}). $$
	We also have $\vO_{\beta,\vp} = \vo_{\vp} + \beta \vo_{\vp}$. Hence we can calculate the discriminant of $\vO_{\beta,\vp}$ in two ways and get
	$$ (\beta - \bar{\beta})^2 \vo_{\vp} = \varpi_{\vp}^{2n_{\vp}} (\theta_{\vp} - \bar{\theta}_{\vp})^2 \vo_{\vp}, $$
	from which we deduce the desired equality.
\end{proof}
\begin{remark}
	Obviously $J_{\beta}$ depends only on $t := \Tr(\beta)$. Hence we can write $J_t$ instead of $J_{\beta}$. In the sequel, we shall denote by $[J_t]$ the image of $J_t$ in $\Cl(\F)$.
\end{remark}

	\subsection{Rankin--Selberg Orbital Integrals}

	In this subsection we shall consider a general test function $f$ (see (\ref{GL2Form})) and introduce some notations and terminologies which are parallel to those in the study of trace formulae. We expect these results to be useful for the potential application of the comparison of Rankin--Selberg trace formulae. In particular, one could try and simplify certain proofs towards the Sato--Tate conjectures \cite{SugiyamaTsuzuki} and towards the bias of signs \cite{Martin}. In the next two subsections, we will specialize (the finite part of) $f$ to be the characteristic functions of the principal or Hecke congruence subgroups, and carry out an explicit computation. Recall that in this case $\eta$ appearing in (\ref{GL2Form}) must be quadratic class group characters. We still assume $f_{\infty}$ is bi-$\gp{K}_{\infty}$-invariant.
	
	As noted after (\ref{GL2Form}), the right hand side of that equation depends only on the trace $t$ of $\gamma$. In fact, all $\gamma$'s with trace $t$ are conjugate under $\gp{B}(\F)$ and we shall choose a particular element in the stable conjugacy class of $\gamma$. To this end, we denote by $\E = \E_t$ the quadratic field extension $\F[X] / (X^2 - tX + 1)$. Obviously, $\gamma$ corresponds to an (abstract) element $\beta$ in $\E$ such that
	$$ \E = \F \oplus \F \beta, \quad \beta^2 - t \beta + 1 = 0 \text{ or } \beta \begin{pmatrix} \beta \\ 1 \end{pmatrix} = \begin{pmatrix} t & -1 \\ 1 & 0 \end{pmatrix} \begin{pmatrix} \beta \\ 1 \end{pmatrix}. $$
	We choose $\gamma$ according to this embedding, i.e., we can assume in the following discussion that
	$$ \gamma = \begin{pmatrix} t & -1 \\ 1 & 0 \end{pmatrix}. $$
	
	Secondly $I_{\eta}(s; t, f)$ is decomposable for decomposable $f = \otimes_v' f_v$ with
	$$ I_{\eta}(s;t,f) = \sideset{}{_v} \prod I_{\eta_v}(s; t, f_v), \quad I_{\eta_v}(s; t, f_v) := \int_{\PGL_2(\F_v)} f_v(x^{-1} \gamma x) e_{v,s}(x) \eta_v(\det x) dx. $$
	The computation of the infinite component is simply given by \cite[Proposition 4.10]{Wu9} since $\eta_{\infty} = 1$, i.e., we have (note that difference in the definitions of $e_s$!)
	$$ I_{\eta_{\infty}}(s; t, f_{\infty}) = \int_{\PGL_2(\F_{\infty})} f_{\infty}(x^{-1} \gamma x) e_s(x) dx = \mathcal{Z}f_{\infty}(s+1/2, -t), $$
	since the lower-left entry of $\gamma$ is $1$.
\begin{lemma}
	Suppose $\gamma$ is $\F_{\infty}$-hyperbolic such that for some $P \in \GL_2(\F_{\infty})$
	$$ P^{-1} \gamma P = \begin{pmatrix} N(\gamma)^{1/2} & 0 \\ 0 & N(\gamma)^{-1/2} \end{pmatrix}, \quad \norm[N(\gamma)] > 1. $$
	Then we have (recall the orbital integral (\ref{OrbIntA}))
	$$ \mathcal{Z}f_{\infty}(1,-t) = \frac{\Gamma_{\F_{\infty}}(1)}{\Gamma_{\F_{\infty}}(2)} \vO(t,f_{\infty}). $$
\label{ZagTransAt1}
\end{lemma}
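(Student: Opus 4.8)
The plan is to evaluate the Zagier transform $\mathcal{Z}f_{\infty}(s,-t)$ at $s=1$ by relating it to the orbital integral $\vO(t,f_{\infty})$ via an explicit change of coordinates, tracking the archimedean gamma factors $\Gamma_{\F_{\infty}}$ that arise from the height weight. Recall from (\ref{ZagT}) that
$$ \mathcal{Z}f_{\infty}(s,-t) = \int_{\PSL_2(\F_{\infty})} f_{\infty}\!\left(x^{-1}\begin{pmatrix} t & -1 \\ 1 & 0 \end{pmatrix} x\right) \Ht_{\infty}(x)^{s}\, dx, $$
since $-(-t)=t$ and the lower-left entry of $\gamma$ is $1$. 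The strategy is to split the domain $\PSL_2(\F_{\infty})$ using an Iwasawa-type decomposition adapted to $C_\gamma$, the centralizer of $\gamma$: since $\gamma$ is $\F_{\infty}$-hyperbolic, conjugating by $P$ we have $C_\gamma = P\,\gp{A}_1(\F_{\infty})\,P^{-1}$, so writing $x = P a(\lambda) y$ with $a(\lambda)\in\gp{A}_1(\F_{\infty})$ and $y$ ranging over $\gp{A}_1(\F_{\infty})\backslash\PSL_2(\F_{\infty})$, the integrand $f_{\infty}(x^{-1}\gamma x)$ becomes $f_{\infty}(y^{-1}(P^{-1}\gamma P)y)$, independent of $\lambda$, while $\Ht_{\infty}(x)^s$ picks up a factor depending on $\lambda$ that must be integrated out.

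First I would carry out this substitution carefully. The orbital integral (\ref{OrbIntA}) is precisely $\vO(t,f_{\infty}) = \int_{\gp{A}_1(\F_{\infty})\backslash\PSL_2(\F_{\infty})} f_{\infty}(y^{-1}(P^{-1}\gamma P)y)\, dy$, so after the substitution $\mathcal{Z}f_{\infty}(s,-t)$ factors as $\vO(t,f_{\infty})$ times a one-dimensional archimedean integral over $\gp{A}_1(\F_{\infty})$ (modulo the precise measure normalizations, which I would match against those fixed earlier). That remaining integral is an archimedean Mellin-type integral whose value at $s=1$ is a ratio of gamma factors; the claim is that it equals $\Gamma_{\F_{\infty}}(1)/\Gamma_{\F_{\infty}}(2)$. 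Concretely, over $\F_{\infty}=\R$ one gets an integral of the form $\int_{\R}(\cosh r)^{-2s}\,dr$ or a beta-integral in the variable $e^{2r}$, which at $s=1$ evaluates to a constant matching $\Gamma_{\R}(1)/\Gamma_{\R}(2)$; over $\F_{\infty}=\C$ the analogous computation uses the measure $dz/|z|_{\C}$ on $\C^{\times}$ and produces $\Gamma_{\C}(1)/\Gamma_{\C}(2)$. The cleanest route is probably to recognize this one-variable integral as the archimedean zeta integral of the standard $\GL_1$-factor, or simply to invoke \cite[Proposition 4.10]{Wu9} (already cited in the excerpt for the identity $I_{\eta_{\infty}}(s;t,f_{\infty}) = \mathcal{Z}f_{\infty}(s+1/2,-t)$), which should contain exactly this evaluation as a special case.

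The main obstacle I anticipate is bookkeeping of measures and normalizations: the Haar measure on $\PSL_2(\F_{\infty})$, the induced quotient measure on $\gp{A}_1(\F_{\infty})\backslash\PSL_2(\F_{\infty})$ entering $\vO(t,f_{\infty})$, and the measure on $\gp{A}_1(\F_{\infty})\simeq\F_{\infty}^{\times}/\{\pm1\}$ must be compatible, and the gamma factors $\Gamma_{\R}(s) = \pi^{-s/2}\Gamma(s/2)$ versus $\Gamma_{\C}(s) = 2(2\pi)^{-s}\Gamma(s)$ carry the precise constants ($\pi^{1/2}$, factors of $2$) that make the identity come out clean. I would handle the two cases $\F_{\infty}=\R$ and $\F_{\infty}=\C$ in parallel, using the Selberg transforms (\ref{STR}) and (\ref{STC}) as a consistency check on the constants, and defer to the companion paper \cite{Wu9} for the routine archimedean integral rather than reproducing it. The key conceptual point — that the $s$-dependence of $\mathcal{Z}f_{\infty}$ decouples into an orbital integral times a $\GL_1$ archimedean factor — is immediate from the structure of the integrand; everything else is calibration.
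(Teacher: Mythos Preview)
Your proposal has a genuine gap at the step where you claim that, after writing $x = P\,a(\lambda)\,y$, the Zagier transform factors as $\vO(t,f_{\infty})$ times a one-variable integral over $\gp{A}_1(\F_{\infty})$. The integrand $f_{\infty}(x^{-1}\gamma x)$ indeed becomes independent of $\lambda$, but the height weight $\Ht_{\infty}(P\,a(\lambda)\,y)^{s}$ does \emph{not} separate into a product of a function of $\lambda$ and a function of $y$: the diagonalizing matrix $P$ is not upper-triangular (in the paper's notation $P_t=\begin{pmatrix} x_1 & x_2 \\ 1 & 1 \end{pmatrix}$), so the Iwasawa data of $P\,a(\lambda)\,y$ mix $\lambda$ and $y$ in a nontrivial way. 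Thus there is no ``one-dimensional archimedean integral'' to isolate and evaluate; the decoupling you describe simply does not occur for general $s$, and not obviously at $s=1$ either.

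The paper resolves exactly this issue by rewriting $e_{\infty,s}$ via a Godement section: one has
\[
e_{\infty,s}(g) \;=\; \frac{\Gamma_{\F_{\infty}}(1)}{\Gamma_{\F_{\infty}}(2s)}\,|\det g|^{s}\int_{\F_{\infty}^{\times}} \Phi\bigl((0,1)tg\bigr)\,|t|^{2s}\,d^{\times}t,
\qquad \Phi(x,y)=e^{-\pi(x\bar x + y\bar y)}.
\]
This replaces the height weight by an expression that \emph{does} transform cleanly under left multiplication by $P_t$: the row vector $(0,1)$ becomes $(1,1)$, and after passing to $\gp{A}\backslash \GL_2$ one is left with an inner integral $\int_{\gp{A}(\F_{\infty})}\Phi\bigl((1,1)eg\bigr)|\det eg|^{s}\,de$. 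The key point, which your argument is missing, is that precisely at $s=1$ this inner integral reduces to the full Gaussian $\int \Phi(t_1,t_2)\,dt_1\,dt_2 = 1$, independently of $g$; that is what produces the clean factorization into $\vO(t,f_{\infty})$ times the gamma ratio. The gamma ratio $\Gamma_{\F_{\infty}}(1)/\Gamma_{\F_{\infty}}(2)$ comes from the Godement-section normalization itself, not from a separate beta integral over $\gp{A}_1$.
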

\begin{proof}
	This is \cite[(4.12)]{Za81} for $\F_{\infty} = \R$ and \cite[(3.15)]{Sz83} for $\F_{\infty} = \C$. For convenience, we include a proof, which follows the style of \cite[Proposition 4.9]{Wu9}. We treat the case $\F_{\infty}=\ag{R}$ with details. By assumption $\norm[t] > 2$. An easy computation shows
	$$ \gamma_t P_t = P_t e_t, \quad \gamma_t = \begin{pmatrix} t & -1 \\ 1 & 0 \end{pmatrix}, \quad P_t = \begin{pmatrix} x_1 & x_2 \\ 1 & 1 \end{pmatrix}, \quad e_t = \begin{pmatrix} x_1 & 0 \\ 0 & x_2 \end{pmatrix}, \quad \text{with} $$
	$$ x_1 = \frac{t+\sqrt{t^2-4}}{2}, \quad x_2 = \frac{t-\sqrt{t^2-4}}{2}; \quad x_1 \text{ or } x_2 = N(\gamma). $$
	With the choice of test function $\Phi$ in the Godement section, we have
	$$ \norm[\det g]^s \int_{\ag{R}^{\times}} \Phi((0,t)g) \norm[t]_{\ag{R}}^{2s} d^{\times}t = \frac{\Gamma_{\ag{R}}(2s)}{\Gamma_{\R}(1)} \cdot e_{\infty,s}(g), \quad \Phi(x,y) = e^{-\pi(x^2+y^2)}. $$
	It follows that
\begin{align*}
	\mathcal{Z}f_{\infty}(s,-t) &= \frac{\Gamma_{\R}(1)}{\Gamma_{\ag{R}}(2s)} \int_{\GL_2(\ag{R})} f_{\infty}(g^{-1} \gamma_t g) \Phi((0,1)g) \norm[\det g]^s dg \\
	&= \frac{\Gamma_{\R}(1)}{\Gamma_{\ag{R}}(2s)} \norm[\det P_t]^s \int_{\GL_2(\ag{R})} f_{\infty}(g^{-1} e_t g) \Phi((1,1)g) \norm[\det g]^s dg \\
	&= \frac{\Gamma_{\R}(1)}{\Gamma_{\ag{R}}(2s)} \norm[t^2-4]^{\frac{s}{2}} \int_{\gp{A}(\ag{R}) \backslash \GL_2(\ag{R})} f_{\infty}(g^{-1} e_t g) \int_{\gp{A}(\ag{R})} \Phi((1,1)eg) \norm[\det eg]^s de dg.
\end{align*}
	By the Iwasawa decomposition, we can write $g=n(x)\kappa$. Evaluated at $s=1$, the last integral
\begin{align*}
	\int_{\gp{A}(\ag{R})} \Phi((1,1)eg) \norm[\det eg]^s de &= \int_{\R^2} e^{-\pi(t_1^2 + (t_2+xt_1)^2)} dt_1 dt_2 \\
	&= \int_{\R^2} e^{-\pi(t_1^2 + t_2^2)} dt_1 dt_2 = 1
\end{align*}
	is independent of $g$. We get the desired equality by (\ref{OrbIntA}) and the identification $\gp{A}(\R) \backslash \GL_2(\R) \simeq \gp{A}_1(\R) \backslash \PSL_2(\R)$. For the case $\F_{\infty}=\C$, we take
	$$ \Phi(x,y) = e^{-2\pi x\bar{x} + y\bar{y}} $$
	and change the subscript $\R$ to $\C$ in the above proof.
\end{proof}

	We are left for the computation at finite primes $\vp < \infty$. Let's make some first reductions. We assume that the support of $f_{\vp}$ is contained in $\SL_2(\vo_{\vp})$. Obviously, the non-vanishing of all $I_{\eta_{\vp}}(s; t, f_{\vp})$ implies that $\beta$ is integral over $\vo$ (even a unit in $\vO_{\E}$ of norm $1$). Since $\vo_{\vp}$ is a PID, there exists $\theta_{\vp} \in \E_{\vp}$ such that the ring $\vO_{\vp}$ of integers of $\E_{\vp} = \E \otimes_{\F} \F_{\vp}$ is a free $\vo_{\vp}$-module with basis $\{ 1, \theta_{\vp} \}$
	$$ \vO_{\vp} = \vo_{\vp} + \vo_{\vp} \theta_{\vp}, \quad \theta_{\vp}^2 - \fb_{\vp} \theta_{\vp} + \fa_{\vp} = 0, \quad \text{with} \quad \fa_{\vp}, \fb_{\vp} \in \vo_{\vp}. $$
	It gives an embedding $\iota_{\vp}: \E_{\vp} \to \Mat_2(\F_{\vp})$ determined by
	$$ \theta_{\vp} \begin{pmatrix} \theta_{\vp} \\ 1 \end{pmatrix} = \iota_{\vp}(\theta_{\vp}) \begin{pmatrix} \theta_{\vp} \\ 1 \end{pmatrix}, \quad \text{i.e. } \iota_{\vp}(\theta_{\vp}) = \begin{pmatrix} \fb_{\vp} & - \fa_{\vp} \\ 1 & 0 \end{pmatrix}. $$
\begin{remark}
	We shall identify $\E_{\vp}^{\times}$ with its image under $\iota_{\vp}$ in the sequel. 
\end{remark}
\noindent Since $\beta \in \vO_{\vp} - \F_{\vp}$, we can find $u_{\vp}, v_{\vp} \in \vo_{\vp}$ with $v_{\vp} \neq 0$ such that
	$$ \beta = u_{\vp} + v_{\vp} \theta_{\vp} \quad \Rightarrow \quad \iota_{\vp}(\beta) = \begin{pmatrix} v_{\vp} & u_{\vp} \\ 0 & 1 \end{pmatrix}^{-1} \gamma \begin{pmatrix} v_{\vp} & u_{\vp} \\ 0 & 1 \end{pmatrix} = \begin{pmatrix} * & * \\ v_{\vp} & * \end{pmatrix}. $$
	Consequently, we get
	$$ I_{\eta_{\vp}}(s; t, f_{\vp}) = \eta_{\vp}(v_{\vp}) \norm[v_{\vp}]_{\vp}^{s+\frac{1}{2}} \tilde{I}_{\eta_{\vp}}(s; t, f_{\vp}), $$
	$$ \tilde{I}_{\eta_{\vp}}(s; t, f_{\vp}) := \int_{\PGL_2(\F_{\vp})} f_{\vp}(x^{-1} \iota_{\vp}(\beta) x) e_{\vp,s}(x) \eta_{\vp}(\det x) dx. $$
	By definition, we obviously have (recall $\eta_{\vp}$ is unramified)
\begin{equation}
	(\beta - \bar{\beta})^2 \vo_{\vp} = v_{\vp}^2 (\theta_{\vp} - \bar{\theta}_{\vp})^2 \vo_{\vp} = v_{\vp}^2 \Dis_{\E,\vp} \quad \Rightarrow \quad \eta_{\vp}(v_{\vp}) \norm[v_{\vp}]_{\vp}^{s+\frac{1}{2}} = \eta_{\vp}(J_{t,\vp}) \Nr(J_{t,\vp})^{-(s+\frac{1}{2})}.
\label{EtaFactor}
\end{equation}
	We have two possibilities for $\eta_{\vp}$: $\eta_{\vp}(\varpi_{\vp}) = 1$ (equivalent to $\eta_{\vp}=1$) or $\eta_{\vp}(\varpi_{\vp}) = -1$. We observe that the second case can be reduced to the first one as follows. We can write
	$$ x = \begin{pmatrix} t_1 & * \\ 0 & t_2 \end{pmatrix} \kappa, \quad t_1,t_2 \in \F_{\vp}^{\times}, \kappa \in \GL_2(\vo_{\vp}). $$
	Then we have
	$$ e_{\vp,s}(x) = \extnorm{\frac{t_1}{t_2}}_{\vp}^{s+\frac{1}{2}} = \left( \Nr(\vp)^{s+\frac{1}{2}} \right)^{\mathrm{ord}_{\vp}(t_2) - \mathrm{ord}_{\vp}(t_1)}, \quad \eta_{\vp}(\det x) = (-1)^{\mathrm{ord}_{\vp}(t_1t_2)} = (-1)^{\mathrm{ord}_{\vp}(t_2) - \mathrm{ord}_{\vp}(t_1)}. $$
	Thus if $\tilde{I}_1(s;t,f_{\vp})$ is expressed as a function $H(\Nr(\vp)^{s})$, then $\tilde{I}_{\eta_{\vp}}(s;t,f_{\vp})$ is simply $H(-\Nr(\vp)^{s})$.
	
	We are finally reduced to computing $\tilde{I}_1(s;t,f_{\vp})$, which we write as $\tilde{I}_{\vp}(s;t,f_{\vp})$. Now at a finite place $\vp < \infty$ such that $\E/\F$ is not split, the principality of lattices implies (for details, see the discussion leading to \cite[(4.2)]{Wu9})
	$$ \GL_2(\F_{\vp}) = \sideset{}{_{r=0}^{\infty}} \bigsqcup \E_{\vp}^{\times} a(\varpi_{\vp}^{-r}) \GL_2(\vo_{\vp}), \quad \E_{\vp}^{\times} \subset \F_{\vp}^{\times} \GL_2(\vo_{\vp}), $$
where $\varpi_{\vp}$ is a uniformizer of $\F_{\vp}$. Choosing a Haar measure $de$ on $\F_{\vp}^{\times} \backslash \E_{\vp}^{\times}$, we can define
\begin{equation}
	d_r := \Vol( \E_{\vp}^{\times} \backslash \E_{\vp}^{\times} a(\varpi_{\vp}^{-r}) \GL_2(\vo_{\vp}) ).
\label{RSdrNS}
\end{equation}
	If we define the \emph{(normalized) Rankin--Selberg orbital integral} as
	$$ \RSO_{\gamma}(r, f_{\vp}) := \frac{\int_{\GL_2(\vo_{\vp})} f_{\vp}( \kappa^{-1} a(\varpi_{\vp}^r) \iota_{\vp}(\beta) a(\varpi_{\vp}^{-r}) \kappa ) d\kappa}{\Vol(\GL_2(\vo_{\vp}))}, $$
	denote $\zeta_{\E,\vp}$ for the product of local factors of the Dedekind zeta-function of $\E$ at primes above $\vp$, then we can rewrite
\begin{align}
	\tilde{I}_{\vp}(s; t, f_{\vp}) &= \sideset{}{_{r=0}^{\infty}} \sum \RSO_{\gamma}(r, f_{\vp}) \cdot d_r \cdot \int_{\F_{\vp}^{\times} \backslash \E_{\vp}^{\times}} e_{\vp,s}(ea(\varpi_{\vp}^{-r})) de \nonumber \\
	&=: \frac{\zeta_{\E,\vp}(s+1/2)}{\zeta_{\F,\vp}(2s+1)} \sideset{}{_{r=0}^{\infty}} \sum \RSO_{\gamma}(r, f_{\vp}) \cdot \mathrm{wt}_{\vp}(s, r, \E_{\vp} / \F_{\vp}), \label{RSwtNS}
\end{align}
	Similarly, at a finite place $\vp < \infty$ such that $\E/\F$ is split, the usual Iwasawa decomposition implies
	$$ \GL_2(\F_{\vp}) = \sideset{}{_{r=0}^{\infty}} \bigsqcup P \gp{A}_{\vp} n(\varpi_{\vp}^{-r}) \GL_2(\vo_{\vp}), \quad P = \begin{pmatrix} \theta_{\vp} & \bar{\theta}_{\vp} \\ 1 & 1 \end{pmatrix} \in \GL_2(\vo_{\vp}), $$
where $\gp{A}_{\vp} = \gp{A}(\F_{\vp})$ is the diagonal torus and we have identified $\beta$ with an element in $\vo_{\vp}$, $\bar{\theta}_{\vp} := \fb_{\vp} - \theta_{\vp}$. Choosing a Haar measure $de$ on $\F_{\vp}^{\times} \backslash \E_{\vp}^{\times}$, we can define
\begin{equation}
	d_r := \Vol( \E_{\vp}^{\times} \backslash \E_{\vp}^{\times} P n(\varpi_{\vp}^{-r}) \GL_2(\vo_{\vp}) ) = \Vol( \gp{A}_{\vp} \backslash \gp{A}_{\vp} n(\varpi_{\vp}^{-r}) \GL_2(\vo_{\vp}) ).
\label{RSdrS}
\end{equation}
	If we define the \emph{(normalized) Rankin--Selberg orbital integral} as
	$$ \RSO_{\gamma}(r, f_{\vp}) := \frac{\int_{\GL_2(\vo_{\vp})} f_{\vp}( \kappa^{-1} n(-\varpi_{\vp}^{-r}) P^{-1} \iota_{\vp}(\beta) P n(\varpi_{\vp}^{-r}) \kappa ) d\kappa}{\Vol(\GL_2(\vo_{\vp}))}, $$
	then we can rewrite
\begin{align} 
	\tilde{I}_{\vp}(s; t, f_{\vp}) &= \sideset{}{_{r=0}^{\infty}} \sum \RSO_{\gamma}(r, f_{\vp}) \cdot d_r \cdot \int_{\F_{\vp}^{\times} \backslash \E_{\vp}^{\times}} e_{\vp,s}(ePn(\varpi_{\vp}^{-r})) de \nonumber \\
	&=: \frac{\zeta_{\E,\vp}(s+1/2)}{\zeta_{\F,\vp}(2s+1)} \sideset{}{_{r=0}^{\infty}} \sum \RSO_{\gamma}(r, f_{\vp}) \cdot \mathrm{wt}_{\vp}(s, r, \E_{\vp} / \F_{\vp}), \label{RSwtS}
\end{align}
	The \emph{Rankin--Selberg weights} $\mathrm{wt}_{\vp}(s; r, \E_{\vp} / \F_{\vp})$ are independent of $f_{\vp}$. We record their explicit values and postpone their computation to the next subsection.
\begin{proposition}
	We write $q=q_{\vp}$ for the cardinality of $\vo/\vp$ and $Z := q^s$. Then we have
	$$ \frac{\Vol(\vo_{\vp}^{\times})}{\Vol(\GL_2(\vo_{\vp}))} \mathrm{wt}_{\vp}(s; 0, \E_{\vp} / \F_{\vp}) = L_{\vp}(1, \eta_{\E_{\vp} / \F_{\vp}}) = \left\{ \begin{matrix} (1+q^{-1})^{-1} & \text{if } \E_{\vp} / \F_{\vp} \text{ is unramified} \\ 1 & \text{if } \E_{\vp} / \F_{\vp} \text{ is ramified} \\ (1-q^{-1})^{-1} & \text{if } \E_{\vp} / \F_{\vp} \text{ is split} \end{matrix} \right. , $$
	where $\eta_{\E_{\vp} / \F_{\vp}}$ is the quadratic character associated with the quadratic extension $\E_{\vp} / \F_{\vp}$. We have the following formulae of the weights $\mathrm{wt}_{\vp}(s; r, \E_{\vp} / \F_{\vp})$ for $r \geq 1$.
\begin{itemize}
	\item[(1)] If $\E_{\vp} / \F_{\vp}$ is unramified, then
	$$ \frac{\Vol(\vo_{\vp}^{\times})}{\Vol(\GL_2(\vo_{\vp}))} \mathrm{wt}_{\vp}(s; r, \E_{\vp} / \F_{\vp}) = \frac{ (Z-q^{-1}Z^{-1}) (q^{\frac{1}{2}}Z)^r - (Z^{-1}-q^{-1}Z) (q^{\frac{1}{2}}Z^{-1})^r }{Z-Z^{-1}}. $$
	\item[(2)] If $\E_{\vp} / \F_{\vp}$ is ramified, then
	$$ \frac{\Vol(\vo_{\vp}^{\times})}{\Vol(\GL_2(\vo_{\vp}))} \mathrm{wt}_{\vp}(s; r, \E_{\vp} / \F_{\vp}) = \frac{ (Z-q^{-\frac{1}{2}}) (q^{\frac{1}{2}}Z)^r - (Z^{-1}-q^{-\frac{1}{2}}) (q^{\frac{1}{2}}Z^{-1})^r }{Z-Z^{-1}}. $$
	\item[(3)] If $\E_{\vp} / \F_{\vp}$ is split, then
	$$ \frac{\Vol(\vo_{\vp}^{\times})}{\Vol(\GL_2(\vo_{\vp}))} \mathrm{wt}_{\vp}(s; r, \E_{\vp} / \F_{\vp}) = \frac{ (Z+q^{-1}Z^{-1}-2q^{-\frac{1}{2}}) (q^{\frac{1}{2}}Z)^r - (Z^{-1}+q^{-1}Z-2q^{-\frac{1}{2}}) (q^{\frac{1}{2}}Z^{-1})^r }{Z-Z^{-1}}. $$
\end{itemize}
\label{ExpWts}
\end{proposition}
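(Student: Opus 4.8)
The quantities $\mathrm{wt}_{\vp}(s;r,\E_{\vp}/\F_{\vp})$ are manifestly independent of $f_{\vp}$: unwinding the defining relations (\ref{RSwtNS}) and (\ref{RSwtS}),
$$\mathrm{wt}_{\vp}(s;r,\E_{\vp}/\F_{\vp}) = \frac{\zeta_{\F,\vp}(2s+1)}{\zeta_{\E,\vp}(s+1/2)}\cdot d_r\cdot\int_{\F_{\vp}^{\times}\backslash\E_{\vp}^{\times}}e_{\vp,s}(eg_r)\,de,$$
where $g_r=a(\varpi_{\vp}^{-r})$ in the non-split case and $g_r=Pn(\varpi_{\vp}^{-r})$ in the split case. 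The plan is therefore to compute separately the two purely geometric ingredients --- the orbit volumes $d_r$ of (\ref{RSdrNS}), (\ref{RSdrS}) and the torus integrals $T_r(s):=\int_{\F_{\vp}^{\times}\backslash\E_{\vp}^{\times}}e_{\vp,s}(eg_r)\,de$ --- then multiply, and finally divide by the zeta-ratio; the arbitrary choice of Haar measure $de$ drops out of the product $d_r\,T_r(s)$. Throughout one distinguishes the three local behaviours of $\E_{\vp}/\F_{\vp}$.

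For the volumes, the natural bookkeeping device is the Bruhat--Tits tree $\mathcal{T}$ of $\PGL_2(\F_{\vp})$, whose vertex set is $\GL_2(\F_{\vp})/\F_{\vp}^{\times}\GL_2(\vo_{\vp})$, on which $\E_{\vp}^{\times}$ acts by left translation. The stabiliser of the vertex $v_r:=g_rv_0$ inside $\E_{\vp}^{\times}/\F_{\vp}^{\times}$ is the image of $\vO_{r,\vp}^{\times}$, where $\vO_{r,\vp}=\vo_{\vp}+\varpi_{\vp}^{r}\vO_{\E,\vp}$ is the order of conductor $\varpi_{\vp}^{r}$ (here one uses the explicit embedding $\iota_{\vp}$ of the previous subsection together with Lemma \ref{LevelIdeal}). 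Hence $d_r/d_0=[\vO_{\E,\vp}^{\times}:\vO_{r,\vp}^{\times}]$, which for $r\geq1$ equals $q^{r-1}(q+1)$, $q^{r}$, $q^{r-1}(q-1)$ in the unramified, ramified and split cases respectively; the normalising constant $d_0$ is read off directly from the measure. In the split case $\E_{\vp}^{\times}/\F_{\vp}^{\times}$ is a split torus and this is the elementary count of the $\gp{A}_{\vp}$-orbits of depth-$r$ vertices hanging off the standard apartment.

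For the torus integral, since $e_{\vp,s}$ is right-$\GL_2(\vo_{\vp})$-invariant, $e_{\vp,s}(eg_r)$ depends on $e$ only through the vertex $ev_r$, and on vertices $e_{\vp,s}$ is the exponential $w\mapsto(q^{1/2}Z)^{\beta(w)}$ of the signed horocycle ($\Z$-valued) function $\beta$ attached to the standard cusp, with $Z=q^{s}$; so $T_r(s)$ is the weighted sum of $(q^{1/2}Z)^{\beta(w)}$ over the $\E_{\vp}^{\times}$-orbit of $v_r$. In the unramified and ramified cases $\E_{\vp}^{\times}/\F_{\vp}^{\times}$ is compact and fixes the single vertex $v_0$ (the one attached to the maximal order $\vO_{\E,\vp}$); grouping its orbit on the sphere of radius $r$ about $v_0$ according to the value of $\beta$ --- which again reduces to the indices $[\vO_{k,\vp}^{\times}:\vO_{k+1,\vp}^{\times}]$ --- turns $T_r(s)$ into a finite geometric progression, producing the numerator with the two exponentials $(q^{1/2}Z)^{r}$ and $(q^{1/2}Z^{-1})^{r}$ and the denominator $Z-Z^{-1}$. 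In the split case $\E_{\vp}^{\times}/\F_{\vp}^{\times}$ translates along a fixed apartment and $T_r(s)$ is a genuine infinite geometric series along its two ends, of the same shape. Multiplying by $d_r$ and dividing by $\zeta_{\E,\vp}(s+1/2)/\zeta_{\F,\vp}(2s+1)$ --- which is the value of $\sum_{r}d_r\,T_r(s)$ in the maximal-order case by the classical unramified Rankin--Selberg computation, and hence a useful consistency check --- yields the three displayed formulas; the stated $r=0$ value and the identity $\frac{\Vol(\vo_{\vp}^{\times})}{\Vol(\GL_2(\vo_{\vp}))}\mathrm{wt}_{\vp}(s;0,\E_{\vp}/\F_{\vp})=L_{\vp}(1,\eta_{\E_{\vp}/\F_{\vp}})$ fall out of the $d_0$ and $T_0$ computations.

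I expect the genuine obstacle to be the torus integral in the \emph{ramified} case: there $\vO_{\E,\vp}$ is not generated over $\vo_{\vp}$ by a unit and the torus sits less transparently inside $\GL_2(\vo_{\vp})$, so one must follow carefully how the ramified uniformizer interacts with the Iwasawa coordinate; the asymmetry between the factor $(Z-q^{-1}Z^{-1})$ in the unramified answer and $(Z-q^{-1/2})$ in the ramified answer is exactly the trace of this. A second, purely bookkeeping, difficulty is fixing all measure normalisations ($de$, $\Vol(\GL_2(\vo_{\vp}))$, $\Vol(\vo_{\vp}^{\times})$) consistently so that the clean statements of the Proposition hold without spurious constants.
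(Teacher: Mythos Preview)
Your outline is sound and would lead to the stated formulas; the decomposition into $d_r$ and the torus integral $T_r(s)$, the identification $d_r/d_0=[\vO_{\E,\vp}^{\times}:\vO_{r,\vp}^{\times}]$, and the case-split on the splitting behaviour of $\E_{\vp}/\F_{\vp}$ all match the paper's Lemmas \ref{MeasCal}--\ref{OrderUnitsInd}. Where you diverge is in the evaluation of $T_r(s)$: you propose to interpret $e_{\vp,s}$ as the exponential of a horocycle function on the Bruhat--Tits tree and to compute $T_r(s)$ by stratifying the $\E_{\vp}^{\times}$-orbit of $v_r$ according to horodistance. The paper instead replaces $e_{\vp,s}$ by its Godement-section realisation $e_s(g)=\norm[\det g]^{s+1/2}\int_{\F^{\times}}\Phi((0,t)g)\norm[t]^{2s+1}d^{\times}t\big/\zeta_{\F}(2s+1)\Vol(\vo^{\times})$ with $\Phi=\Char_{\vo\times\vo}$; the integral over $\F_{\vp}^{\times}\backslash\E_{\vp}^{\times}$ then merges with the inner $\F_{\vp}^{\times}$-integral to give a single Tate-type integral $\int_{\E^{\times}}\Char_{\vO_r}(e)\norm[e]_{\E}^{s+1/2}d^{\times}e$ (in the non-split case; $\Char_{L_r}$ in the split case), which the paper calls a \emph{Legendre function} $P_{s+1/2}(r,\E/\F)$ and evaluates by slicing $\vO_r$ along the $\E$-valuation using Lemma \ref{OrderUnitsInd}.

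The two routes are equivalent in content but organised differently. The Godement-section trick buys two things: it dissolves the measure-normalisation issues you flag (Lemma \ref{MeasCal} makes the constants fall out cleanly), and it treats the ramified case on exactly the same footing as the others, since the computation of $\int_{\E}\Char_{\vO_r}\norm[\cdot]_{\E}^s$ only needs the valuation formula $v_{\E}(x+\theta y)=\min(2v_{\F}(x),2v_{\F}(y)+1)$, bypassing the delicate tree geometry you anticipate. Your tree picture, on the other hand, explains structurally why the answers have the form $\frac{A(q^{1/2}Z)^r-B(q^{1/2}Z^{-1})^r}{Z-Z^{-1}}$ (two ends of an orbit, or two branches off a fixed vertex) and why the functional equation $s\leftrightarrow -s$ is immediate. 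Either approach is complete once executed; the paper's is slightly more mechanical, yours slightly more conceptual.
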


	We are thus reduced to the computation of the Rankin--Selberg orbital integrals $\RSO_{\gamma}(r,f_{\vp})$ for various concrete choices of $f_{\vp}$. For further convenience of notations, we denote
\begin{equation}
	a_r = \left\{ \begin{matrix} a(\varpi_{\vp}^{-r}) & \text{if } \E_{\vp} \text{ non-split} \\ \begin{pmatrix} \theta_{\vp} & \overline{\theta}_{\vp} \\ 1 & 1 \end{pmatrix} n(\varpi_{\vp}^{-r}) & \text{if } \E_{\vp} \simeq \F_{\vp}^2 \end{matrix} \right. .
\label{RSar}
\end{equation}
	Thus we get a uniform form of the Rankin--Selberg orbital integrals
	$$ \RSO_{\gamma}(r, f_{\vp}) := \frac{\int_{\GL_2(\vo_{\vp})} f_{\vp}( \kappa^{-1} a_r^{-1} \iota_{\vp}(\beta) a_r \kappa ) d\kappa}{\Vol(\GL_2(\vo_{\vp}))}. $$

	\subsection{Rankin--Selberg weights}

	For simplicity of notations, let's drop the subscript $\vp$ in this subsection. We shall compute the Rankin--Selberg weights explicitly, i.e., prove Proposition \ref{ExpWts}. Recall from (\ref{RSwtNS}) and (\ref{RSwtS}) that these weights have the form
	$$ \mathrm{wt}_{\vp}(s, r, \E / \F) = \frac{\zeta_{\F}(2s+1)}{\zeta_{\E}(s+1/2)} \cdot d_r \cdot \int_{\F^{\times} \backslash \E^{\times}} e_s(e a_r) d^{\times}e, $$
where $d_r$ is given in (\ref{RSdrNS}) and (\ref{RSdrS}) and $a_r$ is given in (\ref{RSar}). In particular, these weights do not depend on our choice of measure on $\E^{\times}$. We shall achieve the computation by replacing the flat sections $e_s$ with the \emph{Godement section}, i.e., for $\Phi(x,y) := \mathbbm{1}_{\vo \times \vo}(x,y)$
	$$ e_s(g) = \frac{ \norm[\det g]^{s+1/2} \int_{\F^{\times}} \Phi((0,t)g) \norm[t]^{s+1/2} d^{\times}t }{ \int_{\F^{\times}} \Phi(0,t) \norm[t]^{s+1/2} d^{\times}t }. $$
	We can then decompose the weight as
\begin{align*}
	\mathrm{wt}_{\vp}(s, r, \E / \F) &= \left( d_0 \cdot \frac{\int_{\E^{\times}} \Phi((0,1)e) \norm[e]_{\E}^{s+1/2} d^{\times}e}{\zeta_{\E}(s+1/2)} \right) \cdot \frac{\zeta_{\F}(2s+1)}{\int_{\F^{\times}} \Phi(0,t) \norm[t]^{2s+1} d^{\times}t } \cdot \\
	&\quad \frac{d_r}{d_0} \cdot \norm[\det a_r]^{s+1/2} \cdot \frac{\int_{\E^{\times}} \Phi((0,1)ea_r) \norm[e]_{\E}^{s+1/2} d^{\times}e}{\int_{\E^{\times}} \Phi((0,1)e) \norm[e]_{\E}^{s+1/2} d^{\times}e},
\end{align*}
where each term in the first line depends only on the choice of the measure on $\GL_2(\F)$ resp. $\gp{Z}(\F)$, while each term in the second line depends only on $r$.

\begin{lemma}
	Whatever the measure on $\E^{\times}$ we choose, we have
	$$ \frac{\int_{\F^{\times}} \Phi(0,t) \norm[t]^{2s+1} d^{\times}t }{\zeta_{\F}(2s+1)} = \Vol(\vo^{\times}), \qquad d_0 \cdot \frac{\int_{\E^{\times}} \Phi[(0,1)e] \norm[\det e]^{s+\frac{1}{2}} d^{\times}e }{\zeta_{\E}(s+1/2)} = \Vol(\GL_2(\vo)). $$
\label{MeasCal}
\end{lemma}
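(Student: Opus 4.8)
The plan is to recognise both quotients as elementary Tate-type local zeta integrals. Throughout I keep the convention of the subsection, so $\F,\E,\vo,\vO_\E,q,\varpi$ denote the local objects at the fixed $\vp$ and $\iota\colon\E\hookrightarrow\Mat_2(\F)$ the embedding normalised by $\iota(\theta)=\left(\begin{smallmatrix}\fb&-\fa\\1&0\end{smallmatrix}\right)$, with $\vO_\E=\vo+\vo\theta$. The first identity is immediate: since $\Phi(0,t)=\mathbbm{1}_{\vo}(t)$, decomposing $\vo\cap\F^\times=\bigsqcup_{n\ge 0}\varpi^n\vo^\times$ and summing the geometric series in $q^{-(2s+1)}$ gives $\int_{\F^\times}\Phi(0,t)\norm[t]^{2s+1}\,d^\times t=\Vol(\vo^\times)(1-q^{-(2s+1)})^{-1}=\Vol(\vo^\times)\zeta_\F(2s+1)$, and dividing by $\zeta_\F(2s+1)$ yields the claim.

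For the second identity the key observation will be that the right action $v\mapsto v\iota(e)$ makes the row space $\F^2$ a \emph{free} rank-one $\E$-module with generator $(0,1)$: indeed $(0,1)\iota(1)=(0,1)$ and $(0,1)\iota(\theta)=(1,0)$ form an $\F$-basis, and in the split case one checks in addition that the two idempotents of $\E$ act through complementary rank-one projections, so the module is free and not merely faithful. Hence $e\mapsto(0,1)\iota(e)$ is an $\F$-linear isomorphism $\E\xrightarrow{\ \sim\ }\F^2$ carrying $\vO_\E=\vo+\vo\theta$ onto $\vo\times\vo$, carrying $\E^\times$ onto $\{v:\Nr_{\E/\F}(v)\ne 0\}$, and carrying $\det\iota(e)$ to $\Nr_{\E/\F}(e)$. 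Transporting $d^\times e$ along it will identify $\int_{\E^\times}\Phi((0,1)e)\norm[\det e]^{s+\frac12}\,d^\times e$ with the Tate integral $\int_{\E^\times}\mathbbm{1}_{\vO_\E}(e)\norm[e]_\E^{s+\frac12}\,d^\times e$, which by the same geometric-series computation organised over the valuation(s) at the prime(s) of $\E$ above $\vp$ (running through the unramified, ramified and split cases, or invoking directly that $\zeta_\E$ here is the product of the local factors of the Dedekind zeta function of $\E$ above $\vp$) equals $\Vol_{\E^\times}(\vO_\E^\times)\,\zeta_\E(s+\tfrac12)$. Thus the quotient by $\zeta_\E(s+\tfrac12)$ is exactly $\Vol_{\E^\times}(\vO_\E^\times)$.

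It then remains to identify $d_0$. Since $a_0$ is the identity when $\E/\F$ is not split and lies in $\GL_2(\vo)$ (it is $Pn(\varpi^0)$) when $\E/\F$ is split, in both cases $d_0=\Vol(\E^\times\backslash\E^\times\GL_2(\vo))$ for the measure on $\E^\times\backslash\GL_2(\F)$ induced by $dg$ and the chosen $d^\times e$. By the standard formula for the volume of $H\backslash HU$ with $U$ a compact open subgroup, $d_0=\Vol(\GL_2(\vo))/\Vol_{\E^\times}(\E^\times\cap\GL_2(\vo))$, and reading off the bottom row of $\iota(e)$ shows $\iota(e)\in\Mat_2(\vo)\iff e\in\vo+\vo\theta=\vO_\E$, whence $\E^\times\cap\GL_2(\vo)=\vO_\E^\times$. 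Multiplying the two pieces, the factors $\Vol_{\E^\times}(\vO_\E^\times)$ cancel and the product is $\Vol(\GL_2(\vo))$, visibly independent of the Haar measure chosen on $\E^\times$. The routine parts are the two geometric-series sums; the one step I expect to require real care is the module-theoretic identification in the split case, where $\E$ is not a field and $\E^\times\subsetneq\E\setminus\{0\}$, so one must verify both that $\F^2$ is free of rank one over $\E$ and that $(0,1)$ generates it.
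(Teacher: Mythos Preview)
Your proof is correct and follows essentially the same route as the paper: both identify the zeta integral over $\E^\times$ as $\Vol(\vO_\E^\times)\zeta_\E(s+\tfrac12)$ via a Tate computation, then use $d_0=\Vol(\GL_2(\vo))/\Vol(\vO_\E^\times)$ to conclude. You spell out explicitly what the paper compresses into the phrase ``$\vO_\E$ is optimally embedded in $\Mat_2(\vo)$''---namely your check that $\iota(e)\in\Mat_2(\vo)\iff e\in\vO_\E$---and your extra care in the split case (verifying $a_0=P\in\GL_2(\vo)$ and that $\F^2$ is free of rank one over $\E$) is warranted but not a different idea.
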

\begin{proof}
	The first equation is standard. It implies that the zeta-integral in the second equation is equal to $\zeta_{\E}(s+1/2) \cdot \Vol(\vO_{\E}^{\times})$. By the definition of the quotient measure and the fact that $\vO_{\E}$ is optimally embedded in $\Mat_2(\vo)$, we get
	$$ \Vol(\GL_2(\vo)) = d_0 \cdot \Vol(\vO_{\E}^{\times}), $$
hence the second equation.
\end{proof}

\begin{lemma}
	If $q$ denotes the cardinality of $\vo/\vp$, then we have for $r \geq 1$
	$$ \frac{d_r}{d_0} = \left\{ \begin{matrix} q^r(1+q^{-1}) & \text{if } \E / \F \text{ is unramified} \\ q^r & \text{if } \E/\F \text{ is ramified} \\ q^r(1-q^{-1}) & \text{if } \E/\F \text{ is split} \end{matrix} \right.. $$
\label{VolRatio}
\end{lemma}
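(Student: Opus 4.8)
The plan is to reinterpret each $d_r$ as the reciprocal of the covolume of a compact open subgroup of $\E_\vp^\times$, so that the ratio $d_r/d_0$ becomes an index of such subgroups; this makes the measure-independence transparent and reduces the statement to a local unit-index computation. First I would fix $a_r$ as in \eqref{RSar}, put $L_r := a_r\vo_\vp^2 \subset \F_\vp^2 \cong \E_\vp$, and let $H_r := \E_\vp^\times \cap a_r\GL_2(\vo_\vp)a_r^{-1}$ be the stabiliser of the lattice $L_r$ in $\E_\vp^\times$ (in the split case $H_r$ is equally the stabiliser of $n(\varpi_\vp^{-r})\vo_\vp^2$ in the diagonal torus $\gp{A}_\vp$, using the second expression for $d_r$ in \eqref{RSdrS}). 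Applying the quotient integration formula to $\Char_{a_r\GL_2(\vo_\vp)}$, together with left-invariance and unimodularity of Haar measure on $\GL_2(\F_\vp)$, shows that $d_r$ equals the volume of a fixed compact open set in $\PGL_2(\F_\vp)$ divided by the covolume of the image of $H_r$ in $\F_\vp^\times\backslash\E_\vp^\times$; since $\vo_\vp^\times \subseteq H_r \subseteq H_0$ (see below), this yields $d_r/d_0 = [H_0 : H_r]$, independently of the auxiliary measure.

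The next step is to identify $H_r$. Because $\iota_\vp$ is an optimal embedding one has $H_0 = \E_\vp^\times \cap \GL_2(\vo_\vp) = \vO_{\E,\vp}^\times$. For $r \ge 1$, write an element of $\E_\vp$ as $\beta = x + y\theta_\vp$ with $\iota_\vp(\theta_\vp) = \left(\begin{smallmatrix}\fb_\vp & -\fa_\vp \\ 1 & 0\end{smallmatrix}\right)$; a one-line matrix computation shows that the condition $\beta L_r \subseteq L_r$ forces $y \in \vp^r\vo_\vp$ in every case, so that $H_r = (\vo_\vp + \vp^r\vO_{\E,\vp})^\times$, the unit group of the order of level $\vp^r$ (with $\vO_{\E,\vp} = \vo_\vp \times \vo_\vp$ when $\E_\vp/\F_\vp$ is split, in which case one finds directly $H_r = \{\diag(x,y) : x,y \in \vo_\vp^\times,\ x \equiv y \pmod{\vp^r}\}$). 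In particular $H_r \subseteq H_0$.

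It then remains to compute $[\vO_{\E,\vp}^\times : (\vo_\vp + \vp^r\vO_{\E,\vp})^\times]$. I would use the reduction surjection $\vO_{\E,\vp}^\times \twoheadrightarrow (\vO_{\E,\vp}/\vp^r\vO_{\E,\vp})^\times$, whose kernel $1 + \vp^r\vO_{\E,\vp}$ is contained in $(\vo_\vp + \vp^r\vO_{\E,\vp})^\times$, and under which the image of this last group is exactly the image of $(\vo_\vp/\vp^r\vo_\vp)^\times$ (which injects since $\vo_\vp \cap \vp^r\vO_{\E,\vp} = \vp^r\vo_\vp$). Hence
$$ \frac{d_r}{d_0} = [H_0 : H_r] = \frac{\bigl|(\vO_{\E,\vp}/\vp^r\vO_{\E,\vp})^\times\bigr|}{\bigl|(\vo_\vp/\vp^r\vo_\vp)^\times\bigr|}, $$
and the three cases follow from the local structure of $\vO_{\E,\vp}$: the denominator is always $q^{r-1}(q-1) = q^r(1-q^{-1})$; if $\E_\vp/\F_\vp$ is unramified the numerator is $q^{2r}(1-q^{-2})$, giving $q^r(1+q^{-1})$; if $\E_\vp/\F_\vp$ is ramified then $\vO_{\E,\vp}$ is a discrete valuation ring with $\vp\vO_{\E,\vp}$ equal to the square of its maximal ideal, so the numerator is $q^{2r}(1-q^{-1})$, giving $q^r$; and if $\E_\vp/\F_\vp$ is split the numerator is $\bigl(q^r(1-q^{-1})\bigr)^2$, giving $q^r(1-q^{-1})$.

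The genuinely delicate point is the bookkeeping in the first step: the quotient measure used to define $d_r$ in \eqref{RSdrNS} and \eqref{RSdrS} is built from a chosen Haar measure on $\F_\vp^\times\backslash\E_\vp^\times$, so one must check carefully --- via the quotient integration formula applied to $\Char_{a_r\GL_2(\vo_\vp)}$, and the fact that the fibres of $a_r\GL_2(\vo_\vp)\to\E_\vp^\times\backslash\GL_2(\F_\vp)$ are left cosets of $H_r$ --- that the product of $d_r$ with the covolume of $H_r$ is independent of $r$; and in the split case one must reconcile the two descriptions of $d_r$ in \eqref{RSdrS} by conjugating $\E_\vp^\times$ onto $\gp{A}_\vp$ via $P \in \GL_2(\vo_\vp)$. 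Once these are settled, the rest is a routine local computation.
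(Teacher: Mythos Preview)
Your proof is correct and follows essentially the same approach as the paper: both reduce $d_r/d_0$ to the unit-group index $[\vO_{\E,\vp}^\times:(\vo_\vp+\vp^r\vO_{\E,\vp})^\times]$ by identifying the stabiliser of the lattice $a_r\vo_\vp^2$ in $\E_\vp^\times$ (the paper phrases this as an orbit-space identification, you as a stabiliser computation, but these are the same). The only minor difference is in evaluating the index itself---the paper uses the filtration by higher unit groups $U_\E^{(\ell)}$ in its next lemma, whereas you use the reduction map modulo $\vp^r\vO_{\E,\vp}$ and count directly; both are standard and yield the same result.
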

\begin{proof}
	Let $\vO_r := \vo + \varpi^r \beta \vo$ denote the order in $\E$ of level $r$. In the non-split case, we have an identification of spaces of orbits
	$$ \E^{\times} \backslash \E^{\times} \begin{pmatrix} \varpi^{-r} & 0 \\ 0 & 1  \end{pmatrix} \GL_2(\vo) \simeq \begin{pmatrix} \varpi^r & 0 \\ 0 & 1  \end{pmatrix} \E^{\times} \begin{pmatrix} \varpi^{-r} & 0 \\ 0 & 1  \end{pmatrix} \cap \GL_2(\vo) \backslash \GL_2(\vo). $$
	We also have
	$$ \begin{pmatrix} \varpi^r & 0 \\ 0 & 1  \end{pmatrix} \E^{\times} \begin{pmatrix} \varpi^{-r} & 0 \\ 0 & 1  \end{pmatrix} \cap \GL_2(\vo) = \begin{pmatrix} \varpi^r & 0 \\ 0 & 1  \end{pmatrix} \iota(\vO_r^{\times}) \begin{pmatrix} \varpi^{-r} & 0 \\ 0 & 1  \end{pmatrix}. $$
	Hence we get the ratio
	$$ d_r/d_0 = [\vO^{\times} : \vO_r^{\times}], $$
	which will be explicitly determined in the next Lemma \ref{OrderUnitsInd}. In the split case, we have similarly
\begin{align*}
	&\quad \E^{\times} \backslash \E^{\times} P \begin{pmatrix} 1 & \varpi^{-r} \\ 0 & 1 \end{pmatrix} \GL_2(\vo) \simeq \gp{A} \backslash \gp{A} \begin{pmatrix} 1 & \varpi^{-r} \\ 0 & 1 \end{pmatrix} \GL_2(\vo) \\
	&\simeq \begin{pmatrix} 1 & -\varpi^{-r} \\ 0 & 1 \end{pmatrix} \gp{A} \begin{pmatrix} 1 & \varpi^{-r} \\ 0 & 1 \end{pmatrix} \cap \GL_2(\vo) \backslash \GL_2(\vo).
\end{align*}
	It is easy to see
	$$ \begin{pmatrix} 1 & -\varpi^{-r} \\ 0 & 1 \end{pmatrix} \gp{A} \begin{pmatrix} 1 & \varpi^{-r} \\ 0 & 1 \end{pmatrix} \cap \GL_2(\vo) = \begin{pmatrix} 1 & -\varpi^{-r} \\ 0 & 1 \end{pmatrix} \left\{ \begin{pmatrix} x & 0 \\ 0 & y \end{pmatrix} \middle| x,y \in \vo^{\times}, x-y \in \varpi^r \vo \right\}  \begin{pmatrix} 1 & \varpi^{-r} \\ 0 & 1 \end{pmatrix}. $$
	Hence we get the desired ratio
	$$ d_r / d_0 = [\vo^{\times} : (1+\varpi^r \vo)] = q^r(1-q^{-1}). $$
\end{proof}

\begin{lemma}
	Let $v_{\E}$ be the normalized additive valuation of $\E$. For any $\ell \in \ag{Z}_{\geq 0}$, consider
	$$ \vO_r^{(\ell)} := \left\{ e \in \vO_r \ \middle| \ v_{\E}(e) = \ell \right\}. $$
\begin{itemize}
	\item[(1)] If $e(\E/\F) \in \{ 1,2 \}$ is the ramification index, then we have
	$$ \vO_r^{(\ell)} = \left\{ \begin{matrix} \varpi_{\E}^{\ell} \vO^{\times} & \text{if } \ell \geq e(\E/\F) r \\ \varpi_{\E}^{\ell} \vO_{r-\ell/e(\E/\F)}^{\times} & \text{if } e(\E/\F) \mid \ell < e(\E/\F) r \\ \emptyset & \text{otherwise} \end{matrix} \right. . $$
	\item[(2)] We have $\vO_r^{\times} < \vO^{\times}$ and
	$$ [\vO^{\times} : \vO_r^{\times}] = \left\{ \begin{matrix} q_{\F}^r (1+q_{\F}^{-1}) & \text{if } e(\E/\F) = 1 \\ q_{\F}^r & \text{if } e(\E/\F) = 2 \end{matrix} \right. . $$
\end{itemize}
\label{OrderUnitsInd}
\end{lemma}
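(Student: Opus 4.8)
The plan is to treat Lemma~\ref{OrderUnitsInd} as a purely local computation about orders in a quadratic field extension. Throughout I drop the subscript $\vp$: write $q=q_{\F}=|\vo/\vp|$, $\varpi=\varpi_{\F}$, set $e=e(\E/\F)\in\{1,2\}$, $f=f(\E/\F)=2/e$, $q_{\E}=q^f$, and let $\vO=\vO_{\E}$ be the discrete valuation ring with maximal ideal $\mathfrak{m}=\varpi_{\E}\vO$ and normalized valuation $v_{\E}$, so $v_{\E}|_{\F}=e\,v_{\F}$ and $\varpi^r\vO=\mathfrak{m}^{er}$. The single fact behind everything is $\vo\cap\varpi^r\vO=\varpi^r\vo$ (for $a\in\vo$, $v_{\E}(a)\geq er\iff v_{\F}(a)\geq r$). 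I would prove part~(2) first, as it is what is invoked in Lemma~\ref{VolRatio}. Since $\vO_r=\vo+\varpi^r\vO$ is a finite $\vo$-module inside the local ring $\vO$, it is itself local with maximal ideal $\mathfrak{m}_r=\vO_r\cap\mathfrak{m}$; hence $\vO_r^{\times}=\vO_r\setminus\mathfrak{m}_r=\vO_r\cap\vO^{\times}$, which already gives $\vO_r^{\times}<\vO^{\times}$. Reducing modulo the conductor $\varpi^r\vO$, the map $\rho\colon\vO^{\times}\to(\vO/\varpi^r\vO)^{\times}$ is surjective with kernel $1+\varpi^r\vO\subseteq\vO_r^{\times}$, and $\vo\cap\varpi^r\vO=\varpi^r\vo$ identifies the image of $\vO_r$ with the local subring $\vo/\varpi^r\vo\subseteq\vO/\varpi^r\vO$, whose maximal ideal is the contraction of the ambient one, so $\rho(\vO_r^{\times})=(\vo/\varpi^r\vo)^{\times}$. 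Therefore
\[
[\vO^{\times}:\vO_r^{\times}]=\frac{|(\vO/\varpi^r\vO)^{\times}|}{|(\vo/\varpi^r\vo)^{\times}|}=\frac{q_{\E}^{er}(1-q_{\E}^{-1})}{q^r(1-q^{-1})}=q^r\,\frac{1-q^{-f}}{1-q^{-1}},
\]
where $q_{\E}^{er}=q^{efr}=q^{2r}$; this equals $q^r(1+q^{-1})$ for $e=1$ and $q^r$ for $e=2$, which is~(2).

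For part~(1) I would argue directly with valuations. Writing $x\in\vO_r$ as $x=a+c$ with $a\in\vo$ and $c\in\varpi^r\vO$, and using that $v_{\E}(a)\in e\Z_{\geq 0}$ while $v_{\E}(c)\geq er$, one gets $v_{\E}(x)=v_{\E}(a)$ whenever $v_{\E}(a)<er$ and $v_{\E}(x)\geq er$ otherwise, with $\{v_{\E}\geq er\}=\varpi^r\vO\subseteq\vO_r$. This immediately yields that $\vO_r^{(\ell)}$ is empty unless $\ell\geq er$ or ($e\mid\ell$ and $0\leq\ell<er$); that $\vO_r^{(\ell)}=\{x:v_{\E}(x)=\ell\}=\varpi_{\E}^{\ell}\vO^{\times}$ for $\ell\geq er$; and, for $\ell=em$ with $0\leq m<r$, that $x\in\vO_r^{(\ell)}$ forces $a=\varpi^m a'$ with $a'\in\vo^{\times}$, so $x/\varpi^m\in\vo^{\times}+\varpi^{r-m}\vO\subseteq\vO_{r-m}$ has $v_{\E}=0$ and hence lies in $\vO_{r-m}^{\times}$ by the unit description above, while conversely $\varpi^m\vO_{r-m}^{\times}\subseteq\vO_r$ consists of elements of valuation exactly $em$. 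Thus $\vO_r^{(\ell)}=\varpi^{\ell/e}\vO_{r-\ell/e}^{\times}$, which is the asserted $\varpi_{\E}^{\ell}\vO_{r-\ell/e}^{\times}$ after replacing $\varpi^{\ell/e}$ by the valuation-$\ell$ generator $\varpi_{\E}^{\ell}$ (these agree on the nose when $e=1$).

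I expect the only delicate point to be the bookkeeping in~(2): verifying that $\vO_r^{\times}$ is exactly the $\rho$-preimage of $(\vo/\varpi^r\vo)^{\times}$ — which rests on $\vo/\varpi^r\vo$ sitting inside $\vO/\varpi^r\vO$ as a local subring with compatible maximal ideal — and correctly tracking ramification against residue degree so that $|\vO/\varpi^r\vO|=q_{\E}^{er}=q^{2r}$ uniformly in the unramified and ramified cases. Everything else is routine; alternatively one could deduce~(2) from~(1) by summing $|\vO_r^{(\ell)}|$ over $\ell$, but the reduction-mod-conductor argument is shorter and self-contained.
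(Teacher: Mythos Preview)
Your proof is correct. For part~(1) your valuation analysis via the decomposition $x=a+c$ with $a\in\vo$, $c\in\varpi^r\vO$ is essentially the paper's approach, which phrases the same computation through the explicit formula $v_{\E}(x+\theta y)=\min(v_{\F}(x),v_{\F}(y))$ (unramified) or $\min(2v_{\F}(x),2v_{\F}(y)+1)$ (ramified) in the basis $1,\theta$ of $\vO$; you are right to flag the $\varpi^{\ell/e}$ versus $\varpi_{\E}^{\ell}$ discrepancy in the ramified case, which is a harmless normalization issue for the paper's purposes.

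For part~(2) you take a genuinely different route. The paper argues via the filtration by higher unit groups, establishing the tower $U_{\E}^{er+e-1}<\vO_r^{\times}=\vo^{\times}U_{\E}^{er+e-1}<\vO^{\times}$ together with $\vo^{\times}\cap U_{\E}^{er+e-1}=U_{\F}^{r+e-1}$, and then computes the index as a ratio of known indices $[\vO^{\times}:U_{\E}^{er+e-1}]/[\vo^{\times}:U_{\F}^{r+e-1}]$. Your reduction-modulo-conductor argument is cleaner and more conceptual: once you observe that $\vO_r$ is local with $\vO_r^{\times}=\vO_r\cap\vO^{\times}$ and that the kernel $1+\varpi^r\vO$ of $\rho$ already sits inside $\vO_r^{\times}$, the index drops out immediately from the cardinalities of $(\vO/\varpi^r\vO)^{\times}$ and $(\vo/\varpi^r\vo)^{\times}$. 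The paper's filtration approach has the minor advantage of making the structure of $\vO_r^{\times}$ as $\vo^{\times}U_{\E}^{er+e-1}$ explicit, but this is not used elsewhere, and your argument avoids the case-by-case verification (especially the slightly delicate ramified case where one must adjust by a residue representative to pass from $U_{\E}^{2r}$ to $U_{\E}^{2r+1}$).
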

\begin{proof}
	Both assertions are elementary. We omit the details and only point out that (1) follows from the formula
	$$ v_{\E}(x+\beta y) = \left\{ \begin{matrix} \min(v_{\F}(x), v_{\F}(y)) & \text{if } e(\E/\F) = 1 \\ \min(2v_{\F}(x), 2v_{\F}(y)+1) & \text{if } e(\E/\F) = 2 \end{matrix} \right. ; $$
	and (2) follows from the tower and equality
	$$ U_{\E}^{e(\E/\F)r+e(\E/\F)-1} < \vO_r^{\times} = \vo^{\times} U_{\E}^{e(\E/\F)r+e(\E/\F)-1} < \vO^{\times}, \quad \vo^{\times} \cap U_{\E}^{e(\E/\F)r+e(\E/\F)-1} = U_{\F}^{r+e(\E/\F)-1}, $$
	where we have written $U_{\F}^{(\ell)}$ resp. $U_{\E}^{(\ell)}$ for the standard neighborhoods of identity
	$$ U_{\F}^{(\ell)} := 1+\varpi_{\F}^{\ell} \vo, \quad U_{\E}^{(\ell)} := 1+\varpi_{\E}^{\ell} \vO. $$
	Then note that
	$$ [\vO^{\times} : \vO_r^{\times}] = [\vO^{\times} : U_{\E}^{e(\E/\F)r+e(\E/\F)-1}] / [\vo^{\times} : U_{\F}^{r+e(\E/\F)-1}]. $$
\end{proof}

	We have obviously
	$$ \norm[\det a_r] = \left\{ \begin{matrix} q^r & \text{if } \E/\F \text{ is non-split}, \\ 1 & \text{if } \E/\F \text{ is split}. \end{matrix} \right. $$
	
	It remains the last term. It is an analogue of Legendre functions. Exploiting the construction of our embedding $\E \to \Mat_2(\F)$, it is not difficult to identify the last term as the following \emph{Legendre functions associated with the quadratic extension} $\E / \F$.
\begin{definition}
\begin{itemize}
	\item[(1)] If $\E / \F$ is non-split with ring of integers $\vO$, we take $\theta \in \vO - \vo$ such that $\vO = \vo \theta + \vo$. Hence all the $\vo$-orders in $\E$ are listed by
	$$ \vO_r = \vo \varpi^r \theta + \vo, \quad r \in \Z{\geq 0}. $$
	The $r$-th Legendre function is defined to be
	$$ P_s(r, \E/\F) = \frac{\int_{\E} \Char_{\vO_r}(e) \norm[e]_{\E}^s d^{\times}e}{\int_{\E} \Char_{\vO}(e) \norm[e]_{\E}^s d^{\times}e}. $$
	\item[(2)] If $\E / \F$ is split, we identify it with $\F \times \F$. Write $L_0 = \vo \oplus \vo$ and consider the lattices
	$$ L_r = L_0 n(-\varpi^{-r}) = \vo (1, -\varpi^{-r}) + \vo(0,1), \quad r \in \Z_{\geq 0}. $$
	The $r$-th Legendre function is defined to be
	$$ P_s(r, \F^2/\F) = \frac{\int_{\F \times \F} \Char_{L_r}(x,y) \norm[xy]^s d^{\times}x d^{\times}y}{\int_{\F \times \F} \Char_{L_0}(x,y) \norm[xy]^s d^{\times}x d^{\times}y}. $$
\end{itemize}
\label{LegFDef}
\end{definition}
\begin{proposition}
	 If $\F$ is non-archimedean with $q_{\F}$ the cardinality of the residue class field and $\E$ is a quadratic field extension of $\F$, then, writing $Z = q_{\F}^{s-1/2}$, we have for $r \in \ag{Z}_{\geq 0}$
	$$ P_s(r, \E / \F) = \left\{ \begin{matrix} \frac{(Z-q_{\F}^{-1}Z^{-1})(q_{\F}^{\frac{1}{2}}Z)^r - (Z^{-1}-q_{\F}^{-1}Z)(q_{\F}^{\frac{1}{2}}Z^{-1})^r}{q_{\F}^{r(1+s)}(1+q_{\F}^{-1}) (Z-Z^{-1})} & \text{if } \E/\F \text{ is unramified} \\ \frac{(Z-q_{\F}^{-\frac{1}{2}})(q_{\F}^{\frac{1}{2}}Z)^r - (Z^{-1}-q_{\F}^{-\frac{1}{2}})(q_{\F}^{\frac{1}{2}}Z^{-1})^r}{q_{\F}^{r(1+s)} (Z-Z^{-1})} & \text{if } \E/\F \text{ is ramified}  \end{matrix} \right. ; $$
	while if $\E = \F \oplus \F$, then we have for $r \in \ag{Z}_{\geq 0}$
	$$ P_s(r, \F^2 / \F) = \frac{(Z+q_{\F}^{-1}Z^{-1}-2q_{\F}^{-\frac{1}{2}})(q_{\F}^{\frac{1}{2}}Z)^r - (Z^{-1}+q_{\F}^{-1}Z-2q_{\F}^{-\frac{1}{2}})(q_{\F}^{\frac{1}{2}}Z^{-1})^r}{q_{\F}^{r}(1-q_{\F}^{-1}) (Z-Z^{-1})}. $$
\end{proposition}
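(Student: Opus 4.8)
The plan is to compute the defining ratios of Definition~\ref{LegFDef} head-on, reducing every integral to the arithmetic of the orders $\vO_r$ already recorded in Lemma~\ref{OrderUnitsInd}.

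First I would treat the two non-split cases simultaneously. Write $q=q_{\F}$, so that the residue field of $\E$ has cardinality $q_{\E}=q^2$ if $\E/\F$ is unramified and $q_{\E}=q$ if it is ramified, and $\norm[e]_{\E}=q_{\E}^{-v_{\E}(e)}$. The denominator $\int_{\E}\Char_{\vO}(e)\norm[e]_{\E}^s\,d^{\times}e$ is the usual local zeta integral of the maximal order: slicing $\vO\setminus\{0\}$ into the shells $\varpi_{\E}^{\ell}\vO^{\times}$, $\ell\geq0$, on each of which $\norm[e]_{\E}=q_{\E}^{-\ell}$ is constant, it collapses to $\Vol(\vO^{\times})(1-q_{\E}^{-s})^{-1}$. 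For the numerator I would slice $\vO_r\setminus\{0\}=\bigsqcup_{\ell}\vO_r^{(\ell)}$ using Lemma~\ref{OrderUnitsInd}(1), which picks out the nonempty shells: those with $\ell\geq e(\E/\F)r$ are full shells $\varpi_{\E}^{\ell}\vO^{\times}$ and add up to the tail $\Vol(\vO^{\times})q_{\E}^{-e(\E/\F)rs}(1-q_{\E}^{-s})^{-1}$, whereas those with $e(\E/\F)\mid\ell<e(\E/\F)r$ equal $\varpi_{\E}^{\ell}\vO_{r-\ell/e(\E/\F)}^{\times}$, of volume $\Vol(\vO^{\times})/[\vO^{\times}:\vO_{r-\ell/e(\E/\F)}^{\times}]$, and Lemma~\ref{OrderUnitsInd}(2) turns their sum into a finite geometric series in $q^{1-2s}$ carrying a multiplicative constant $1+q^{-1}$ in the unramified case and $1$ in the ramified case. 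Dividing by the denominator $\Vol(\vO^{\times})(1-q_{\E}^{-s})^{-1}$ clears the normalizing volume and leaves $P_s(r,\E/\F)$ as an explicit rational function of $q^s$; substituting $Z=q_{\F}^{s-1/2}$ (so that $q^{-2s}=q^{-1}Z^{-2}$, $q^{1-2s}=Z^{-2}$, $q^{rs}=(q^{1/2}Z)^r$) and summing the geometric series rearranges this into the two claimed formulas, with the value $P_s(0,\E/\F)=1$ falling out immediately as a check.

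The split case $\E=\F\oplus\F$ is entirely parallel. For $L_r=\vo(1,-\varpi^{-r})+\vo(0,1)$ one has $(x,y)\in L_r$ precisely when $x\in\vo$ and $y+\varpi^{-r}x\in\vo$; hence for fixed $x$ with $v_{\F}(x)=m\geq0$ the $y$-slice is $\vo$ when $m\geq r$ and a single $\vo$-coset on which $\norm[y]=q^{r-m}$ is constant when $0\leq m<r$. Integrating out $y$, then $x$, and dividing by the $r=0$ integral once more produces an infinite tail plus a finite geometric series, which assemble into $P_s(r,\F^2/\F)$. Together with the elementary values of $\norm[\det a_r]$, the ratios $d_r/d_0$ from Lemma~\ref{VolRatio}, and the normalizations of Lemma~\ref{MeasCal}, this computation also completes the proof of Proposition~\ref{ExpWts}.

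The only genuine obstacle is the bookkeeping at the end: in each of the three cases one must verify that the infinite tail plus the finite geometric series really do collapse to the single quotient $\bigl((Z-\ast)(q^{1/2}Z)^r-(Z^{-1}-\ast)(q^{1/2}Z^{-1})^r\bigr)/(Z-Z^{-1})$ with the correct constant $\ast$, clearing the factor $Z-Z^{-1}$ by hand. One must also note that the degenerate locus $q^{1-2s}=1$, i.e.\ $Z=\pm1$, where the geometric sums blow up, causes no trouble, since both sides are rational functions of $q_{\F}^s$ and therefore agree there by continuity.
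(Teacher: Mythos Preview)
Your proposal is correct and follows essentially the same route as the paper: slicing $\vO_r$ into the shells $\vO_r^{(\ell)}$ via Lemma~\ref{OrderUnitsInd} to get a finite geometric series plus a tail in the non-split cases, and rewriting the $L_r$-condition as $x\in\vo$, $y+\varpi^{-r}x\in\vo$ in the split case. The paper even arrives at the same intermediate expressions (e.g.\ $\bigl(\sum_{\ell=0}^{r-1} q^{-2\ell s}[\vO^{\times}:\vO_{r-\ell}^{\times}]^{-1}\bigr)(1-q^{-2s})+q^{-2rs}$ in the unramified case) before the final algebraic rearrangement you flag as bookkeeping.
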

\begin{proof}
	Write $q_{\E}$ for the cardinality of the residue class field of $\E$. If $\E/\F$ is unramified, then
\begin{align*}
	P_s(r, \E/\F) &= \frac{ \int_{\E} 1_{\vO_r}(e) \norm[e]_{\E}^s d^{\times}e }{ \int_{\E} 1_{\vO}(e) \norm[e]_{\E}^s d^{\times}e } = \frac{ \sideset{}{_{\ell =0}^{\infty}} \sum q_{\E}^{-\ell s} \Vol(\vO_r^{(\ell)}) }{ \Vol(\vO^{\times}) \zeta_{\E}(s) } \\
	&= \left( \sideset{}{_{\ell = 0}^{r-1}} \sum q_{\F}^{- 2 \ell s} [\vO^{\times} : \vO_{r-\ell}^{\times}]^{-1} \right) (1-q_{\F}^{-2s}) + q_{\F}^{-2rs} \\
	&= \frac{1-q_{\F}^{-2s}}{1+q_{\F}^{-1}} \left( \sideset{}{_{\ell=0}^{r-1}} \sum q_{\F}^{-(r-\ell)} q_{\F}^{-2\ell s} \right) + q_{\F}^{-2rs}.
\end{align*}
	If $\E/\F$ is ramified, then
\begin{align*}
	P_s(r, \E/\F) &= \frac{ \int_{\E} 1_{\vO_r}(e) \norm[e]_{\E}^s d^{\times}e }{ \int_{\E} 1_{\vO}(e) \norm[e]_{\E}^s d^{\times}e } = \frac{ \sideset{}{_{\ell =0}^{\infty}} \sum q_{\E}^{-\ell s} \Vol(\vO_r^{(\ell)}) }{ \Vol(\vO^{\times}) \zeta_{\E}(s) } \\
	&= \left( \sideset{}{_{\ell = 0}^{r-1}} \sum q_{\F}^{- 2 \ell s} [\vO^{\times} : \vO_{r-\ell}^{\times}]^{-1} \right) (1-q_{\F}^{-s}) + q_{\F}^{-2rs} \\
	&= (1-q_{\F}^{-s}) \sideset{}{_{\ell = 0}^{r-1}} \sum q_{\F}^{-(r-\ell)} q_{\F}^{- 2 \ell s} + q_{\F}^{-2rs}.
\end{align*}
	In the split case $\E = \F \times \F$, we have
\begin{align*}
	P_s(r, \F^2/\F) &= \frac{ \int_{\F^{\times} \times \F^{\times}} 1_{L_r}(x,y) \norm[xy]_{\F}^s d^{\times}xd^{\times}y }{ \int_{\F^{\times} \times \F^{\times}} 1_{L_0}(x,y) \norm[xy]_{\F}^s d^{\times}xd^{\times}y } = \frac{ \int_{\F^{\times} \times \F^{\times}} 1_{\vo}(x) 1_{\vo}(y+\varpi^{-r}x) \norm[xy]_{\F}^s d^{\times}xd^{\times}y }{ \int_{\F^{\times} \times \F^{\times}} 1_{\vo}(x) 1_{\vo}(y) \norm[xy]_{\F}^s d^{\times}xd^{\times}y } \\
	&= (1-q_{\F}^{-1})^{-1} \cdot \left( \sideset{}{_{\ell=0}^{r-1}} \sum q_{\F}^{-\ell s} q_{\F}^{-(r-\ell)(1-s)} \right) \cdot (1-q_{\F}^{-s})^2 + q_{\F}^{-rs}.
\end{align*}
	We get the desired formulas after some elementary manipulation.
\end{proof}

\noindent Proposition \ref{ExpWts} is thus proved by combining all the above computation.

\begin{remark}
	It is obvious from the explicit formulas that the weights satisfy the functional equation
	$$ \mathrm{wt}(s; r, \E/\F) = \mathrm{wt}(-s; r, \E/\F). $$
	This can be proved via the functional equation for local zeta-integrals.
\end{remark}

	\subsection{Principal Congruence Subgroups}
	
	The case of the full modular group $\Gamma = \PSL_2(\vo)$ has been essentially dealt with in the proof of \cite[Proposition 1.9]{Wu9}. The case of principal congruence subgroups is similar, since $f_{\fin} = \Char_{\widehat{\Gamma}}$ is still invariant by $\GL_2(\widehat{\vo})$-conjugation. For definiteness, let
	$$ \Gamma = \Gamma^1(\idlN) = \left\{ \gamma \in \PSL_2(\vo) \ \middle| \ \gamma \equiv \begin{pmatrix} 1 & 0 \\ 0 & 1 \end{pmatrix} \pmod{\idlN} \right\}, $$
	where $\idlN \subseteq \vo$ is an integral ideal. Suppose $\F[\gamma] \simeq \E$ with $\Tr(\gamma) = t$. We have specified an abstract element $\beta \in \E$ corresponding to $\gamma$. Recall that we have chosen the global and local embeddings so that
	$$ \gamma = \begin{pmatrix} t & -1 \\ 1 & 0 \end{pmatrix}, \qquad \iota_{\vp}(\beta) = u_{\vp} + v_{\vp} \iota_{\vp}(\theta_{\vp}) = \begin{pmatrix} u_{\vp} + v_{\vp} \fb_{\vp} & - v_{\vp} \fa_{\vp} \\ v_{\vp} & u_{\vp} \end{pmatrix}. $$

\begin{lemma}
\begin{itemize}
	\item[(1)] The non-vanishing of $I(s;t,f)$, i.e., the conditions that for any finite place $\vp$ there exists $r \geq 0$ such that
	$$ \RSO_{\gamma}(r,f_{\vp}) = \Char_{\Gamma_{\vp}}(a_r^{-1} \iota_{\vp}(\beta) a_r) = 1, $$
	and that the weighted sum of $I_{\eta}(s;t,f)$ (\ref{GL2Form}) is non zero implies
	$$ t \in \Tr(\Gamma). $$
	\item[(2)] Under this condition, the non-vanishing of $\RSO_{\gamma}(r,f_{\vp})$ is equivalent to
	$$ 0 \leq r \leq \mathrm{ord}_{\vp}(v_{\vp}) - \mathrm{ord}_{\vp}(\idlN). $$
	\item[(3)] Recall $\Dis_{\E}$ the relative discriminant ideal of the quadratic field extension $\E \simeq \F[X]/(X^2-tX+1)$. We have
	$$ \mathrm{ord}_{\vp}(v_{\vp}) = (\mathrm{ord}_{\vp}(t^2-4) - \mathrm{ord}_{\vp}(\Dis_{\E}))/2 = \mathrm{ord}_{\vp}(J_t). $$
\end{itemize}
\label{NonVanLoc}
\end{lemma}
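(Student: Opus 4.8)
The plan is to prove the three parts in the order (3), (1), (2): they are essentially independent, except that (2) will invoke the conclusion $t\in\Tr(\Gamma)$ of (1). Part (3) is a bookkeeping computation. From the decomposition $\beta=u_{\vp}+v_{\vp}\theta_{\vp}$ one gets $\vO_{\beta,\vp}=\vo_{\vp}+\beta\vo_{\vp}=\vo_{\vp}+v_{\vp}\theta_{\vp}\vo_{\vp}=\vo_{\vp}+\varpi_{\vp}^{\,\mathrm{ord}_{\vp}(v_{\vp})}\vO_{\E,\vp}$, so the local level of $\vO_{\beta,\vp}$ is $\mathrm{ord}_{\vp}(v_{\vp})$; since $J_{t}=J_{\beta}$ depends only on $t$ (the remark following Lemma \ref{LevelIdeal}), this already gives $\mathrm{ord}_{\vp}(v_{\vp})=\mathrm{ord}_{\vp}(J_{t})$. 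For the first equality I would combine $(\beta-\bar\beta)^{2}=(\beta+\bar\beta)^{2}-4\beta\bar\beta=t^{2}-4$ with the relation $(\beta-\bar\beta)^{2}\vo_{\vp}=v_{\vp}^{2}\Dis_{\E,\vp}$ already recorded in (\ref{EtaFactor}), and take $\mathrm{ord}_{\vp}$ of both sides.

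For part (1), I would first note that, since $f_{\vp}=\Char_{\gp{K}_{\vp}^{1}(\idlN)}$ and $\gp{K}_{\vp}^{1}(\idlN)$ — the kernel of reduction modulo $\idlN$ inside $\SL_{2}(\vo_{\vp})$ — is normal in $\GL_{2}(\vo_{\vp})$, the $\kappa$-average defining $\RSO_{\gamma}(r,f_{\vp})$ is constant, so $\RSO_{\gamma}(r,f_{\vp})=\Char_{\Gamma_{\vp}}(a_{r}^{-1}\iota_{\vp}(\beta)a_{r})\in\{0,1\}$. The non-vanishing assertion then follows from two identities established earlier: on the one hand $I(s;t,f)=\frac{1}{2}\sum_{\eta}I_{\eta}(s;t,f)$ (the formula for $I(s;t,f)$ stated just before (\ref{GL2Form}), using $\Vol(\I_{\F}/\I_{\F}^{2})=2$ from Lemma \ref{QTamM}), so that the hypothesis ``the $\eta$-weighted sum is non-zero'' is literally $I(s;t,f)\neq0$; on the other hand $I(s;t,f)=\Vol(\widehat\Gamma)\,I(s;t,f_{\infty})$ by Lemma \ref{AdelRel}, and $I(s;t,f_{\infty})$ is, by its construction, a sum over the conjugacy classes $[\gamma]$ of $\Gamma$ with $\Tr(\gamma)=t$. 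If $t\notin\Tr(\Gamma)$ that sum is empty, hence $I(s;t,f)=0$; contrapositively $t\in\Tr(\Gamma)$.

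For part (2) I would argue purely at $\vp$, suppressing it from the notation and setting $k=\mathrm{ord}_{\vp}(\idlN)$, $n=\mathrm{ord}_{\vp}(v_{\vp})$, so $\vp^{k}=\idlN\vo_{\vp}$. Because $\RSO_{\gamma}(r,f_{\vp})\in\{0,1\}$, non-vanishing means $a_{r}^{-1}\iota_{\vp}(\beta)a_{r}\in\gp{K}_{\vp}^{1}(\idlN)$. The key point is the identity
\[
\left\{\,x\in\E_{\vp}\ \middle|\ a_{r}^{-1}\iota_{\vp}(x)a_{r}\in\Mat_{2}(\vo_{\vp})\,\right\}=\vO_{r},
\]
valid both when $\E_{\vp}$ is a field and when $\E_{\vp}\simeq\F_{\vp}^{2}$, which I would verify directly from the explicit form of $a_{r}$ in (\ref{RSar}) (for instance in the split case $a_{r}^{-1}\iota_{\vp}(x_{1},x_{2})a_{r}=\left(\begin{smallmatrix}x_{1}&\varpi_{\vp}^{-r}(x_{1}-x_{2})\\0&x_{2}\end{smallmatrix}\right)$, which is integral exactly when $x_{1},x_{2}\in\vo_{\vp}$ and $\mathrm{ord}_{\vp}(x_{1}-x_{2})\ge r$, i.e.\ when $x\in\vO_{r}$). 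Rescaling by $\varpi_{\vp}^{k}$ and using $a_{r}^{-1}\iota_{\vp}(\beta)a_{r}-I=a_{r}^{-1}\iota_{\vp}(\beta-1)a_{r}$, this turns the membership into $\beta-1\in\varpi_{\vp}^{k}\vO_{r}$ (one should also allow $\beta+1$, for the $-I$ lift, but that changes nothing below). Writing $\beta-1=(u_{\vp}-1)+v_{\vp}\theta_{\vp}$ and $\varpi_{\vp}^{k}\vO_{r}=\varpi_{\vp}^{k}\vo_{\vp}+\varpi_{\vp}^{k+r}\theta_{\vp}\vo_{\vp}$ in the $\vo_{\vp}$-basis $\{1,\theta_{\vp}\}$, the condition reads $u_{\vp}\equiv\pm1\pmod{\vp^{k}}$ together with $\mathrm{ord}_{\vp}(v_{\vp})\ge k+r$, that is $r\le n-k=\mathrm{ord}_{\vp}(v_{\vp})-\mathrm{ord}_{\vp}(\idlN)$. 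The congruence on $u_{\vp}$ is where (1) re-enters: picking $\gamma'\in\Gamma$ with $\Tr\gamma'=t$ and an $\SL_{2}(\vo)$-lift $\tilde\gamma'\equiv\pm I\pmod{\idlN}$, and identifying $\F[\tilde\gamma']$ with $\E$ so that $\tilde\gamma'$ corresponds to $\beta$ (or $\bar\beta$), the containment $\tilde\gamma'\mp I\in\Mat_{2}(\idlN)$ pulls back to $\beta\mp1\in\varpi_{\vp}^{k}\vO'$ for some order $\vO_{\beta,\vp}\subseteq\vO'\subseteq\vO_{\E,\vp}$, and reading off the coefficient of $1$ forces $u_{\vp}\equiv\pm1\pmod{\vp^{k}}$. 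Together with $r\ge0$ this gives exactly the asserted interval.

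The step I expect to be the real work is the local computation in part (2), in particular handling the split and non-split places — and the dyadic places — uniformly. The device that keeps the places above $2$ harmless is to extract $u_{\vp}\equiv\pm1\pmod{\vp^{k}}$ from the order-containment $\beta\mp1\in\varpi_{\vp}^{k}\vO'$ rather than from the quadratic identity $(\beta-1)(\bar\beta-1)=2-t$, which at ramified or dyadic $\vp$ would only yield $2\,\mathrm{ord}_{\vp}(u_{\vp}-1)\ge k$.
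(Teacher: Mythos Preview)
Your argument is correct. Parts (1) and (3) coincide with the paper's proof: (1) is exactly the appeal to Lemma~\ref{AdelRel}, and (3) is the same discriminant computation via $(\beta-\bar\beta)^2=t^2-4$ and (\ref{EtaFactor}).

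For (2) you take a slightly different route from the paper. The paper writes out the conjugated matrix $a_r^{-1}\iota_{\vp}(\beta)a_r$ explicitly in the split and non-split cases, observes that the diagonal entries are independent of $r$ while one off-diagonal entry has valuation $\mathrm{ord}_{\vp}(v_{\vp})-r$, and reads off the range of $r$ from that entry; the diagonal congruences $u_{\vp}\equiv\pm1\pmod{\vp^k}$ are left implicit, being forced by the standing hypothesis that \emph{some} $r$ works at each $\vp$. Your approach packages the same computation as the order identity $a_r^{-1}\iota_{\vp}(\E_{\vp})a_r\cap\Mat_2(\vo_{\vp})=a_r^{-1}\iota_{\vp}(\vO_r)a_r$, which treats the split and non-split cases uniformly, and you supply an explicit derivation of $u_{\vp}\equiv\pm1\pmod{\vp^k}$ from $t\in\Tr(\Gamma)$ via the pullback to an order containment. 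This last step is a genuine addition over the paper's write-up and is what makes your argument go through cleanly at places above $2$; the paper's version works too, but only because the $r$-independence of the diagonal entries lets one borrow the congruence from the assumed existence of a good $r_0$.
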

\begin{proof}
	(1) follows directly from the non-adelic translation of $I(s;t,f)$ established in Lemma \ref{AdelRel}. We assume this condition $t \in \Tr(\Gamma)$ in the rest of the proof. 
	
\noindent (2) At $\vp$ for which $\E_{\vp}$ is non-split, we have
	$$ a_r^{-1} \iota_{\vp}(\beta) a_r = \begin{pmatrix} v_{\vp} \fb_{\vp} + u_{\vp} & -v_{\vp} \fa_{\vp} \varpi_{\vp}^r \\ v_{\vp} \varpi_{\vp}^{-r} & u_{\vp} \end{pmatrix}, $$
	while at $\vp$ for which $\E_{\vp} \simeq \F_{\vp}^2$ is split, we have
	$$  a_r^{-1} \iota_{\vp}(\beta) a_r = \begin{pmatrix} v_{\vp} \theta_{\vp} + u_{\vp} & \varpi_{\vp}^{-r} v_{\vp} (\theta_{\vp} - \bar{\theta}_{\vp}) \\ 0 & v_{\vp} \bar{\theta}_{\vp} + u_{\vp} \end{pmatrix}, \quad \text{and} \quad \theta_{\vp} \not\equiv \bar{\theta}_{\vp} \pmod{\vp}. $$
	It follows that if $a_r^{-1} \iota_{\vp}(\beta) a_r \in \Gamma_{\vp}$ for some $r$, then $a_l^{-1} \iota_{\vp}(\beta) a_l \in \Gamma_{\vp}$ for all $0 \leq l \leq r$. The largest such $r$ satisfies
	$$ \mathrm{ord}_{\vp}(v_{\vp} \varpi_{\vp}^{-r}) \quad \text{resp.} \quad \mathrm{ord}_{\vp}(\varpi_{\vp}^{-r} v_{\vp} (\theta_{\vp} - \bar{\theta}_{\vp})) = \mathrm{ord}_{\vp}(\idlN) \quad \Longleftrightarrow \quad r=\mathrm{ord}_{\vp}(v_{\vp}) - \mathrm{ord}_{\vp}(\idlN). $$
(3) This equation follows from the definition, the following relation and Lemma \ref{LevelIdeal}
	$$ t^2-4 = (\beta - \bar{\beta})^2 = v_{\vp}^2 (\theta_{\vp} - \bar{\theta}_{\vp})^2 = v_{\vp}^2 \Dis_{\E}. $$
\end{proof}

\noindent By the above lemma, the proof of \cite[Lemma 3.3]{Wu9} then implies
	$$ \tilde{I}_{\vp}(s; t, \Char_{\Gamma_{\vp}}) = \frac{\zeta_{\E,\vp}(s+1/2)}{\zeta_{\F,\vp}(2s+1)} \cdot P_{\vp}(s+1/2, l_{\vp}), \quad l_{\vp} = \mathrm{ord}_{\vp}(J_t) - \mathrm{ord}_{\vp}(\idlN). $$
	with a polynomial $P_{\vp}(s,l_{\vp})$ in $\Nr(\vp)^s$ and $\Nr(\vp)^{-s}$ which satisfies the functional equation
	$$ P_{\vp}(s,l_{\vp}) = P_{\vp}(1-s,l_{\vp}). $$
	Precisely, writing $Z = \Nr(\vp)^{s-1/2}$ then we have
\begin{equation}
	P_{\vp}(s, l_{\vp}) = \Nr(\vp)^{\frac{l_{\vp}}{2}} \frac{(Z^{l_{\vp}+1} - Z^{-(l_{\vp}+1)}) - \varepsilon_{\E,\vp} \Nr(\vp)^{-\frac{1}{2}} (Z^{l_{\vp}} - Z^{-l_{\vp}})}{Z-Z^{-1}},
\label{PSLocPoly}
\end{equation}
	where $\varepsilon_{\E,\vp} = -1$ resp. $0$ resp. $1$ according as $\E_{\vp}$ is unramified resp. ramified resp. split over $\F_{\vp}$. It satisfies the local Riemann hypothesis by the last part of \cite[Lemma 2.1]{SY13}. We deduce the general case
	$$ \tilde{I}_{\eta_{\vp}}(s; t, \Char_{\Gamma_{\vp}}) = \frac{L_{\vp}(s+1/2, \eta_{\vp}) L_{\vp}(s+1/2, \eta_{\vp} \eta_{\E,\vp})}{\zeta_{\F,\vp}(2s+1)} \cdot P_{\eta_{\vp}}(s+1/2, l_{\vp}), \quad l_{\vp} = \mathrm{ord}_{\vp}(J_t) - \mathrm{ord}_{\vp}(\idlN), $$
	where $P_{\eta_{\vp}}(s+1/2, l_{\vp})$ is obtained from $P_{\vp}(s, l_{\vp})$ by re-defining $Z = \eta_{\vp}(\varpi_{\vp}) \Nr(\vp)^{s-1/2}$.

	\subsection{Hecke Congruence Subgroups}
	
	By omparison with the previous case, the difficulty lies in the computation of the Rankin--Selberg orbital integrals, which are no longer indicator functions, since $\Gamma_{\vp}$ is no longer invariant under conjugation by $\GL_2(\vo_{\vp})$, but only under conjugation by the congruence subgroup
	$$ \gp{K}_0[\idlN] = \left\{ \gamma \in \GL_2(\vo_{\vp}) \ \middle| \ \gamma \equiv \begin{pmatrix} * & * \\ 0 & * \end{pmatrix} \pmod{\idlN} \right\}. $$
	We fix a finite place $\vp$. For simplicity of notations, we introduce
	$$ n := \mathrm{ord}_{\vp}(\idlN), \quad l := \mathrm{ord}_{\vp}(v_{\vp}) = \frac{1}{2} \left( \mathrm{ord}_{\vp}(t^2-4) - \mathrm{ord}_{\vp}(\Dis_{\E}) \right) = \mathrm{ord}_{\vp}(J_t). $$
	The case $n=0$ is already treated. We assume $n \geq 1$ in what follows. Since $\Gamma_{\vp} < \SL_2(\vo_{\vp})$, we deduce from the previous case (with $\idlN = \vo$) that the non-vanishing requires the condition
	$$ (n \geq 1;) \quad 0 \leq r \leq l. $$
	The computation of the split case is different from the non-split case in nature.
	
		\subsubsection{$\vp$ is not split}
		
	For simplicity of notations, we omit the subscript $\vp$ and write $q = \Nr(\vp)$. Recall the decomposition
	$$ \GL_2(\vo) = \sideset{}{_{\alpha_1 \in \vp/\vp^n}} \bigsqcup \begin{pmatrix} 1 & \\ \alpha_1 & 1 \end{pmatrix} \gp{K}_0[\vp^n] \bigsqcup \sideset{}{_{\alpha_2 \in \vo/\vp^n}} \bigsqcup w \begin{pmatrix} 1 & \\ \alpha_2 & 1 \end{pmatrix} \gp{K}_0[\vp^n]. $$
	We have
	$$ \begin{pmatrix} 1 & \\ -\alpha_1 & 1 \end{pmatrix} a_r^{-1} \iota_{\vp}(\beta) a_r \begin{pmatrix} 1 & \\ \alpha_1 & 1 \end{pmatrix} = \begin{pmatrix} v\fb+u - \alpha_1 v \fa \varpi^r & -v\fa \varpi^r \\ v\varpi^{-r} - \alpha_1 v\fb + \alpha_1^2 v\fa \varpi^r & u+\alpha_1 v\fa \varpi^r \end{pmatrix} \in \Gamma_{\vp} $$
	$$ \Longleftrightarrow v\varpi^{-r} - \alpha_1 v\fb + \alpha_1^2 v\fa \varpi^r \in \vp^n \Longleftrightarrow 1 - \alpha_1 \fb \varpi^r + \alpha_1^2 \fa \varpi^{2r} \in \vp^{n+r-l}; $$
	$$ \begin{pmatrix} 1 & \\ -\alpha_2 & 1 \end{pmatrix} w^{-1} a_r^{-1} \iota_{\vp}(\beta) a_r w \begin{pmatrix} 1 & \\ \alpha_2 & 1 \end{pmatrix} = \begin{pmatrix} u-\alpha_2 v \varpi^{-r} & -v \varpi^{-r} \\ v\fa \varpi^r + \alpha_2 v \fb + \alpha_2^2 v \varpi^{-r} & v\fb + u + \alpha_2 v \varpi^{-r} \end{pmatrix} \in \Gamma_{\vp} $$
	$$ \Longleftrightarrow v\fa \varpi^r + \alpha_2 v \fb + \alpha_2^2 v \varpi^{-r} \in \vp^n \Longleftrightarrow \fa \varpi^{2r} + \alpha_2 \fb \varpi^r + \alpha_2^2 \in \vp^{n+r-l}. $$
	
\begin{lemma}
	Assume $n \geq 1$ and $0 \leq r \leq l$. Write
	$$ N(r; n,l) = N_0(r; n,l) + N_{\infty}(r; n,l); $$
	$$ N_0(r; n,l) := \extnorm{ \left\{ \alpha \in \vp/\vp^n \ \middle| \ 1 - \alpha \fb \varpi^r + \alpha^2 \fa \varpi^{2r} \in \vp^{n+r-l} \right\} }, $$
	$$ N_{\infty}(r; n,l) := \extnorm{ \left\{ \alpha \in \vo/\vp^n \ \middle| \ \fa \varpi^{2r} - \alpha \fb \varpi^r + \alpha^2 \in \vp^{n+r-l} \right\} }. $$
	Then we have the following formulae.
\begin{itemize}
	\item[(1)] $N_0(r; n,l)$ is non-vanishing only if $l \geq n$, in which case it is given by
	$$ N_0(r; n,l) = q^{n-1} \cdot \Char_{0 \leq r \leq l-n}. $$
	\item[(2)] If $\E/\F$ is unramified at $\vp$ and if $l \geq n$, then we have
	$$ N_{\infty}(r; n,l) = \left\{ \begin{matrix} q^n & \text{if } 0 \leq r \leq l-n \\ q^{\IntP{\frac{n+l-r}{2}}} & \text{if } l-n < r \leq l \end{matrix} \right. ; $$
	while if $l < n$, then we have
	$$ N_{\infty}(r; n,l) = \left\{ \begin{matrix} 0 & \text{if } 0 \leq r < n-l \\ q^{\IntP{\frac{n+l-r}{2}}} & \text{if } n-l \leq r \leq l \end{matrix} \right. . $$
	\item[(3)] If $\E/\F$ is ramified at $\vp$ and if $l \geq n-1$, then we have
	$$ N_{\infty}(r; n,l) = \left\{ \begin{matrix} q^n & \text{if } 0 \leq r \leq l-n \\ q^{\IntP{\frac{n+l-r}{2}}} & \text{if } l-n < r \leq l \end{matrix} \right. ; $$
	while if $l < n-1$, then we have
	$$ N_{\infty}(r; n,l) = \left\{ \begin{matrix} 0 & \text{if } 0 \leq r < n-l-1 \\ q^{\IntP{\frac{n+l-r}{2}}} & \text{if } n-l-1 \leq r \leq l \end{matrix} \right. . $$
\end{itemize}
\label{PtCountNS}
\end{lemma}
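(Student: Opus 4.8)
The plan is to compute $N_0$ by hand and to reduce $N_\infty$ to a question about how well $\varpi^r\theta$ can be approximated by elements of $\vo$ inside $\E$. Throughout I drop the subscript $\vp$, write $q=\Nr(\vp)$, let $v_{\F},v_{\E}$ be the normalized valuations of $\F$ and $\E$, and set $m:=n+r-l$; recall $\vO=\vo+\vo\theta$ with $\theta^2-\fb\theta+\fa=0$ and $\bar\theta=\fb-\theta$. For part (1): if $\alpha\in\vp$ then $\alpha\fb\varpi^r$ and $\alpha^2\fa\varpi^{2r}$ have positive valuation, so $1-\alpha\fb\varpi^r+\alpha^2\fa\varpi^{2r}$ is a unit, hence lies in $\vp^{m}$ if and only if $m\le 0$, in which case the condition is vacuous. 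Since $0\le r\le l$, the inequality $m\le 0$ forces $l\ge n$ and is then equivalent to $0\le r\le l-n$; in that range all $|\vp/\vp^n|=q^{n-1}$ classes are counted, and outside it none are. This is (1).

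For $N_\infty$ I would first observe that, since $\theta+\bar\theta=\fb$ and $\theta\bar\theta=\fa$, for $\alpha\in\vo$
$$ \fa\varpi^{2r}-\alpha\fb\varpi^r+\alpha^2=(\alpha-\varpi^r\theta)(\alpha-\varpi^r\bar\theta)=\Nr_{\E/\F}(\alpha-\varpi^r\theta), $$
and then use $v_{\F}(\Nr_{\E/\F}(\mu))=f(\E/\F)\,v_{\E}(\mu)$ to rewrite
$$ N_\infty(r;n,l)=\#\Big\{\alpha\in\vo/\vp^n \ \Big|\ v_{\E}(\alpha-\varpi^r\theta)\ge\IntCP{m/f(\E/\F)}\Big\} $$
(a vacuous condition, hence $q^n$, when $m\le 0$; note $m\le n$ since $r\le l$, which is exactly what makes the condition periodic mod $\vp^n$). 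The key input is the value of $v_{\E}(\alpha-\varpi^r\theta)$ for $\alpha\in\vo$. If $\E/\F$ is unramified, $\vO=\vo+\vo\theta$ forces $\theta\in\vO^\times$ with image in $\vO/\vp\vO=\ag{F}_{q^2}$ not lying in $\ag{F}_q$, whence $v_{\E}(x+y\theta)=\min(v_{\F}(x),v_{\F}(y))$ for $x,y\in\vo$, so $v_{\E}(\alpha-\varpi^r\theta)=\min(v_{\F}(\alpha),r)$, with maximum $r$ over $\alpha\in\vo$. If $\E/\F$ is ramified, the same hypothesis forces $\theta-a$ to be a uniformizer of $\E$ for a suitable $a\in\vo$; since $v_{\E}(\varpi^r(\theta-a))=2r+1$ is odd while $v_{\E}$ is even on $\vo$, one gets $v_{\E}(\alpha-\varpi^r\theta)=v_{\E}((\alpha-\varpi^r a)-\varpi^r(\theta-a))=\min(2v_{\F}(\alpha-\varpi^r a),\,2r+1)$, with maximum $2r+1$ over $\alpha\in\vo$.

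To finish, since $e(\E/\F)f(\E/\F)=2$, for $m\ge 1$ the set $\{\alpha\in\vo:v_{\E}(\alpha-\varpi^r\theta)\ge\IntCP{m/f}\}$ is either empty or a coset of $\varpi^{\IntCP{m/2}}\vo$, and it is nonempty exactly when $\IntCP{m/f}$ does not exceed the maxima just computed, i.e. when $2r\ge m$ (unramified) resp. $2r+1\ge m$ (ramified); hence $N_\infty(r;n,l)=q^{\,n-\IntCP{m/2}}$ in that case and $0$ otherwise. One checks $n-\IntCP{m/2}=\IntP{(n+l-r)/2}$, and $r\le l$ gives $\IntCP{m/2}\le n$, so this exponent is $\ge 0$ and no truncation occurs. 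The remaining task is bookkeeping: translate $m\le 0\Leftrightarrow r\le l-n$ (only possible if $l\ge n$), $2r\ge m\Leftrightarrow r\ge n-l$, and $2r+1\ge m\Leftrightarrow r\ge n-l-1$ into subintervals of $[0,l]$; this yields the three-way split in (2) according as $l\ge n$ or $l<n$, and in (3) according as $l\ge n-1$ or $l<n-1$.

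I expect the main obstacle to be the valuation input step, in particular pinning down the position of $\theta$ inside $\vO$ in the ramified case (the fact that $\theta-a$ is a uniformizer for a suitable $a\in\vo$), since this is precisely what produces the shift from the threshold $n-l$ in (2) to $n-l-1$ in (3); the counting of cosets and the final interval bookkeeping are then routine.
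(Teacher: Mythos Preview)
Your proposal is correct and follows essentially the same approach as the paper: for $N_0$ you note the expression is a unit when $\alpha\in\vp$, and for $N_\infty$ you factor $\fa\varpi^{2r}-\alpha\fb\varpi^r+\alpha^2$ as $\Nr_{\E/\F}(\alpha-\varpi^r\theta)$, compute $v_\E(\alpha-\varpi^r\theta)$ via the structure of $\vO=\vo+\vo\theta$ in the unramified and ramified cases separately, and count the resulting coset. The paper does exactly this (with a harmless typo writing $\beta$ for $\theta$ in the norm), so your argument matches both in idea and in execution; your passage through $v_\F(\Nr\,\mu)=f(\E/\F)\,v_\E(\mu)$ and your explicit check that the condition is well-defined modulo $\vp^n$ (since $m\le n$) are welcome bits of tidiness that the paper leaves implicit.
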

\begin{proof}
	For (1), since $1 - \alpha \fb \varpi^r + \alpha^2 \fa \varpi^{2r} \in 1+\vp$ for any $\alpha \in \vp$, $N_0(r; n,l) \neq 0$ only if $n+r-l \leq 0$, in which case all $\alpha \in \vp/\vp^n$ contribute and $N_0(r; n,l) = q^{n-1}$. For the unramified case (2), we notice that
	$$ \fa \varpi^{2r} - \alpha \fb \varpi^r + \alpha^2 = \Nr(\alpha - \beta \varpi^r), $$
where $\Nr$ is the relative norm map for $\E_{\vp} / \F_{\vp}$. We thus have
	$$ 2 \min(\mathrm{ord}_{\vp}(\alpha), r) = \mathrm{ord}_{\vp}(\Nr(\alpha - \beta \varpi^r)) \geq n+r-l \quad \Longleftrightarrow \quad r \geq n-l \ \& \ \mathrm{ord}_{\vp}(\alpha) \geq \IntCP{\frac{n+r-l}{2}}. $$
	The formulae for $N_{\infty}(r; n,l)$ then follows easily. For the ramified case (3), there is $x_0 \in \vo$ such that $\beta - x_0$ is a uniformizer of $\E_{\vp}$. We thus get
	$$ \fa \varpi^{2r} - \alpha \fb \varpi^r + \alpha^2 = \Nr((\alpha - x_0 \varpi^r) - (\beta - x_0) \varpi^r), $$
	$$ \min(2 \cdot \mathrm{ord}_{\vp}(\alpha-x_0 \varpi^r), 2r+1) = \mathrm{ord}_{\vp}(\Nr(\alpha - \beta \varpi^r)) \geq n+r-l $$
	$$ \Longleftrightarrow \quad r \geq n-l-1 \ \& \ \mathrm{ord}_{\vp}(\alpha-x_0 \varpi^r) \geq \IntCP{\frac{n+r-l}{2}}. $$
	The formulae for $N_{\infty}(r; n,l)$ then follows the same way as the previous case.
\end{proof}

	We have the obvious relation
	$$ \RSO_{\gamma}(r, \Char_{\Gamma_{\vp}}) = \frac{N(r; n,l)}{q^n + q^{n-1}}. $$
	Writing
	$$ P_{\vp}(s; l, n, \E_{\vp} / \F_{\vp}) := \sideset{}{_{r=0}^{\infty}} \sum \RSO_{\gamma}(r, \Char_{\Gamma_{\vp}}) \cdot \mathrm{wt}_{\vp}(s, r, \E_{\vp} / \F_{\vp}) $$
	so that we have
	$$ \tilde{I}_{\vp}(s; t, \Char_{\Gamma_{\vp}}) = \frac{\zeta_{\E,\vp}(s+1/2)}{\zeta_{\F,\vp}(2s+1)} P_{\vp}(s+1/2; l, n, \E_{\vp} / \F_{\vp}), $$
	Lemma \ref{PtCountNS} then readily implies the following formulae (recall $Z=q^{s-1/2}$):
\begin{itemize}
	\item[(1)] If $\E/\F$ is unramifed at $\vp$ and if $l \geq n$, then
\begin{align}
	P_{\vp}(s; l, n, \E_{\vp} / \F_{\vp}) &= q^{\frac{l-n}{2}} \cdot \frac{ (Z^{l-n+1} - Z^{-(l-n+1)}) + q^{-\frac{1}{2}} (Z^{l-n} - Z^{-(l-n)}) }{Z-Z^{-1}} \label{PSLocPolyUnr1} \\
	&\quad + \sideset{}{_{r=1}^n} \sum \frac{q^{\IntP{-\frac{r}{2}}}}{1+q^{-1}} \cdot \frac{ (Z-q^{-1}Z^{-1}) (q^{\frac{1}{2}}Z)^{r+l-n} - (Z^{-1}-q^{-1}Z) (q^{\frac{1}{2}}Z^{-1})^{r+l-n} }{Z-Z^{-1}}; \nonumber
\end{align}
	while if $l < n$, then
\begin{equation}
	P_{\vp}(s; l, n, \E_{\vp} / \F_{\vp}) = q^{l-n} \sideset{}{_{r=0}^{2l-n}} \sum \frac{q^{\IntP{-\frac{r}{2}}}}{1+q^{-1}} \cdot \frac{ (Z-q^{-1}Z^{-1}) (q^{\frac{1}{2}}Z)^{r+n-l} - (Z^{-1}-q^{-1}Z) (q^{\frac{1}{2}}Z^{-1})^{r+n-l} }{Z-Z^{-1}}.
\label{PSLocPolyUnr2}
\end{equation}
	In particular, $P_{\vp}(s; l, n, \E_{\vp} / \F_{\vp})$ is non-vanishing only if $n \leq 2l$.
	\item[(2)] If $\E/\F$ is ramifed at $\vp$ and if $l \geq n-1$, then
\begin{align}
	P_{\vp}(s; l, n, \E_{\vp} / \F_{\vp}) &= q^{\frac{l-n}{2}} \cdot \frac{ Z^{l-n+1} - Z^{-(l-n+1)} }{Z-Z^{-1}} \label{PSLocPolyR1} \\
	&\quad + \sideset{}{_{r=1}^n} \sum \frac{q^{\IntP{-\frac{r}{2}}}}{1+q^{-1}} \cdot \frac{ (Z-q^{-\frac{1}{2}}) (q^{\frac{1}{2}}Z)^{r+l-n} - (Z^{-1}-q^{-\frac{1}{2}}) (q^{\frac{1}{2}}Z^{-1})^{r+l-n} }{Z-Z^{-1}}; \nonumber
\end{align}
	while if $l < n$, then
\begin{equation}
	P_{\vp}(s; l, n, \E_{\vp} / \F_{\vp}) = q^{l-n} \sideset{}{_{r=0}^{2l-n}} \sum \frac{q^{\IntP{-\frac{r}{2}}}}{1+q^{-1}} \cdot \frac{ (Z-q^{-\frac{1}{2}}) (q^{\frac{1}{2}}Z)^{r+n-l} - (Z^{-1}-q^{-\frac{1}{2}}) (q^{\frac{1}{2}}Z^{-1})^{r+n-l} }{Z-Z^{-1}}.
\label{PSLocPolyR2}
\end{equation}
	In particular, $P_{\vp}(s; l, n, \E_{\vp} / \F_{\vp})$ is non-vanishing only if $n \leq \max(2l,l+1)$.
\end{itemize}

\begin{remark}
	Although these polynomials $P_{\vp}(s; l,n, \E_{\vp} / \F_{\vp}) = P_{\vp}(1-s; l,n, \E_{\vp} / \F_{\vp})$ still satisfy the functional equation, local Riemann hypothesis fails in general (ie.\ the zeros of $P_{\vp}(s; l,n, \E_{\vp} / \F_{\vp})$ may not lie on the critical line $\operatorname{Re}(s)=1/2$). For example,
	$$ P_{\vp}(s; 1,1, \E_{\vp} / \F_{\vp}) = \left\{ \begin{matrix} \frac{1}{q^{\frac{1}{2}} + q^{-\frac{1}{2}}}(Z+Z^{-1}) + 1 & \text{if } \E/\F \text{ is unramified at } \vp \\  \frac{1}{q^{\frac{1}{2}} + q^{-\frac{1}{2}}}(Z+Z^{-1}) + \frac{1}{1+q^{-1}} & \text{if } \E/\F \text{ is ramified at } \vp \end{matrix} \right. . $$
\end{remark}

	We deduce the general case
	$$ \tilde{I}_{\eta_{\vp}}(s; t, \Char_{\Gamma_{\vp}}) = \frac{L_{\vp}(s+1/2, \eta_{\vp}) L_{\vp}(s+1/2, \eta_{\vp} \eta_{\E,\vp})}{\zeta_{\F,\vp}(2s+1)} \cdot P_{\eta_{\vp}}(s; l, n, \E_{\vp} / \F_{\vp}), $$
	where $P_{\eta_{\vp}}(s; l, n, \E_{\vp} / \F_{\vp})$ is obtained from $P_{\vp}(s; l, n, \E_{\vp} / \F_{\vp})$ by re-defining $Z = \eta_{\vp}(\varpi_{\vp}) \Nr(\vp)^{s-1/2}$.

		\subsubsection{$\vp$ is split}
		
	We omit the subscript $\vp$ and write $q = \Nr(\vp)$. Recall the decomposition
	$$ \GL_2(\vo) = \sideset{}{_{\alpha_1 \in \vp/\vp^n}} \bigsqcup w \begin{pmatrix} 1 & \\ \alpha_1 & 1 \end{pmatrix} \gp{K}_0[\vp^n] \bigsqcup \sideset{}{_{\alpha_2 \in \vo/\vp^n}} \bigsqcup \begin{pmatrix} 1 & \\ \alpha_2 & 1 \end{pmatrix} \gp{K}_0[\vp^n]. $$
	We have
	$$ \begin{pmatrix} 1 & \\ -\alpha_1 & 1 \end{pmatrix} w^{-1} a_r^{-1} \iota_{\vp}(\beta) a_r w \begin{pmatrix} 1 & \\ \alpha_1 & 1 \end{pmatrix} = \begin{pmatrix} v \bar{\theta} + u & 0 \\ v(\theta - \bar{\theta})(\alpha_1 - \varpi^{-r}) & v \theta + u \end{pmatrix} \in \Gamma_{\vp} $$
	$$ \Longleftrightarrow v(\theta - \bar{\theta})(\alpha_1 - \varpi^{-r}) \in \vp^n \Longleftrightarrow c \in \vp^{r+n}; $$
	$$ \begin{pmatrix} 1 & \\ -\alpha_2 & 1 \end{pmatrix} a_r^{-1} \iota_{\vp}(\beta) a_r \begin{pmatrix} 1 & \\ \alpha_2 & 1 \end{pmatrix} = \begin{pmatrix} v \beta + u + \alpha_2 \varpi^{-r} v (\theta - \bar{\theta}) & \varpi^{-r} v (\theta - \bar{\theta}) \\ - v (\theta - \bar{\theta}) \alpha_2 (1 + \alpha_2 \varpi^{-r}) & v \bar{\theta} + u + \alpha_2 \varpi^{-r} v (\theta - \bar{\theta}) \end{pmatrix} \in \Gamma_{\vp} $$
	$$ \Longleftrightarrow v (\theta - \bar{\theta}) \alpha_2 (1 + \alpha_2 \varpi^{-r}) \in \vp^n \Longleftrightarrow \alpha_2 (\alpha_2 + \varpi^r) \in \vp^{n+r-l}. $$
	
\begin{lemma}
	Assume $n \geq 1$ and $0 \leq r \leq l$. Write
	$$ N(r; n,l) = N_0(r; n,l) + N_{\infty}(r; n,l); $$
	$$ N_0(r; n,l) := \extnorm{ \left\{ \alpha \in \vp/\vp^n \ \middle| \ v(\theta - \bar{\theta})(\alpha - \varpi^{-r}) \in \vp^n \right\} }, $$
	$$ N_{\infty}(r; n,l) := \extnorm{ \left\{ \alpha \in \vo/\vp^n \ \middle| \ \alpha (\alpha + \varpi^r) \in \vp^{n+r-l} \right\} }. $$
	Then we have the following formulae.
\begin{itemize}
	\item[(1)] $N_0(r; n,l)$ is non-vanishing only if $l \geq n$, in which case it is given by
	$$ N_0(r; n,l) = q^{n-1} \cdot \Char_{0 \leq r \leq l-n}. $$
	\item[(2)] If $l \geq n$, then we have
	$$ N_{\infty}(r; n,l) = \left\{ \begin{matrix} q^n & \text{if } 0 \leq r \leq l-n \\ q^{\IntP{\frac{n+l-r}{2}}} & \text{if } l-n < r \leq l \end{matrix} \right. ; $$
	while if $l < n$, then we have
	$$ N_{\infty}(r; n,l) = \left\{ \begin{matrix} 2q^l & \text{if } 0 \leq r < n-l \\ q^{\IntP{\frac{n+l-r}{2}}} & \text{if } n-l \leq r \leq l \end{matrix} \right. . $$
\end{itemize}
\label{PtCountS}
\end{lemma}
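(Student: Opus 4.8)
The plan is to reduce the statement to elementary valuation counting in $\vo_{\vp}$. Write $\mathrm{ord}$ for $\mathrm{ord}_{\vp}$. By definition $\mathrm{ord}(v) = l$, and $\theta - \bar{\theta} = \det\begin{pmatrix}\theta & \bar{\theta}\\ 1 & 1\end{pmatrix} \in \vo_{\vp}^{\times}$ since $P \in \GL_2(\vo_{\vp})$; hence $\mathrm{ord}\big(v(\theta-\bar{\theta})\big) = l$. Thus the condition defining $N_0$ simplifies to $\mathrm{ord}(\alpha - \varpi^{-r}) \geq n - l$, and the one defining $N_{\infty}$ to $\mathrm{ord}\big(\alpha(\alpha + \varpi^r)\big) \geq m$, where $m := n + r - l$ (note $m \leq n$ since $r \leq l$). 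These are exactly the conditions read off from the conjugation identities preceding the lemma.

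For part (1): if $r \geq 1$ then $\mathrm{ord}(\alpha - \varpi^{-r}) = -r$ for every $\alpha \in \vp$, while for $r = 0$ it equals $0$; in either case the condition holds for all $q^{n-1}$ classes of $\vp/\vp^n$ simultaneously, or for none, exactly when $l - r \geq n$, i.e.\ $0 \leq r \leq l - n$ — which already forces $l \geq n$. This yields the stated formula for $N_0$.

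For part (2): if $m \leq 0$ (equivalently $r \leq l - n$) the condition on $\alpha$ is vacuous and $N_{\infty} = |\vo/\vp^n| = q^n$. When $m \geq 1$, I would stratify $\alpha \in \vo/\vp^n$ by $j := \mathrm{ord}(\alpha)$, treating $\alpha \equiv 0 \pmod{\vp^n}$ separately (it always qualifies). The one piece of elementary input is that $\mathrm{ord}(\alpha + \varpi^r) = \min(j,r)$ unless $j = r$, in which case, writing $\alpha = \varpi^r u$ with $u \in \vo_{\vp}^{\times}$, one has $\mathrm{ord}(\alpha + \varpi^r) = r + \mathrm{ord}(u+1)$. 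Accordingly the qualifying $\alpha$ split into the ranges $j > r$ (condition $j \geq n - l$), $j < r$ (condition $j \geq \IntCP{m/2}$), and the ``resonance'' stratum $j = r$ (condition $\mathrm{ord}(u+1) \geq n - r - l$); in each range the number of $\alpha$ with a prescribed valuation is a geometric progression in $q$. Summing these progressions and using $n - \IntCP{(n+r-l)/2} = \IntP{(n+l-r)/2}$ collapses the total to the claimed closed forms. The only case-dependent subtlety is that for $l \geq n$ the resonance condition at $j = r$ is automatically satisfied — so that stratum contributes a full geometric term — whereas for $l < n$ and $0 \leq r < n - l$ it reads $\mathrm{ord}(u+1) \geq n - r - l \geq 1$, confining $u$ to a coset of $1 + \vp^{n-r-l}\vo_{\vp}$ of size $q^l$; that extra contribution is precisely what produces the value $2q^l$ in that range.

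I expect the sole obstacle to be the bookkeeping: fixing the cut-offs in $r$ (at $l - n$ and at $n - l$), checking which strata are empty, and keeping the floor/ceiling identities straight — nothing conceptual. Indeed the split case differs from Lemma~\ref{PtCountNS} only in that the role played there by the ramification of $\E_{\vp}/\F_{\vp}$ is here played by the resonance stratum $j = r$.
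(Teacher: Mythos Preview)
Your proposal is correct and is essentially the same elementary valuation-counting argument as the paper's. The only difference is organizational: for $l<n$ the paper stratifies by the pair $(\mathrm{ord}(\alpha),\mathrm{ord}(\alpha+\varpi^r))$ and sums the resulting $N(u,v)$, whereas you stratify by $j=\mathrm{ord}(\alpha)$ alone and isolate the resonance stratum $j=r$; your version is slightly more uniform across the two regimes, and indeed the paper's own footnote notes that for $n-l\le r\le l$ the condition collapses to $\alpha\in\vp^{\lceil(n+r-l)/2\rceil}$, which is exactly what your stratification produces.
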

\begin{proof}
	(1) is easy. For (2), if $l \geq n$, then $\alpha (\alpha + \varpi^r) \in \vp^{n+r-l}$ is equivalent to $\alpha^2 \in \vp^{n+r-l}$, from which we easily deduce the desired formula. Assume $l < n$. We introduce for integers $0 \leq u,v \leq n$
	$$ N(u,v) := \extnorm{ \left\{ \alpha \in \vo / \vp^n \ \middle| \ \mathrm{ord}_{\vp}(\alpha) = u, \mathrm{ord}_{\vp}(\alpha + \varpi^r) = v \right\} }. $$
	If $N(u,v) \neq 0$ then we must have $\min(u,v) \leq r$, which we assume from now on. We distinguish two cases: (i) $u,v \leq r$; (ii) $\min(u,v) \leq r, \max(u,v) > r$. In the case (i), we have
	$$ N(u,v) = \left\{ \begin{matrix} 0 & \text{if } u \neq v \\ q^{n-u} - q^{n-u-1} & \text{if } u=v<r \\ q^{n-r} - 2 q^{n-r-1} & \text{if } u=v=r \end{matrix} \right. . $$
	In the case (ii), we have
	$$ N(u,v) = \left\{ \begin{matrix} q^{n-\max(u,v)} - q^{n-\max(u,v) - 1} & \text{if } \min(u,v) = r < \max(u,v) < n \\ 1 & \text{if }  \min(u,v) = r < \max(u,v) = n \\ 0 & \text{if } \min(u,v) < r < \max(u, v) \end{matrix} \right. . $$
	We obviously have the relation
	$$ N_{\infty}(r; n,l) = \sideset{}{_{u+v \geq n+r-l}} \sum N(u,v). $$
	If $r < n-l$, i.e., if $2r < n+r-l$, then
	$$ N_{\infty}(r; n,l) = \sideset{}{_{\substack{ \min(u,v) = r \\ \max(u,v) \geq n-l }}} \sum N(u,v) = 2q^l. $$
	If $n-l \leq r \leq l$, i.e., if $2r \geq n+r-l$, then\footnote{A simpler way is to observe that in this case $\alpha(\alpha + \varpi^r) \in \vp^{n+r-l}$ is equivalent to $\alpha \in \vp^{\IntCP{\frac{n+r-l}{2}}}$. This observation applies also to the non-split case.}
	$$ N_{\infty}(r; n,l) = \left( \sideset{}{_{\substack{ \min(u,v) = r \\ \max(u,v) > r }}} \sum + \sideset{}{_{r \geq u=v \geq \IntCP{ \frac{n+r-l}{2} }}} \sum \right) N(u,v) = q^{\IntP{\frac{n+l-r}{2}}}. $$
\end{proof}

	We have the obvious relation
	$$ \RSO_{\gamma}(r, f_{\vp}) = \frac{N(r; n,l)}{q^n + q^{n-1}}. $$
	Writing
	$$ P_{\vp}(s; l, n, \E_{\vp} / \F_{\vp}) := \sideset{}{_{r=0}^{\infty}} \sum \RSO_{\gamma}(r, f_{\vp}) \cdot \mathrm{wt}_{\vp}(s, r, \E_{\vp} / \F_{\vp}) $$
	so that we have
	$$ I_{\vp}(s; t, f_{\vp}) = \frac{\zeta_{\E,\vp}(s+1/2)}{\zeta_{\F,\vp}(2s+1)} P_{\vp}(s+1/2; l, n, \E_{\vp} / \F_{\vp}), $$
	Lemma \ref{PtCountS} then readily implies the following formulae (recall $Z=q^{s-1/2}$):
\begin{itemize}
	\item[(1)] If $l \geq n$, then
\begin{align}
	P_{\vp}(s; l, n, \E_{\vp} / \F_{\vp}) &= q^{\frac{l-n}{2}} \cdot \frac{ (Z^{l-n+1} - Z^{-(l-n+1)}) - q^{-\frac{1}{2}} (Z^{l-n} - Z^{-(l-n)}) }{Z-Z^{-1}} + \sideset{}{_{r=1}^n} \sum \frac{q^{\IntP{-\frac{r}{2}}}}{1+q^{-1}} \cdot \label{PSLocPolyS1} \\
	&\quad \frac{ (Z+q^{-1}Z^{-1}-2q^{-\frac{1}{2}}) (q^{\frac{1}{2}}Z)^{r+l-n} - (Z^{-1}+q^{-1}Z-2q^{-\frac{1}{2}}) (q^{\frac{1}{2}}Z^{-1})^{r+l-n} }{Z-Z^{-1}}; \nonumber
\end{align}
	\item[(2)] If $l < n$, then
\begin{align}
	&\quad P_{\vp}(s; l, n, \E_{\vp} / \F_{\vp}) = \frac{2q^{l} }{q^n+q^{n-1}} \cdot \frac{ (Z^{n-l} - Z^{-(n-l)}) - q^{-\frac{1}{2}} (Z^{n-l-1} - Z^{-(n-l-1)}) }{Z-Z^{-1}} \label{PSLocPolyS2} \\
	&\quad + \sideset{}{_{r=n-l}^{l}} \sum \frac{q^{\IntP{\frac{n+l-r}{2}}}}{q^n+q^{n-1}} \cdot \frac{ (Z+q^{-1}Z^{-1}-2q^{-\frac{1}{2}}) (q^{\frac{1}{2}}Z)^{r} - (Z^{-1}+q^{-1}Z-2q^{-\frac{1}{2}}) (q^{\frac{1}{2}}Z^{-1})^{r} }{Z-Z^{-1}}. \nonumber
\end{align}
\end{itemize}

\begin{remark}
	Although these polynomials $P_{\vp}(s; l,n, \E_{\vp} / \F_{\vp}) = P_{\vp}(-s; l,n, \E_{\vp} / \F_{\vp})$ still satisfy the functional equation, local Riemann hypothesis fails in general. For example,
	$$ P_{\vp}(s; 1,1, \E_{\vp} / \F_{\vp}) = \frac{1}{q^{\frac{1}{2}} + q^{-\frac{1}{2}}}(Z+Z^{-1}) + \frac{1-q^{-1}}{1+q^{-1}} . $$
\end{remark}

	We deduce the general case
	$$ \tilde{I}_{\eta_{\vp}}(s; t, \Char_{\Gamma_{\vp}}) = \frac{L_{\vp}(s+1/2, \eta_{\vp}) L_{\vp}(s+1/2, \eta_{\vp} \eta_{\E,\vp})}{\zeta_{\F,\vp}(2s+1)} \cdot P_{\eta_{\vp}}(s; l, n, \E_{\vp} / \F_{\vp}), $$
	where $P_{\eta_{\vp}}(s; l, n, \E_{\vp} / \F_{\vp})$ is obtained from $P_{\vp}(s; l, n, \E_{\vp} / \F_{\vp})$ by re-defining $Z = \eta_{\vp}(\varpi_{\vp}) \Nr(\vp)^{s-1/2}$.

	\subsection{Proof of Main Result}
	
	We give the final part of the proof in the case of principal congruence subgroups, the other case being similar.
	
	Inserting the local computations into (\ref{GL2Form}), we get
\begin{align*}
	\frac{I(s;t,f)}{\Vol(\GL_2(\widehat{\vo}))} &= \frac{1}{\Vol(\I_{\F} / \I_{\F}^2)} \cdot \extnorm{\frac{\Dis_{\E}}{t^2-4}}_{\infty}^{\frac{s}{2}+\frac{1}{4}} \cdot \mathcal{Z}f_{\infty}(s+\frac{1}{2},-t) \cdot \\
	&\quad \sideset{}{_{\substack{ \eta \in \Cl(\F)^{\vee} \\ \eta^2 = 1 }}} \sum \eta(J_t) \frac{L(s+1/2, \eta) L(s+1/2, \eta \eta_{\E})}{\zeta_{\F}(2s+1)} P_{\eta}(s+\frac{1}{2}, \idlN),
\end{align*}
	where we recall
\begin{itemize}
	\item $\E = \E_t \simeq \F[X] / (X^2 - tX +1)$,
	\item $J_t$ is the ideal determined by $(t^2-4)\vo = J_t^2 \Dis_{\E}$,
	\item $P_{\eta}(s,\idlN)$ is the product of local polynomials $P_{\eta_{\vp}}(s,\idlN)$ given in (\ref{PSLocPoly}).
\end{itemize}
	By Corollary \ref{ResEis} and Lemma \ref{AdelRel}, we also have
	$$ \Res_{s=1/2} I(s;t,f) = \frac{\Vol(\widehat{\Gamma})}{\norm[ \Cl(\F) ]} \cdot \frac{\Lambda_{\F}^*(1)}{2 \Lambda_{\F}(2)} \cdot I(t,f_{\infty}), $$
	with $I(t,f_{\infty})$ given by (\ref{STGeom}). Comparing the above two equations with (\ref{STGeom}), taking into account Lemma \ref{QTamM} and Lemma \ref{ZagTransAt1}, we see that $I_{\eta}(s; t, f)$ is $L(s+1/2, \eta) L(s+1/2, \eta \eta_{\E})$ times a factor holomorphic at $s=1/2$. In particular it is holomorphic at $s=1/2$ unless $\eta=1$. Hence we get
	$$ \frac{\norm[\Dis_{\E}]_{\infty}^{1/2}}{2} \frac{\Gamma_{\F_{\infty}}(1)}{\Gamma_{\F_{\infty}}(2)} \frac{\zeta_{\F}^*(1) L(1,\eta_{\E/\F})}{\zeta_{\F}(2)} P(1,\idlN) = \frac{\Vol(\widehat{\Gamma})}{\norm[ \Cl(\F) ]} \cdot \frac{\Lambda_{\F}^*(1)}{2 \Lambda_{\F}(2)} \cdot \sum_{[\gamma]: \Tr(\gamma) = t} \frac{l(\gamma_0)}{\Reis(\gamma)} \cdot \left\{ \begin{matrix} 1 & \text{if } \F_{\infty}=\R \\ 2\pi & \text{if } \F_{\infty}=\C \end{matrix} \right. . $$
	Theorem \ref{MainPS} is thus proved by summing over suitable $t$ and by taking $P_{\Gamma}(s) = P_1(s,\idlN)$.
\begin{remark}
	The contribution from $I_{\eta}(s;t,f), \eta \neq 1$ to $\Res_{s=1/2} I(s;t,f)$ is zero, which is consistent with the discussion in \cite[\S 3.2 Case 1]{JZ87}. It reflects the orthogonality between the restriction to the diagonal of the elliptic terms in the geometric side of the trace formula and $\eta(\det x)$.
\end{remark}

\section{A Prime Geodesic Theorem for Principal Congruence Subgroups}\label{S5}

    \subsection{Preliminary Results}
    
    We consider the special cases $\F = \Q$ or $\Q(i)$. Let $\idlN$ be an integral ideal generated by $N \in \vo = \Z$ or $\Z[i]$. Let $\Gamma = \Gamma(\idlN)$ be the principal congruence subgroup modulo $\idlN$ of $\SL_2(\vo)$. As the first preliminary task, we need to make our formula explicit in this setting.

\begin{corollary}
    Recall the condition of summation
\begin{equation} 
    \max \left\{ \extnorm{\frac{n+\sqrt{n^2-4}}{2}}_{\infty}, \extnorm{\frac{n-\sqrt{n^2-4}}{2}}_{\infty} \right\} \leq x.
\label{SumCond}
\end{equation}
    The counting function $\Psi_{\Gamma}$ is equal to
    $$ \Psi_{\Gamma}(x) = C [\PSL_2(\vo):\Gamma] \sum_{\substack{n \in 2+\idlN^2 \\ n \text{ satisfies } (\ref{SumCond})}} \sqrt{\Nr\left( \frac{n^2-4}{N^2} \right)} \cdot \mathcal{L} \left( 1, \frac{n^2-4}{N^2} \right), $$
    where $\mathcal{L}(s,\delta)$ is Zagier's $L$-function defined by
\begin{equation}
\mathcal{L}(s,\delta) = \frac{\zeta_{\F}(2s)}{\zeta_{\F}(s)} \sum_{0 \neq q \in \vo} \frac{\rho_q(\delta)}{\Nr(q)^s} =: \sum_{0 \neq q \in \vo} \frac{\lambda_q(\delta)}{\Nr(q)^s},
\label{def:ZagierL}
\end{equation}
    and $\rho_q(\delta)$ is the counting function
    $$ \rho_q(\delta) := \extnorm{ \left\{ x \pmod{2q} \ \middle| \ x^2 \equiv \delta \pmod{4q} \right\} }; $$
    and $C = 1$ or $1/(2\pi)$ according to $\F = \Q$ or $\Q(i)$.
\label{PCSubgpBF}
\end{corollary}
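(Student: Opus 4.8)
The plan is to read the statement off Theorem~\ref{MainPS}, specialized to $\F\in\{\Q,\Q(i)\}$, by rewriting the $L$-factor $P_\Gamma(s)L(s,\chi_{d_\delta})$ in terms of Zagier's Dirichlet series $\mathcal{L}$. First I would note that since $\F$ has class number one, $\norm[\Cl(\F)]=1$ and the constant $\varepsilon_\infty$ of Theorem~\ref{MainPS}(2) is precisely the constant $C$; so that theorem becomes
\[
\Psi_\Gamma(x)=C\,[\PSL_2(\vo):\Gamma]\sum_{n}\extnorm{d_{n^2-4}}_\infty^{1/2}\,P_\Gamma(1)\,L(1,\chi_{d_{n^2-4}}),
\]
the sum being over traces $n$ of hyperbolic elements of $\Gamma$ subject to~\eqref{SumCond}.

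Next I would pin down the range of summation. If $\gamma=1+M$ with $M\equiv 0\pmod{\idlN}$ and $\det\gamma=1$, then $\Tr M=-\det M\in\idlN^2$, so $\Tr\gamma\in 2+\idlN^2$; conversely, writing $n-2=N^2m$ with $m\in\vo$, the matrix $\begin{pmatrix}1+N^2m & N\\ Nm & 1\end{pmatrix}$ lies in $\Gamma$ with trace $n$. Thus $\Tr(\Gamma)=2+\idlN^2$, and the hyperbolic traces are exactly those $n\in 2+\idlN^2$ with $n^2-4\notin(\F^\times)^2$; all but finitely many $n\in 2+\idlN^2$ are of this type, the exceptions giving elliptic or unipotent classes that do not enter $\Psi_\Gamma$. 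For such $n$ one has $n^2-4\in\idlN^2$, so $\delta':=(n^2-4)/N^2\in\vo$ and $\F[\sqrt{\delta'}]=\F[\sqrt{n^2-4}]=:\E$, hence $d_{\delta'}=d_{n^2-4}$.

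The heart of the matter is the arithmetic identity
\[
\mathcal{L}(s,\delta')=L(s,\chi_{d_{\delta'}})\cdot\Nr(J_{\delta'})^{-s}\cdot P_\Gamma(s),
\]
where $J_{\delta'}$ is the level ideal of $\vo+\sqrt{\delta'}\,\vo$, so $(\delta')=J_{\delta'}^{2}\Dis_\E$ and, by Lemma~\ref{LevelIdeal} and Lemma~\ref{NonVanLoc}, $\mathrm{ord}_\vp(J_{\delta'})=\mathrm{ord}_\vp(J_{n^2-4})-\mathrm{ord}_\vp(\idlN)=l_\vp\ge 0$. I would establish it by expanding $\mathcal{L}(s,\delta')=\tfrac{\zeta_\F(2s)}{\zeta_\F(s)}\sum_q\rho_q(\delta')\Nr(q)^{-s}$ into an Euler product (the densities $\rho_q$ being multiplicative away from the prime above $2$, whose contribution is handled by the standard local analysis) and evaluating the local factors: the classical computation of $\rho_{\vp^{k}}(\delta')$ — the very one underlying~\eqref{PSLocPoly} — yields $\mathcal{L}_\vp(s,\delta')=L_\vp(s,\chi_{d_{\delta'}})\,\Nr(\vp)^{-s\,l_\vp}\,P_{\Gamma,\vp}(s)$, with $P_{\Gamma,\vp}$ the polynomial of~\eqref{PSLocPoly} (which is $1$ when $l_\vp=0$). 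Setting $s=1$ and using the product formula $\extnorm{d_{n^2-4}}_\infty=\Nr(\Dis_\E)$ together with $\Nr(\delta')=\Nr(J_{\delta'})^{2}\Nr(\Dis_\E)$, this collapses to
\[
\sqrt{\Nr(\delta')}\,\mathcal{L}(1,\delta')=\extnorm{d_{n^2-4}}_\infty^{1/2}\,P_\Gamma(1)\,L(1,\chi_{d_{n^2-4}}),
\]
and substituting into the displayed form of Theorem~\ref{MainPS} gives the corollary.

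I expect the main obstacle to be this last local computation: proving $\mathcal{L}_\vp(s,\delta')=L_\vp(s,\chi)\,\Nr(\vp)^{-s\,l_\vp}\,P_{\Gamma,\vp}(s)$ uniformly in how $\vp$ splits in $\E$, in particular at the prime(s) above $2$ and over $\Q(i)$, where one needs the Gaussian‑integer analogue of Gauss's count of square roots modulo prime powers. The remaining work — tracking the normalizing constants (the unit group $\vo^\times$ in the Dirichlet series, the orientation factor $\Reis$, and the factor $(2\pi)^{-1}$ at a complex place) and the precise meaning of $\extnorm{\cdot}_\infty$ — is routine and is already implicit in the derivation of~\eqref{PSLocPoly} and in the proof of Theorem~\ref{MainPS}.
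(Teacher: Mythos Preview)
Your proposal is correct and follows essentially the same route as the paper: apply Theorem~\ref{MainPS}, identify $\Tr(\Gamma)=2+\idlN^2$ (your matrix witness is a variant of the paper's), observe via~\eqref{PSLocPoly} and Lemma~\ref{NonVanLoc} that the shift $l_\vp=\mathrm{ord}_\vp(J_t)-\mathrm{ord}_\vp(\idlN)$ is exactly the level parameter for $\delta'=(n^2-4)/N^2$, and then match the resulting product $P_\Gamma(s)L(s,\chi_{d_{\delta'}})$ with Zagier's $\mathcal{L}(s,\delta')$. The only difference is packaging: the paper factors through the full-level function, writing $L_\Gamma(1,n^2-4)=[\PSL_2(\vo):\Gamma]\,L_{\Gamma_0}(1,(n^2-4)/N^2)$ and then citing \cite[\S 3.1]{BBCL20} for the identification $L_{\Gamma_0}(s,\delta')\leftrightarrow\mathcal{L}(s,\delta')$, whereas you propose to verify that identification directly by the local Euler computation---which is precisely the content behind~\eqref{PSLocPoly} and the cited formula, so the ``main obstacle'' you flag is already available in the paper and its references.
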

\begin{remark}
\label{1104:rmk}
    We emphasize that in this paper we count the closed geodesics without orientation. Hence there is a factor of $2$ missing in our formula, for example compared to Soundararajan-Young's version \cite[Proposition 2.2]{SY13}.
\end{remark}
\begin{proof}
    This is a direct consequence of Theorem \ref{MainPS} as follows. We first need to determine the set $\Tr(\Gamma)$, which we claim to be $2+\idlN^2$. In fact, any element in $\Gamma$ is of the form
	$$ \begin{pmatrix} 1 + Na & Nb \\ Nc & 1+Nd \end{pmatrix} $$
	for some $a,b,c,d \in \vo$, subject to the condition
	$$ 1 = \det \begin{pmatrix} 1 + Na & Nb \\ Nc & 1+Nd \end{pmatrix} \quad \Leftrightarrow \quad a+d = N(bc - ad) \in \idlN. $$
	Hence the trace $2+N(a+d)$ lies in $2+\idlN^2$. Conversely, let $t \in \vo$. Then the following element has trace $2+N^2t$ and lies in $\Gamma$:
	$$ \begin{pmatrix} 1+N^2t & Nt \\ N & 1 \end{pmatrix}. $$
	Then we notice that the extra local non-vanishing condition in Lemma \ref{NonVanLoc} (2) is automatically satisfied. Now if $\Gamma_0 := \SL_2(\Z[i])$ denotes the full modular group, then (\ref{PSLocPoly}) implies
	$$ L_{\Gamma}(1,n^2-4) = [\PSL_2(\vo):\Gamma] L_{\Gamma_0} \left( 1, \frac{n^2-4}{N^2} \right), \quad n \in 2+\idlN^2. $$
	Theorem \ref{MainPS} implies
	$$ \Psi_{\Gamma}(x) = \sum_{n \in 2+\idlN^2} \norm[d_{n^2-4}]_{\infty}^{1/2} \cdot L_{\Gamma}(1,n^2-4). $$
	The last displayed equation in \cite[\S 3.1]{BBCL20}, which relates $L_{\Gamma_0}(s,\delta)$ to Zagier's $L$-function, together with the obvious fact $d_{n^2-4} = d_{(n^2-4)/N^2}$ yields the desired formula.
\end{proof}

    As the second preliminary task, we need to study some exponential sum which is analoguous to the Kloosterman sums. They will appear when we bound $\Psi_{\Gamma}(X+Y) - \Psi_{\Gamma}(X)$ via the above formula in Corollary \ref{PCSubgpBF}. Precisely, we are interested in the size of
\begin{equation}
    S_q(k,N) := \sum_{b (q)} \rho_q \left( b (4+N^2b) \right) e\left( \extPairing{\frac{b}{q}}{k} \right),
\label{AuxExpSum}
\end{equation} 
    where $k \in \vo/q\vo$ and
\begin{itemize}
    \item the sum is over a complete system of representatives of $\vo/q\vo$,
    \item $e(x) := e^{2\pi i x}$ and $\Pairing{x}{y} = xy$ if $\F = \Q$ or $(xy + \overline{xy})/2$ if $\F = \Q(i)$.
\end{itemize}

\begin{lemma}
    Write $q = q_1q_2$ with $q_1 \mid N^{\infty}$ and $(q_2,N) = 1$. Recall the function $d(q)$ is the number of prime divisors of $q$. Then $S_q(k,N)$ for $k \neq 0$ satisfies the bound
    $$ \extnorm{ S_q(k,N) } \ll d(q_2) \Nr((k,q))^{\frac{1}{2}} \Nr(q)^{\frac{1}{2}}. $$
    While for $k=0$, we have
    $$ S_q(0,N) = \Nr(q_1) \phi(q_2), $$
    where $\phi(\cdot)$ is the Euler's function.
\label{AuxExpSumBd}
\end{lemma}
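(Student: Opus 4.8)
The plan is to exploit the multiplicativity of the two ingredients of $S_q(k,N)$. Since $\rho_q$ counts solutions of a congruence and $b\mapsto e(\Pairing{b/q}{k})$ is an additive character, the Chinese Remainder Theorem yields a factorisation $S_q(k,N)=\prod_{\vp^a\| q}S_{\vp^a}(k_{\vp},N)$, the product running over the prime powers exactly dividing $q$ (in $\Z$ or $\Z[i]$), where $k_{\vp}$ is the local component of $k$ and $S_{\vp^a}(\cdot,N)$ is the evident local analogue of \eqref{AuxExpSum}. Because $\Nr$, $\phi$ and $\Nr((\cdot,q))$ are multiplicative while $d(q_2)$ is additive in $q_2$, it suffices to establish, for each $\vp^a\| q$, the local facts: $S_{\vp^a}(k_{\vp},N)=\Nr(\vp^a)$ when $\vp\mid N$ and $k_{\vp}=0$; $S_{\vp^a}(k_{\vp},N)=\phi(\vp^a)$ when $\vp\nmid N$ and $k_{\vp}=0$; and $|S_{\vp^a}(k_{\vp},N)|\ll \Nr((k_{\vp},\vp^a))^{1/2}\Nr(\vp^a)^{1/2}$ in all cases, with an absolute implied constant that is $1$ at the places dividing $N$. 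The finitely many primes above $2$ (ramified when $\F=\Q(i)$) carry the mild arithmetic of the moduli $2q$, $4q$ in $\rho_q$; I would dispatch these as isolated special cases contributing a bounded multiplicative constant. One records in advance that $b(4+N^2b)\equiv (Nb)^2\pmod 4$, so the ``$x\bmod 2$'' part of the solution count is identically $1$ and may be suppressed at the odd places.

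For $\vp\nmid N$ (so $\vp^a\mid q_2$; take $\vp$ odd) I would open $\rho_{\vp^a}(b(4+N^2b))=\#\{x\bmod\vp^a:x^2\equiv 4b+N^2b^2\pmod{\vp^a}\}$ and interchange the summations. Completing the square through the substitution $z:=N^2b+2$, a bijection of $\vo/\vp^a\vo$ since $N$ is a unit, turns the inner condition into $z^2\equiv N^2x^2+4\pmod{\vp^a}$; the further unimodular change $u:=z+Nx$, $v:=z-Nx$ (legitimate because $2N$ is a unit) converts it into $uv\equiv 4\pmod{\vp^a}$, i.e.\ $u\in(\vo/\vp^a\vo)^{\times}$ with $v=4u^{-1}$. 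Writing $b=(u+4u^{-1}-4)/(2N^2)$ and pulling out a unit–modulus constant identifies $S_{\vp^a}(k_{\vp},N)$ with a Kloosterman sum $\mathrm{Kl}(m,n;\vp^a)$ over $\vo/\vp^a\vo$ whose arguments $m=(2N^2)^{-1}k_{\vp}$ and $n=2N^{-2}k_{\vp}$ are both proportional to $k_{\vp}$, so $(m,n,\vp^a)=(k_{\vp},\vp^a)$. For $k_{\vp}=0$ this is $\#(\vo/\vp^a\vo)^{\times}=\phi(\vp^a)$; otherwise the standard estimate for Kloosterman sums to prime power modulus (Weil's bound for $a=1$, stationary–phase evaluation for $a\ge 2$, valid equally over $\vo$ in the imaginary quadratic case) gives the bound $\ll \Nr((k_{\vp},\vp^a))^{1/2}\Nr(\vp^a)^{1/2}$.

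For $\vp\mid N$ (so $\vp^a\mid q_1$; $\vp$ odd, $\nu:=v_{\vp}(N)\ge 1$) the quantity $N^2b$ is divisible by $\vp^{2\nu}$, so $4+N^2b$ is a unit and the identity $z^2\equiv N^2x^2+4\pmod{\vp^a}$ still holds, now with $z=N^2b+2\equiv 2\pmod{\vp^{\min(2\nu,a)}}$. If $a\le 2\nu$ this forces $z\equiv 2$, hence $x^2\equiv 4b$, and reindexing by $x$ produces the quadratic Gauss sum $S_{\vp^a}(k_{\vp},N)=\sum_{x\bmod\vp^a}e(\Pairing{x^2/(4\vp^a)}{k_{\vp}})$, which equals $\Nr(\vp^a)$ for $k_{\vp}=0$ and is $\ll \Nr((k_{\vp},\vp^a))^{1/2}\Nr(\vp^a)^{1/2}$ otherwise by the classical Gauss sum evaluation. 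If $a>2\nu$, only the square root $z_{+}(x)$ of $N^2x^2+4$ with $z_{+}(x)\equiv 2\pmod{\vp^{2\nu}}$ yields an admissible $b$, and those $b$ fill a coset of $\vp^{a-2\nu}\vo/\vp^a\vo$; summing the additive character over that coset annihilates the sum unless $\vp^{2\nu}\mid k_{\vp}$, while when $\vp^{2\nu}\mid k_{\vp}$ the surviving sum over $x$ is again a Gauss–type sum with leading quadratic term $x^2/4$, giving the same bound. I expect this last case — $\vp\mid N$ with $a>2\nu$ — to be the fiddliest point of the argument; it is purely mechanical but requires some care with the $\vp$-adic valuations.

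Finally I would assemble the local data. The two exact evaluations at $k_{\vp}=0$ multiply to $\prod_{\vp\mid q_1}\Nr(\vp^a)\cdot\prod_{\vp\mid q_2}\phi(\vp^a)=\Nr(q_1)\phi(q_2)$, which is the case $k=0$. For $k\neq 0$, multiplying the local bounds over the $d(q_1)+d(q_2)$ prime powers dividing $q$ produces $\Nr((k,q))^{1/2}\Nr(q)^{1/2}$ times a product of per–place constants; these are $1$ at the $d(q_1)$ places dividing $N$, and the remaining places combine — equivalently, the $q_2$-part of $S_q$ is, up to a unit factor, a single Kloosterman sum modulo $q_2$ with argument proportional to $k$, to which the Weil/Estermann bound $|\mathrm{Kl}(m,n;c)|\ll d(c)\,\Nr((m,n,c))^{1/2}\Nr(c)^{1/2}$ applies — to yield the stated $\ll d(q_2)\,\Nr((k,q))^{1/2}\Nr(q)^{1/2}$, the bounded contribution of the primes above $2$ being absorbed.
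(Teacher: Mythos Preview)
Your reduction by multiplicativity and the treatment of $\vp\nmid N$ are correct and match the paper's approach: both parametrize the pairs $(b,x)$ by a single unit variable and recognize a Kloosterman sum (your $u$ is related to the paper's $y$ by a unit factor). The sub-case $\vp\mid N$ with $a\le 2\nu$ is also fine, reducing to a quadratic Gauss sum.

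The gap is in the case $\vp\mid N$, $a>2\nu$. Your substitution $z=N^2b+2$ is not invertible modulo $\vp^a$ once $N^2$ fails to be a unit, and the congruence $z^2\equiv N^2x^2+4\pmod{\vp^a}$ is only a \emph{consequence} of the original relation $x^2\equiv 4b+N^2b^2\pmod{\vp^a}$, not an equivalence. In fact, viewing the latter as $N^2b^2+4b-x^2\equiv 0\pmod{\vp^a}$ and noting that its $b$-derivative $2N^2b+4\equiv 4\pmod{\vp}$ is a unit, Hensel's lemma gives a \emph{unique} $b$ for each $x$, not a coset of $\vp^{a-2\nu}$. (Quick sanity check: your coset count would force $S_{\vp^a}(0,N)=\Nr(\vp)^{a+2\nu}$, contradicting the value $\Nr(\vp^a)$ you yourself assert.) With $b(x)$ unique, the sum is $\sum_x e(\Pairing{b(x)/\vp^a}{k})$ where $b(x)=x^2/4-N^2x^4/64+\cdots$; the correction terms beyond $x^2/4$ are precisely what prevent this from being a pure Gauss sum when $a>2\nu$, so a $p$-adic stationary-phase argument is unavoidable. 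The paper sidesteps your case split altogether: it uses the uniform parametrization $y=(x-Nb)/2$, giving $b=y^2(1-Ny)^{-1}$, and applies stationary phase in $y$ (writing $y=y_0+p^{\alpha}y_1$ for $m=2\alpha$ or $2\alpha+1$) to locate the unique critical point $y_0=0$.
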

\begin{proof}
    It is easy to see that $S_q(k,N)$ is multiplicative in $q$, as well as the desired bound and formula. Hence it suffices to consider the case $q=q_2$ and the case $q = p^m$ for some prime $p \mid N$ and some $m \in \Z_{>0}$. We give details for the case $k \neq 0$ as follows.
    
\noindent \textbf{Case 1:} $(q,N) = 1$. Making the changes of variables $x = 2y + Nb$ then $y=y+\tilde{N}$, we see
\begin{align}
    \rho_q(b(4+N^2b)) &= \extnorm{ \left\{ y (q) \ \middle| \ y^2+Nby-b \equiv 0 (q) \right\} } \label{1stCurveCount} \\
    &= \extnorm{ \left\{ y (q) \ \middle| \ y^2+(2\tilde{N}+Nb)y+\tilde{N}^2 \equiv 0 (q) \right\} }, \label{2ndCurveCount}
\end{align}
    where we denote by $\tilde{N}$ an inverse of $N$ mod $q$. For a solution $y(q)$ of (\ref{2ndCurveCount}), we necessarily have $(q,y) = 1$. Thus
    $$ b \equiv -\tilde{N}y-\tilde{N}^3y^{-1}-2\tilde{N}^2 (q). $$
    Inserting the above expression, we get
	\begin{equation}\label{1410:eq001}
	S_q(k,N) = e \left( \extPairing{-\frac{2\tilde{N}^2}{q}}{k} \right) \cdot \sum_{\substack{y (q) \\ (y,q) = 1 }} e \left( \extPairing{\frac{-\tilde{N}y-\tilde{N}^3y^{-1}}{q}}{k} \right),
	\end{equation}
	which is equal to the Kloosterman sum $S(k\tilde{N}, k\tilde{N}^3, q)$ up to a unitary phase factor. The classical bound as recalled in \cite[(16)]{BBCL20} concludes the proof in this case.
	
\noindent \textbf{Case 2:} $q=p^m \mid N^{\infty}$. We may assume $(p,k)=1$, since the general case easily reduces to this special one. Any solution $y(q)$ to (\ref{1stCurveCount}) satisfies $(Ny-1,q) = 1$. Thus 
	$$ b \equiv y^2(1-Ny)^{-1} (q). $$
	We obtain
    \begin{equation}\label{1410:eq002}
    S_q(k,N) = \sum_{y (q)} e \left( \extPairing{\frac{y^2(1-Ny)^{-1}}{q}}{k} \right).
    \end{equation}
    Write $S_m(N)$ for $S_q(k,N)$ and $\psi(x)$ for $e(\Pairing{x}{k})$. We distinguish three cases.
	
\noindent \textbf{Case 2.1:} $m=1$. Then $S_1(N)$ reduces to a quadratic Gauss sum and the required bound follows.

\noindent \textbf{Case 2.2:} $m=2\alpha$ with $\alpha \geq 1$. Write $y = y_0 + p^{\alpha}y_1$ where $y_0, y_1$ traverse $\vo/p^{\alpha}\vo$. We have
	$$ y^2(1-Ny)^{-1} \equiv (1-Ny_0)^{-1} \cdot (y_0^2 + P p^{\alpha} y_1 ) \pmod{p^m}, $$
	where $P = (N(1-Ny_0)^{-1}y_0 + 2) y_0$ and the inverse is understood modulo $p^m$. Thus
	$$ S_m(N) = \sum_{y_0, y_1 (p^{\alpha})} \psi \left( \frac{y_0^2(1-Ny_0)^{-1}}{p^m} \right) \cdot \psi \left( \frac{P}{p^{\alpha}} y_1 \right). $$
	The inner sum over $y_1$ is non-vanishing only if $P \in p^{\alpha}\vo$, which is equivalent to $y_0 \in p^{\alpha}\vo$. We thus get
	$$ S_m(N) = \sum_{y_1 (p^{\alpha})} 1 = \Nr(p)^{\alpha} = \Nr(q)^{\frac{1}{2}}. $$
	
\noindent \textbf{Case 2.3:} $m=2\alpha+1$ with $\alpha \geq 1$. Write $y = y_0 + p^{\alpha}y_1$ where $y_0$ resp. $y_1$ traverse $\vo/p^{\alpha}\vo$ resp. $\vo/p^{\alpha+1}\vo$. We similarly get
	$$ S_m(N) = \sum_{y_0 (p^{\alpha})} \psi \left( \frac{y_0^2(1-Ny_0)^{-1}}{p^m} \right) \sum_{y_1 (p^{\alpha+1})} \psi \left( \frac{y_1^2}{p} + \frac{P}{p^{\alpha+1}} y_1 \right), $$
	where $P = (N(1-Ny_0)^{-1}y_0 + 2) y_0$. The inner sum over $y_1$ is non-vanishing only if $P \in p^{\alpha}\vo$, which is equivalent to $y_0 \in p^{\alpha}\vo$. Writing $P = p^{\alpha} P_1$ in this case, we get
	$$ S_m(N) = \sum_{y_1 (p^{\alpha+1})} \psi \left( \frac{y_1^2}{p} + \frac{P_1}{p} y_1 \right) = \Nr(p)^{\alpha} \sum_{y_1 (p)} \psi \left( \frac{y_1^2}{p} + \frac{P_1}{p} y_1 \right). $$
	Bounding the Gauss sum by $\Nr(p)^{1/2}$, we obtain
	$$ \extnorm{S_m(N)} \leq \Nr(p)^{\alpha+\frac{1}{2}} = \Nr(q)^{\frac{1}{2}}. $$
\end{proof}

Before moving to the proof of Theorem \ref{MainPGT} we also give a small
lemma about Dirichlet series.

\begin{lemma}\label{2610:lemma}
Let $N\in\Z[i]$, $N\neq 0$. For $q\neq 0$, let $g(q)=\Nr(q')\phi(q'')$
where $q'=\gcd(q,N^{\infty})$ and $q''=q/q'$.
Let $s\in\C$ with $\Re(s)>0$. Then
\[
\sum_{q\neq 0} \frac{1}{\Nr(q)^{1+s}} \sum_{q_1^2q_2q_3=q} \mu(q_2)\frac{g(q_3)}{\Nr(q_3)}
=
\frac{\zeta_{\Q(i)}(2+2s)}{\zeta_{\Q(i)}(2+s)}\prod_{p|N}\left(1-\frac{1}{\Nr(p)^{2+s}}\right)^{-1}.
\]
\end{lemma}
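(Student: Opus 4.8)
The plan is to recognize the left-hand side as an Euler product and compute it place by place. The summand involves $g(q_3)/\Nr(q_3)$ where $g$ is the multiplicative function from Lemma~\ref{AuxExpSumBd} (with $g(q)=\Nr(q')\phi(q'')$, $q'=\gcd(q,N^\infty)$, $q''=q/q'$), together with a Möbius factor $\mu(q_2)$ and a perfect-square factor $q_1^2$. Since all of $\mu$, $g$, $\Nr(\cdot)$ and the constraint $q_1^2q_2q_3=q$ are multiplicative, the whole Dirichlet series factors as $\prod_{\vp} L_\vp(s)$, where
\[
L_\vp(s) = \sum_{a,b,c\ge 0} \frac{\mu(\vp^b)}{\Nr(\vp)^{(1+s)(2a+b+c)}}\cdot\frac{g(\vp^c)}{\Nr(\vp)^c}.
\]
Here $\mu(\vp^b)=0$ unless $b\in\{0,1\}$, so the $b$-sum has only two terms, contributing a factor $1-\Nr(\vp)^{-(1+s)}$. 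The $a$-sum is a geometric series giving $(1-\Nr(\vp)^{-2(1+s)})^{-1}$. It remains to evaluate the $c$-sum $G_\vp(s):=\sum_{c\ge0} g(\vp^c)\Nr(\vp)^{-c}\Nr(\vp)^{-(1+s)c}$, which depends on whether $\vp\mid N$.

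First I would handle the generic primes $\vp\nmid N$. Then $g(\vp^c)=\phi(\vp^c)=\Nr(\vp)^c-\Nr(\vp)^{c-1}$ for $c\ge1$ and $g(1)=1$, so
\[
G_\vp(s) = 1 + \sum_{c\ge1}\bigl(\Nr(\vp)^c-\Nr(\vp)^{c-1}\bigr)\Nr(\vp)^{-c}\Nr(\vp)^{-(1+s)c}
= 1 + (1-\Nr(\vp)^{-1})\sum_{c\ge1}\Nr(\vp)^{-(1+s)c},
\]
a routine geometric-series manipulation that collapses to $\bigl(1-\Nr(\vp)^{-(2+s)}\bigr)\big/\bigl(1-\Nr(\vp)^{-(1+s)}\bigr)$. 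Multiplying by the $a$- and $b$-factors computed above, the $\vp$-factor becomes
\[
\frac{1-\Nr(\vp)^{-(2+s)}}{1-\Nr(\vp)^{-2(1+s)}}
= \frac{1-\Nr(\vp)^{-(2+2s)}}{1-\Nr(\vp)^{-(2+s)}}\cdot\frac{\bigl(1-\Nr(\vp)^{-(2+s)}\bigr)^2}{\bigl(1-\Nr(\vp)^{-(2+2s)}\bigr)\bigl(1-\Nr(\vp)^{-2(1+s)}\bigr)};
\]
more simply, one checks directly that $\prod_{\vp\nmid N}\frac{1-\Nr(\vp)^{-(2+s)}}{1-\Nr(\vp)^{-2(1+s)}}$ equals $\zeta_{\Q(i)}(2+2s)/\zeta_{\Q(i)}(2+s)$ times $\prod_{\vp\mid N}$ the reciprocal local zeta factors, so it is cleanest to first write $\prod_{\text{all }\vp}$ and then correct at $\vp\mid N$.

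For the bad primes $\vp\mid N$ one has $g(\vp^c)=\Nr(\vp)^c$ for all $c\ge0$, hence $G_\vp(s)=\sum_{c\ge0}\Nr(\vp)^{-(1+s)c}=(1-\Nr(\vp)^{-(1+s)})^{-1}$, and the full $\vp$-factor is
\[
\bigl(1-\Nr(\vp)^{-2(1+s)}\bigr)^{-1}\bigl(1-\Nr(\vp)^{-(1+s)}\bigr)\bigl(1-\Nr(\vp)^{-(1+s)}\bigr)^{-1}
= \bigl(1-\Nr(\vp)^{-2(1+s)}\bigr)^{-1}.
\]
Comparing this with the value $\frac{1-\Nr(\vp)^{-(2+s)}}{1-\Nr(\vp)^{-2(1+s)}}$ that a ``generic'' prime would contribute, the discrepancy at each $\vp\mid N$ is exactly a factor $\bigl(1-\Nr(\vp)^{-(2+s)}\bigr)^{-1}$, which reassembles into the claimed product $\prod_{\vp\mid N}\bigl(1-\Nr(\vp)^{-(2+s)}\bigr)^{-1}$. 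Finally I would note that all the geometric series converge absolutely for $\Re(s)>0$ (indeed for $\Re(s)>-1$ away from the pole of $\zeta_{\Q(i)}$ at argument $2$, so $\Re(s)>0$ is safe), which justifies the rearrangement into an Euler product. The only mildly delicate point — the ``main obstacle'', such as it is — is bookkeeping: making sure the $b\le 1$ truncation, the $a$-doubling, and the two regimes of $g(\vp^c)$ are combined without sign or exponent slips; the underlying analysis is entirely elementary.
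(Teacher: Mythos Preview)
Your argument is correct: the double Dirichlet series factors as an Euler product, and your local computations at $\vp\nmid N$ and $\vp\mid N$ are accurate (the displayed ``rewriting'' after the generic-prime paragraph is an unnecessary detour, but you say as much yourself and the clean identity $(1-\Nr(\vp)^{-(2+s)})/(1-\Nr(\vp)^{-(2+2s)})$ is exactly the local factor of $\zeta_{\Q(i)}(2+2s)/\zeta_{\Q(i)}(2+s)$).

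The paper takes a somewhat different route. Rather than expanding everything into an Euler product, it first strips off the $q_1$-sum to produce the factor $\zeta_{\Q(i)}(2+2s)$, and then evaluates the Dirichlet convolution $\sum_{q_2q_3=m}\mu(q_2)\,g(q_3)/\Nr(q_3)$ \emph{pointwise}: by multiplicativity one checks that this convolution equals $\mu(m)/\Nr(m)\cdot\Char_{(m,N)=1}$, which forces $m$ to be squarefree and coprime to $N$. The remaining sum $\sum_{b}\mu(b)\Char_{(b,N)=1}/\Nr(b)^{2+s}$ is then recognized directly as $\zeta_{\Q(i)}(2+s)^{-1}\prod_{p\mid N}(1-\Nr(p)^{-(2+s)})^{-1}$. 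Your Euler-product approach is more mechanical and perhaps safer against bookkeeping slips; the paper's approach has the minor advantage of isolating the arithmetic identity $\mu\ast(g/\Nr)=\mu/\Nr\cdot\Char_{(\,\cdot\,,N)=1}$, which is a tidy fact in its own right. Both are entirely elementary and of comparable length.
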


\begin{proof}
Since $\Re(s)>0$ the sum is absolutely convergent. Writing $q=q_1^2r^2b$ with $b$ squarefree,
we can therefore rewrite the left-hand side as
\begin{equation}\label{2610:eq002}
\zeta(2+2s)\sum_{r,b\neq 0} \frac{\mu^2(b)}{\Nr(r)^{2+2s}\Nr(b)^{1+s}} \sum_{q_2q_3=r^2b} \mu(q_2)\frac{g(q_3)}{\Nr(q_3)}
\end{equation}
Since $g$ is multiplicative, the inner sum factors as
\[
\sum_{q_2q_3=r^2b} \mu(q_2)\frac{g(q_3)}{\Nr(q_3)}
=
\prod_{\substack{p^{\alpha}||r^2b\\p| N}}
\left(1+\mu(p)\right)
\prod_{\substack{p^{\alpha}||r^2b\\p\nmid N}}
\left(\frac{\phi(p^\alpha)}{\Nr(p^\alpha)}+\mu(p)\frac{\phi(p^{\alpha-1})}{\Nr(p^{\alpha-1})}\right)
=
\frac{\mu(r^2b)}{\Nr(r^2b)}\mathbf{1}_{(r^2b,N)=1}.
\]
In particular, we must have $r=1$. Inserting the above in \eqref{2610:eq002}
we obtain
\[
\sum_{b\neq 0} \frac{\mu(b)\mathbf{1}_{(b,N)=1}}{\Nr(b)^{2+s}}
=
\frac{1}{\zeta_{\Q(i)}(2+s)}\prod_{p|N}\left(1-\frac{1}{\Nr(p)^{2+s}}\right)^{-1}.
\]
\end{proof}

\subsection{Proof of Theorem \ref{MainPGT}}\label{S5.2}
Let us start by taking a smooth version of the counting function $\Psi_\Gamma$,
defined by
\[
\Psi_\Gamma(X,k) = \int_{Y}^{2Y} \Psi_\Gamma(X+u) k(u) du,
\]
where $X^{1/2}<Y\leq X$ and $k$ is a smooth, real-valued function with compact support on $(Y,2Y)$,
of unit mass and such that
$\int_{Y}^{2Y}|k^{(j)}(u)|du \ll Y^{-j}$ for all $j\geq 0$.
This choice of smoothing allows us to write
\begin{equation}\label{3009:eq001}
\Psi_\Gamma(X) = \Psi_\Gamma(X,k) - \int_{Y}^{2Y} (\Psi_\Gamma(X+u)-\Psi_\Gamma(X))k(u)du.
\end{equation}
We claim now that
\begin{equation}\label{2309:eq001}
\Psi_\Gamma(X,k) =
\frac{1}{4}\int_{Y}^{2Y}\Biggl((X+u)^{2}
+
2\Re\Biggl(\sum_{r_j\leq X^{1+\epsilon}/Y} \frac{(X+u)^{1+ir_j}}{1+ir_j}\Biggr)\Biggr)k(u)du
+
O(X^{\frac{71}{64}+\epsilon}),
\end{equation}
and
\begin{equation}\label{2309:eq002}
\Psi_\Gamma(X+u)-\Psi_\Gamma(X)
=
\frac{1}{2}\left(Xu +\frac{u^2}{2}\right) + O(u^{2/5}X^{(4\theta+6)/5+\epsilon}).
\end{equation}
Inserting \eqref{2309:eq001} and \eqref{2309:eq002} into \eqref{3009:eq001}
and using the estimate
\[
\sum_{r_j\leq T} X^{ir_j} \ll T^3,
\]
which follows from the Weyl law \cite[Chap.8 Theorem 9.1]{EGM98}, we obtain
\[
\Psi_\Gamma(X) = \frac{1}{4}X^{2}
+
O\left(\frac{X^{3+\epsilon}}{Y^2} + Y^{2/5}X^{(4\theta+6)/5+\epsilon} + X^{\frac{71}{64}+\epsilon}\right).
\]
Picking $Y=X^{\frac{3}{4}-\frac{\theta}{3}}$ the error becomes $O(X^{\frac{3}{2}+\frac{2\theta}{3}+\epsilon})$,
giving Theorem \ref{MainPGT}.

It is therefore left to prove \eqref{2309:eq001} and \eqref{2309:eq002}.
Let us start with the second formula and use Corollary \ref{PCSubgpBF}
to express the left-hand side as a sum of Zagier's $L$-functions.
For convenience we simplify the condition \eqref{SumCond}
to $X<\Nr(n)\leq X+u$ and replace $\Nr(n^2-4)$ by $\Nr(n^2)+O(\Nr(n))$.
Observing that we also have $\mathcal{L}(1,(n^2-4)/N^2)\ll \Nr(n)^\epsilon$,
we can write
\begin{equation}\label{2309:eq003}
\Psi_\Gamma(X+u) - \Psi_\Gamma(X)
=
\quad C_\Gamma\!\!\!\!
\sum_{\substack{X<\Nr(n)\leq X+u\\n\in 2+\mathfrak{N}^2}}
   \Nr(n) \mathcal{L}\left(1,\frac{n^2-4}{N^2}\right) + O\left(X^{1+\epsilon}\right),
\end{equation}
where by Corollary \ref{PCSubgpBF} the constant $C_\Gamma$ equals
\begin{equation}\label{1104:CGamma}
C_\Gamma= \frac{[\PSL_2(\vo) : \Gamma]}{2\pi\Nr(N)} = \frac{\Nr(N)^2}{2\pi}\prod_{p|N}\left(1-\frac{1}{\Nr(p)^2}\right).
\end{equation}
Let $\delta=(n^2-4)/N^2$ and let $V>0$ a parameter to be chosen later.
We can write
\begin{equation}\label{3009:eq010}
\frac{1}{2\pi i} \int_{(1+\epsilon)} \Gamma(s-1) \mathcal{L}(s,\delta) V^{s-1} ds
=
\mathcal{L}(1,\delta)
+
\frac{1}{2\pi i} \int_{(1/2)} \Gamma(s-1) \mathcal{L}(s,\delta) V^{s-1} ds.
\end{equation}
Recall Theorem \ref{MainPS} shows that Zagier's $L$-function equals a
quadratic Dirichlet $L$-function of modulus dividing $\delta$,
up to a Dirichlet polynomial that can be
bounded on the critical line $\Re s=1/2$ by $O(|s|^A\Nr(\delta)^\epsilon)$.
Therefore we can bound $\mathcal{L}(1/2+it,\delta)\ll (1+|t|)^A \Nr(\delta)^{\theta+\epsilon}$,
where $\theta$ is a subconvexity exponent as in the statement of Theorem \ref{MainPGT}.
We deduce that we have the estimate
\[
\sum_{\substack{X<\Nr(n)\leq X+u\\n\in 2+\mathfrak{N}^2}}
   \frac{\Nr(n)}{2\pi i} \int_{(1/2)} \Gamma(s-1) \mathcal{L}\left(s,\frac{n^2-4}{N}\right) V^{s-1} ds
\ll
u X^{1+2\theta+\epsilon} V^{-1/2}.
\]
We can therefore rewrite \eqref{2309:eq003} as
\[
\Psi_\Gamma(X+u) - \Psi_\Gamma(X)
=
C_\Gamma \sum_{\substack{X<\Nr(n)\leq X+u\\n\in 2+\mathfrak{N}^2}}
   \Nr(n) G_V\left(\frac{n^2-4}{N^2}\right)
+
O(u X^{1+2\theta+\epsilon} V^{-1/2}+X^{1+\epsilon}),
\]
where $G_V(\delta)$ denotes the integral on the line $(1+\epsilon)$ in \eqref{3009:eq010}.
Opening $\mathcal{L}(s,\delta)$ as Dirichlet series, the above sum becomes
\begin{equation}\label{3009:eq011}
\sum_{\substack{X<\Nr(n)\leq X+u\\n\in 2+\mathfrak{N}^2}}
   \Nr(n) G_V\left(\frac{n^2-4}{N^2}\right)
=
\sum_{q\neq 0} \frac{e^{-\Nr(q)/V}}{\Nr(q)} 
\sum_{\substack{X<\Nr(n)\leq X+u\\n\in 2+\mathfrak{N}^2}}
   \Nr(n) \lambda_q\left(\frac{n^2-4}{N^2}\right).
\end{equation}
In analogy to \cite[Lemma 2.2]{BBCL20}, one can prove that
\begin{equation}\label{3009:eq012}
\sum_{\substack{\Nr(n)\leq Z\\n+2\in\mathfrak{N}^2}} \lambda_q\left(\frac{n^2-4}{N^2}\right)
=
\frac{\pi Z}{\Nr(N^2)} \sum_{q_1^2q_2q_3=q} \mu(q_2)\frac{g(q_3)}{\Nr(q_3)}
+
O(Z^{1/3+\epsilon}\Nr(q)^{1/3+\epsilon}\Nr(q_{*})),
\end{equation}
where $q_{*}$ in the error is a maximal element such that $q/q_{*}$ is squarefree
and $g(q_3)=\Nr(q_3')\phi(q_3'')$ with $q_3'=\gcd(q_3,N)$ and $q_3''=q_3/q_3'$.
To see this,
take a smooth function $f:\mathbb{R}^2\to[0,1]$,
supported on $|z|\leq \sqrt{Z}+\Delta$ for some $\Delta>0$
and with total mass $\widehat{f}(0)=\pi Z$
(for example, take the normalized convolution of the indicator function of a ball
of radius $\sqrt{Z}$ and one of radius $\Delta$).
Starting from the sum of $\rho_q(n)$, we can write
\[
\sum_{\substack{\Nr(n)\leq Z\\n+2\in\mathfrak{N}^2}} \rho_q\left(\frac{n^2-4}{N^2}\right)
=
\sum_{b\;(q)} \rho_q(b(4+N^2b)) \sum_{m\in\mathbb{Z}[i]} f(2+N^2b+N^2qm)
+
O(\Nr(q)^\epsilon\Nr(q_1) \sqrt{Z} \Delta),
\]
where we use pointwise bounds on $\rho_q(n)$ (see \cite[(15)]{BBCL20})
for the error term.
Applying Poisson summation we transform the sum into
\[
\frac{1}{\Nr(qN^2)} \sum_{k\in\Z[i]} \widehat{f}\left(\frac{k}{qN^2}\right) e\left(\left\langle\frac{2}{qN^2},k\right\rangle\right) S_q(k,N),
\]
with $S_q(k,N)$ the exponential sum defined in \eqref{AuxExpSum}.
The term $k=0$ gives the main contribution, while for the rest we estimate
$S_q(k,N)$ by Lemma \ref{AuxExpSumBd} and we bound $\widehat{f}$
in absolute value. Since $S_q(0,N)=\phi(q)$ if $(q,N)=1$ and $S_q(0,N)=\Nr(q)$ if $q \mid N^{\infty}$
(cf.~\eqref{1410:eq001},\eqref{1410:eq002}), we obtain
\[
\begin{split}
\sum_{\substack{\Nr(n)\leq Z\\n+2\in\mathfrak{N}^2}} \rho_q\left(\frac{n^2-4}{N^2}\right)
&=
\frac{\pi Z g(q)}{\Nr(qN^2)}
+
O(\Nr(q)^\epsilon \Nr(q_1) \sqrt{Z} \Delta + \Nr(q)^{1/2+\epsilon}(1+Z^{1/4}\Delta^{-1/2}))
\\
&=
\frac{\pi Z g(q)}{\Nr(qN^2)}
+
O(Z^{1/3+\epsilon}\Nr(q)^{1/3+\epsilon}\Nr(q_1))
\end{split}
\]
upon taking $\Delta=\Nr(q)^{1/3}Z^{-1/6}$.
Since $\lambda_q$ can be expressed as a divisor sum of $\rho_q$,
see \eqref{def:ZagierL} and \cite[(14)]{BBCL20},
one arrives at \eqref{3009:eq012}.
Using then \eqref{3009:eq012} into \eqref{3009:eq011} we obtain
\[
\sum_{\substack{X<\Nr(n)\leq X+u\\n\in 2+\mathfrak{N}^2}}
   \!\!\!\!\!\!\!\!\! \Nr(n) G_V\left(\frac{n^2-4}{N^2}\right)
=
\frac{\pi}{\Nr(N^2)}\left(Xu+\frac{u^2}{2}\right)
\!\!\!\! \sum_{\substack{q\neq 0\\ q=q_1^2q_2q_3}} \!\!\!\! \frac{\mu(q_2)g(q_3)}{\Nr(q_3q)}e^{-\Nr(q)/V}
+
O\left(X^{4/3+\epsilon}V^{1/3+\epsilon}\right).
\]
We express the exponential in terms of its Mellin transform and we
recognize the sum by means of Lemma \ref{2610:lemma}, so we can write
\[
\sum_{\substack{q\neq 0\\ q=q_1^2q_2q_3}} \!\!\!\! \frac{\mu(q_2)g(q_3)}{\Nr(q)}e^{-\Nr(q)/V}
=
\frac{1}{2\pi i} \int_{(\epsilon)} \Gamma(s)V^{s} \frac{\zeta_{\Q(i)}(2+2s)}{\zeta_{\Q(i)}(2+s)}\prod_{p|N}\left(1-\frac{1}{\Nr(p)^{2+s}}\right)^{-1} ds.
\]
Moving the line of integration to $\Re(s)=-1/2$ and picking up the residue at $s=0$
we deduce that the above equals
\[
\prod_{p|N}\left(1-\frac{1}{\Nr(p)^{2}}\right)^{-1} + O(V^{-1/2}),
\]
which in conclusion leads (in combination with \eqref{1104:CGamma}) to
\[
\Psi_\Gamma(X+u)-\Psi_\Gamma(X)
=
\frac{1}{2}\Bigl(Xu+\frac{u^2}{2}\Bigr)
+
O(X^{4/3+\epsilon}V^{1/3+\epsilon}+u X^{1+2\theta+\epsilon} V^{-1/2}+X^{1+\epsilon}).
\]
We choose $V=(Y^3X^{6\theta-1})^{6/15}$ and obtain \eqref{2309:eq002}.
%By evaluating the above at $u=X$ and comparing with the known asymptotic
%$\Psi_\Gamma(X)\sim\frac{1}{2}X^2$ we get $C_\Gamma'=1$,
%which completes the proof of \eqref{2309:eq002}.

Let us now prove \eqref{2309:eq001}.
Denote by $Z_\Gamma(s)$ the Selberg zeta function attached to $\Gamma$.
In a box of the form $[-\epsilon,2+\epsilon]\times[-T,T]$ in the complex plane,
the real poles of $Z_\Gamma'/Z_\Gamma$ are at $s=2$
and at a finite number of $s_j\in[1,71/64]$
corresponding to the small eigenvalues attached to $\Gamma$ (see \cite{BB11,Na12}).
Moreover, there are simple poles $s_j=1\pm ir_j$ on the line $\Re s=1$ and
another $O(T\log T)$ simple poles $\rho_j$ in the strip $\Re s\in(0,1)$.
By Perron's formula we start by writing
\[
\frac{1}{2\pi i} \int_{2+\epsilon-iT}^{2+\epsilon+iT} \frac{Z_\Gamma'}{Z_\Gamma}(s)\frac{X^s}{s} ds
=
2\Psi_\Gamma(X) + O\left(\frac{X^{2+\epsilon}}{T}\right)
\]
(recall Remark \ref{1104:rmk} for the extra factor of two on the right).
Next we move the line of integration to $\Re s=-\epsilon$
and we pass the poles of $Z_\Gamma'/Z_\Gamma$, obtaining
\begin{equation}\label{1410:eq003}
\begin{split}
\Psi_\Gamma(X)
=&
\frac{1}{4}X^2
+
X\Re\Biggl(\sum_{r_j\leq T} \frac{X^{ir_j}}{1+ir_j}\Biggr)
\\
&
\pm\frac{1}{4\pi i} \int_{C_1^{\pm}} \frac{Z_\Gamma'}{Z_\Gamma}(s) \frac{X^s}{s} ds
+
\frac{1}{4\pi i} \int_{C_2} \frac{Z_\Gamma'}{Z_\Gamma}(s) \frac{X^s}{s} ds
+
O\left(X^{71/64+\epsilon} + X T^\epsilon + \frac{X^{2+\epsilon}}{T}\right),
\end{split}
\end{equation}
where $C_1^{\pm}$ are the two horizontal segments $[-\epsilon\pm iT,2+\epsilon\pm iT]$
while $C_2$ denotes the vertical segment $[-\epsilon-iT,-\epsilon+iT]$.
At this point we evaluate the above at $X+u$ and integrate against $k(u)$.
Moreover, we select $T\approx X$ and recall that $u\in(Y,2Y)$ with $X^{1/2}< Y \leq X$.
Repeated integration by parts allows us to write
\begin{equation}\label{1410:eq004}
\int_{Y}^{2Y} (X+u)^s k(u) du \ll_{l} \frac{X^{\Re s+l}}{|s Y|^{l}},
\end{equation}
which in turn implies that we can truncate the sum over $r_j$ in \eqref{1410:eq003}
at height $X^{1+\epsilon}/Y$. By standard properties of the Selberg zeta function,
it also implies that we can bound the integral over $C_1^{\pm}$ by
$O(X^{2+\epsilon}T^{-1})=O(X^{1+\epsilon})$.
As for the integral over $C_2$, \eqref{1410:eq004} implies 
that we can truncated the integral at height $|s|\ll X^{1+\epsilon}/Y$
and in that range it gives $O(X^{2+\epsilon}/Y^{2})=O(X^{1+\epsilon})$.
Summarizing, we obtain
\[
\Psi_\Gamma(X)
=
\frac{1}{4}\int_{Y}^{2Y}\Biggl((X+u)^{2}
+
\Re\Biggl(\sum_{r_j\leq X^{1+\epsilon}/Y} \frac{(X+u)^{1+ir_j}}{1+ir_j}\Biggr)\Biggr)k(u)du
+
O(X^{71/64+\epsilon}),
\]
as desired. This proves \eqref{2309:eq001} and thus completes the proof of Theorem \ref{MainPGT}.

To end this section we sketch the proof of Theorem \ref{MainPGT2}
for the case of principal congruence subgroups of $\mathrm{SL}_2(\Z)$.
While the general strategy is the same as for Theorem \ref{MainPGT},
equation \ref{3009:eq012} becomes
\[
\sum_{\substack{n\leq Z\\n+2\in\mathfrak{N}^2}} \lambda_q\left(\frac{n^2-4}{N^2}\right)
=
\frac{Z}{N^2}\sum_{q_1^2q_2q_3=q}\frac{\mu(q_2)g(q_3)}{q_2}
+
O(q^{1/2+\epsilon}q_1),
\]
with $q=q_1^2q_2$ and $q_2$ squarefree, which leads to the identity
\begin{equation}\label{1410:eq010}
\Psi_\Gamma(X+u)-\Psi_\Gamma(X)
=
\frac{u}{2} + O(u^{1/2}X^{1/4+\theta/2+\epsilon}).
\end{equation}
On the other hand, Perron's formula gives the expansion
\begin{equation}\label{1410:eq011}
\Psi_\Gamma(X)
=
\frac{1}{2}\int_{Y}^{2Y}\Biggl(X+u
+
2\Re\Biggl(\sum_{r_j\leq X^{1+\epsilon}/Y} \frac{(X+u)^{1/2+ir_j}}{1+ir_j}\Biggr)\Biggr)k(u)du
+
O(X^{39/64+\epsilon}),
\end{equation}
the exponent $39/64$ following from the estimate $\lambda_1\geq 975/4096$ from \cite{KS02}.
Combining \eqref{1410:eq010} and \eqref{1410:eq011} and using the bound \cite{LS95}
\[
\sum_{r_j \leq T} X^{ir_j} \ll T^{5/4+\epsilon} X^{1/8+\epsilon},
\]
we conclude that
\[
\Psi_\Gamma(X) = \frac{1}{2}X + O(Y^{1/2}X^{1/4+\theta/2+\epsilon} + X^{7/8+\epsilon}Y^{-1} + X^{39/64+\epsilon}),
\]
which gives Theorem \ref{MainPGT2} upon picking $Y=X^{5/6-2\theta/3}$.

\section{Appendix: Finiteness Properties}

	We shall prove Proposition \ref{FinEllConj}. Recall by \cite[Theorem 2.7]{EGM98} that $\Gamma \backslash \ag{H}_3$ has a fundamental domain given by a Poincar\'e normal polyhedron $\vP_Q(\Gamma)$ for some $Q=rj \in \ag{H}_3$ with $r \geq 1$.

\begin{definition}
	For any $P \in \vP_Q(\Gamma)$, write
	$$ \LatL(P) := \{ Q_1 \in \Gamma.Q: d(P,Q_1) = d(P,Q) \}. $$
\end{definition}

\noindent The following lemma is geometrically intuitive. We leave the detail of the proof to the reader.
\begin{lemma}
	We can distinguish the position of a point $P \in \vP=\vP_Q(\Gamma)$ as follows.
\begin{itemize}
	\item[(1)] $P$ lies in the interior of $\vP_Q(\Gamma)$ iff $\LatL(P) = \{ Q \}$ is reduced to a single point.
	\item[(2)] $P$ lies in the interior of a face $\mathcal{S}$ in $\partial \vP_Q(\Gamma)$ iff $\LatL(P) = \{ Q, \gamma.Q \}$ with a unique $1 \neq \gamma \in \Gamma$. The geodesic linking $Q,\gamma.Q$ is perpendicular to $\mathcal{S}$.
	\item[(3)] $P$ lies in the interior of an edge $\mathfrak{s}$ in $\partial \vP_Q(\Gamma)$ iff $\LatL(P)$ is a set of at least three points, all lying in a geodesic plane perpendicular to $\mathfrak{s}$.
	\item[(4)] $P$ is a vertex of $\vP_Q(\Gamma)$ iff $\LatL(P)$ is not contained in any geodesic plane.
\end{itemize}
\end{lemma}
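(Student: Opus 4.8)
The plan is to work entirely with the description of $\vP := \vP_Q(\Gamma)$ as a Dirichlet domain,
\[
\vP = \left\{ P \in \ag{H}_3 \ \middle| \ d(P,Q) \leq d(P,\gamma.Q) \ \text{for all } \gamma \in \Gamma \right\},
\]
which by \cite[Theorem 2.7]{EGM98} is a locally finite convex polyhedron. First I would fix $P \in \vP$, set $\rho := d(P,Q)$, and for each $Q_1 \in \Gamma.Q \setminus \{Q\}$ introduce the perpendicular bisector plane $H_{Q_1} = \{ x : d(x,Q) = d(x,Q_1) \}$ (a totally geodesic surface) together with the closed half-space $H_{Q_1}^-$ containing $Q$, so that $\vP = \bigcap_{Q_1} H_{Q_1}^-$. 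By construction $P \in H_{Q_1}$ precisely when $Q_1 \in \LatL(P)$, and local finiteness ensures that only finitely many $H_{Q_1}$ meet a neighbourhood of $P$ and that no translate $\gamma.Q$ is strictly closer to $P$ than $Q$. Hence the unique face of $\vP$ whose relative interior contains $P$ is $F_P = \vP \cap \bigcap_{Q_1 \in \LatL(P) \setminus \{Q\}} H_{Q_1}$, with $\dim F_P = 3 - k$, where $k$ is the rank (dimension of the span in $T_P\ag{H}_3$) of the normal directions of the planes $H_{Q_1}$, $Q_1 \in \LatL(P) \setminus \{Q\}$. The whole lemma then reduces to translating $\dim F_P \in \{3,2,1,0\}$, i.e.\ $k \in \{0,1,2,3\}$, into conditions on $\LatL(P)$.

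The single computational ingredient I would isolate is the hyperbolic Pythagorean theorem: for a totally geodesic submanifold $M$ (a line or a plane) and a point $A \notin M$ with orthogonal projection $A_0 \in M$ and $d(A,M) = h_A$, one has $\cosh d(x,A) = \cosh h_A \cdot \cosh d_M(x,A_0)$ for every $x \in M$. From this I would extract three facts: (i) $H_{Q_1}$ is orthogonal to the geodesic $[Q,Q_1]$, meeting it at its midpoint, and $Q_1$ is the reflection of $Q$ in $H_{Q_1}$ — so distinct $Q_1$ give distinct planes; (ii) for a geodesic line $L$, one has $L \subseteq H_{Q_1}$ iff $Q$ and $Q_1$ have the same orthogonal projection onto $L$ and the same distance to $L$, equivalently iff both lie in the geodesic plane through that common projection perpendicular to $L$; (iii) for a geodesic plane $\Pi$, writing $P'$ for the orthogonal projection of $P$ onto $\Pi$, the geodesic $M$ through $P$ perpendicular to $\Pi$ lies in $H_{Q_1}$ iff $d_\Pi(P',Q) = d_\Pi(P',Q_1)$.

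With these in place the four clauses follow by a case analysis on $k$. For $k = 0$: no bisector plane passes through $P$, so $\LatL(P) = \{Q\}$ and $F_P$ is open — clause (1). For $k = 1$: exactly one plane $H_{Q_1}$ through $P$, so $\LatL(P) = \{Q, Q_1\}$, $F_P = \vP \cap H_{Q_1}$ is a $2$-face, and $[Q,Q_1] \perp H_{Q_1}$ by (i) — clause (2). For $k = 2$: then $|\LatL(P)| \geq 3$, the (pairwise distinct, by (i)) planes $H_{Q_1}$ share a common geodesic line $L \supseteq F_P = \mathfrak{s}$, and by (ii) every element of $\LatL(P)$ lies in the geodesic plane perpendicular to $L$ through the common foot, hence perpendicular to $\mathfrak{s}$; conversely, if $|\LatL(P)| \geq 3$ with $\LatL(P)$ contained in a geodesic plane perpendicular to some line $L \ni P$, coplanarity forces a common foot, so (ii) gives $L \subseteq \bigcap H_{Q_1}$ and thus $k = 2$ — clause (3). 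For $k = 3$: if $\LatL(P)$ were contained in a geodesic plane $\Pi$, then $d(P,Q) = \rho = d(P,Q_1)$ together with (iii) would force $d_\Pi(P',Q) = d_\Pi(P',Q_1)$, so the perpendicular $M$ to $\Pi$ through $P$ lies in every $H_{Q_1}$, giving $k \leq 2$, a contradiction; conversely, if $\LatL(P)$ lies in no geodesic plane then necessarily $k = 3$, since otherwise (ii) would place $\LatL(P)$ in a plane perpendicular to the common line — clause (4). Exhaustiveness of the four positions and mutual exclusivity of the possible dimensions of $F_P$ then upgrade each implication to the stated ``iff''.

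I expect the only real care needed — the main obstacle, though it is bookkeeping rather than depth — to lie in two places: justifying that $\vP$ is locally cut out by exactly the finitely many half-spaces indexed by $\LatL(P)$ and that the face dimension equals $3 - k$, and handling the degenerate sub-positions of $P$ in the Pythagorean computations (for instance when $P$ is itself the foot of the perpendicular to $L$ or to $\Pi$, or when $P$ already lies on $\Pi$). Both are standard once the Dirichlet-domain picture is set up, and neither affects the logical skeleton above.
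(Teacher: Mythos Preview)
The paper does not actually prove this lemma: immediately after the statement it says ``The following lemma is geometrically intuitive. We leave the detail of the proof to the reader.'' So there is nothing to compare against in terms of the paper's own argument.

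Your proposal is a correct and well-organized way to fill in the omitted details. The key points --- identifying the open face $F_P$ containing $P$ with the intersection of $\vP$ and the bisecting planes $H_{Q_1}$ for $Q_1 \in \LatL(P)\setminus\{Q\}$, computing $\dim F_P = 3-k$ where $k$ is the rank of the normal directions, and then using the hyperbolic Pythagorean relation to translate each value of $k$ into the stated description of $\LatL(P)$ --- are exactly what is needed. Your observation that the four conditions on $\LatL(P)$ and the four face-types of $P$ each partition the possibilities, so that proving one direction in each clause suffices, is the cleanest way to get the ``iff'' statements; note in particular that for clause~(3) the perpendicularity of the plane to $\mathfrak{s}$ is a consequence of the forward direction once you know $P$ lies in an edge, so it need not be assumed separately in the converse. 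The two caveats you flag (local finiteness to reduce to finitely many $H_{Q_1}$ near $P$, and the degenerate foot-of-perpendicular cases) are indeed the only points requiring care, and both are routine.
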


\begin{corollary}
	If $P \in \vP$ as above is in the case (k) and $\gamma \in \Gamma$ such that $\gamma.P \in \vP$, then $\gamma.P$ is also in the case (k), $k=1,2,3,4$.
\label{PNCons}
\end{corollary}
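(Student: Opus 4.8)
The plan is to notice that each of the four cases in the preceding lemma is characterized purely by two \emph{isometry-invariant} features of the finite set $\LatL(P) \subset \Gamma.Q$: its cardinality ($1$, $2$, or $\geq 3$) and, in the case of cardinality $\geq 3$, whether or not it is contained in a geodesic plane of $\ag{H}_3$. Since $\gamma \in \Gamma$ acts on $\ag{H}_3$ by an isometry which preserves the orbit $\Gamma.Q$ and carries geodesic planes to geodesic planes, everything will follow once we show that $\LatL(\gamma.P) = \gamma.\LatL(P)$ whenever both $P$ and $\gamma.P$ lie in $\vP = \vP_Q(\Gamma)$.

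First I would record the defining property of the Poincar\'e normal polyhedron: it is a Dirichlet-type fundamental domain for the point $Q$, so that for every $P \in \vP$ one has $d(P,Q) = \min_{Q_1 \in \Gamma.Q} d(P,Q_1)$. Consequently $\LatL(P) = \{ Q_1 \in \Gamma.Q : d(P,Q_1) = \min_{Q_2 \in \Gamma.Q} d(P,Q_2) \}$ is precisely the set of orbit points nearest to $P$, and the same reformulation applies verbatim to $\gamma.P$, because $\gamma.P \in \vP$ by hypothesis.

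Next, $\gamma$ is an isometry of $\ag{H}_3$ with $\gamma.(\Gamma.Q) = \Gamma.Q$, so it sends the set of orbit points nearest to $P$ bijectively onto the set of orbit points nearest to $\gamma.P$; that is, $\LatL(\gamma.P) = \gamma.\LatL(P)$. In particular the two sets have the same cardinality, and (applying $\gamma$ or $\gamma^{-1}$ to a geodesic plane) $\LatL(\gamma.P)$ is contained in a geodesic plane if and only if $\LatL(P)$ is. Matching these two invariants against the characterizations (1)--(4) of the previous lemma, we conclude that $\gamma.P$ is in the same case $(k)$ as $P$. (The finer perpendicularity statements in cases (2) and (3) are then automatic: $\gamma$ carries the geodesic through $Q$ and $\gamma^{-1}.Q_1$ to the one through $\gamma.Q$ and $Q_1$, and preserves perpendicularity.)

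The argument is essentially formal, so I do not anticipate a genuine obstacle; the only subtlety worth flagging is that $\LatL$ is defined through the condition ``$d(P,Q_1) = d(P,Q)$'' relative to the \emph{fixed} centre $Q$, which is not manifestly $\gamma$-equivariant. It is precisely the hypothesis $\gamma.P \in \vP$ (together with the Dirichlet-domain property of $\vP_Q(\Gamma)$) that allows one to replace this by the minimality condition, and only the latter is obviously transported by $\gamma$. Keeping track of that reduction is the one place where a little care is needed.
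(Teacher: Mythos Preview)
Your proposal is correct and follows essentially the same approach as the paper: both arguments establish $\LatL(\gamma.P) = \gamma.\LatL(P)$ using the Dirichlet-domain property of $\vP_Q(\Gamma)$, and then observe that the cases (1)--(4) are distinguished by isometry-invariant features of $\LatL(P)$. Your reformulation of $\LatL(P)$ as the set of nearest orbit points is exactly the content of the paper's chain of inequalities $d(\gamma.P,\gamma.Q) \geq d(\gamma.P,Q) \geq d(P,Q)$, and you have correctly identified the one subtlety (that the definition of $\LatL$ is tied to the fixed centre $Q$, so the hypothesis $\gamma.P \in \vP$ is genuinely needed).
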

\begin{proof}
	We must have $\gamma.Q \in \LatL(P)$ in this case. Now if $\gamma'.Q \in \LatL(P)$, then
	$$ d(\gamma.P, \gamma \gamma'.Q) = d(P, \gamma'.Q) = d(P,Q) = d(\gamma.P,\gamma.Q) \geq d(\gamma.P,Q) \geq d(P,Q), $$
	we must have equality everywhere, proving that $\gamma.\LatL(P) \subset \LatL(\gamma.P)$. Exchanging the roles of $P$ and $\gamma.P$, we get $\gamma^{-1}\LatL(\gamma.P) \subset \LatL(P)$. Hence $\LatL(\gamma.P) = \gamma.\LatL(P)$. The nature of $\LatL(\gamma.P)$ is the same as $\LatL(P)$.
\end{proof}

\begin{proof}[Proof of Proposition \ref{FinEllConj}]
	Let $[\gamma_0]$ be an elliptic conjugacy class in $\Gamma$. Let $\ell_0$ be the geodesic invariantly fixed by a representative $\gamma_0$. We may assume $P_0 \in \ell_0 \cap \vP_Q(\Gamma)$ exists. $P_0$ can not lie in the interior of $\vP$.
	
\noindent (1) If $P_0$ lies in the interior of a face $\mathcal{S}_0$, we have $\LatL(P_0)=\{ Q, \gamma.Q \}$. Then from
	$$ d(P_0,Q) = d(\gamma_0^n.P_0,Q) = d(P_0,\gamma_0^{-n}.Q), \forall n \in \ag{Z} $$
	we deduce that $\gamma_0^n \in \{ 1,\gamma \}$, hence $\gamma = \gamma_0$ is cyclic of order $2$. Thus $\gamma_0$ is the rotation about the axis $\ell_0$ of angle $\pi$. Consequently, $\ell_0$ and the geodesic linking $Q$ and $\gamma_0.Q$ lie in a geodesic plane and they are perpendicular with each other. Hence $\ell_0$ lies in the geodesic plane containing $\mathcal{S}_0$. As the rotation $\gamma_0$ must map the interior of $\mathcal{S}_0$ into itself by Corollary \ref{PNCons}, $\ell_0$ must be an axis of symmetry of the hyperbolic polygon $\mathcal{S}_0$.
	
\noindent (2) If $P_0$ lies in the interior of an edge $\mathfrak{s}_0$, and if $\ell_0$ does not contain $\mathfrak{s}_0$, then $P_0$ must be the middle point of $\mathfrak{s}_0$ and $\gamma_0$ is a rotation of angle $\pi$, since $\gamma_0$ maps the interior of $\mathfrak{s}_0$ into itself by Corollary \ref{PNCons}. We also have $\gamma_0 \LatL(P_0) = \LatL(P_0)$ by the proof of Corollary \ref{PNCons}, hence $\ell_0$ is an axis of symmetry of the polygon determined by $\LatL(P_0)$. If $\ell_0$ does contain $\mathfrak{s}_0$, then $\gamma_0$ is a rotation about $\mathfrak{s}_0$ which permutes $\LatL(P_0)$, since we still have $\gamma_0 \LatL(P_0) = \LatL(P_0)$.

\noindent (3) If $P_0$ is a vertex of $\vP$, we claim that there exist $P_1 \in \ell_0$ and $\gamma \in \Gamma$ such that $\gamma.P_1 \in \vP$ is not a vertex, hence we can replace $\gamma_0$ resp. $P_0$ with $\gamma \gamma_0 \gamma^{-1}$ resp. $P_1$ and reduce to the previous cases. In fact, otherwise, the orbits of the vertices under $\Gamma$, which is countably many, would cover $\ell_0$, which is uncountably many. Contradiction.

\noindent We have shown that up to conjugation by elements of $\Gamma$, $\gamma_0$ is
\begin{itemize}
	\item either a rotation of angle $\pi$ about an axis of symmetry of a face of $\vP$;
	\item or a rotation of angle $\pi$ about an axis of symmetry of the polygon determined by $\LatL(P_0)$, where $P_0$ is the middle point of an edge of $\vP$;
	\item or a rotation about an edge of $\vP$, which permutes $\LatL(P_0)$ for any $P_0$ lying in the interior of that edge.
\end{itemize}
	Hence there are only finitely many options for $\gamma_0$ and we conclude the proof.
\end{proof}

\bibliographystyle{acm}
	
\bibliography{mathbib}
	
\address{\quad\\ Giacomo Cherubini Charles University \\ Faculty of Mathematics and Physics\\ Department of Algebra\\ Sokolov\-sk\' a 83\\ 18600 Praha~8\\ Czech Republic\\ cherubini@karlin.mff.cuni.cz}
	
\address{\quad \\ Han WU \\ School of Mathematical Sciences \\ Queen Mary University of London \\ Mile End Road \\ London, E1 4NS \\ United Kingdom \\ wuhan1121@yahoo.com}

\address{\quad \\ Gergely Z\'abr\'adi \\ E\"otv\"os Lor\'and University of Budapest (\& MTA R\'enyi Int\'ezet Lend\"ulet Automorphic Research Group) \\ Institute of Mathematics \\ P\'azm\'any P.s. 1/C \\ 1117 Budapest \\ Hungary}
	
\end{document}